\documentclass[reqno,english]{amsart}%
\usepackage{amsfonts,amsmath,latexsym,verbatim,amscd,mathrsfs,color,array}
\usepackage[colorlinks=true]{hyperref}
\usepackage{amsmath,amssymb,amsthm,amsfonts,graphicx,color}
\usepackage{amssymb}
\usepackage{pdfsync}
\usepackage{epstopdf}
\usepackage{cite}
\usepackage{graphicx}
\usepackage{amsmath}
\usepackage{amsfonts}%
\usepackage[a4paper]{geometry}
\providecommand{\U}[1]{\protect\rule{.1in}{.1in}}

\numberwithin{equation}{section}

\newtheorem{theorem}{Theorem}[section]
\newtheorem{corollary}{Corollary}[section]
\newtheorem{lemma}{Lemma}[section]
\newtheorem{proposition}{Proposition}[section]
\newtheorem{remark}{Remark}[section]
\newtheorem{definition}{Definition}[section]

\numberwithin{equation}{section}

\newcommand{\bbr}{\mathbb{R}}

\newcommand{\bbn}{\mathbb{N}}
\newcommand{\ve}{\varepsilon}

\newcommand{\bd}{\begin{definition}}
\newcommand{\ed}{\end{definition}}

\newcommand{\br}{\begin{remark}}
\newcommand{\er}{\end{remark}}

\newcommand{\be}{\begin{equation}}
\newcommand{\ee}{\end{equation}}

\newcommand{\bc}{\begin{corollary}}
\newcommand{\ec}{\end{corollary}}
\begin{document}

\title[The Struwe decomposition]{A refined blow-up analysis of the Brezis-Nirenberg equation and its application: The one-bubble case for $N\geq4$}

\author[R. He]{Rui He}
\address{\noindent  School of Mathematics, Yunnan Normal University, Kunming, 650500, P.R. China\\
Yunnan Key Laboratory of Modern Analytical Mathematics and Applications, Kunming, 650500, P.R. China}
\email{xxxxxx@ynnu.edu.cn}

\author[X. Liu]{Xiangqing Liu}
\address{\noindent  School of Mathematics, Yunnan Normal University, Kunming, 650500, P.R. China\\
Yunnan Key Laboratory of Modern Analytical Mathematics and Applications, Kunming, 650500, P.R. China}
\email{liuxiangqing@ynnu.edu.cn}

\author[J. Wei]{Juncheng Wei}
\address{\noindent Department of Mathematics, Chinese University of Hong Kong,
Shatin, NT, Hong Kong}
\email{wei@math.cuhk.edu.hk}

\author[Y. Wu]{Yuanze Wu}
\address{\noindent  School of Mathematics, Yunnan Normal University, Kunming, 650500, P.R. China\\
Yunnan Key Laboratory of Modern Analytical Mathematics and Applications, Kunming, 650500, P.R. China}
\email{yuanze.wu@ynnu.edu.cn}

\thanks{R. He and X. Liu are supported by NSFC (No. 12161093) and Modern Applied Mathematics and Life Sciences Interdisciplinary Research Team in Yunnan (No. 202405AS350003).  J. Wei is partially supported by GRF of Hong Kong entitled "New frontiers in singularity formations in nonlinear partial differential equations".   Y. Wu is supported by NSFC (No. 12171470), Modern Applied Mathematics and Life Sciences Interdisciplinary Research Team in Yunnan (No. 202405AS350003) and Yunnan Fundamental Research Projects (No. 202601CJ070001). }

\begin{abstract}
In this paper, we consider the famous Brezis-Nirenberg equation
\begin{eqnarray*}
\left\{
\aligned
&-\Delta u=\lambda u+|u|^{\frac{4}{N-2}}u,\quad&\mbox{in}\,\, \Omega,\\
&u=0,\quad&\mbox{on}\,\, \partial\Omega,
\endaligned
\right.
\end{eqnarray*}
where $N\geq3$ is the dimension, $\Omega\subset\mathbb{R}^N$ is a bounded domain with smooth boundary $\partial\Omega$ and $\lambda>0$ is a parameter.  By developing a refined blow-up analysis based on the inverse reduction argument developed in \cite{WW2019,WW2019-2}, we classify, for the fist time, the Struwe decomposition of the Brezis-Nirenberg equation in the one-bubble case as the parameter $\lambda$ varies for $N\geq4$.  As applications, we prove that the $4d$ Brezis-Nirenberg equation has a nontrivial solution (least energy solution) for $\lambda\in\sigma(-\Delta)$ in general bounded domains, where $\sigma(-\Delta)$ is the spectrum of $-\Delta$ in $H^1_0(\Omega)$.  Our result completes the existence theory of the Brezis-Nirenberg equation for $N\geq4$ in \cite{AP2025,CFP1985,CFS,CSS1986,CW2005,CSZ2012,SWW2009,TYZ2022} since 1984.

\vspace{3mm} \noindent{\bf Keywords:} Brezis-Nirenberg equation; Struwe decomposition; Refined blow-up analysis; Sign-changing solution; Inverse reduction argument.

\vspace{3mm}\noindent {\bf AMS} Subject Classification 2020: 35B33; 35B40; 35B44; 35J15.%
\end{abstract}

\date{}

\maketitle

\section{Introduction}
\subsection{Background}
In this paper, we consider the famous Brezis-Nirenberg equation
\begin{eqnarray}\label{BN}
\left\{
\aligned
&-\Delta u=\lambda u+|u|^{\frac{4}{N-2}}u,\quad&\mbox{in}\,\, \Omega,\\
&u=0,\quad&\mbox{on}\,\, \partial\Omega,
\endaligned
\right.
\end{eqnarray}
where $\lambda>0$ is a parameter, $\Omega\subset\bbr^N$ is a bounded domain with smooth boundary $\partial\Omega$ and $N\geq3$.

\vskip0.12in

This famous model was introduced by Brezis and Nirenberg in their celebrated paper \cite{BN} as an analogous example of the Yamabe problem.  The Yamabe problem concerns the existence of a metric $g$ conformal to $g_0$ in an $N$-dimensional Riemannian manifold $(\mathcal{M}, g_0)$ whose scalar curvature $R_g$ is equal to a prescribed function $R$.  It boils down to showing the existence of a positive solution $u$, serving as the conformal factor $g=u^{\frac{N-2}{2}}g_0$, to the nonlinear partial differential equation
\begin{eqnarray}\label{Yamabe}
-\Delta_{g_0}u+\frac{N-2}{4(N-1)}R_{g_0} u= \frac{N-2}{4(N-1)}R u^{2^*-1}\quad\text{in}\quad\mathcal{M},
\end{eqnarray}
where $-\Delta_{g_0}$ is the Laplace-Beltrami operator in $\mathcal{M}$ and $N\geq3$.  Thus, the Brezis-Nirenberg equation~\eqref{BN} is a generalization of the Yamabe problem~\eqref{Yamabe} by introducing a free parameter $\lambda$ if $\mathcal{M}$ is a smoothly bounded domain in $\mathbb{R}^N$.  By considering the following variational problem
\begin{eqnarray}\label{GS}
S_\lambda=\inf_{u\in H^1_0(\Omega)\backslash\{0\}}\frac{\|\nabla u\|_{L^2(\Omega)}^2-\lambda\|u\|_{L^2(\Omega)}^2}{\|u\|_{L^{2^*}(\Omega)}^{2^*}}
\end{eqnarray}
where we use the notation $2^*=\frac{2N}{N-2}$ to denote the critical Sobolev exponent as usual,
Brezis and Nirenberg gave a detailed study on the existence theory of positive solutions of \eqref{BN}:
\begin{enumerate}
\item[$(i)$]\quad \eqref{BN} only has the trivial solution when $\Omega$ is star-shaped about an inner point of $\Omega$ and $\lambda\leq 0$;
\item[$(ii)$]\quad \eqref{BN} has no positive solutions for $\lambda\geq\lambda_1$ where $\lambda_1$ is the first eigenvalue of $-\Delta$ in $H_0^1(\Omega)$;
\item[$(iii)$]\quad \eqref{BN} admits a positive solution for every $\lambda\in (0, \lambda_1)$ if $N\geq 4$;
\item[$(iv)$]\quad \eqref{BN} admits a positive solution for every $\lambda\in (\lambda^*, \lambda_1)$ if $N=3$, where $0<\lambda_*<\lambda_1$ is given by $\lambda_*=\inf\left\{\lambda\in\bbr\mid S_{\lambda}<S\right\}$ and $S$ is the optimal Sobolev constant;
\item[$(v)$]\quad \eqref{BN} admits a positive solution if and only if $\lambda\in (\lambda^*, \lambda_1)$ for $N=3$ and $\Omega=B_1$ the unit ball, where $\lambda^*=\frac{\lambda_1}{4}$ for $\Omega=B_1$.
\end{enumerate}
Bahri and Coron \cite{BC} presented an existence result for a positive solution to \eqref{BN} for $\Omega$ with a nontrivial topology and $\lambda=0$ by the same variational method, which further completes the very deep study on the existence theory of positive solutions to \eqref{BN} in \cite{BN}.
Rey \cite{R} introduced the Lyapunov-Schmidt reduction argument to establish the existence theory of positive solutions of \eqref{BN} for $N\geq5$ and $\lambda>0$ sufficiently small by proving the nondegeneracy of the Aubin-Talenti bubbles \cite{A,T} given by
\begin{eqnarray}\label{Talenti}
U_{\mu, \xi}(x)=\frac{\alpha_N\mu^{\frac{N-2}{2}}}{\left(\mu^2+|x-\xi|^2\right)^{\frac{N-2}{2}}},
\end{eqnarray}
where $\alpha_N=(N(N-2))^{\frac{N-2}{4}}$ and we say that $U_{\mu, \xi}$ is nondegenerate in the sense that the only bounded solutions to the linearization of the Yamabe equation in $\mathbb{R}^N$ at $U_{\mu, \xi}$ are spanned by $\partial_{\xi_i}U_{\mu, \xi}$, $1\leq i\leq N$, and $\partial_{\mu}U_{\mu, \xi}$.  The Lyapunov-Schmidt reduction argument was improved by Musso and Pistoia \cite{MP} for the multiplicity of positive solutions of equation~\eqref{BN}, and was generalized by del Pino, Dolbeault and Musso \cite{dPDM2006}, Musso and Salazar \cite{MS2018} and Pistoia, Rocci and Vaira \cite{PMV2025} for $\lambda\to\lambda_*$ in the case $N=3$ and for $\lambda\to0$ in the case $N\geq4$, respectively.  The positive solutions constructed by the Lyapunov-Schmidt reduction all blow up at stable critical points of the Kirchhoff-Routh function of $\Omega$, which is given by
\begin{eqnarray*}\label{eqn0002}
\mathcal{K}(\pmb{\mu},\pmb{x})=\frac12\left(\sum_{j=1}^kH(x_j,x_j)\mu_j^{N-2}-\sum_{i,j=1;i\not=j}^kG(x_i,x_j)\mu_j^{\frac{N-2}{2}}\mu_i^{\frac{N-2}{2}}\right)-\sum_{j=1}^k\frac{B_N}{2}f_N(\mu_j),
\end{eqnarray*}
where
\begin{eqnarray}\label{Green}
G(x, y)=\gamma_N\left(\frac{1}{|x-y|^{N-2}}-H(x, y)\right)
\end{eqnarray}
is the Green function of the Laplace operator $-\Delta$ at the boundary $\partial\Omega$, with $\gamma_N=\frac{1}{(N-2)\omega_N}$ and $\omega_N$ the surface area of the unit sphere in $\bbr^N$,
$H(x, y)$ is the regular part of $G(x, y)$, that is, $H(x, y)$ is the unique solution of the following equation
\begin{eqnarray}\label{regular}
\left\{\begin{aligned}
&-\Delta H(x, y)=0,\quad&\mbox{in}\,\, \Omega,\\
&H(x, y)=\frac{1}{|x-y|^{N-2}}, \quad&\mbox{on}\,\, \partial\Omega,
\end{aligned}\right.
\end{eqnarray}
$B_N$ is a constant only depending on the dimension $N$, $k\in\mathbb{N}$, $\{x_j\}\subset\Omega$ and $\{\mu_j\}\subset\mathbb{R}_{+}$ are the number, the locations and the heights of the bubbles, respectively,  and $f_N(t)$ is a function only depending on the dimension $N$ (For $k=1$, the Kirchhoff-Routh function is reduced to the Robin function).  These results provide a rather complete understanding in the existence theory of positive solutions of the Brezis-Nirenberg equation~\eqref{BN}.  We also point out that we shall frequently use the following notation.

\vskip0.06in

{\bf Notation:}\quad We denote the spectrum of $-\Delta$ in $H_0^1(\Omega)$ by
$\sigma(-\Delta)=\{\lambda_i\}_{i\geq1}$.

\vskip0.12in

Besides the existence theory of positive solutions of \eqref{BN}, which is almost completely understood, the existence theory of sign-changing solutions of \eqref{BN} is also established under some assumptions on $N$ and $\lambda$.  To our best knowledge, the first results on the existence theory of sign-changing solutions of \eqref{BN} are obtained by Capozzi, Fortunato and Palmieri \cite{CFP1985} by minimization methods similar to Brezis and Nirenberg \cite{BN} and Bahri and Coron \cite{BC}, which can be stated as follows.
\begin{theorem}\label{Thm-CFP1984}
Let $N\geq4$ and $\lambda>0$.  Then the Brezis-Nirenberg equation~\eqref{BN} has a sign-changing solution, provided
\begin{enumerate}
\item[$(1)$]\quad $N=4$ and $\lambda\not\in\sigma(-\Delta)$.
\item[$(2)$]\quad $N\geq5$ and $\lambda>0$.
\end{enumerate}
\end{theorem}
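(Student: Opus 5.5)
The plan is to follow the linking scheme of Capozzi--Fortunato--Palmieri. The key reduction is that a nontrivial solution at a level below $\frac{1}{N}S^{N/2}$ is enough, and such a solution must change sign whenever $\lambda\geq\lambda_1$.

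Setup and reduction.
\begin{itemize}
\item Work with the functional
\[
I_\lambda(u)=\frac12\|\nabla u\|_{L^2(\Omega)}^2-\frac\lambda2\|u\|_{L^2(\Omega)}^2-\frac1{2^*}\|u\|_{L^{2^*}(\Omega)}^{2^*}
\]
on $H^1_0(\Omega)$.
\item For $0<\lambda<\lambda_1$, a sign-changing solution comes from the nodal Nehari set. Minimizing $I_\lambda$ over $\{u:u^\pm\neq0,\ \langle I_\lambda'(u),u^\pm\rangle=0\}$ yields one, provided the minimal level is strictly below $c_\lambda+\frac1N S^{N/2}$, where $c_\lambda$ is the positive ground level.
\item For $\lambda\geq\lambda_1$, any nontrivial solution is automatically sign-changing. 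Indeed, testing against the first eigenfunction $\varphi_1>0$ gives $(\lambda_1-\lambda)\int u\varphi_1=\int|u|^{2^*-2}u\varphi_1$. If $u\geq0$ this is impossible: the left side is $\leq0$ and the right side is $>0$. The case $u\leq0$ is symmetric.
\end{itemize}

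Linking geometry for $\lambda_k\leq\lambda<\lambda_{k+1}$, with $k\geq1$.
\begin{itemize}
\item Split $H^1_0=E_k^-\oplus E_k^+$, where $E_k^-=\mathrm{span}\{e_1,\dots,e_k\}$ is the span of the eigenfunctions with eigenvalues $\leq\lambda_k$.
\item On $E_k^+$, $I_\lambda(u)\geq \frac12(1-\lambda/\lambda_{k+1})\|\nabla u\|^2-C\|u\|^{2^*}$, so $I_\lambda\geq\alpha>0$ on a small sphere of $E_k^+$.
\item On $E_k^-$, $I_\lambda\leq0$ for every $\lambda\geq\lambda_k$.
\item $I_\lambda$ satisfies the Palais--Smale condition below $\frac1N S^{N/2}$. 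This is Brezis--Nirenberg's compactness argument: a PS sequence is bounded and, after extracting a weak limit, the remaining energy is quantized via the Sobolev inequality.
\item The linking theorem of Benci--Rabinowitz (or Rabinowitz's generalized mountain pass), applied to $Q=\{v+t u_\varepsilon: v\in E_k^-,\ t\geq0,\ \|v+tu_\varepsilon\|\leq R\}$, then gives a critical point at level $c\in[\alpha,\sup_Q I_\lambda]$. It is nonzero, hence sign-changing by the reduction above, as soon as $\sup_Q I_\lambda<\frac1N S^{N/2}$.
\end{itemize}

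The main obstacle is the energy estimate $\sup_{E_k^-\oplus\mathbb{R}^+u_\varepsilon}I_\lambda<\frac1N S^{N/2}$. Here $u_\varepsilon=\eta U_{\varepsilon,\xi}$ is a cut-off Aubin--Talenti bubble (possibly projected off $E_k^-$).

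Write $w=v+tu_\varepsilon$. The cross terms are controlled by the smoothness of the eigenfunctions: $\int\nabla v\cdot\nabla u_\varepsilon$, $\int vu_\varepsilon$ and $\int|v|^{2^*-1}u_\varepsilon$ are all $O(\|v\|\,\varepsilon^{(N-2)/2})$, using $\|u_\varepsilon\|_{L^1}=O(\varepsilon^{(N-2)/2})$. Combining $I_\lambda(v)\leq -\frac{\lambda-\lambda_k}{2}\|v\|_{L^2}^2-c\|v\|_{L^{2^*}}^{2^*}$ with the elementary inequality $|a+b|^{2^*}\geq|a|^{2^*}+|b|^{2^*}-C(|a|^{2^*-1}|b|+|a||b|^{2^*-1})$, one gets
\[
\sup I_\lambda(w)\leq \frac1N S^{N/2}-c\lambda\|u_\varepsilon\|_{L^2}^2+O(\varepsilon^{N-2})+(\text{cross terms}).
\]
\begin{itemize}
\item For $N\geq5$: $\|u_\varepsilon\|_{L^2}^2\sim\varepsilon^2$ dominates both $\varepsilon^{N-2}$ and the cross terms $\varepsilon^{(N-2)/2}\|v\|$. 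The latter are absorbed into the negative $-c\|v\|^{2^*}$ term by Young's inequality, leaving a remainder of size $O(\varepsilon^{N(N-2)/(2(N+2))})$, which is still $o(\varepsilon^2)$ for $N\geq5$. This works for every $\lambda>0$, including eigenvalues.
\item For $N=4$: the gain is only $\varepsilon^2|\log\varepsilon|$, against cross terms of order $\varepsilon$. One must use the strict negativity $-\frac{\lambda-\lambda_k}{2}\|v\|^2_{L^2}$, available because $\lambda>\lambda_k$ strictly. The cross term is then absorbed as $\varepsilon\|v\|\leq\delta\|v\|^2+C_\delta\varepsilon^2$, and $C_\delta\varepsilon^2=o(\varepsilon^2|\log\varepsilon|)$. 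This is exactly where $\lambda\notin\sigma(-\Delta)$ is needed.
\end{itemize}
I expect this careful bookkeeping, especially in dimension $4$, to be the heart of the proof.
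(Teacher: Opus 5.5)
The paper gives no proof of this statement: it is quoted in the introduction as a known result of Capozzi--Fortunato--Palmieri, so your argument has to stand on its own.

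\textbf{The case $\lambda\ge\lambda_1$.} Your treatment here is the standard one and is sound. It consists of linking over $E_k^-\oplus\mathbb{R}^+u_\varepsilon$, compactness below $\frac1N S^{N/2}$, and sign change by testing against the first eigenfunction. One arithmetic fix is needed. Maximizing $C\varepsilon^{(N-2)/2}\|v\|-c\|v\|^{2^*}$ leaves a remainder of order $\varepsilon^{N(N-2)/(N+2)}$, not $\varepsilon^{N(N-2)/(2(N+2))}$. With your exponent, the claim ``$o(\varepsilon^2)$ for $N\ge5$'' would be false, since $15/14<2$ at $N=5$. With the correct exponent, the condition $N(N-2)>2(N+2)$ is exactly $N\ge5$. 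For $N=4$ the remainder is $\varepsilon^{4/3}$, which is why the strict gap $\lambda>\lambda_k$ is needed there, as you say.

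\textbf{The gap: the range $0<\lambda<\lambda_1$.} The statement as written includes this range. There you only assert that a minimizer on the nodal Nehari set exists \emph{provided} the nodal level is below $c_\lambda+\frac1N S^{N/2}$, and you never establish that inequality; all your energy estimates concern the linking level for $\lambda\ge\lambda_1$.

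That inequality is the whole difficulty, and it is not routine in low dimensions. Take the natural competitors $s w_\lambda-t u_\varepsilon$, with $w_\lambda>0$ the ground state. The linear interaction $-st\int w_\lambda^{p}u_\varepsilon$ is cancelled at $s=1$ by the $+s^{p}t\int w_\lambda^{p}u_\varepsilon$ part of the nonlinear interaction. What survives is $+st^{p}\int w_\lambda u_\varepsilon^{p}\approx C\,w_\lambda(\xi)\,\varepsilon^{(N-2)/2}$. This has the unfavourable sign, and it competes against a gain of only $\varepsilon^2$ (resp.\ $\varepsilon^2|\log\varepsilon|$ for $N=4$). So the naive estimate works only for $N\ge7$, borderline at $N=6$, which is the Cerami--Solimini--Struwe range. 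For $N=4,5$ one needs either the new test functions of Tavares--You--Zou or a different Lusternik--Schnirelmann type argument as in Clapp--Weth; both are recalled in the paper's introduction.

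Your linking scheme cannot cover this range either. For $k=0$ it reduces to the mountain pass, which only produces a one-signed ground state. As it stands, your proposal proves the statement only for $\lambda\ge\lambda_1$ (with $\lambda\notin\sigma(-\Delta)$ when $N=4$).
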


This existence theory has been continuously improved in the past 40 years.  Cerami, Solimini and Struwe \cite{CSS1986} improved Theorem~\ref{Thm-CFP1984} by proving the existence of least energy sign-changing solutions of \eqref{BN} for $N\geq7$ and $0<\lambda<\lambda_1$ via the method of the Nehari manifold, where we say that $u$ is a least energy sign-changing solution of \eqref{BN} if it is a sign-changing solution which has least energy among all sign-changing solutions of \eqref{BN}.  Clapp and Weth \cite{CW2005} improved Theorem~\ref{Thm-CFP1984} by proving the following multiplicity result via the Lusternik–Schnirelmann theory:
 \begin{enumerate}
\item[$(1)$]\quad \eqref{BN} has $\left[\frac{N+1}{2}\right]$ pairs of sign-changing solutions for $N\geq 4$ and $\lambda\not\in\sigma(-\Delta)$.
\item[$(2)$]\quad \eqref{BN} has $\left[\frac{N+2}{2}\right]$ pairs of sign-changing solutions for $N\geq 4$ and $0<\lambda<\lambda_1$.
\item[$(3)$]\quad \eqref{BN} has $\left[\frac{N+1-m}{2}\right]$ pairs of sign-changing solutions for $N\geq 4$ and $\lambda\in\sigma(-\Delta)$, where $m$ is the multiplicity of $\lambda$ as an eigenvalue of $-\Delta$ in $H_0^1(\Omega)$.
\end{enumerate}
Szulkin, Weth and Willem \cite{SWW2009} improved Theorem~\ref{Thm-CFP1984} by proving the existence of least energy sign-changing solutions of \eqref{BN} via the method of the Pankov-Nehari manifold, provided
\begin{enumerate}
\item[$(1)$]\quad $N=4$ and $\lambda>\lambda_1$ with $\lambda\not\in\sigma(-\Delta)$,
\item[$(2)$]\quad $N\geq5$ and all $\lambda>0$.
\end{enumerate}
Chen, Shioji and Zou \cite{CSZ2012} improved Theorem~\ref{Thm-CFP1984} by proving that \eqref{BN} has $\left[\frac{N+1}{2}\right]$ pairs of least energy sign-changing solutions for $N\geq 5$ and $\lambda\geq\lambda_1$ via improving the estimates in the Lusternik–Schnirelmann theory used in \cite{CW2005}.  Chen, Lin and Zou \cite{CLZ2014} and Tavares, You and Zou \cite{TYZ2022} improved Theorem~\ref{Thm-CFP1984} by proving the existence of least energy sign-changing solutions of \eqref{BN} for $N\geq6$ and $N\geq4$, respectively, with $0<\lambda<\lambda_1$ via constructing new test functions in the method of the Nehari manifold used in \cite{CSS1986}.  There are also many other works devoted to the multiplicity of sign-changing solutions of \eqref{BN} by using the variational methods, which we refer the readers to 
\cite{AGGS2008,CC2003,CFS,CSS1986,CW2005,DS2002,DS2003,FJ,GG,S,SZ2010} and the references therein, where we remark that infinitely many sign-changing solutions of equation~\eqref{BN} have been constructed for all $\lambda>0$ and $N\geq7$ in \cite{DS2002,SZ2010}.  It is worth pointing out that all the above mentioned results are all devoted to $N\geq4$, and a significant result on the existence theory of sign-changing solutions of equation~\eqref{BN} via the variational methods is obtained very recently by Ali and Premoselli \cite{AP2025}, which proved, for the first time, that there exists $\lambda_{i,*}\in[\lambda_i, \lambda_{i+1})$ for every $i\geq1$ such that the $3d$ Brezis-Nirenberg equation~\eqref{BN} has a least energy sign-changing solution for $\lambda\in(\lambda_{i,*}, \lambda_{i+1})$.  Ali and Premoselli's proof is based on the discovery, for the first time, of the positive interval in $[\lambda_i, \lambda_{i+1})$ for every $i\geq1$ such that the positive mass condition holds true in it.  Besides the variational methods, the Lyapunov-Schmidt reduction argument is also used to construct sign-changing solutions of \eqref{BN} in \cite{BMP,CC,IV,IV1,LVWW,LVWW2026,MiPi,MP2010,MRV2024,PV,V2015} and the references therein, where different kinds of sign-changing solutions are obtained under different assumptions on dimension $N$ and parameter $\lambda$.  We also remark that another significant result on the existence theory of sign-changing solutions of equation~\eqref{BN} via the Lyapunov-Schmidt reduction argument is obtained very recently by Sun, Wei and Yang \cite{SWY2025}, where infinitely many sign-changing solutions of the $3d$ Brezis-Nirenberg equation~\eqref{BN} in the unit ball have been constructed for all $\lambda>0$ which solves Brezis' first open question proposed in \cite{B}, which had been open for 40 years.  Sun-Wei-Yang's construction is based on a new idea of using  del Pino-Musso-Pacard-Pistoia's crown solution as building blocks, whose nondegeneracy is proved by Musso and Wei \cite{MW2015}, and inverting it at a zero point via the Kelvin transformation.  The same idea has also been used in \cite{SWY2026} to construct infinitely many sign-changing solutions of the $3d$ Brezis-Nirenberg equation~\eqref{BN} with less symmetry of $\Omega$ for $\lambda>0$ sufficiently small.  We also point out that if $\Omega$ is a unit ball then the existence and multiplicity of sign-changing solutions of \eqref{BN} is also obtained by the ODE methods in \cite{ABP, AY, AGGPV,EGPV,GG, IP} and the references therein.

\subsection{Main results.}
Although various existence and multiplicity results on the sign-changing solutions of \eqref{BN} have been obtained yet in the past 40 years, several problems still remain open.  For example, since the multiplicity of $\lambda_i$ is unknown in general bounded domains (It is well known that $\lambda_1$ is simple in any general bounded domain and $\lambda_i$ are simple for $i\geq2$ in generic bounded domains), it is still not known whether the $4d$ Brezis-Nirenberg equation~\eqref{BN} always has sign-changing solutions for $\lambda=\lambda_i$, which has been open for a long time since \cite{CFS}.  In this paper, we shall answer this open question by proving the following theorem.
\begin{theorem}\label{existence}
Let $N=4$ and $\partial\Omega$ is smooth.  Then the Brezis-Nirenberg equation~\eqref{BN} has a least energy sign-changing solution for all $\lambda>0$.  Moreover, the least energy of sign-changing solutions of \eqref{BN}, denoted by $m_{sg}(\lambda)$, is continuous and strictly decreasing as a function of $\lambda\in(\lambda_i, \lambda_{i+1}]$ for all $i\geq0$ such that
\begin{eqnarray*}
\left\{\aligned
&\lim_{\lambda\to0^+}m_{sg}(\lambda)=\frac{1}{2}S^2,\\
&\lim_{\lambda\to\lambda_i^+}m_{sg}(\lambda)\in\left(0, \frac{1}{4}S^2\right],\quad i\geq1
\endaligned\right.
\quad\text{and}\quad
\left\{\aligned
&\lim_{\lambda\to\lambda_1^-}m_{sg}(\lambda)\in\left(0, \frac{1}{4}S^2\right],\\
&\lim_{\lambda\to\lambda_i^-}m_{sg}(\lambda)=0,\quad i\geq2,
\endaligned\right.
\end{eqnarray*}
where for the sake of simplicity, we denote $\lambda_0=0$.
\end{theorem}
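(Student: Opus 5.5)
The plan is to work with the Nehari-type characterization of least energy sign-changing solutions and to treat $\lambda\in\sigma(-\Delta)$ by approximation from the regular case. Let $m_{sg}(\lambda)$ be the infimum of the energy
\[
I_\lambda(u)=\frac12\int_\Omega(|\nabla u|^2-\lambda u^2)-\frac14\int_\Omega u^4
\]
over the sign-changing Nehari set for $\lambda<\lambda_1$, and over the Pankov-Nehari set of Szulkin-Weth-Willem \cite{SWW2009} for $\lambda>\lambda_1$. For $\lambda\notin\sigma(-\Delta)$, existence is already known from \cite{TYZ2022} (when $0<\lambda<\lambda_1$) and \cite{SWW2009} (when $\lambda>\lambda_1$).

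\textbf{Step 1: monotonicity and continuity off the spectrum.} For $\lambda\notin\sigma(-\Delta)$, I would show that $m_{sg}$ is strictly decreasing and continuous on each open interval $(\lambda_i,\lambda_{i+1})$. The argument is to test $m_{sg}(\lambda')$ with the projection of a minimizer for $\lambda$ onto the $\lambda'$-Nehari set, for $\lambda'$ close to $\lambda$. Since $\lambda'>\lambda$ gives
\[
I_{\lambda'}(tu)<I_\lambda(tu)\quad\text{for every } t,
\]
this yields strict monotonicity. Continuity follows from uniform bounds on the Nehari scalings together with compactness of minimizers away from the spectrum.

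\textbf{Step 2: endpoint limits.} As $\lambda\to0^+$, the limit $\frac12 S^2$ comes from the two-bubble picture: an upper bound by gluing a positive and a negative bubble, and a lower bound because each sign component carries energy at least $\frac14 S^2$. As $\lambda\to\lambda_{i}^-$ with $i\ge2$, I would test $m_{sg}$ with $t\phi_i$, where $\phi_i$ is a sign-changing eigenfunction. This gives
\[
m_{sg}(\lambda)\le \frac{(\lambda_i-\lambda)^2\|\phi_i\|_2^4}{4\|\phi_i\|_4^4}\to0.
\]
For $\lambda\to\lambda_1^-$ and $\lambda\to\lambda_i^+$, the upper bound $\frac14 S^2$ comes from the test functions (a bubble plus a low-frequency component) used in \cite{TYZ2022,SWW2009}. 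Positivity of the limits needs a lower bound, which is where the blow-up analysis enters.

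\textbf{Step 3: the case $\lambda=\lambda_i$, which is the main obstacle.} Take $\lambda_n\to\lambda_i^+$ with $\lambda_n\notin\sigma(-\Delta)$, and let $u_n$ be least energy sign-changing solutions. By Step 2 the energies satisfy $\limsup I_{\lambda_n}(u_n)\le\frac14 S^2$. Applying the Struwe decomposition, $u_n=u_0+\sum_j(\text{bubbles})+o(1)$, and since a single bubble already costs $\frac14 S^2$, at most one bubble can occur. There are three alternatives.

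\emph{No bubble.} Then $u_n\to u_0$ strongly, and $u_0$ is a nontrivial solution of \eqref{BN} at $\lambda_i$, provided $u_n\not\to0$. I would rule out $u_n\to0$ by rescaling $v_n=u_n/\|u_n\|$. The limit $v$ would be an eigenfunction at $\lambda_i$, and projecting the equation onto the eigenspace gives a contradiction with the sign of $\lambda_n-\lambda_i$ together with the positive $L^4$ term. This is also what gives $\liminf m_{sg}>0$.

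\emph{One bubble and $u_0\neq0$.} The energy would then exceed $\frac14 S^2$, which is impossible.

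\emph{One bubble and $u_0=0$.} This is exactly the one-bubble case that the refined blow-up analysis announced in the abstract is designed to classify. For $N=4$, I would use the inverse reduction argument \cite{WW2019,WW2019-2}. Write
\[
u_n=PU_{\mu_n,\xi_n}+w_n
\]
with $w_n$ orthogonal to the kernel, expand the Pohozaev identity in a ball around $\xi_n$, and derive the balance law
\[
\lambda_n\mu_n^2|\log\mu_n|\sim H(\xi_n,\xi_n)\mu_n^2 .
\]
Because $\lambda_n\to\lambda_i>0$, the $\lambda$ term dominates logarithmically, and the two sides cannot balance. This shows that pure one-bubble blow-up is impossible when $N=4$. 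The delicate part is controlling $w_n$, in particular its projection onto the eigenspace of $\lambda_i$, which may have high multiplicity. I expect this step to be the hardest.

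\textbf{Conclusion.} The no-bubble limit $u_0$ is sign-changing: a positive solution cannot exist at $\lambda_i\ge\lambda_1$, by testing against $\phi_1$. Lower semicontinuity then gives
\[
I_{\lambda_i}(u_0)=\lim_{\lambda\to\lambda_i^+}m_{sg}(\lambda),
\]
which shows that $u_0$ is least energy and that $m_{sg}$ is continuous from the left at $\lambda_{i+1}$ on $(\lambda_i,\lambda_{i+1}]$.
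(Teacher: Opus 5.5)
\textbf{The gap is in Step~3, in the case ``one bubble and $u_0=0$''.}

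Your balance law $\lambda_n\mu_n^2|\log\mu_n|\sim H(\xi_n,\xi_n)\mu_n^2$ is what the Pohozaev identity (equivalently, testing with $\mathcal{Z}^0$) gives only if the remainder $w_n$ is $O(\mu_n)$ in $H^1_0(\Omega)$, i.e.\ of the size of the bubble's tail. The paper runs exactly your computation as a contradiction hypothesis and concludes the opposite (Lemma~\ref{basic-esti}): $\|w_n\|_{H^1_0(\Omega)}/\mu_n\to+\infty$. Lemma~\ref{locate-limit} then shows that $w_n/\|w_n\|$ converges weakly to an element of the $\lambda_i$-eigenspace. Even that contradiction needs the $\xi$-derivative identities as well. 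Since $H(\xi_n,\xi_n)\sim d(\xi_n,\partial\Omega)^{-2}$ is unbounded, ``the two sides cannot balance'' is false for concentration at $\partial\Omega$.

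Once $w_n$ carries an eigen-component $\mathcal{E}_n^*=\sum_l\tau_{n,l}e_{i,l}$, it adds a term $D_{4,3}\mu_n\mathcal{E}_n^*(\xi_n)$ to the balance (Proposition~\ref{exp-orth}(1)):
\[
\lambda D_{4,2}\mu_n^2|\log\mu_n|-\alpha_4D_{4,3}\mu_n^2\varphi(\xi_n)+D_{4,3}\mu_n\mathcal{E}_n^*(\xi_n)=o\left(\mu_n^2|\log\mu_n|\right).
\]
This closes whenever $\mathcal{E}_n^*(\xi_n)<0$ with $|\mathcal{E}_n^*(\xi_n)|\sim\mu_n|\log\mu_n|$. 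It is not an artifact. Theorem~\ref{zero weak limit+}$(b_1)$ describes a consistent regime: an interior critical point of an eigenfunction, with $\mu\sim e^{-1/\ve}$ and $\beta\sim\ve^{-1}e^{-1/\ve}$. Sign-changing solutions blowing up this way are constructed in \cite{IV,LVWW}. So for $N=4$, one-bubble blow-up with zero weak limit cannot be excluded for general solutions. Any proof must use that $u_n$ has least energy, and your Step~3 never does.

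\textbf{The missing idea is an energy comparison.} The paper first classifies the blow-up and then compares energies. For $\lambda\to\lambda_k^+$ with $k\ge2$, the classification together with the identity $D_{4,3}=D_{4,6}$ (computed via stereographic projection) gives $E_\lambda(u_n)\ge\frac14S^2+o(\mu_n^2|\log\mu_n|)$. On the other side, it tests the min-max $\inf_{v\in\mathbb{Y}_k\setminus\{0\}}\max_{w\in\mathbb{X}_k,\,t>0}E_\lambda(tv+w)$ with a bubble of the same scale $\mu_n$. The bubble is centred at an interior point where the next-order eigen-component vanishes, which gives $E_\lambda(u_n)\le\frac14S^2-c\,\mu_n^2|\log\mu_n|$, a contradiction.

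Placing the bubble there requires every nonzero element of $\Xi_k$ to change sign, so this fails for $k=1$. Correspondingly, approaching $\lambda_1$ from the right does not exclude the $(b_1)$ regime at an interior maximum of $e_{1,1}$. The paper therefore approaches $\lambda_1$ from the left. There, Theorems~\ref{zero weak limit-1} and~\ref{zero weak limit-2} force concentration at a singular point of $e_{1,1}$, and none exists by the strong maximum principle and the Hopf lemma. Your uniform approach $\lambda_n\to\lambda_i^+$ would therefore need a separate argument at $i=1$, even after the one-bubble case is repaired.
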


\vskip0.12in

Let us briefly sketch the proof of Theorem~\ref{existence}.  It has been proved by Cerami, Solimini and Struwe \cite{CSS1986}, Chen, Lin and Zou \cite{CLZ2014},  Chen, Shioji and Zou \cite{CSZ2012}, Clapp and Weth \cite{CW2005}, Tavares, You and Zou \cite{TYZ2022}, and Szulkin, Weth and Willem \cite{SWW2009} that the least energy sign-changing solutions of \eqref{BN}, denoted by $u_{\lambda, sg}$, satisfies
\begin{eqnarray*}
E_\lambda(u_{\lambda,sg})=\left\{\aligned
&\inf_{\mathcal{N}_{sg}}E_\lambda(u)<\frac{2}{N}S^{\frac{N}{2}},\quad0<\lambda<\lambda_1,\\
&\inf_{\mathcal{P}_i}E_\lambda(u)<\frac{1}{N}S^{\frac{N}{2}},\quad\lambda_i<\lambda<\lambda_{i+1}\text{ and }i\geq1,
\endaligned\right.
\end{eqnarray*}
where $\lambda_i$ are the eigenvalues of $-\Delta$ in $H_0^1(\Omega)$, 
\begin{eqnarray*}
E_\lambda(u)=\frac{1}{2}\|u\|^2_{H_0^1(\Omega)}-\frac{\lambda}{2}\|u\|^2_{L^2(\Omega)}-\frac{1}{p+1}\|u\|^{p+1}_{L^{p+1}(\Omega)}
\end{eqnarray*}
is the corresponding energy functional of the Brezis-Nirenberg equation~\eqref{BN}, 
\begin{eqnarray*}
\mathcal{N}_{sg}=\left\{u\in H_0^1(\Omega)\backslash\{0\}\mid E'_\lambda(u^{\pm})u^{\pm}=0\text{ and }u^\pm\not=0\right\}
\end{eqnarray*}
and
\begin{eqnarray*}
\mathcal{P}_i=\left\{u\in H_0^1(\Omega)\backslash\{0\}\mid E'_\lambda(u)u=0\quad\text{and}\quad Q_jE'_\lambda(u)=0, 1\leq j\leq i\right\}
\end{eqnarray*}
are its associated sign-changing Nehari manifold and Pankov-Nehari manifold, respectively, with $Q_j$ the projection from $H_0^1(\Omega)$ onto $\Xi_j=\bbr e_{j,1}\oplus\bbr e_{j,2}\oplus\cdots\oplus\bbr e_{j,m_j}$ with
$m_j\in\bbn$ being the multiplicity of $\lambda_{j}$ and $\{e_{j,i}\}$ being its orthogonal system such that $\|e_{j,i}\|_{H^1_0(\Omega)}=1$.  Since it is known \cite{AP2025, SWW2009} that 
\begin{eqnarray}\label{engy}
\frac{1}{N}S^{\frac{N}{2}}\geq\inf_{\mathcal{P}_i}E_{\lambda_i}(u),\quad \forall i\geq1,
\end{eqnarray}
for any sequence $u_{\lambda^j, sg}$ in the set
\begin{eqnarray*}
\mathcal{G}_{sg}=\left\{u_{\lambda, sg}\mid u_{\lambda, sg}\text{ is a least energy sign-changing solution of \eqref{BN} for }\lambda\not\in\sigma(-\Delta)\right\},
\end{eqnarray*}
we have
\begin{eqnarray*}\label{energy-condition}
\sup_{\lambda^j}\left\|u_{\lambda^j, sg}\right\|_{H^1_0(\Omega)}^2\leq S^{\frac{N}{2}}
\end{eqnarray*}
if $\lambda^j\to\lambda_i$ as $j\to\infty$ for some $i\geq1$.  Our purpose is to show that $\left\{u_{\lambda^j, sg}\right\}$ is compact in $H^1_0(\Omega)$.  Thus, the limit, say $u_{\lambda_i, sg}$, must be a least energy sign-changing solution of \eqref{BN} for $\lambda=\lambda_i$.  To prove this, we assume the contrary that $\left\{u_{\lambda^j, sg}\right\}$ is noncompact in $H^1_0(\Omega)$.  Thus, it follows from the well-known Struwe decomposition \cite{Str1984} that there exist $\varrho=\pm1$, $\xi_\ve\in\Omega$ and $\mu_\ve\to0$ such that 
\begin{eqnarray}\label{Struwe0}
\left\|\nabla u_\ve-\varrho\nabla U_{\xi_\ve, \mu_\ve}\right\|_{L^2(\bbr^N)}\to0
\end{eqnarray}
as $\ve=\lambda^j-\lambda_i\to0$ up to a subsequence, where $U_{\xi_\ve, \mu_\ve}(x)$ is the Aubin-Talenti bubble given by \eqref{Talenti} and we denote $u_\ve=u_{\lambda^j, sg}$ for the sake of simplicity.  By \eqref{engy}, the Struwe decomposition~\eqref{Struwe0} is not good enough to solve the compactness issue of $\{u_\ve\}$ in passing to the limit.

\vskip0.12in

To go further, we shall derive a refined expansion of \eqref{Struwe0}.  For this purpose, we go to a general setting by assuming that $u_\ve$ is a solution (sign-constant or sign-changing) of the Brezis-Nirenberg equation~\eqref{BN} such that
\begin{eqnarray}\label{condition}
\sup_{\ve}\left\|\nabla u_\ve\right\|_{L^2(\Omega)}^2\leq S^{\frac{N}{2}}\quad\text{and}\quad u_{\ve}(\xi_{\ve})=\max_{x\in\Omega}u_{\ve}(x)\to+\infty
\end{eqnarray}
as $\ve=\lambda-\overline{\lambda}$ for some $\overline{\lambda}>0$.  Thus, by the well-known Struwe decomposition (\cite{Str1984}) and the standard blow-up arguments (\cite{H1991,R1989}), 
\begin{eqnarray}\label{Struwe}
\left\|u_\ve-\mathcal{W}_{\mu_\ve, \xi_\ve}\right\|_{H_0^1(\Omega)}\to0\quad\text{and}\quad\frac{d(\xi_{\ve},\partial\Omega)}{\mu_\ve}\to+\infty
\end{eqnarray}
as $\ve\to0$, where 
\begin{eqnarray}\label{mu-ve}
\mu_{\ve}=\left(\frac{1}{u_{\ve}(\xi_{\ve})}\right)^{\frac{2}{N-2}}\to0\quad\text{as }\ve\to0
\end{eqnarray}
and $\mathcal{W}_{\mu, \xi}(x)$ is the projection of the Aubin-Talenti bubble $U_{\mu, \xi}(x)$ into $H^1_0(\Omega)$, that is, $\mathcal{W}_{\mu, \xi}(x)$ is the unique solution of the following equation
\begin{eqnarray}\label{Projection}
\left\{\aligned
&-\Delta w=\mathcal{U}_{\mu, \xi}^{p},\quad&\mbox{in}\,\, \Omega,\\
&w=0,\quad&\mbox{on}\,\, \partial\Omega.
\endaligned\right.
\end{eqnarray}
As usual \cite{BC, CKW2025, E2004, R1989}, we solve the following variational problem
\begin{equation}\label{orthogonal-pro}
\inf_{(\mu, \xi)\in (0,\infty)\times \Omega}\left\|u_\ve-\mathcal{W}_{\mu,\xi}\right\|^2_{H^1_0(\Omega)}
\end{equation}
to orthogonally decompose $u_\ve$.  It is easy to prove that \eqref{orthogonal-pro} is achieved by $(\mu_{\ve,*}, \xi_{\ve,*})\in(0,\infty)\times \Omega$ such that $\frac{d(\xi_{\ve,*},\partial\Omega)}{\mu_{\ve,*}}\to+\infty$ as $\ve\to0$.  Under the condition~\eqref{condition}, by the standard blow-up arguments (\cite{H1991,R1989}), it is also easy to see that $\mu_{\ve,*}\sim\mu_{\ve}$ and $\frac{\left|\xi_{\ve,*}-\xi_{\ve}\right|}{\mu_{\ve,*}}\lesssim1$ as $\ve\to0$.  Thus, without loss of generality, we may assume that $(\mu_{\ve,*}, \xi_{\ve,*})=(\mu_{\ve}, \xi_{\ve})$.  We introduce some necessary notations before the further discussions. For every $\mu>0$ and $\xi \in \Omega$, we denote
\begin{eqnarray}\label{kernel}
\Phi_{\mu,\xi}^i=\left\{\aligned
&\mu \frac{\partial U_{\mu, \xi}}{\partial \mu},\quad i=0,\\
&\mu \frac{\partial U_{\mu, \xi}}{\partial \xi_i},\quad 1\leq i\leq N.
\endaligned\right.
\end{eqnarray}
Then it is well known (\cite{BE1991, R}) that all the bounded solutions of the following linear equation,
\begin{eqnarray}\label{linear}
-\Delta \Phi_{\mu,\xi}=pU_{\mu, \xi}^{p-1}\Phi_{\mu,\xi}\quad\text{in }\mathbb{R}^N,
\end{eqnarray}
is spanned by $\{\Phi_{\mu,\xi}^i\}_{0\leq i\leq N}$, where for the sake of simplicity, we denote $p=2^*-1$.  As usual, we denote the projection of $\Phi_{\mu,\xi}^i$ onto $H_0^1(\Omega)$ by $\mathcal{Z}_{\mu,\xi}^i$, that is, $\mathcal{Z}_{\mu,\xi}^i$ is the unique solution of the following linear equation
\begin{eqnarray}\label{projkernel}
\left\{\aligned&-\Delta w=pU_{\mu, \xi}^{p-1}\Phi_{\mu,\xi}^i,\quad\text{in }\Omega,\\
&w=0,\quad\text{on }\partial\Omega.
\endaligned\right.
\end{eqnarray}
Now, let 
\begin{eqnarray}\label{v}
v_\ve=u_\ve-\mathcal{W}_{\mu_\ve, \xi_\ve}.
\end{eqnarray}
Then by \eqref{projkernel} and \eqref{orthogonal-pro}, 
\begin{eqnarray}\label{orthogonality}
\langle v_\ve, \mathcal{Z}_{\mu_\ve, \xi_\ve}^i\rangle=\int_{\Omega}U_{\mu_\ve, \xi_\ve}^{p-1}\Phi_{\mu_\ve,\xi_\ve}^i v_\ve dx=0
\end{eqnarray}
for all $0\leq i\leq N$.  To refine the expansion~\eqref{Struwe}, the first step is to 
use the inverse reduction argument introduced by Wang and Wei \cite{WW2019, WW2019-2} to improve \eqref{Struwe} up to the next order term.  The main finding in our improvement is that the Sobolev norm of $v_\ve$ is much larger than the vanishing rate of the Aubin-Talenti bubble $U_{\mu_\ve, \xi_\ve}$ away from the concentration point $\xi_\ve$ as $\ve\to0$, that is, 
\begin{eqnarray*}
\lim_{\ve\to0}\frac{\|v_\ve\|_{H_0^1(\Omega)}}{\mu_\ve^{\frac{N-2}{2}}}=+\infty
\end{eqnarray*}
for $\overline{\lambda}>0$.  This finding suggests that the limit equation of \eqref{BN} as $\ve\to0$, which is away from the concentration point $\xi_\ve$, is the eigenvalue equation.  Thus, the limit value $\overline{\lambda}$, if it is positive, must be an eigenvalue of $-\Delta$ in $H_0^1(\Omega)$, which generates additional kernels in the analysis of the compactness of $\{u_\ve\}$ as $\ve\to0$.
To well control the remaining term $v_\ve$ in any reasonable sense, the second step is to remove projections of $v_\ve$ on all kernels such that the new remaining term, say $v_{\ve}^*$, is orthogonal to all kernels in $H_0^1(\Omega)$, and then expand the orthogonal conditions of $v_{\ve}^*$ up to the second order terms.  We remark that for $N\geq5$, the precise expansion of the orthogonal conditions only needs the $H_0^1$-norm estimate of $v_{\ve}^*$ while, for 
$N=4$, the $H_0^1$-norm estimate of $v_{\ve}^*$ must be replaced by the $L^\infty$-norm estimate of $v_{\ve}^*$.  Our main observation in this refined expansion is that, if $\ve\to0^+$ then the refined expansion is nondegenerate in the sense that there is no cancelation up to the second order terms.  To state our classification results, we introduce the following definition
\begin{definition}
We say that $x_0\in\mathbb{R}^N$ is a singular point of the function $v(x)\in C^1\left(\mathbb{R}^N\right)$ if $v(x_0)=0$ and $\nabla v(x_0)=0$.
\end{definition}
Now, our first classification result can be stated as follows.
\begin{theorem}\label{zero weak limit+}
Let $N\geq4$.  Suppose that $u_\ve$ is a solution of the Brezis-Nirenberg equation~\eqref{BN} satisfying the condition~\eqref{condition} for $\overline{\lambda}>0$ as $\ve=\lambda-\overline{\lambda}\to0^+$.  Then we must have $N=4,5$, $\overline{\lambda}=\lambda_k$ for some $k\geq1$ and
\begin{eqnarray*}
u_\ve=\mathcal{W}_{\mu_\ve,\xi_\ve}+\sum_{i=1}^{m_{k}}\tau_{k,i,\ve}e_{k,i}+\varphi_{\ve},
\end{eqnarray*}
where $\sum_{i=1}^{m_{k}}\tau_{k,i,\ve}e_{k,i}(\xi_\ve)<0$, $\xi_\ve\to\xi_0\in\overline{\Omega}$, $\mu_{\ve}\to0$, $\beta_\ve^*=\max_{1\leq i\leq m_{k}}|\tau_{k,i,\ve}|\to0$ and 
\begin{eqnarray*}\label{equa0015}
\frac{\|\varphi_{\ve}\|_{H_0^1(\Omega)}}{\max_{1\leq i\leq m_{k}}|\tau_{k,i,\ve}|}\to0
\end{eqnarray*}
as $\ve\to0^+$.  Moreover, 
\begin{enumerate}
\item[$(a)$]\quad if $N=5$ then $\xi_0$ is a singular point of $\mathcal{E}_0^*(x)=\sum_{i=1}^{m_{k}}t_{k,i,0}e_{k,i}(x)$
with $t_{k,i,0}=\lim_{\ve\to0}\frac{\tau_{k,i,\ve}}{\beta_{\ve}^*}$.
\item[$(b)$]\quad If $N=4$ then either $\xi_0$ is a singular point of $\mathcal{E}_0^*(x)=\sum_{i=1}^{m_{\kappa}}t_{k,i,0}e_{k,i}(x)$ or
\begin{enumerate}
\item[$(b_1)$]\quad $\xi_0\in\Omega$, $\mathcal{E}_0^*\left(\xi_0\right)\not=0$ and $\nabla \mathcal{E}_0^*\left(\xi_0\right)=0$ with
\begin{eqnarray*}
\beta_\ve\sim\frac{1}{\ve}e^{-\frac{1}{\ve}}\quad\text{and}\quad\mu_\ve\sim e^{-\frac{1}{\ve}}
\end{eqnarray*}
as $\ve\to0^+$.
\item[$(b_2)$]\quad $\xi_0\in\partial\Omega$ and $\partial_{\nu}\mathcal{E}_0^*\left(\xi_0\right)\not=0$ with
\begin{eqnarray*}
\mu_\ve\sim e^{-\frac{1}{\sqrt{\ve}}},\quad \beta_\ve\sim \frac{1}{\ve^{\frac{3}{4}}}e^{-\frac{1}{\sqrt{\ve}}}\quad\text{and}\quad d(\xi_\ve, \partial\Omega)\sim\ve^{\frac{1}{4}}
\end{eqnarray*}
as $\ve\to0^+$, where $\partial_{\nu}\mathcal{E}_0^*\left(\xi_0\right)$ is the normal derivative of $\mathcal{E}_0^*$ at $\xi_0$.
\end{enumerate}
All the asymptotic behaviors described in $(b_1)$ and $(b_2)$ can be precisely computed up to the leading order term.
\end{enumerate}
\end{theorem}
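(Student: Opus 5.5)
We use the inverse reduction strategy sketched above, applied to the decomposition $u_\ve=\mathcal{W}_{\mu_\ve,\xi_\ve}+v_\ve$ with the orthogonality \eqref{orthogonality}.

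\textbf{Step 1: equation for $v_\ve$ and identification of $\overline{\lambda}$.} Subtracting \eqref{Projection} from \eqref{BN}, $v_\ve$ satisfies
\[
-\Delta v_\ve-\lambda v_\ve-p\,\mathcal{W}_{\mu_\ve,\xi_\ve}^{p-1}v_\ve=\lambda\mathcal{W}_{\mu_\ve,\xi_\ve}+N(v_\ve),
\]
where $N(v_\ve)$ collects the superlinear terms and the error $\mathcal{W}^p-U^p$. Testing against $v_\ve$ and using the nondegeneracy of the bubble together with \eqref{orthogonality} gives a coercivity estimate. This estimate holds on the complement of the kernel of $-\Delta-\overline{\lambda}$.

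We first assume $\|v_\ve\|_{H^1_0}\lesssim \mu_\ve^{\frac{N-2}{2}}$ plus the standard bubble-tail sizes. Under this assumption, a Pohozaev identity on a ball around $\xi_\ve$ leads to a contradiction with $\ve>0$ when $\overline{\lambda}>0$. Hence $\|v_\ve\|/\mu_\ve^{\frac{N-2}{2}}\to\infty$. Normalizing $\tilde v_\ve=v_\ve/\|v_\ve\|$, the right-hand side is negligible away from $\xi_\ve$, so $\tilde v_\ve\to \tilde v_0$ weakly, with $-\Delta\tilde v_0=\overline{\lambda}\tilde v_0$. Coercivity on the orthogonal complement excludes vanishing. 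Therefore $\overline{\lambda}=\lambda_k$, and $v_\ve=\sum_i\tau_{k,i,\ve}e_{k,i}+\varphi_\ve$ with $\|\varphi_\ve\|=o(\beta^*_\ve)$ and $\beta^*_\ve\to0$, the latter because the $H^1$ limit of $u_\ve$ is zero by the energy bound \eqref{condition}.

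\textbf{Step 2: refined estimate of $\varphi_\ve$.} We project out the eigenfunctions $e_{k,i}$ as well, and redo the linear theory with this enlarged kernel. This yields $\|\varphi_\ve\|$ bounded by the sizes $\ve\beta^*_\ve$, $\mu_\ve^{\frac{N-2}{2}}$ times $\log$-type corrections, and interaction terms $\mu_\ve^{\frac{N-2}{2}}\beta^*_\ve$. For $N=4$ the quadratic interaction $\int U^{p-1}\mathcal{E}\varphi$ is borderline. There we replace the $H^1$ bound by a pointwise bound, obtained by Green representation and iterated potential estimates in weighted $L^\infty$ norms.

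\textbf{Step 3: expanding the orthogonality conditions.} We test the equation of $u_\ve$ against $e_{k,i}$, against $\mathcal{Z}^0$ (the dilation direction), and against $\mathcal{Z}^j$, $1\le j\le N$ (the translations); equivalently, we use local Pohozaev identities.

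\emph{Eigenfunction test.} Testing against $e_{k,i}$ gives
\[
\ve\,\tau_{k,i,\ve}\approx -\int U^p e_{k,i}\approx -c\,\mu_\ve^{\frac{N-2}{2}}e_{k,i}(\xi_\ve).
\]

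\emph{Dilation test.} This gives a balance between three terms: $\lambda\mu_\ve^2$ (for $N\ge5$) or $\mu_\ve^2|\log\mu_\ve|$ (for $N=4$); the boundary term $H(\xi_\ve,\xi_\ve)\mu_\ve^{N-2}$; and the interaction term $\mu_\ve^{\frac{N-2}{2}}\mathcal{E}_\ve(\xi_\ve)$, where $\mathcal{E}_\ve=\sum_i\tau_{k,i,\ve}e_{k,i}$.

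\emph{Translation tests.} These give $\mu_\ve^{\frac{N-2}{2}}\nabla\mathcal{E}_\ve(\xi_\ve)$ balanced against $\mu_\ve^{N-2}\nabla H$.

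Combining the eigenfunction test with the dilation test, the sign of $\ve$ forces $\mathcal{E}_\ve(\xi_\ve)<0$. The order counting then goes as follows.
\begin{enumerate}
\item[$\bullet$] For $N\ge6$, the $\lambda\mu_\ve^2$ term dominates with a definite sign and cannot be balanced. This gives a contradiction, which excludes $N\ge6$.
\item[$\bullet$] For $N=5$, the balance forces both $\mathcal{E}_0^*(\xi_0)=0$ and $\nabla\mathcal{E}_0^*(\xi_0)=0$.
\item[$\bullet$] For $N=4$, the logarithm allows a genuine balance. In the interior case with $\mathcal{E}_0^*(\xi_0)\neq0$, the relations $\ve\beta\sim\mu$ and $\mu^2|\log\mu|\sim\mu\beta$ give $\mu\sim e^{-1/\ve}$ and $\beta\sim\ve^{-1}e^{-1/\ve}$. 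The translation condition then forces $\nabla\mathcal{E}_0^*(\xi_0)=0$.
\item[$\bullet$] In the boundary case $\xi_0\in\partial\Omega$, we have $\mathcal{E}(\xi_\ve)\approx\partial_\nu\mathcal{E}_0^*\,d_\ve$ and $H(\xi_\ve,\xi_\ve)\sim d_\ve^{-2}$. Solving the three relations gives $d\sim\ve^{1/4}$, $\mu\sim e^{-1/\sqrt\ve}$ and $\beta\sim\ve^{-3/4}e^{-1/\sqrt\ve}$.
\end{enumerate}

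The main obstacle is Step 2 for $N=4$. One must show that the remainder $\varphi_\ve$ contributes strictly lower order in the dilation and translation expansions, which are only logarithmically separated; this is why the pointwise weighted estimates are needed. I would attempt these by a blow-up/contradiction argument on the weighted norm $\sup|\varphi_\ve|/(\text{bubble-tail}+\beta^*_\ve\ve)$.
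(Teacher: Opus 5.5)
Your architecture is the paper's: show $\|v_\ve\|_{H^1_0}\gg\mu_\ve^{\frac{N-2}{2}}$, identify $\overline\lambda=\lambda_k$, project off both the bubble kernel and $\Xi_k$, then expand the $\mathcal{Z}^0$, $\mathcal{Z}^j$ and $e_{k,l}$ tests. The gap is in Step~3: you drop the self-interaction of the eigenfunction component, and the argument cannot do without it. The $e_{k,l}$-test really reads
\[
0=\Big(1+\frac{\ve}{\lambda_k}\Big)D_{N,6}\,\mu_\ve^{\frac{N-2}{2}}e_{k,l}(\xi_\ve)+\int_\Omega|\mathcal{E}_\ve|^{p-1}\mathcal{E}_\ve\, e_{k,l}\,dx+\ve\int_\Omega\mathcal{E}_\ve e_{k,l}\,dx+\text{(lower order)}.
\]
The middle term has size $(\beta^*_\ve)^p$ and is not negligible a priori. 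Correspondingly, your Step~2 bound on $\|\varphi_\ve\|$ is missing a $(\beta^*_\ve)^p$ contribution.

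Summing the test against $\tau_{k,l,\ve}$ gives
\[
\|\mathcal{E}_\ve\|_{L^{p+1}}^{p+1}+\ve\|\mathcal{E}_\ve\|_{L^2}^2\approx -D_{N,6}\mu_\ve^{\frac{N-2}{2}}\mathcal{E}_\ve(\xi_\ve),
\]
and both terms on the left are positive for $\ve>0$. This gives $\ve\beta^*_\ve+(\beta^*_\ve)^p\lesssim\mu_\ve^{\frac{N-2}{2}}$. Together with the dilation identity it also gives $(\beta^*_\ve)^{p+1}+\ve(\beta^*_\ve)^2\lesssim\mu_\ve^2$ for $N\ge5$.

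These bounds are exactly what proves (a). With your relations alone, $N=5$ admits self-consistent scalings, signs included:
\begin{enumerate}
\item[$\bullet$] an interior one with $\mathcal{E}_0^*(\xi_0)\neq0$, $\nabla\mathcal{E}_0^*(\xi_0)=0$, $\beta^*_\ve\sim\mu_\ve^{1/2}$ and $\ve\sim\mu_\ve$;
\item[$\bullet$] a boundary one with $d(\xi_\ve,\partial\Omega)\sim\mu_\ve^{1/3}$, $\beta^*_\ve\sim\mu_\ve^{1/6}$ and $\ve\sim\mu_\ve^{5/3}$.
\end{enumerate}
Both are excluded only by $(\beta^*_\ve)^{10/3}\lesssim\mu_\ve^2$. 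In both regimes $(\beta^*_\ve)^p\gg\ve\beta^*_\ve$, so your approximation $\ve\tau_{k,i,\ve}\approx-c\mu_\ve^{\frac{N-2}{2}}e_{k,i}(\xi_\ve)$ is simply false there.

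Your $N\ge6$ bullet is also wrong as stated. In the dilation identity
\[
\lambda D_{N,1}\mu_\ve^2-\alpha_ND_{N,3}\mu_\ve^{N-2}\varphi(\xi_\ve)+D_{N,3}\mu_\ve^{\frac{N-2}{2}}\mathcal{E}_\ve(\xi_\ve)\approx0,
\]
the Robin term has the opposite sign to $\lambda\mu_\ve^2$. It balances it whenever $\xi_\ve\to\partial\Omega$ with $\kappa_\ve^{N-2}\sim\mu_\ve^2$.

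The contradiction comes only from the translation identity, $\kappa_\ve^{N-1}\sim\mu_\ve^{N-1}|\nabla\varphi(\xi_\ve)|\lesssim\mu_\ve^{\frac N2}\beta^*_\ve$. Combined with $\kappa_\ve^{N-2}\sim\mu_\ve^2$, this forces $\beta^*_\ve\gtrsim\mu_\ve^{-1/2}$ (or worse) for $N\ge6$, which is impossible. To make the error terms in that identity smaller than $\kappa_\ve^{N-1}$, you again need $\ve\beta^*_\ve+(\beta^*_\ve)^p\lesssim\mu_\ve^{\frac{N-2}{2}}$ from the nonlinear term; those errors include $(\ve\beta^*_\ve)^2+(\beta^*_\ve)^{2p}$ from the quadratic remainder.

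The same dilation-then-translation mechanism, not the sign of $\ve$, is what produces the contradiction in your Step~1.
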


\begin{remark}
The sign-changing solution described in $(b_1)$ of Theorem~\ref{zero weak limit+}, which will concentrate at inner critical points of the eigenfunctions as $\lambda$ close to the related eigenvalues from above, has been constructed in \cite{IV, LVWW}.  Moreover, $(b_2)$ of Theorem~\ref{zero weak limit+} suggests that the $4d$ Brezis-Nirenberg equation may have sign-changing solutions, which will concentrate at the boundary $\partial\Omega$ as $\lambda$ close to the eigenvalue $\lambda_k$ for some $k\geq1$ from above, if there exists $\pmb{t}_0$ such that $\partial\Omega_{\pmb{t}_0, k, -}\cap\partial\Omega\not=\emptyset$, where $\pmb{t}_0=(t_{1,0}, t_{2,0}, \cdots, t_{m_k,0})$ and
\begin{eqnarray*}
\Omega_{\pmb{t}_0, k, -}=\left\{x\in\Omega\mid \sum_{i=1}^{m_{\kappa}}t_{i,0}e_{k,i}(x)<0\right\}.
\end{eqnarray*}
\end{remark}

\vskip0.12in

If $\ve\to0^-$ then our refined expansion may degenerate in the sense that there is a cancelation in the leading order terms if
\begin{eqnarray*}
\|\mathcal{E}_0^*\|_{L^{p+1}(\Omega)}^{p+1}+\|\mathcal{E}_0^*\|_{L^{2}(\Omega)}^{2}\lim_{\ve\to0^-}\frac{\ve}{\left(\beta_\ve^*\right)^{p-1}}=0,
\end{eqnarray*}  
where $\mathcal{E}_0^*(x)=\sum_{i=1}^{m_{k}}t_{k,i,0}e_{k,i}(x)$
with $t_{k,i,0}=\lim_{\ve\to0}\frac{\tau_{k,i,\ve}}{\beta_{\ve}^*}$ and $\beta_\ve^*=\max_{1\leq i\leq m_{k}}|\tau_{k,i,\ve}|\to0$.
Thus, under the current refined expansion of the orthogonal conditions of $v_\ve^*$, we can obtain a partial classification of the Struwe decomposition under the condition~\eqref{condition} as $\ve\to0^-$ if there is no cancelations in passing to the limit.
\begin{theorem}\label{zero weak limit-1}
Let $N\geq4$ and $u_\ve$ is a solution of the Brezis-Nirenberg equation~\eqref{BN} satisfying the condition~\eqref{condition} for $\overline{\lambda}>0$ as $\ve=\lambda-\overline{\lambda}\to0^-$.  If
\begin{eqnarray*}
\|\mathcal{E}_0^*\|_{L^{p+1}(\Omega)}^{p+1}+\|\mathcal{E}_0^*\|_{L^{2}(\Omega)}^{2}\lim_{\ve\to0^-}\frac{\ve}{\left(\beta_\ve^*\right)^{p-1}}\not=0,
\end{eqnarray*}
then we must have $N=4$, $\overline{\lambda}=\lambda_k$ for some $k\geq1$ and 
\begin{eqnarray*}
u_\ve=\mathcal{W}_{\mu_\ve, \xi_\ve}+\sum_{i=1}^{m_{k}}\tau_{k,i,\ve}e_{k,i}+\varphi_{\ve},
\end{eqnarray*}
where $\xi_\ve\to\xi_0\in\partial\Omega$, $\mu_{\ve}\to0$, $\beta_\ve^*=\max_{1\leq i\leq m_{k}}|\tau_{k,i,\ve}|\to0$ and 
\begin{eqnarray*}\label{equa00151}
\frac{\|\varphi_{\ve}\|_{H_0^1(\Omega)}}{\max_{1\leq i\leq m_{k}}|\tau_{k,i,\ve}|}\to0
\end{eqnarray*}
as $\ve\to0^-$.  Moreover, $\xi_0\in\partial\Omega$ is a singular point of $\mathcal{E}_0^*(x)=\sum_{i=1}^{m_{k}}t_{k,i,0}e_{k,i}(x)$
with $t_{k,i,0}=\lim_{\ve\to0}\frac{\tau_{k,i,\ve}}{\beta_{\ve}^*}$.
\end{theorem}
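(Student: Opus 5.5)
The plan follows the same inverse reduction scheme as for Theorem~\ref{zero weak limit+}; the only new input is the sign of $\ve$ in the energy-type identities. We start from the orthogonal decomposition $u_\ve=\mathcal{W}_{\mu_\ve,\xi_\ve}+v_\ve$ with the orthogonality conditions \eqref{orthogonality}. Applying the Wang--Wei inverse reduction to the equation for $v_\ve$,
\begin{eqnarray*}
-\Delta v_\ve-\overline{\lambda}v_\ve=\ve u_\ve+\overline{\lambda}\mathcal{W}_{\mu_\ve,\xi_\ve}+\left(|u_\ve|^{p-1}u_\ve-U_{\mu_\ve,\xi_\ve}^p\right),
\end{eqnarray*}
we test the linearized operator against functions orthogonal to the kernels $\mathcal{Z}^i_{\mu_\ve,\xi_\ve}$. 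This shows that, away from $\xi_\ve$, the limit problem is the eigenvalue problem $-\Delta w=\overline{\lambda}w$, and that $\|v_\ve\|_{H^1_0}\gg\mu_\ve^{\frac{N-2}{2}}$ when $\overline{\lambda}>0$. If $\overline{\lambda}\notin\sigma(-\Delta)$, the operator $-\Delta-\overline{\lambda}$ would be invertible on the orthogonal complement, which forces $\|v_\ve\|\lesssim\mu_\ve^{\frac{N-2}{2}}$. That contradicts the previous bound, so $\overline{\lambda}=\lambda_k$. We then split $v_\ve=\sum_i\tau_{k,i,\ve}e_{k,i}+\varphi_\ve$ with $\varphi_\ve$ orthogonal to $\Xi_k$ and to the bubble kernels. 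Invertibility on this complement gives $\|\varphi_\ve\|=o(\beta^*_\ve)$ together with $\beta^*_\ve\to0$. For $N=4$ this estimate has to be upgraded to an $L^\infty$ bound on $\varphi_\ve$ away from $\xi_\ve$ by a Moser/Green representation argument.

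Next we expand the orthogonality conditions to second order. Testing the equation against $e_{k,j}$ and using $(-\Delta-\lambda_k)e_{k,j}=0$ gives
\begin{eqnarray*}
\ve\tau_{k,j,\ve}\|e_{k,j}\|_{L^2}^2+\ve\int\mathcal{W}e_{k,j}+\lambda_k\int\mathcal{W}e_{k,j}+\int\left(|u_\ve|^{p-1}u_\ve-U^p\right)e_{k,j}=0 .
\end{eqnarray*}
Here $\int U^p e_{k,j}\approx c_N\mu_\ve^{\frac{N-2}{2}}e_{k,j}(\xi_\ve)$, and the term $\int|\sum\tau e|^{p-1}(\sum\tau e)e_{k,j}$ gives a contribution of order $(\beta^*_\ve)^p$. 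Multiplying by $t_{k,j,0}$ and summing yields the scalar identity
\begin{eqnarray*}
\beta^*_\ve\left[\ve\|\mathcal{E}_0^*\|_{L^2}^2+(\beta^*_\ve)^{p-1}\|\mathcal{E}_0^*\|^{p+1}_{L^{p+1}}\right](1+o(1))=-c\mu_\ve^{\frac{N-2}{2}}\mathcal{E}^*_0(\xi_\ve)+\text{(interaction terms)} .
\end{eqnarray*}
In addition, the Pohozaev/kernel identities tested with $\mathcal{Z}^0$ and $\mathcal{Z}^i$ give
\begin{eqnarray*}
\mu_\ve^{\frac{N-2}{2}}\beta^*_\ve\,\mathcal{E}^*_0(\xi_\ve)\approx\text{(}\ve\mu_\ve^2\text{ term, with the } \log \text{ when } N=4\text{)}+\mu_\ve^{N-2}H(\xi_\ve,\xi_\ve),
\end{eqnarray*}
together with the corresponding gradient identity for $\nabla\mathcal{E}^*_0(\xi_\ve)$.

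Under the stated non-cancellation hypothesis, the bracket is comparable to $\beta^*_\ve\max\{|\ve|,(\beta^*_\ve)^{p-1}\}$ and its sign is fixed. This is where $\ve<0$ enters. The $\mu$-identity combines a positive Robin term $H$, a term $\ve\mu^2$ (times $|\log\mu|$ if $N=4$), which is now negative, and the eigenfunction term. Comparing orders in these identities, for $N\ge5$ the $\ve\mu_\ve^2$ contribution cannot balance $\mu^{N-2}H$ against $\beta\mu^{\frac{N-2}{2}}$ without contradicting the scalar identity, which excludes $N\ge5$. For $N=4$ the same comparison shows that an interior limit point is impossible: at an interior $\xi_0$ with $\mathcal{E}^*_0(\xi_0)\neq0$, the three terms have incompatible signs when $\ve<0$, the mirror image of case $(b_1)$, which needed $\ve>0$. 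Interior singular points are excluded as well, since $H$ stays bounded there and the remaining terms cannot balance. Hence $\xi_0\in\partial\Omega$. The boundary expansions, with $H(\xi,\xi)\sim d^{-2}$, $\mathcal{E}^*_0(\xi_\ve)\approx d\,\partial_\nu\mathcal{E}^*_0$ and $\nabla\mathcal{E}^*_0(\xi_\ve)\approx\partial_\nu\mathcal{E}^*_0\,\nu$, then show that the case $\partial_\nu\mathcal{E}^*_0(\xi_0)\neq0$ (the $(b_2)$ regime) requires $\ve>0$. Therefore $\partial_\nu\mathcal{E}^*_0(\xi_0)=0$, and since $\mathcal{E}^*_0$ vanishes on $\partial\Omega$, $\xi_0$ is a singular point.

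The main obstacle is the $N=4$ second-order expansion. It requires $L^\infty$ control of $\varphi_\ve$ and careful tracking of the logarithmic terms $\ve\mu^2|\log\mu|$ against $\mu\beta\,\mathcal{E}^*_0(\xi_\ve)$ near the boundary, so that the sign argument is not destroyed by error terms. I would first establish pointwise bounds $|\varphi_\ve|\lesssim o(\beta^*_\ve)+\mu_\ve|x-\xi_\ve|^{-2}$ through the Green representation, and then feed them into the kernel identities.
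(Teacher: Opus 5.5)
The sign mechanism you rely on sits in the wrong identity. When the equation is tested with $\mathcal{Z}^0_{\mu_\ve,\xi_\ve}$, the $\mu^2$-term comes from $\lambda\int_\Omega\mathcal{W}_{\mu_\ve,\xi_\ve}\mathcal{Z}^0_{\mu_\ve,\xi_\ve}\,dx$. In your form of the equation, $\overline{\lambda}\mathcal{W}+\ve\mathcal{W}=\lambda\mathcal{W}$, so this term is $\lambda D_{N,1}\mu_\ve^2$ (resp.\ $\lambda D_{4,2}\mu_\ve^2|\log\mu_\ve|$) with $\lambda\to\lambda_k>0$. There is no $\ve\mu_\ve^2$ term whose sign flips.

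The identity $D_{N,3}\mu_\ve^{\frac{N-2}{2}}\mathcal{E}^*_\ve(\xi_\ve)\approx\alpha_ND_{N,3}\mu_\ve^{N-2}\varphi(\xi_\ve)-\lambda D_{N,1}\mu_\ve^2$ is therefore the same for $\ve\to0^{\pm}$. Its $\mu^2$-term has exact order $\mu_\ve^2$ (times $|\log\mu_\ve|$ if $N=4$). The sign of $\ve$ enters only through $\ve\|\mathcal{E}^*_\ve\|_{L^2}^2$ in the $e_{k,l}$-identity. So your exclusions of $N\ge5$ and of interior points have no basis as written.

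What the paper actually takes from the hypothesis is \emph{size}. After multiplying by $\tau'_{k,l,\ve}$ and summing, the bracket $\|\mathcal{E}^*_\ve\|^{p+1}_{L^{p+1}}+\ve\|\mathcal{E}^*_\ve\|^2_{L^2}$ has exact order $\beta_\ve^2\max\{|\ve|,\beta_\ve^{p-1}\}$. It is balanced by $O(\mu_\ve^{\frac{N-2}{2}}\beta_\ve)$, so $\max\{|\ve|\beta_\ve,\beta_\ve^p\}\lesssim\mu_\ve^{\frac{N-2}{2}}$, and the proof of Theorem~\ref{zero weak limit+} is then rerun.
\begin{itemize}
\item $N\ge5$: one gets $\mu_\ve^{\frac{N-2}{2}}\beta_\ve\lesssim\mu_\ve^{\frac{N(N-2)}{N+2}}=o(\mu_\ve^2)$. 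The $\mathcal{Z}^0$-identity then forces $\xi_0\in\partial\Omega$ with $\kappa_\ve^{N-2}\sim\mu_\ve^2$. The $\mathcal{Z}^i$-identity, whose Robin term has size $\kappa_\ve^{N-1}$ by \eqref{exp-rob}, gives $\kappa_\ve^{N-1}\lesssim\mu_\ve^{\frac N2}\beta_\ve=o(\kappa_\ve^{N-1})$, a contradiction.
\item $N=4$, $\xi_0\in\Omega$, $\mathcal{E}^*_0(\xi_0)\ne0$: the $\mathcal{Z}^0$-identity, driven by $\lambda>0$, forces $\mathcal{E}^*_\ve(\xi_\ve)<0$ and $\beta_\ve\sim\mu_\ve|\log\mu_\ve|$. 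In the $e_{k,l}$-identity, $D_{4,6}\mu_\ve\mathcal{E}^*_\ve(\xi_\ve)$ and $\ve\|\mathcal{E}^*_\ve\|_{L^2}^2$ are then both negative, while the $L^4$-term is negligible.
\item $N=4$, $\xi_0\in\partial\Omega$, $\partial_\nu\mathcal{E}^*_0(\xi_0)\ne0$: this is the correct form of your ``$(b_2)$ needs $\ve>0$''. The gradient identity $\mu_\ve^2\nabla\mathcal{E}^*_\ve(\xi_\ve)\approx\alpha_4\mu_\ve^3\nabla\varphi(\xi_\ve)$ gives $\kappa_\ve^3\sim\mu_\ve^2\beta_\ve$. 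Since $\nabla\varphi$ points towards $\partial\Omega$, where $\mathcal{E}^*_\ve$ vanishes, it also gives $-\mu_\ve\mathcal{E}^*_\ve(\xi_\ve)\sim\kappa_\ve^2$. In the $e_{k,l}$-identity this must be matched by $\|\mathcal{E}^*_\ve\|_{L^4}^4+\ve\|\mathcal{E}^*_\ve\|_{L^2}^2\le\|\mathcal{E}^*_\ve\|_{L^4}^4\lesssim\beta_\ve^4$. That is $o(\kappa_\ve^2)$, because the $\mathcal{Z}^0$-identity gives $\kappa_\ve^2\lesssim\mu_\ve^2|\log\mu_\ve|$ and hence $\beta_\ve\lesssim\mu_\ve|\log\mu_\ve|^{3/2}$. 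The paper reaches this contradiction through a case analysis on the sign of the non-cancellation quantity.
\end{itemize}

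Second, your claim that interior singular points are excluded because ``$H$ stays bounded and the remaining terms cannot balance'' is not supported. For $\xi_0\in\Omega$ the $\mathcal{Z}^0$-identity only yields $\mathcal{E}^*_\ve(\xi_\ve)\approx-c\,\mu_\ve|\log\mu_\ve|$. This is compatible with $\beta_\ve\gg\mu_\ve|\log\mu_\ve|$ when $\mathcal{E}^*_0(\xi_0)=0$. If the non-cancellation quantity is positive (so $|\ve|\lesssim\beta_\ve^2$), the $e_{k,l}$-identity balances with $\beta_\ve^4\sim\mu_\ve^2|\log\mu_\ve|$, which is consistent with $\beta_\ve^3\lesssim\mu_\ve$. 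The $\mathcal{Z}^i$-identity merely gives $\nabla\mathcal{E}^*_0(\xi_0)=0$. Nothing at the order of Proposition~\ref{exp-orth} is violated. The paper's own argument in the interior case also stops at ``$\xi_0$ is a singular point of $\mathcal{E}^*_0$''. To get $\xi_0\in\partial\Omega$ you would need a genuinely new ingredient, which your plan does not supply.

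A smaller flaw: invertibility of $-\Delta-\overline{\lambda}$ off the kernels only gives $\|v_\ve\|_{H^1_0}\lesssim\|\mathcal{R}_1\|_{L^{\frac{p+1}{p}}}$. That bound contains $\kappa_\ve$-terms for every $N$, and for $N\ge7$ the piece $\lambda\mathcal{W}_{\mu_\ve,\xi_\ve}$ alone has size $\mu_\ve^2\gg\mu_\ve^{\frac{N-2}{2}}$ (cf.\ \eqref{R1-esti-0}). So ``$\overline{\lambda}\notin\sigma(-\Delta)$ forces $\|v_\ve\|\lesssim\mu_\ve^{\frac{N-2}{2}}$'' is false as stated. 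Ruling out $\overline{\lambda}\notin\sigma(-\Delta)$ has to go through the $\mathcal{Z}^0/\mathcal{Z}^i$ identities rather than a comparison of norms.
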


In the degenerate case
\begin{eqnarray*}
\|\mathcal{E}_0^*\|_{L^{p+1}(\Omega)}^{p+1}+\|\mathcal{E}_0^*\|_{L^{2}(\Omega)}^{2}\lim_{\ve\to0^-}\frac{\ve}{\left(\beta_\ve^*\right)^{p-1}}=0,
\end{eqnarray*}
we need further refine the expansion of the orthogonal conditions of $v_\ve^*$.  For this purpose, we construct a new perturbation near the limit profile to remove this cancelation (see Proposition~\ref{new-decomp} for more details).  It can be achieved since the degeneracy is generated by a nonlinear term $\|\mathcal{E}_0^*\|_{L^{p+1}(\Omega)}^{p+1}$ and a linear term $\|\mathcal{E}_0^*\|_{L^{2}(\Omega)}^{2}$.  Thus, it contains a secondary nondegenerate condition.  After doing this, we shall 
orthogonally decompose $v_\ve^*$ once more to remove the additional projections on all kernels in $H_0^1(\Omega)$, according to the construction of the new perturbation.  Then we expand the orthogonal conditions of another new remaining term, say $v_\ve^{**}$, up to the third order terms to complete the classification of the Struwe decomposition under the condition~\eqref{condition} as $\ve\to0^-$, which can be stated as follows.
\begin{theorem}\label{zero weak limit-2}
Let $N\geq4$ and $u_\ve$ is a solution of the Brezis-Nirenberg equation~\eqref{BN} satisfying the condition~\eqref{condition} for $\overline{\lambda}>0$ as $\ve=\lambda-\overline{\lambda}\to0^-$.  If
\begin{eqnarray*}
\|\mathcal{E}_0^*\|_{L^{p+1}(\Omega)}^{p+1}+\|\mathcal{E}_0^*\|_{L^{2}(\Omega)}^{2}\lim_{\ve\to0^-}\frac{\ve}{\left(\beta_\ve^*\right)^{p-1}}=0,
\end{eqnarray*}
then $\overline{\lambda}=\lambda_k$ for some $k\geq1$ and 
\begin{eqnarray*}
u_\ve=\mathcal{W}_{\mu_\ve, \xi_\ve}+\sum_{i=1}^{m_{k}}\tau_{k,i,\ve}e_{k,i}+\varphi_{\ve},
\end{eqnarray*}
where $\mathcal{E}_0^*(x)=\sum_{i=1}^{m_{k}}t_{k,i,0}e_{k,i}(x)$
with $t_{k,i,0}=\lim_{\ve\to0}\frac{\tau_{k,i,\ve}}{\beta_{\ve}^*}$, $\xi_\ve\to\xi_0\in\overline{\Omega}$, $\mu_{\ve}\to0$, $\beta_\ve^*=\max_{1\leq i\leq m_{k}}|\tau_{k,i,\ve}|\to0$ and 
\begin{eqnarray*}\label{equa00151}
\frac{\|\varphi_{\ve}\|_{H_0^1(\Omega)}}{\max_{1\leq i\leq m_{k}}|\tau_{k,i,\ve}|}\to0
\end{eqnarray*}
as $\ve\to0^-$.  Moreover, either
\begin{enumerate}
\item[$(a)$]\quad the matrix
\begin{eqnarray}\label{matrix}
\mathcal{M}=\left(\int_{\Omega}\left(p\left|\widetilde{\mathcal{E}}_{\ve,0}^*\right|^{p-1}-1\right)e_{k,l}e_{k,t}dx\right)_{m_k\times m_k}
\end{eqnarray}
is singular, where $\widetilde{\mathcal{E}}_{\ve,0}^*=\sum_{l=1}^{m_k}\left(\frac{\|\mathcal{E}_0^*\|_{L^{2}(\Omega)}^{2}}{\|\mathcal{E}_0^*\|_{L^{p+1}(\Omega)}^{p+1}}\right)^{\frac{1}{p-1}}t_{k,l,0} e_{k,l}$, or
\item[$(b)$]\quad $N=4,5$ and
\begin{enumerate}
\item[$(b_1)$]\quad if $N=4$ then $\xi_0$ is a singular point of $\mathcal{E}_0^*(x)$;
\item[$(b_2)$]\quad if $N=5$ then either $\xi_0$ is a singular point of $\mathcal{E}_0^*(x)$ or $\xi_0\in\Omega$, $\mathcal{E}_0^*\left(\xi_0\right)\not=0$ and $\nabla \mathcal{E}_0^*\left(\xi_0\right)=0$ with
\begin{eqnarray*}
\beta_\ve^*\sim|\ve|^{\frac{3}{4}}\quad\text{and}\quad\mu_\ve\sim|\ve|^{\frac{3}{2}}.
\end{eqnarray*}
as $\ve\to0^-$.  The asymptotic behaviors described above can be precisely computed up to the leading order term.
\end{enumerate}
\end{enumerate}
\end{theorem}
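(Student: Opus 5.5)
We will follow the same scheme used for Theorems~\ref{zero weak limit+} and \ref{zero weak limit-1}, pushed one order further. As there, we start from the orthogonal decomposition \eqref{v}--\eqref{orthogonality}. We then use the inverse reduction argument of \cite{WW2019,WW2019-2} to show that $\|v_\ve\|_{H_0^1}\gg\mu_\ve^{\frac{N-2}{2}}$. After normalizing, $v_\ve/\|v_\ve\|$ converges to a nontrivial solution of $-\Delta w=\overline{\lambda}w$. This gives $\overline{\lambda}=\lambda_k$ and the decomposition
\[
u_\ve=\mathcal{W}_{\mu_\ve,\xi_\ve}+\sum_{i=1}^{m_k}\tau_{k,i,\ve}e_{k,i}+\varphi_\ve,\qquad \|\varphi_\ve\|_{H_0^1}=o(\beta_\ve^*).
\]
This part is common to all three theorems, so we take it as already established.

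The new ingredient is the degenerate case. Testing the equation against $e_{k,l}$ yields the identity
\[
\ve\,\tau_{k,l,\ve}+\int_\Omega |u_\ve|^{p-1}u_\ve e_{k,l}=O(\text{bubble interaction}).
\]
At leading order this reads $\ve\beta_\ve^*\,t_{k,l,0}+(\beta_\ve^*)^p\int|\mathcal{E}_0^*|^{p-1}\mathcal{E}_0^*e_{k,l}$. When the stated combination vanishes, we have $\ve\sim-(\beta^*_\ve)^{p-1}$. In that regime the limit profile is a genuine critical point of the reduced functional
\[
\frac{\ve}{2}|t|^2+\frac{1}{p+1}\Bigl\|\sum t_le_{k,l}\Bigr\|_{p+1}^{p+1}
\]
on the eigenspace, and the rescaled $\widetilde{\mathcal{E}}^*_{\ve,0}$ is its critical point. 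Following the description around Proposition~\ref{new-decomp}, we will construct a corrected perturbation $\mathcal{E}_\ve=\sum_l\tau_{k,l,\ve}e_{k,l}+\psi_\ve$, where $\psi_\ve\perp\Xi_k$ solves the projected nonlinear eigenvalue problem
\[
-\Delta\psi-\lambda\psi=\Pi^\perp\bigl(|\mathcal{E}|^{p-1}\mathcal{E}\bigr).
\]
This is solvable by a contraction, since $-\Delta-\lambda_k$ is invertible on $\Xi_k^\perp$. We then redo the orthogonal decomposition: we set $v^{**}_\ve=u_\ve-\mathcal{W}_{\mu_\ve,\xi_\ve}-\mathcal{E}_\ve$ and make it orthogonal both to the $\mathcal{Z}^i_{\mu_\ve,\xi_\ve}$ and to $\Xi_k$.

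Next we linearize the $\Xi_k$-equations around the profile. The remainder terms are controlled by the linear operator on $\Xi_k$ given by the matrix $\mathcal{M}$ in \eqref{matrix}, that is, by the Hessian of the reduced functional at $\widetilde{\mathcal{E}}^*_{\ve,0}$. Two cases follow. If $\mathcal{M}$ is singular, we are in alternative $(a)$ and nothing more is claimed. If $\mathcal{M}$ is invertible, we can solve for the deviation of $(\tau_{k,l,\ve})$ from the exact profile in terms of the bubble interaction terms $\int U^p_{\mu_\ve,\xi_\ve}e_{k,l}\sim\mu_\ve^{\frac{N-2}{2}}e_{k,l}(\xi_\ve)$. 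This yields an estimate for $v^{**}_\ve$ strong enough to expand the bubble orthogonality conditions, obtained by testing against $\mathcal{Z}^0$ and $\mathcal{Z}^i$, up to third order. We will use $H_0^1$ bounds when $N\ge5$, and we will need $L^\infty$ bounds on $v^{**}_\ve$ when $N=4$, obtained via Green's representation and a bootstrap as in the nondegenerate theorems.

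The expansions have the following structure. The $\mu$-condition balances $\mu_\ve^{\frac{N-2}{2}}\beta_\ve^*\mathcal{E}_0^*(\xi_\ve)$ against $\ve\mu_\ve^2$ (or $\ve\mu_\ve^2|\ln\mu_\ve|$ when $N=4$) and against the Robin term $H(\xi_\ve,\xi_\ve)\mu_\ve^{N-2}$. The $\xi$-conditions balance $\mu_\ve^{\frac{N}{2}}\beta_\ve^*\nabla\mathcal{E}_0^*(\xi_\ve)$ against $\nabla H$ terms. For $N\ge6$ the powers are incompatible unless all leading coefficients vanish, which is impossible because $\ve\mu^2$ dominates. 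This forces $N=4,5$. For $N=4$, the relation $\beta^*_\ve\sim|\ve|^{1/2}$ combined with the logarithmic term leaves no consistent nonzero-$\mathcal{E}_0^*(\xi_0)$ balance. This forces $\mathcal{E}_0^*(\xi_0)=0=\nabla\mathcal{E}_0^*(\xi_0)$, including the boundary case handled through the normal derivative. For $N=5$ we have $p-1=4/3$, so $\beta^*_\ve\sim|\ve|^{3/4}$. Balancing $\mu^{3/2}\beta^*$ against $\ve\mu^2$ then gives $\mu_\ve\sim|\ve|^{3/2}$ when $\mathcal{E}_0^*(\xi_0)\neq0$. The gradient condition then forces $\nabla\mathcal{E}_0^*(\xi_0)=0$, and boundary concentration is excluded because it would make the $\nabla H$ term dominate.

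The main obstacle is the third-order expansion in the invertible-$\mathcal{M}$ case. We must show that, after the correction $\psi_\ve$ and the second decomposition, the error terms coming from the degenerate cancellation are genuinely smaller than the bubble interaction terms. When $N=4$ this requires pointwise control of $v_\ve^{**}$ near $\xi_\ve$. We would first try a weighted $L^\infty$ estimate of the form $|v^{**}_\ve|\lesssim \mu_\ve\,(\mu_\ve+|x-\xi_\ve|)^{-1}\cdot o(1)+o(\beta^*_\ve)$, proved by a blow-up and contradiction argument on the linearized operator restricted to the orthogonal complement.
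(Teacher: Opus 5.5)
Your construction matches the paper's. The corrected profile is Proposition~\ref{new-decomp}; the paper fixes the $\Xi_k$-coefficients at an exact critical point $\overline{\beta}_\ve t_{k,l}$ of the reduced problem and tracks the deviation $\widehat{\tau}^{**}_{k,l,\ve}$, which is equivalent to your linearization. This is followed by the second decomposition, Lemma~\ref{esti-v**} and Proposition~\ref{refine-exp-orth}. The gap is in the classification, which is where the content of $(b)$ lies. The leading bubble term in the $\mathcal{Z}^0$-condition is $\lambda\int_\Omega\mathcal{W}_{\mu_\ve,\xi_\ve}\mathcal{Z}^0_{\mu_\ve,\xi_\ve}dx\approx\lambda D_{N,1}\mu_\ve^2$ (resp.\ $\lambda D_{4,2}\mu_\ve^2|\log\mu_\ve|$) with $\lambda\to\lambda_k>0$, not $\ve\mu_\ve^2$. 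Your own $N=5$ balance $\mu_\ve^{3/2}\beta^*_\ve\sim|\ve|\mu_\ve^2$ would give $\mu_\ve^{1/2}\sim\beta^*_\ve/|\ve|\sim|\ve|^{-1/4}$. The stated rate $\mu_\ve\sim|\ve|^{3/2}$ comes from $\mu_\ve^{3/2}\beta^*_\ve\sim\mu_\ve^2$. For $N\geq6$ the correct mechanism runs as follows. Since $\mu_\ve^{\frac{N-2}{2}}\beta^*_\ve=o(\mu_\ve^2)$, the $\mathcal{Z}^0$-condition forces $\lambda D_{N,1}\mu_\ve^2\approx\alpha_N D_{N,3}\mu_\ve^{N-2}\varphi(\xi_\ve)$. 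Hence $\xi_0\in\partial\Omega$ with $\kappa_\ve^{N-2}\sim\mu_\ve^2$. The $\mathcal{Z}^i$-conditions then give $\kappa_\ve^{N-1}\lesssim\mu_\ve^{\frac N2}\beta^*_\ve=o(\kappa_\ve^{N-1})$, because $\frac N2>\frac{2(N-1)}{N-2}$.

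For $N=4$ the corrected balance $\mu_\ve\beta^*_\ve|\mathcal{E}_0^*(\xi_0)|\sim\lambda\mu_\ve^2|\log\mu_\ve|$ is compatible with $\beta^*_\ve\sim|\ve|^{1/2}$; it only gives $\mu_\ve\sim|\ve|^{1/2}/|\log|\ve||$. So your reason for excluding an interior point with $\mathcal{E}_0^*(\xi_0)\neq0$ disappears. The missing idea is to feed the eigenspace equations $(3)$ of Proposition~\ref{refine-exp-orth} back into the classification. By \eqref{cancelation} the deviation satisfies $\beta_{1,\ve}=o(\beta^*_\ve)$, so $\mu_\ve^{\frac{N-2}{2}}\max_l|e_{k,l}(\xi_\ve)|\lesssim|\ve|\beta_{1,\ve}+(\text{lower order})=o(|\ve|\beta^*_\ve)$. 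At an interior point with $\mathcal{E}_0^*(\xi_0)\neq0$, this reads $\mu_\ve=o(|\ve|^{3/2})$ for $N=4$, a contradiction. For $N=5$ it reads $\mu_\ve^{3/2}=o(|\ve|^{7/4})$, which is compatible with $\mu_\ve\sim|\ve|^{3/2}$. This is exactly what separates $(b_1)$ from $(b_2)$.

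Your boundary exclusion also fails as stated. The $\nabla\varphi$ term dominated in Theorems~\ref{zero weak limit+} and \ref{zero weak limit-1} only because $(\beta^*_\ve)^p\lesssim\mu_\ve^{\frac{N-2}{2}}$ held there. Now $\beta^*_\ve\sim|\ve|^{\frac{1}{p-1}}$ is decoupled from $\mu_\ve$, and the $\mathcal{Z}^i$-conditions simply fix $\kappa_\ve^{N-1}\sim\mu_\ve^{\frac N2}\beta^*_\ve$. For $N=4$ the inequality above still works. Combined with $\kappa_\ve^3\sim\mu_\ve^2\beta^*_\ve$ and $\kappa_\ve^2\lesssim\mu_\ve^2|\log\mu_\ve|$, it forces $d(\xi_\ve,\partial\Omega)^4=o(|\ve|)$, contradicting $d(\xi_\ve,\partial\Omega)\gtrsim|\log\mu_\ve|^{-1/2}$. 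For $N=5$ the paper argues by signs via \eqref{exp-rob}; check that step before relying on it. By $(2)$ one has $\mathcal{E}^{**}_\ve(\xi_\ve)\approx\nabla\mathcal{E}^{**}_\ve(\xi_\ve)\cdot(\xi_\ve-\xi_\ve')<0$, which is the same sign as the Robin term in $(1)$. So those two conditions alone produce no conflict.
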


\vskip0.12in

\begin{remark}
Since we have renormalized $\{e_{j,i}\}$ such that $\|e_{j,i}\|_{H^1_0(\Omega)}=1$, the matrix~\eqref{matrix} is just the Hessian of the function
\begin{eqnarray*}
\mathcal{H}(\pmb{t}_{k})=\frac{1}{2}\left\|\sum_{n=1}^{m_k}t_{k,n}e_{k,n}\right\|_{L^2(\Omega)}^{2}-\frac{1}{2^*}\left\|\sum_{n=1}^{m_k}t_{k,n}e_{k,n}\right\|_{L^{p+1}(\Omega)}^{p+1}+\frac{m_k}{2}
\end{eqnarray*}
at the point
\begin{eqnarray*}
\hat{\pmb{t}}_{k,0}=\left(\left(\frac{\|\mathcal{E}_0^*\|_{L^{2}(\Omega)}^{2}}{\|\mathcal{E}_0^*\|_{L^{p+1}(\Omega)}^{p+1}}\right)^{\frac{1}{p-1}}t_{k,1,0}, \left(\frac{\|\mathcal{E}_0^*\|_{L^{2}(\Omega)}^{2}}{\|\mathcal{E}_0^*\|_{L^{p+1}(\Omega)}^{p+1}}\right)^{\frac{1}{p-1}}t_{k,2,0}, \cdots, \left(\frac{\|\mathcal{E}_0^*\|_{L^{2}(\Omega)}^{2}}{\|\mathcal{E}_0^*\|_{L^{p+1}(\Omega)}^{p+1}}\right)^{\frac{1}{p-1}}t_{k,m_k,0}\right).
\end{eqnarray*}
Moreover, if $k=1$ or $k\geq2$ and $\Omega$ is generic in the sense of Uhlenbeck \cite{U1976}, then $\lambda_k$ is simple.  In these cases, the matrix~\eqref{matrix} is reduced to $(p-1)\|\mathcal{E}_0^*\|_{L^{2}(\Omega)}^{2}$ which is automatically regular.  Thus, the classification result stated in Theorem~\ref{zero weak limit-2} holds true if $\lambda_k$ is simple.  Such kind of blow-up sign-changing solutions have been constructed in \cite{IV, LVWW} for $N=5$.
\end{remark}

\vskip0.12in

Theorems~\ref{zero weak limit+}, \ref{zero weak limit-1} and \ref{zero weak limit-2} give {\bf a first complete classification} of the Struwe decomposition under the condition~\eqref{condition}, which we believe will be useful in understanding many other critical equations.  Going back to the proof of the compactness of $\{u_\ve\}$, since the blow-up phenomenon is classified, the blow-up phenomenon of $\{u_\ve\}$ for $\ve=\lambda-\lambda_1\to0^-$ will contradict the regularity of the principal eigenfunction (see the Step~1 in the proof of Theorem~\ref{existence} for more details).  Thus, $4d$ Brezis-Nirenberg equation~\eqref{BN} has a least energy sign-changing solution for $\lambda=\lambda_1$.  For $\lambda=\lambda_k$ with $k\geq2$, we consider the blow-up phenomenon of $\{u_\ve\}$ for $\ve=\lambda-\lambda_k\to0^+$.  Then the classification of the blow-up phenomenon implies that
\begin{eqnarray*}
E_{\lambda}\left(u_{\ve}\right)\geq\frac{1}{4}S^2+o(\mu_\ve^2|\log\mu_\ve|)
\end{eqnarray*}
as $\ve\to0^+$, where $\mu_\ve$ is the blow-up rate of $u_{\ve}$.  However, since $u_\ve$ is a least energy sign-changing solution, it has a variational characterization
\begin{eqnarray*}
E_\lambda(u_{\ve})=\inf_{v\in\mathbb{Y}_k\atop v\not=0}\max_{w\in\mathbb{X}_k\atop t>0}E_\lambda(tv+w)
\end{eqnarray*}
where $\mathbb{X}_k=\oplus_{j=1}^{k}\Xi_j$ and $\mathbb{Y}_k=\oplus_{j=k+1}^{\infty}\Xi_j$.
We then construct a test function of $E_\lambda(u_{\ve})$, based on the same blow-up rate $\mu_\ve$ but new location $\eta_\ve$ such that $\sum_{l=1}^{m_k}s^*_{k,l,\ve}e_{k,l}(\eta_\ve)=0$ and $d(\eta_\ve, \partial\Omega)\gtrsim1$, by this variational characterization, which leads to
\begin{eqnarray*}
E_{\lambda}\left(u_{\ve}\right)-\frac{1}{4}S^2\lesssim-\mu_\ve^2|\log\mu_\ve|.
\end{eqnarray*}
Here, $\sum_{l=1}^{m_k}s^*_{k,l,\ve}e_{k,l}(x)$ is the next order of the constructed test function given in \eqref{test-10}.
We remark that this construction only works for $k\geq2$ since we need the property that $\sum_{l=1}^{m_k}s_{k,l}e_{k,l}$ is sign-changing in $\Omega$ for any $\pmb{s}_{k}\not=\pmb{0}\in\mathbb{R}^{m_k}$.  Thus, the blow-up phenomenon is impossible according to the energy expansion up to the second order term in passing to the limit.  It follows that $4d$ Brezis-Nirenberg equation~\eqref{BN} has a least energy sign-changing solution for $\lambda=\lambda_k$ with all $k\geq2$.

\vskip0.12in

We also remark that, as pointed out above, Theorem~\ref{existence} {\bf completes the existence theory} of the Brezis-Nirenberg equation~\eqref{BN} for $N\geq4$ in the sense that it asserts that  the $4d$ Brezis-Nirenberg equation~\eqref{BN} always has sign-changing solutions for $\lambda=\lambda_i$ with $i\geq2$, which has been open for a long time since \cite{CFS}.  For $\lambda=\lambda_1$, the existence theory of the $4d$ Brezis-Nirenberg equation~\eqref{BN} has already been established by Clapp and Weth \cite{CW2005}.  The novelty of Theorem~\ref{existence} is that it asserts that  one of the nontrivial solutions of the $4d$ Brezis-Nirenberg equation~\eqref{BN} for $\lambda=\lambda_1$ must be a least energy one.

\subsection{Further remarks}
As pointed out above, the crucial point in proving Theorem~\ref{existence} is the classification of the one-bubble blow-up phenomenon of solutions of \eqref{BN}.  Such kind of studies was implicitly initiated by Brezis' discussion on the open question~7 in \cite{B1986} and Han made the first attempt in this direction.  He proved in \cite{H1991} by the blow-up analysis and the local Pohozaev identity that as $\lambda\to0$ for $N\geq4$, the positive solution of \eqref{GS}, say $u_\lambda$, will concentrate and blow up whose behavior likes $U_{\mu_\lambda, \xi_\lambda}$ such that $\mu_{\lambda}\to0$ with a precise dependence on $\lambda$ and $\xi_\lambda\to\xi_0$ with $\xi_0$ being a global minimal point of the Robin function of $\Omega$.  The same result is obtained by Rey in \cite{R1989} via orthogonally decomposing the remaining term.  The case $\lambda\to\lambda_*$ in $N=3$ is much more complicated and the same result of the positive solution of \eqref{GS} is established by Druet \cite{D2002} more than ten years later.  The proof is much more involved, which required by further adapting the iteration technique for the decay rate away from the concentration point and the harmonic analysis near the concentration point.  Esposito gave a simple proof of Druet's result in \cite{E2004} by orthogonally decomposing the remaining term and estimating the quadratic form of the functional in \eqref{GS} at its positive solution.  By further developing the idea in \cite{D2002} with the application of the Harnack inequality in the neck region of each blow-up point, Druet and Laurain proved in \cite{DL2010} the stability of the Pohozaev obstruction of \eqref{BN} in a more general setting, which asserted that blow-up points of the positive solutions of \eqref{BN} with finite energy are all isolated.  Recently, in the similar spirit, Konig and Laurain \cite{KL2022-1,KL2024} derived the precise profile of all positive solutions of \eqref{BN} as $\lambda\to0$ for $N\geq4$ and $\lambda\to\lambda_*$ for $N=3$ by further applying the refined analysis.  Moreover, Cao, Luo and Peng \cite{CLP2021} proved, by applying the local Pohozaev identity, the local uniqueness of blow-up positive solutions as $\lambda\to0$ for $N\geq6$, if all concentration points are nondegenerate critical points of the Robin function of $\Omega$.  Besides, in the recent papers \cite{FKK2020,FKK2021,FKK2024}, by nice and surprisingly elegant development  of the estimation on the quadratic form of the functional in \eqref{GS} in \cite{E2004} with very little regularity assumptions, Frank, Konig and Kovarik improved the results of Druet \cite{D2002}, Esposito \cite{E2004}, Han \cite{H1991} and Rey \cite{R1989} in the following two fronts:  
\begin{enumerate}
\item[$(1)$]\quad The same conclusions hold true for minimizing sequences of \eqref{GS} satisfying a mild additional condition.
\item[$(2)$]\quad The expansion of minimizing sequences could be precisely up to the second order term.
\end{enumerate}
These results give a rather complete understanding of the classification of positively blow-up solutions of \eqref{BN}.  It is also worth pointing out that the above a priori analysis on the profile of nontrivial solutions of \eqref{BN} as $\lambda\to0$ for $N\geq4$ and $\lambda\to\lambda_*$ for $N=3$ are all based on the following crucial fact: All the positive solutions of the Yamabe equation in $\mathbb{R}^N$, that is, the limit equation in the blow-up analysis of \eqref{BN}, are the Aubin-Talenti bubbles $U_{\mu,\xi}$ (proved by Caffarelli, Gidas and Spruck \cite{CGS}), moreover, the Aubin-Talenti bubbles are all nondegenerate in the sense that the only bounded solutions to the linearization of the Yamabe equation in $\mathbb{R}^N$ at $U_{\mu,\xi}$ are spanned by $\partial_{\xi_i}U_{\mu,\xi}$, $1\leq i\leq N$ and $\partial_{\mu}U_{\mu,\xi}$ (proved by Rey \cite{R} and Bianchi and Egnell \cite{BE1991}).

\vskip0.12in

The classification on the profile of sign-changing solutions of \eqref{BN}, as the parameter $\lambda$ close to a concentration value, is also considered in the literatures.  Ben Ayed, Mehdi and Pacella made the first attempt in this direction in \cite{BEP2006-2, BEP2006-1} by considering the least energy sign-changing solutions for $N=3$, $\lambda\to\overline{\lambda}^*$ and $N\geq4$, $\lambda\to0$, respectively, where $\overline{\lambda}^*\geq\lambda_*$ is defined by
\begin{eqnarray*}
\overline{\lambda}^*=\inf\left\{\lambda>0\mid \text{\eqref{BN} has a least energy sign-changing solution for }N=3\right\}.
\end{eqnarray*}
They proved, by further assuming that the blow-up rates of the positive and negative part of $u_\lambda^*$ are the same in the case $N\geq4$, that the least energy sign-changing solution $u_\lambda^*$ will concentrate and blow up at two different points of $\Omega$ as $\lambda\to\overline{\lambda}^*$ in the case $N=3$ and $\lambda\to0$ in the case $N\geq4$.  It is worth pointing out that it is not yet known what is the value $\overline{\lambda}^*$.  
Iacopetti \cite{Iac} and Iacopetti and Pacella \cite{IP2015} have observed that 
the further assumptions in \cite{BEP2006-1} are not necessary for $N\geq7$ and seems to be necessary for $4\leq N\leq 6$.  They proved in \cite{Iac} that least energy sign-changing solution $u_\lambda^*$ will concentrate and blow up at two different points of $\Omega$ as $\lambda\to0$ with different blow-up rates of the positive and negative part  such that their limit profile is a tower of two bubbles for $N\geq7$ and in \cite{IP2015} that this kind of solutions do not exist for $4\leq N\leq 6$.  We remark that since the profile of the blow-up sign-changing solutions of \eqref{BN} may be also sign-changing, and sign-changing solutions of the Yamabe equation in $\mathbb{R}^N$, that is, the limit equation in the blow-up analysis of \eqref{BN}, are far away from well understood, see, for example, the construction of  del Pino, Musso, Pacard and Pistoia \cite{dPMPP2011}, Ding \cite{D1986}, Medina, Musso and Wei \cite{MMW2019} and Medina and Musso \cite{MM2021} on infinitely many sign-changing solutions of the Yamabe equation in $\mathbb{R}^N$, the profile of blow-up sign-changing solutions of \eqref{BN} are also far away from well understood, except the radial case.  In the radial setting, that is, $\Omega$ is a unit ball, the profile of blow-up sign-changing solutions of \eqref{BN} are completely understood by making a detailed ODE analysis in \cite{ABP, AY, AGGPV, GG, EGPV, IP}, which asserts that 
\begin{enumerate}
\item[$(1)$]\quad $\left(\frac{2m-1}{2}\pi\right)^2$ are all concentration values of \eqref{BN} for $N=3$.
\item[$(2)$]\quad Eigenvalues $\lambda_i$ are all concentration values of \eqref{BN} for $N=4,5$.
\item[$(3)$]\quad All concentration values of \eqref{BN} for $N=6$ are $\overline{\lambda}_m$ such that \eqref{BN} for $N=6$ and $\lambda=\overline{\lambda}_m$ has a unique radial solution which has $m-1$ nodal domains.
\item[$(4)$]\quad $\lambda=0$ is the only concentration value of \eqref{BN} for $N\geq7$.
\end{enumerate}
Moreover, the only concentration point in the radial setting is the origin with one standard Aubin-Talenti bubble for $3\leq N\leq 6$ and arbitrary number of standard Aubin-Talenti bubbles in an alternative way on the sign for $N\geq7$.
Thus, to our best knowledge, Theorems~\ref{zero weak limit+}, \ref{zero weak limit-1} and \ref{zero weak limit-2} are the first results on a complete classification of one-bubble sign-changing solutions of \eqref{BN} in a general bounded domain, which may be of independent interest.  Since similar blow-up analysis of sign-changing solutions is carried out for the Schr\"odinger equation on Riemannian manifolds, see, for example, 
\cite{DMW2019,MPV2009,PV2013,P2024,PR2025,PT2018,PV2022,PV2022-1,RV2013,RV2015,RV2023} and the references therein, we believe that the refinement of the inverse reduction argument developed in this paper will also be useful in the Riemannian setting to deeper understand the compactness of sign-changing solutions of the Schr\"odinger equation on Riemannian manifolds.

\section{Preliminaries}
We recall that we have denoted the projection of $U_{\mu,\xi}$ onto $H_0^1(\Omega)$ by $\mathcal{W}_{\mu,\xi}$.  Then it is also well known (\cite{MRV2024,R}) that 
\begin{equation}\label{exp-ker}
\mathcal{Z}_{\mu,\xi}^i(x)=\left\{\aligned &\Phi_{\mu,\xi}^0(x)-\frac{\alpha_N(N-2)}{2}\mu^{\frac{N-2}{2}}H(x,\xi)+O\left(\frac{\mu^{\frac{N+2}{2}}}{d(\xi,\partial \Omega)^{N}}\right),\quad\text{for }i=0,\\
&\Phi_{\mu,\xi}^i(x)-\alpha_N \mu^{\frac{N}{2}}\frac{\partial H(x,\xi)}{\partial \xi_i }+O\left(\frac{\mu^{\frac{N+4}{2}}}{d(\xi,\partial \Omega)^{N+1}}\right), \quad\text{for }i=1,\cdots,N\endaligned\right.
\end{equation}
and
\begin{equation}\label{exp-bub}
\mathcal{W}_{\mu,\xi}(x)= U_{\mu,\xi}(x)-\alpha_N\mu^{\frac{N-2}{2}}H(x,\xi)+O\left(\frac{\mu^{\frac{N+2}{2}}}{d(\xi,\partial \Omega)^{N}}\right),
\end{equation}
where $d(\xi,\partial \Omega)$ is the distance of $\xi$ and $\partial\Omega$, and $H(x,\xi)$ is the regular part of the Green function given by \eqref{regular}.  Clearly, by applying the maximum principle to \eqref{Projection} and \eqref{linear}, 
\begin{eqnarray}\label{comp}
0<\mathcal{W}_{\mu,\xi}\leq U_{\mu,\xi}\quad\text{and}\quad \left|\mathcal{Z}_{\mu,\xi}^0\right|\lesssim \mathcal{W}_{\mu,\xi}
\end{eqnarray}
in $\Omega$.
Moreover, it is also known (\cite{LVWW}) that
\begin{eqnarray*}\label{ker-gre}
\mathcal{Z}_{\mu,\xi}^i(x)=\left\{\aligned \frac{(N-2)\alpha_N}{2\gamma_N} \mu^{\frac{N-2}{2}} G(x,\xi)+\omega_{\mu,\xi}^0,\quad\text{if } i=0,\\
\frac{\alpha_N}{\gamma_N} \mu^{\frac{N}{2}} \frac{\partial G(x,\xi) }{\partial \xi_i}+\omega_{\mu,\xi}^i,\quad \text{if } 1\leq i\leq N\endaligned \right.
\end{eqnarray*}
and
\begin{eqnarray*}\label{bub-gre}
\mathcal{W}_{\mu,\xi}(x)=\frac{\alpha_N}{\gamma_N} \mu^{\frac{N-2}{2}} G(x,\xi)+\omega^*_{\mu,\xi},
\end{eqnarray*}
where $G(x,\xi)$ is the Green function of $\Omega$ given by \eqref{Green},
\begin{eqnarray*}
|\omega_{\mu,\xi}^0|+|\omega_{\mu,\xi}^*| \lesssim \frac{\mu^{\frac{N+2}{2}}}{d(\xi,\partial \Omega)^{N}}+\left\{\aligned &\mu^{\frac{N-2}{2}} |x-\xi|^{-(N-2)},\quad \text{if }|x-\xi| \lesssim \mu,\\
&\mu^{\frac{-(N-2)}{2}},\quad\text{if } |x-\xi|\sim \mu,\\
&\mu^{\frac{N+2}{2}}|x-\xi|^{-N},\quad\text{if }|x-\xi| \gtrsim \mu
\endaligned\right.
\end{eqnarray*}
and
\begin{eqnarray*}
|\omega_{\mu,\xi}^i| \lesssim \frac{\mu^{\frac{N+2}{2}}}{d(\xi,\partial \Omega)^{N}}+\left\{\aligned &\mu^{\frac{N}{2}} |x-\xi|^{-(N-1)} \,\, \text{if} \,\, |x-\xi| \lesssim \mu,\\
&\mu^{\frac{-(N-2)}{2}},\,\,\, \,\text{if} |x-\xi|\sim \mu\\
&\mu^{\frac{N+4}{2}}|x-\xi|^{-N-1}, \,\, \,\,\text{if} \,\, |x-\xi| \gtrsim \mu
\endaligned\right.
\end{eqnarray*}
for $1\leq i\leq N$.  We denote the Robin function of $\Omega$ by $\varphi(x)$, that is, $\varphi(x)=H(x,x)$.  Then it is well known (\cite{CPY2021}) that for smooth $\Omega$,
\begin{eqnarray}\label{exp-rob}
\left\{\aligned
&\varphi(x)=\frac{1}{(2d(x,\partial \Omega))^{N-2}}\left(1+O\left(d(x,\partial \Omega)\right)\right) , \\
 &\nabla\varphi(x) =\frac{2(N-2)}{(2d(x,\partial \Omega))^{N-1}}\frac{x'-x}{|x'-x|}(1+O(d(x,\partial \Omega)))\endaligned\right.
\end{eqnarray}
as $d(x,\partial \Omega)\to0$, where $x' \in \partial \Omega$ is the unique point satisfying $d(x,\partial \Omega)=|x-x'|$.

\vskip0.12in

Let $\textit{i}^*:L^{\frac{p+1}{p}}(\Omega)\rightarrow H_0^1(\Omega)$ be the adjoint operator of the embedding $\textit{i}^*:H_0^1(\Omega)\rightarrow L^{p+1}(\Omega)$, namely, if $v \in L^{\frac{p+1}{p}}(\Omega)$ then $u =\textit{i}^*(v) \in H_0^1(\Omega)$ is the unique solution of the equation
\begin{eqnarray*}
\left\{\aligned  -\Delta u=v,\quad \text{in }\Omega,\\
u=0,\quad\text{on }\partial \Omega.
\endaligned\right.
\end{eqnarray*}
It follows from the continuity of the adjoint operator that
\begin{eqnarray*}
\|\textit{i}^*(v)\|_{H_0^1(\Omega)} \lesssim \|v\|_{L^{\frac{p+1}{p}}(\Omega)},\quad \forall v\in L^{\frac{p+1}{p}}(\Omega).
\end{eqnarray*}
Let 
\begin{eqnarray*}\label{ker-BN}
E_\ve=\left\{\aligned
&\text{span}\left\{Z_{\mu,\xi}^i\mid0\leq i \leq N\right\},\quad \overline{\lambda}\not=\lambda_\kappa,\\
&\text{span}\left\{Z_{\mu,\xi}^i,e_{k,l}\mid0\leq i \leq N, 1 \leq l \leq m_k\right\},\quad \overline{\lambda}=\lambda_k,
\endaligned\right.
\end{eqnarray*}
and $\Pi^\perp:H_0^1(\Omega)\rightarrow E_\ve^\perp$ is the projection, where
\begin{eqnarray*}
E_\ve^\perp=\left\{v\in H^1_0(\Omega)\mid \langle v, \phi\rangle=0, \forall\phi\in E_\ve\right\}
\end{eqnarray*}
and $\langle u, v\rangle=\int_{\Omega}\nabla u\nabla vdx$ is the usual inner product in $H_0^1(\Omega)$.  Then we have the following fundamental result.
\begin{lemma}\label{Lem2.1}
Let $N\geq4$, then
\begin{equation*}
\|\rho\|_{H_0^1(\Omega)} \lesssim \|g\|_{L^{\frac{p+1}{p}}(\Omega)}
\end{equation*}
for any $\rho \in E_\ve^\perp$ and $g\in H_0^{1}(\Omega)$ satisfying
\begin{eqnarray*}
\Pi^{\perp}\left[\rho-i^*\left(\lambda \rho+f'(\mathcal{W}_{\mu_\ve,\xi_\ve})\rho+g\right)\right]=0.
\end{eqnarray*}
\end{lemma}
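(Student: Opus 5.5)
We argue by contradiction, following the standard invertibility argument for the linearized operator in Lyapunov--Schmidt reductions. Here $f'(\mathcal{W}_{\mu_\ve,\xi_\ve})=p|\mathcal{W}_{\mu_\ve,\xi_\ve}|^{p-1}$. Suppose there are sequences $\ve_n\to0$, $\rho_n\in E_{\ve_n}^\perp$ and $g_n$ satisfying the projected equation, with $\|\rho_n\|_{H_0^1(\Omega)}=1$ and $\|g_n\|_{L^{\frac{p+1}{p}}(\Omega)}\to0$. We write $\mu_n=\mu_{\ve_n}$, $\xi_n=\xi_{\ve_n}$ and $\lambda_n=\overline{\lambda}+\ve_n$.

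The equation says that
\[
\rho_n-i^*\left(\lambda_n\rho_n+p\mathcal{W}_{\mu_n,\xi_n}^{p-1}\rho_n+g_n\right)=\sum_{i=0}^N c_n^i\mathcal{Z}^i_{\mu_n,\xi_n}+\sum_l d_n^l e_{k,l}
\]
for some coefficients $c_n^i$ and $d_n^l$, the second sum being present only when $\overline{\lambda}=\lambda_k$.

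\textbf{Step 1: the coefficients vanish.} Pair the identity with $\mathcal{Z}^j_{\mu_n,\xi_n}$ and with $e_{k,l}$. Since $\rho_n\in E_{\ve_n}^\perp$, the left side reduces to integrals of the form $\int(\lambda_n\rho_n+p\mathcal{W}^{p-1}\rho_n+g_n)\mathcal{Z}^j$ and $\int(\lambda_n\rho_n+p\mathcal{W}^{p-1}\rho_n+g_n)e_{k,l}$.

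For the $\mathcal{Z}^j$ pairings, use $\langle\rho_n,\mathcal{Z}^j\rangle=p\int U^{p-1}\Phi^j\rho_n=0$. Then $p\int\mathcal{W}^{p-1}\mathcal{Z}^j\rho_n=p\int U^{p-1}\Phi^j\rho_n+o(1)=o(1)$ by \eqref{exp-bub} and \eqref{exp-ker}. The term $\lambda_n\int\rho_n\mathcal{Z}^j$ is $o(1)$ because $\|\mathcal{Z}^j\|_{L^2}\to0$ for $N\geq4$ (at worst like $\mu|\log\mu|^{1/2}$ when $N=4$).

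For the $e_{k,l}$ pairings, the contribution $\lambda_k\int\rho_n e_{k,l}=\frac{\lambda_k}{\lambda_k}\langle\rho_n,e_{k,l}\rangle=0$, and $\int\mathcal{W}^{p-1}\rho_n e_{k,l}=o(1)$ since $\|\mathcal{W}^{p-1}\|_{L^{N/2}}$ concentrates.

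Finally, the Gram matrix $\langle\mathcal{Z}^i,\mathcal{Z}^j\rangle$ is asymptotically diagonal with positive entries, $\langle\mathcal{Z}^i,e_{k,l}\rangle=p\int U^{p-1}\Phi^ie_{k,l}\to0$, and $\langle e_{k,l},e_{k,t}\rangle=\delta_{lt}$. Hence all $c_n^i,d_n^l\to0$.

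\textbf{Step 2: blow-up limit.} Set $\tilde\rho_n(y)=\mu_n^{\frac{N-2}{2}}\rho_n(\mu_ny+\xi_n)$. This is bounded in $D^{1,2}(\mathbb{R}^N)$, since $d(\xi_n,\partial\Omega)/\mu_n\to\infty$, so up to a subsequence $\tilde\rho_n\rightharpoonup\tilde\rho$. Testing the equation against rescaled smooth functions, the $\lambda$-term vanishes after scaling (it carries a factor $\mu_n^2$), and $g_n$ and the coefficient terms vanish by Step 1. Thus $-\Delta\tilde\rho=pU^{p-1}\tilde\rho$ in $\mathbb{R}^N$. The orthogonality $\int U^{p-1}\Phi^i\tilde\rho=0$ passes to the limit, so the nondegeneracy of $U$ gives $\tilde\rho=0$. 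Consequently $\int\mathcal{W}_{\mu_n,\xi_n}^{p-1}\rho_n^2\to0$, because $U^{p-1}$ weights are locally compact in the rescaled variable and decay at infinity.

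\textbf{Step 3: weak limit in $\Omega$ and conclusion.} Let $\rho_n\rightharpoonup\rho_0$ in $H_0^1(\Omega)$. Since the bubble term converges weakly to $0$, the limit satisfies $-\Delta\rho_0=\overline{\lambda}\rho_0$. If $\overline{\lambda}\notin\sigma(-\Delta)$, then $\rho_0=0$. If $\overline{\lambda}=\lambda_k$, then $\rho_0\in\Xi_k$, and orthogonality to $e_{k,l}$ forces $\rho_0=0$. Hence $\rho_n\to0$ in $L^2$ by compact embedding.

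Now test the equation with $\rho_n$ itself. The coefficient terms drop by orthogonality, giving
\[
1=\|\rho_n\|^2=\lambda_n\|\rho_n\|_{L^2}^2+p\int\mathcal{W}^{p-1}\rho_n^2+\int g_n\rho_n=o(1),
\]
which is a contradiction.

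\textbf{Main obstacle.} The delicate step is $\int\mathcal{W}^{p-1}\rho_n^2\to0$ from a merely weak limit. I would split it into $B_{R\mu_n}(\xi_n)$, where local compactness applies, and its complement, where $\|U^{p-1}\|_{L^{N/2}}$ is small for large $R$, combined with Hölder and Sobolev inequalities. For $N=4$, the $L^2$ smallness of $\mathcal{Z}^0$ is only logarithmic, and this should be tracked carefully.
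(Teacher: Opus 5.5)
Your proposal is correct and follows the route the paper takes. The paper omits the details and simply invokes the standard contradiction/blow-up argument of Iacopetti--Vaira (Proposition~4.1 there): normalize $\|\rho_n\|_{H_0^1(\Omega)}=1$, show the Lagrange coefficients on $\mathcal{Z}^i_{\mu_n,\xi_n}$ and $e_{k,l}$ vanish, use the nondegeneracy of $U_{1,0}$ to kill the rescaled limit, use the spectral condition to kill the weak limit in $\Omega$, and test with $\rho_n$. Your treatment of $\int_\Omega \mathcal{W}_{\mu_n,\xi_n}^{p-1}\rho_n^2\,dx\to0$ via $B_{R\mu_n}(\xi_n)$ and the tail of $\|U_{1,0}^{p-1}\|_{L^{N/2}}$ is exactly what is needed. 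The $N=4$ point you flag is harmless, since $\|\mathcal{Z}^0_{\mu_n,\xi_n}\|_{L^2(\Omega)}\lesssim\mu_n|\log\mu_n|^{1/2}\to0$ already suffices.
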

\begin{proof}
Under the condition~\eqref{Struwe}, the proof of the conclusion is achieved by applying the same blow-up arguments used for \cite[Proposition~4.1]{IV} (see also \cite[Proposition~3.1]{LVWW}).  Since these blow-up arguments are well known nowadays, we omit the details here.
\end{proof}

\section{The inverse reduction and the basic estimates}
By \eqref{BN} and \eqref{Projection}, $v_\ve$, the remaining term in the Struwe decomposition given by \eqref{v}, satisfies the following equation
\begin{eqnarray}\label{v-equ}
\left\{\aligned
&-\Delta {v}_\ve= \lambda {v}_{\ve}+f(\mathcal{W}_{\mu_\ve,\xi_\ve}+v_{\ve})-f(\mathcal{W}_{\mu_\ve,\xi_\ve})+f(\mathcal{W}_{\mu_\ve,\xi_\ve})-f(U_{\mu_\ve,\xi_\ve})+\lambda \mathcal{W}_{\mu_\ve,\xi_\ve},\quad\text{in }\Omega,\\
&\langle v_\ve, \mathcal{Z}_{\mu_\ve, \xi_\ve}^i\rangle=\int_{\Omega}U_{\mu_\ve, \xi_\ve}^{p-1}\Phi_{\mu_\ve,\xi_\ve}^i v_\ve dx=0,\quad 0\leq i\leq N,\\
&v_\ve=0,\quad\text{on }\partial\Omega,
\endaligned\right.
\end{eqnarray}
where for the sake of simplicity, we denote $f(u)=|u|^{p-2}u$.
\begin{lemma}\label{L-infinity}
Let $N\geq4$ and $v_\ve$ is the solution of \eqref{v-equ}, then we have
\begin{eqnarray*}
\|v_\ve\|_{L^\infty (\Omega)} \lesssim  \mu_\ve^{-\frac{N-2}{2}}\kappa_\ve^{N-2}+\mu_\ve^{\frac{6-N}{2}}
\end{eqnarray*}
as $\ve\to0$,  where $\kappa_\ve=\frac{\mu_\ve}{d(\xi_\ve,\partial \Omega)}$.
\end{lemma}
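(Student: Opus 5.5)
The plan is to obtain the $L^\infty$ bound from the equation for $u_\ve$ itself rather than from \eqref{v-equ}. Write $|v_\ve|\le |u_\ve-U_{\mu_\ve,\xi_\ve}|+|U_{\mu_\ve,\xi_\ve}-\mathcal{W}_{\mu_\ve,\xi_\ve}|$. By \eqref{exp-bub}, the second term is $\alpha_N\mu_\ve^{\frac{N-2}{2}}H(x,\xi_\ve)+O(\mu_\ve^{\frac{N+2}{2}}d_\ve^{-N})$ with $d_\ve=d(\xi_\ve,\partial\Omega)$. Since $0\le H(x,\xi)\lesssim d(\xi,\partial\Omega)^{-(N-2)}$ by the maximum principle, this term is $\lesssim \mu_\ve^{\frac{N-2}{2}}d_\ve^{2-N}=\mu_\ve^{-\frac{N-2}{2}}\kappa_\ve^{N-2}$, which is the first term in the claim. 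So the real task is a pointwise bound on $u_\ve$ relative to the bubble.

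First step: a global pointwise decay estimate $|u_\ve(x)|\lesssim U_{\mu_\ve,\xi_\ve}(x)+\mu_\ve^{\frac{6-N}{2}}$, or at least a bound of the form $|u_\ve|\lesssim \mu_\ve^{\frac{N-2}{2}}|x-\xi_\ve|^{2-N}$ away from a $\mu_\ve$-neighbourhood. I would obtain this with the standard blow-up/isolated simple blow-up arguments cited before \eqref{Struwe} (\cite{H1991,R1989}): rescale $\tilde u_\ve(y)=\mu_\ve^{\frac{N-2}{2}}u_\ve(\mu_\ve y+\xi_\ve)$, which converges to the bubble in $C^2_{loc}$ by \eqref{Struwe}. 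Then use the $\epsilon$-regularity/Moser iteration together with the energy bound \eqref{condition}: since there is only one bubble, $|x-\xi_\ve|^{\frac{N-2}{2}}|u_\ve(x)|$ stays bounded. A Harnack inequality on annuli combined with the Green representation $u_\ve=\int_\Omega G(x,y)(\lambda u_\ve+|u_\ve|^{p-1}u_\ve)\,dy$ then upgrades this to the $|x-\xi_\ve|^{2-N}$ decay up to a bounded remainder. Here the linear term $\lambda u_\ve$ contributes through the Green kernel, and iterating the representation once gives the extra factor.

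Second step: insert this into the Green representation for $u_\ve-\mathcal{W}_{\mu_\ve,\xi_\ve}$, namely
\[
v_\ve(x)=\int_\Omega G(x,y)\Big(\lambda u_\ve+f(u_\ve)-f(U_{\mu_\ve,\xi_\ve})\Big)dy .
\]
Near $\xi_\ve$, in the region $|y-\xi_\ve|\lesssim R\mu_\ve$, the rescaled $C^2_{loc}$ convergence together with nondegeneracy and orthogonality \eqref{orthogonality} makes $f(u_\ve)-f(U)$ small compared with $U^{p}$. Away from $\xi_\ve$, the pointwise decay controls it by $U^{p-1}|v_\ve|+|v_\ve|^p$. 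The linear part is estimated by $\lambda\int_\Omega G(x,y)U_{\mu_\ve,\xi_\ve}(y)\,dy\lesssim \mu_\ve^{\frac{N-2}{2}}\int_\Omega |x-y|^{2-N}|y-\xi_\ve|^{2-N}dy$. For $N\ge5$ this is $\lesssim \mu_\ve^{\frac{N-2}{2}}|x-\xi_\ve|^{4-N}$, which is maximal at $|x-\xi_\ve|\sim\mu_\ve$ and there equals $\mu_\ve^{\frac{6-N}{2}}$. For $N=4$ one gets $\mu_\ve|\log|x-\xi_\ve||\lesssim\mu_\ve|\log\mu_\ve|$, which still needs care to match $\mu_\ve^{\frac{6-N}{2}}=\mu_\ve$. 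I would compute this more carefully, using $U\lesssim \mu_\ve^{-1}$ on the core to truncate the logarithm; if the logarithm genuinely persists, I would handle the $N=4$ endpoint through the $O(1)$ interpretation, noting that $\mu_\ve|\log\mu_\ve|\to0$ anyway.

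Finally, absorb the nonlinear terms. Setting $M_\ve=\|v_\ve\|_{L^\infty}$, the term $\int G\,U^{p-1}|v_\ve|$ is a priori of order $M_\ve$ and is not directly small. This is the main obstacle. I would first try a weighted norm $\sup_x |v_\ve(x)|/\big(\mu_\ve^{-\frac{N-2}{2}}\kappa_\ve^{N-2}+\mu_\ve^{\frac{6-N}{2}}+\sigma_\ve(x)\big)$ and argue by contradiction. If the ratio diverges, normalize $v_\ve$ by it and blow up at $\xi_\ve$ at scale $\mu_\ve$; the limit solves the linearized equation \eqref{linear}, is bounded, and is orthogonal to all $\Phi^i$ by \eqref{orthogonality}, hence vanishes. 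Away from the core, the Green representation shows the normalized function tends to $0$, which contradicts the normalization. This is exactly the blow-up scheme referred to in Lemma~\ref{Lem2.1}, and it yields the stated bound.
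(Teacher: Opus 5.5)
The gap is in everything you do away from $\xi_\ve$, and it comes from the fact that $\lambda\to\overline{\lambda}=\lambda_k$ is a resonant limit.

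Your Step~1 bound $|u_\ve|\lesssim U_{\mu_\ve,\xi_\ve}+\mu_\ve^{\frac{6-N}{2}}$ is borrowed from \cite{H1991,R1989}. Those papers treat positive solutions with $\lambda\to0$, where $-\Delta-\lambda$ is uniformly invertible; your Harnack step also needs a sign that $u_\ve$ does not have. Here, iterating $u_\ve=\int_\Omega G(\cdot,y)\big(\lambda u_\ve+f(u_\ve)\big)dy$ gains nothing in the directions of $\Xi_k$.

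The same point breaks the last step of your contradiction argument. At unit scale the normalized $v_\ve$ converges to a Dirichlet solution of $-\Delta w=\lambda_k w$, i.e.\ to an element of $\Xi_k$; this is exactly Lemma~\ref{locate-limit}. Since \eqref{orthogonality} only constrains the $\mathcal{Z}^i_{\mu_\ve,\xi_\ve}$-directions, nothing forces $w=0$.

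This is not a matter of sharper estimates. Since $\tau_{k,l,\ve}=\lambda_k\int_\Omega v_\ve e_{k,l}\,dx$, we have $\beta_\ve^*\lesssim\|v_\ve\|_{L^\infty(\Omega)}$. The stated bound would therefore force $\beta_\ve^*\lesssim\mu_\ve^{-\frac{N-2}{2}}\kappa_\ve^{N-2}+\mu_\ve^{\frac{6-N}{2}}$, i.e.\ $\beta_\ve^*\lesssim\mu_\ve$ when $N=4$ and $\xi_0\in\Omega$. That is incompatible with the paper's own later analysis. In this situation the paper derives $\beta_\ve\gtrsim\mu_\ve|\log\mu_\ve|$ from \eqref{equa0006}, and $(b_1)$ of Theorem~\ref{zero weak limit+} describes solutions with $\beta_\ve/\mu_\ve\sim\ve^{-1}$. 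Your own $N=4$ computation points the same way: $\lambda\int_\Omega G(x,y)U_{\mu_\ve,\xi_\ve}(y)\,dy\sim\mu_\ve|\log\mu_\ve|$ for $|x-\xi_\ve|\sim\mu_\ve$. Observing that this tends to zero does not give a bound $\lesssim\mu_\ve$.

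The paper argues differently and needs no Green-function decay estimates for $u_\ve$. It rescales $V_\ve=\mu_\ve^{\frac{N-2}{2}}v_\ve(\mu_\ve x+\xi_\ve)/\|v_\ve\|_{H_0^1(\Omega)}$ and runs a Moser iteration on \eqref{eqa3.6}. It then runs a blow-up contradiction on $\bar V_\ve=V_\ve/\|V_\ve\|_{L^\infty}$, using the nondegeneracy of $U_{1,0}$ and \eqref{orthogonality}.

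That argument rests on the same unproved point as yours. It asserts that $|\bar V_\ve|$ attains the value $1$ in a fixed ball $\{|x|\le R\}$. This is exactly what fails when the $\Xi_k$-component dominates: the maximum then sits at distance $\sim\mu_\ve^{-1}$ in the rescaled variable, where the term $\lambda\mu_\ve^2\bar V_\ve$ is no longer negligible.

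So neither route yields the estimate as stated. A correct version has to carry a term controlling the eigen-component, for instance $\beta_\ve$, which does appear in the $L^\infty$ bound for $v_\ve^*$ in Lemma~\ref{esti-v*}.
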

\begin{proof}
Let
\begin{equation}\label{eqa3.2}
V_\ve(x)=\frac{\mu_\ve^{\frac{N-2}{2}}v_{\ve}(\mu_\ve x+\xi_\ve)}{\|{v}_{\ve}\|_{H_0^1(\Omega)}},
\end{equation}
then by setting $V_\ve(x)=0$ in $\mathbb{R}^N\backslash\Omega_{\mu_\ve,\xi_\ve}$ and the smoothness of $\Omega$, we know that $V_\ve$ is bounded in $D^{1,2}(\mathbb{R}^N)$, where $D^{1,2}(\bbr^d)=\dot{W}^{1,2}(\bbr^d)$ is the usual homogeneous Sobolev space (cf. \cite[Definition~2.1]{FG2021}) and
\begin{eqnarray*}
\Omega_{\mu_\ve,\xi_\ve}:=\frac{\Omega-\xi_\ve}{\mu_\ve}=\left\{x\in\mathbb{R}^N\mid \mu_\ve x+\xi_{\ve}\in\Omega\right\}.
\end{eqnarray*}
We introduce the set
\begin{eqnarray}\label{A1}
\mathcal{A}_1=\left\{x \in \Omega\mid \left|\mathcal{W}_{\mu_\ve,\xi_\ve}(x)\right|\lesssim\left|v_\ve(x)\right|\right\}.
\end{eqnarray}
For $x\in \mathcal{A}_1$,
\begin{eqnarray*}
\left|f(\mathcal{W}_{\mu_\ve,\xi_\ve}+v_{\ve})-f(\mathcal{W}_{\mu_\ve,\xi_\ve})-f'(\mathcal{W}_{\mu_\ve,\xi_\ve})v_\ve\right|
&\lesssim&|v_\ve|^{p}\\
&\lesssim&\left\{\aligned
&|v_\ve|^{p},\quad p\geq2,\\
&f''(\mathcal{W}_{\mu_\ve,\xi_\ve})|v_\ve|^2,\quad 1<p<2.
\endaligned\right.
\end{eqnarray*}
For $x\in\Omega\backslash\mathcal{A}_1$, by the Taylor expansion,
\begin{eqnarray*}
\left|f(\mathcal{W}_{\mu_\ve,\xi_\ve}+v_{\ve})-f(\mathcal{W}_{\mu_\ve,\xi_\ve})-f'(\mathcal{W}_{\mu_\ve,\xi_\ve})v_\ve\right|
\lesssim f''(\mathcal{W}_{\mu_\ve,\xi_\ve})|v_\ve|^2.
\end{eqnarray*}
Thus, we always have
\begin{eqnarray}\label{taylor-1}
\left|f(\mathcal{W}_{\mu_\ve,\xi_\ve}+v_{\ve})-f(\mathcal{W}_{\mu_\ve,\xi_\ve})-f'(\mathcal{W}_{\mu_\ve,\xi_\ve})v_\ve\right|
\lesssim\left\{\aligned
&f''(\mathcal{W}_{\mu_\ve,\xi_\ve})|v_\ve|^2+|v_\ve|^{p},\quad p\geq2,\\
&f''(\mathcal{W}_{\mu_\ve,\xi_\ve})|v_\ve|^2,\quad 1<p<2.
\endaligned\right.
\end{eqnarray}
Since by applying the maximum principle to \eqref{regular} and by the positivity of $G(x,\xi_\ve)$, we have 
\begin{eqnarray}\label{decay-H}
H(x,\xi_\ve)\lesssim \min\left\{\frac{1}{d(\xi_\ve,\partial \Omega)^{N-2}},\frac{1}{|x-\xi_\ve|^{N-2}}\right\}\quad\text{in }\Omega,
\end{eqnarray}
similarly, by \eqref{Struwe} and \eqref{exp-bub}, we also have
\begin{eqnarray}\label{taylor-2}
\left|f(\mathcal{W}_{\mu_\ve,\xi_\ve})-f(U_{\mu_\ve,\xi_\ve})-f'(U_{\mu_\ve,\xi_\ve})(\mathcal{W}_{\mu_\ve,\xi_\ve}-U_{\mu_\ve,\xi_\ve})\right|\lesssim\left\{\aligned
&\frac{\mu_\ve^{N-2}f''(U_{\mu_\ve,\xi_\ve})}{d(\xi_\ve,\partial \Omega)^{2(N-2)}}+\frac{\mu_\ve^{\frac{N+2}{2}}}{d(\xi_\ve,\partial \Omega)^{N+2}},\quad p\geq2,\\
&\frac{\mu_\ve^{N-2}f''(U_{\mu_\ve,\xi_\ve})}{d(\xi_\ve,\partial \Omega)^{2(N-2)}},\quad 1<p<2.
\endaligned\right.
\end{eqnarray}
Thus, by \eqref{v-equ} and \eqref{eqa3.2}, $V_\ve$ satisfies 
\begin{eqnarray}\label{eqa3.6}
\left\{\aligned
&-\Delta V_\ve=\lambda \mu_\ve^2V_{\ve}+(f'(U_{1,0})+\mathcal{O}(\kappa_\ve^{N-2}))V_{\ve}+\mathcal{K}_{\ve},\quad\text{in }\Omega_{\mu_\ve,\xi_\ve},\\
&V_\ve=0,\quad \quad \text{on }\partial\Omega_{\mu_\ve,\xi_\ve}.
\endaligned\right.
\end{eqnarray}
where by \eqref{taylor-1} and \eqref{taylor-2}, 
\begin{eqnarray*}
\mathcal{K}_{\ve}=O\left(\|v_\ve\|_{H_0^1(\Omega)}|V_\ve|^2+\|v_\ve\|_{H_0^1(\Omega)}^{p-1}|V_{\ve}|^p{\bf 1}_{p\geq2}\right)+\mathcal{O}\left(
\frac{\kappa_\ve^{N-2}+\mu_\ve^2 U}{\|{v}_\ve\|_{H_0^1(\Omega)}}\right).
\end{eqnarray*}
Here, we use the notation ${\bf 1}_{(A)}=1$ if $(A)$ holds and ${\bf 1}_{(A)}=0$ if $(A)$ does not hold, where $(A)$ is a condition.  Since by the choice of $\mu_\ve$ given by \eqref{mu-ve} and the definition of $v_\ve$, we must have $\|v_\ve\|_{L^{\infty}(\Omega)}\lesssim \mu_\ve^{-\frac{N-2}{2}}$, by \eqref{eqa3.2} and applying the classical Moser iteration to \eqref{eqa3.6}, we have
\begin{eqnarray}\label{eqa31.6}
\left\|V_\ve\right\|_{L^\infty\left(\Omega_{\mu_\ve,\xi_\ve}\right)}\lesssim
\frac{\kappa_\ve^{N-2}+\mu_\ve^2}{\|{v}_\ve\|_{H_0^1(\Omega)}}+\|{v}_\ve\|_{H_0^1(\Omega)}^{\frac{1\wedge(p-1)}{2}}
\end{eqnarray}
as $\ve\to0$, where $a\wedge b=\min\{a,b\}$.  Since by the condition~\eqref{condition}, we must have $\|{v}_\ve\|_{H_0^1(\Omega)}\to0$ as $\ve\to0$.  Thus, if $\frac{\kappa_\ve^{N-2}+\mu_\ve^2}{\|{v}_\ve\|_{H_0^1(\Omega)}}\gtrsim1$ then by \eqref{eqa3.2} and \eqref{eqa31.6}, we have already established the conclusion.  Otherwise, if $\frac{\kappa_\ve^{N-2}+\mu_\ve^2}{\|{v}_\ve\|_{H_0^1(\Omega)}}\to0$ as $\ve\to0$ then by \eqref{eqa31.6}, we must have $\left\|V_\ve\right\|_{L^\infty\left(\Omega_{\mu_\ve,\xi_\ve}\right)}\to0$ as $\ve\to0$.  We claim that in this case, 
\begin{eqnarray}\label{eqa3.7}
\left\|V_\ve\right\|_{L^\infty\left(\Omega_{\mu_\ve,\xi_\ve}\right)}\lesssim
\frac{\kappa_\ve^{N-2}+\mu_\ve^2}{\|{v}_\ve\|_{H_0^1(\Omega)}}.
\end{eqnarray}
Assume the contrary.  Then 
\begin{eqnarray}\label{eqa33.8}
\frac{\kappa_\ve^{N-2}+\mu_\ve^2}{\left\|V_\ve\right\|_{L^\infty\left(\Omega_{\mu_\ve,\xi_\ve}\right)}\|{v}_\ve\|_{H_0^1(\Omega)}}\rightarrow 0
\end{eqnarray}
as $\ve\to0$ up to subsequence.  Let 
\begin{eqnarray*}
\bar{V}_{\ve}=\frac{V_\ve}{\left\|V_\ve\right\|_{L^\infty\left(\Omega_{\mu_\ve,\xi_\ve}\right)}}.
\end{eqnarray*}
Then by \eqref{eqa3.6} and \eqref{eqa33.8}, $\bar{V}_{\ve}$ satisfies
\begin{eqnarray}\label{eqa33.6}
\left\{\aligned
&-\Delta \bar{V}_\ve=\lambda \mu_\ve^2\bar{V}_{\ve}+(f'(U_{1,0})+\mathcal{O}(\kappa_\ve^{N-2}))\bar{V}_{\ve}+\overline{\mathcal{K}}_{\ve},\quad\text{in }\Omega_{\mu_\ve,\xi_\ve},\\
&\bar{V}_\ve=0,\quad \quad \text{on }\partial\Omega_{\mu_\ve,\xi_\ve}.
\endaligned\right.
\end{eqnarray}
where
\begin{eqnarray*}
\overline{\mathcal{K}}_{\ve} =O\left(\|v_\ve\|_{H_0^1(\Omega)}|V_\ve|+\|v_\ve\|_{H_0^1(\Omega)}^{p-1}|V_{\ve}|^{p-1}{\bf 1}_{p\geq2}\right)\bar{V}_{\ve}+o(1).
\end{eqnarray*}
Since $\left\|V_\ve\right\|_{L^\infty\left(\Omega_{\mu_\ve,\xi_\ve}\right)}\to0$ as $\ve\to0$ and $\left\|\bar{V}_\ve\right\|_{L^\infty\left(\Omega_{\mu_\ve,\xi_\ve}\right)}=1$, by applying the standard elliptic regularity to \eqref{eqa33.6}, we have, for any $\gamma \in (0,1)$, 
\begin{equation} \label{eqa3.9}
\|\bar{V}_\ve\|_{C_{loc}^{1,\gamma}(\mathbb{R}^N)} \lesssim \mu_\ve^2+\left\|U_{1,0}\right\|_{L_{loc}^{p+1}(\mathbb{R}^N)}+o(1),
\end{equation}
which, together with $\left\|\bar{V}_\ve\right\|_{L^\infty\left(\Omega_{\mu_\ve,\xi_\ve}\right)}=1$, implies that there exists $R>0$ and $x_\ve\in\mathbb{R}^N$ such that $|x_\ve| \leq R$ and $|\bar{V}_{\ve}(x_{\ve})|=1$.  By \eqref{eqa3.9} and the Arzela-Ascoli theorem, $\bar{V}_{\ve}\rightarrow \bar{V}_{0}$ in $C_{loc}^1(\mathbb{R}^N)\cap L^\infty(\mathbb{R}^N)$ as $\ve\to0$, which, together with \eqref{Talenti}, \eqref{kernel}, \eqref{eqa3.2}, \eqref{eqa33.6} and the orthogonal condition in \eqref{v-equ}, implies that $\bar{V}_{0}$ satisfies the equation~\eqref{linear} in the weak sense with the orthogonal conditions 
\begin{eqnarray*}
\int_{\mathbb{R}^N} \nabla \bar{V}_0  \nabla \Psi^idx=0,\quad\text{for all }i=0,\cdots,N.
\end{eqnarray*}
It follows from the nondegeneracy of the Talenti bubble $U_{1,0}$ and the standard elliptic regularity theory that $\bar{V}_0=0$, which contradicts $|\bar{V}_{\ve}(x_{\ve})|=1$ for some $|x_\ve| \leq R$ as $\ve\to0$.
Thus, the claim~\eqref{eqa3.7} holds true and the conclusion follows immediately from \eqref{eqa3.2}.
 \end{proof}

With Lemma~\ref{L-infinity} in hands, we can drive the following basic estimate of $\|v_\ve\|_{H^1_0(\Omega)}$.

\begin{lemma}\label{basic-esti}
Let $N\geq4$ and $v_\ve$ is the solution of \eqref{v-equ}, then $\mu_\ve^{-\frac{N-2}{2}}\|v_{\ve}\|_{H_0^1(\Omega)}\rightarrow+\infty$ as $\ve\to0$.
\end{lemma}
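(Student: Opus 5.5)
We argue by contradiction. Suppose that, along a subsequence, $\|v_\ve\|_{H_0^1(\Omega)}\lesssim\mu_\ve^{\frac{N-2}{2}}$. The plan is to show that in this regime the linear term $\lambda u_\ve$ produces a contribution of order $\lambda\mu_\ve^2$ (for $N\geq5$) or $\lambda\mu_\ve^2|\log\mu_\ve|$ (for $N=4$). Since $\overline{\lambda}>0$ is fixed, this contribution is strictly larger than everything else that the decomposition $u_\ve=\mathcal{W}_{\mu_\ve,\xi_\ve}+v_\ve$ can balance. The natural tool is a Pohozaev-type identity, tested against the dilation kernel $\mathcal{Z}^0_{\mu_\ve,\xi_\ve}$, or equivalently the Pohozaev identity with multiplier $(x-\xi_\ve)\cdot\nabla u_\ve$.

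\emph{Step 1: the interior concentration case, $d(\xi_\ve,\partial\Omega)\gtrsim1$.} Fix a small $r>0$ and apply the local Pohozaev identity on $B_r(\xi_\ve)$:
\begin{eqnarray*}
\lambda\int_{B_r(\xi_\ve)}u_\ve^2dx=\text{boundary terms on }\partial B_r(\xi_\ve)\text{ involving }u_\ve,\ \nabla u_\ve.
\end{eqnarray*}
\begin{enumerate}
\item[$(i)$] On $\partial B_r(\xi_\ve)$ we have $\mathcal{W}_{\mu_\ve,\xi_\ve}=O(\mu_\ve^{\frac{N-2}{2}})$ in $C^1$, by \eqref{exp-bub}.
\item[$(ii)$] The contradiction hypothesis gives $\|v_\ve\|_{H^1_0}\lesssim\mu_\ve^{\frac{N-2}{2}}$. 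Combining this with Lemma~\ref{L-infinity} (which here gives $\|v_\ve\|_{L^\infty}\lesssim1$) and interior elliptic estimates for \eqref{v-equ} on the annulus $B_{2r}\setminus B_{r/2}$, I would upgrade this to $\|v_\ve\|_{C^1(\partial B_r)}\lesssim\mu_\ve^{\frac{N-2}{2}}$.
\item[$(iii)$] Hence all boundary terms are $O(\mu_\ve^{N-2})$.
\item[$(iv)$] On the other hand, $\int_{B_r}u_\ve^2\geq\frac12\int_{B_r}U_{\mu_\ve,\xi_\ve}^2-C\|v_\ve\|_{L^2}^2$. The bubble term is $\gtrsim\mu_\ve^2$ for $N\geq5$ and $\gtrsim\mu_\ve^2|\log\mu_\ve|$ for $N=4$, and the error term is $O(\mu_\ve^{N-2})$.
\end{enumerate}
For $N\geq5$ we have $\mu_\ve^2\gg\mu_\ve^{N-2}$, and for $N=4$ we have $\mu_\ve^2|\log\mu_\ve|\gg\mu_\ve^2$. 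Either way this contradicts $\overline{\lambda}>0$.

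\emph{Step 2: the boundary case, $d_\ve=d(\xi_\ve,\partial\Omega)\to0$.} Here I would use the global Pohozaev identity on $\Omega$ with multiplier $(x-\xi_\ve)\cdot\nabla u_\ve$, together with the translation identity with multiplier $\partial_{x_i}u_\ve$. These give:
\begin{enumerate}
\item[$(i)$] $\lambda\int_\Omega u_\ve^2=\frac12\int_{\partial\Omega}(x-\xi_\ve)\cdot\nu|\partial_\nu u_\ve|^2$;
\item[$(ii)$] $\int_{\partial\Omega}\nu_i|\partial_\nu u_\ve|^2=0$.
\end{enumerate}
Write $\partial_\nu u_\ve=\partial_\nu\mathcal{W}_{\mu_\ve,\xi_\ve}+\partial_\nu v_\ve$. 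By \eqref{exp-bub} and \eqref{exp-rob}, the main part of the right-hand side of $(i)$ is $\sim\mu_\ve^{N-2}\varphi(\xi_\ve)\sim\mu_\ve^{N-2}d_\ve^{-(N-2)}$. The main part of $(ii)$ is $\sim\mu_\ve^{N-2}|\nabla\varphi(\xi_\ve)|\sim\mu_\ve^{N-2}d_\ve^{-(N-1)}$, which is nonzero in the normal direction.

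The cross terms with $\partial_\nu v_\ve$ must be shown to be of lower order. For this I would control $\partial_\nu v_\ve$ on $\partial\Omega$ by boundary elliptic estimates at scale $d_\ve$, using Lemma~\ref{L-infinity} and the contradiction hypothesis. Once that is done, identity $(ii)$ forces the leading term $\mu_\ve^{N-2}d_\ve^{-(N-1)}$ to be cancelled by something of strictly smaller order, which is impossible.

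I expect this boundary regime to be the main obstacle. The difficulty is that the $v_\ve$-contributions on $\partial\Omega$ are not obviously dominated by the $\mathcal{W}$-terms when $\kappa_\ve=\mu_\ve/d_\ve$ is not small relative to $d_\ve$. If the direct estimate fails, the fallback is to test \eqref{v-equ} against $\mathcal{Z}^i_{\mu_\ve,\xi_\ve}$ and use the orthogonality \eqref{orthogonality}. With Lemma~\ref{Lem2.1} bounding the nonlinear remainders by $\|v_\ve\|^2$, the $i=0$ test gives
\begin{eqnarray*}
\lambda\int_\Omega\mathcal{W}_{\mu_\ve,\xi_\ve}\mathcal{Z}^0_{\mu_\ve,\xi_\ve}\sim\lambda\mu_\ve^2\ (\text{or }\lambda\mu_\ve^2|\log\mu_\ve|)=c\,\mu_\ve^{N-2}\varphi(\xi_\ve)+O\big(\mu_\ve^{N-2}+\kappa_\ve^{N-2}\|v_\ve\|\mu_\ve^{-\frac{N-2}{2}}\big),
\end{eqnarray*}
while the $i\geq1$ tests give $\mu_\ve^{N-2}\nabla\varphi(\xi_\ve)=o(\mu_\ve^{N-2}d_\ve^{-(N-1)})$. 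The latter contradicts \eqref{exp-rob} as soon as $d_\ve\to0$. So in every case the assumption $\|v_\ve\|_{H_0^1(\Omega)}\lesssim\mu_\ve^{\frac{N-2}{2}}$ is impossible.
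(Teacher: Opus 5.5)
Your Step~1 is a valid alternative to the paper's interior argument. The paper tests \eqref{v-equ} against $\mathcal{Z}^0_{\mu_\ve,\xi_\ve}$, which is a projected form of the same Pohozaev balance. One correction there: for $d(\xi_\ve,\partial\Omega)\gtrsim1$, Lemma~\ref{L-infinity} only gives $\|v_\ve\|_{L^\infty(\Omega)}\lesssim\mu_\ve^{\frac{N-2}{2}}+\mu_\ve^{\frac{6-N}{2}}$, which is unbounded for $N\geq7$. So the $C^1$ bound on $\partial B_r(\xi_\ve)$ has to come from $\varepsilon$-regularity (the $L^{2^*}$ norm of $u_\ve$ on the annulus is $O(\mu_\ve^{\frac{N-2}{2}})$), not from that lemma.

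The genuine gap is the boundary regime $d_\ve\to0$. This is where the lemma is really decided, and you leave it open. In your Pohozaev route everything hinges on $\|\partial_\nu v_\ve\|_{L^2(\partial\Omega)}^2=o(\mu_\ve^{N-2}d_\ve^{-(N-1)})$, and Lemma~\ref{L-infinity} cannot deliver this, for the same reason as above. The route can be closed, for example as follows. A Rellich identity with a vector field $h$ equal to $\nu$ on $\partial\Omega$, supported in $\{d(x,\partial\Omega)<d_\ve/2\}$ with $|Dh|\lesssim d_\ve^{-1}$, gives $\int_{\partial\Omega}|\partial_\nu v_\ve|^2\lesssim d_\ve^{-1}\|\nabla v_\ve\|_{L^2}^2+\|\nabla v_\ve\|_{L^2}\|\Delta v_\ve\|_{L^2(\{d(x,\partial\Omega)<d_\ve/2\})}$. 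Boundary $\varepsilon$-regularity at scale $d_\ve$ gives $|u_\ve|\lesssim\mu_\ve^{\frac{N-2}{2}}d_\ve^{-(N-2)}$ in that strip. With these, all error terms in your identity $(ii)$ are $o(\mu_\ve^{N-2}d_\ve^{-(N-1)})$, and $(ii)$ alone gives the contradiction, with no relation between $\kappa_\ve$ and $\mu_\ve$ needed. None of this is in the proposal, so as it stands Step~2 is a plan rather than a proof.

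Your fallback, as stated, fails. In the $\mathcal{Z}^i$-test the leading term is $\mu_\ve^{N-1}\nabla\varphi(\xi_\ve)$, of size $\kappa_\ve^{N-1}$. Bounding the quadratic remainder by $\|v_\ve\|_{H^1_0(\Omega)}^2\lesssim\mu_\ve^{N-2}$ makes it $o(\kappa_\ve^{N-1})$ only when $d_\ve^{N-1}=o(\mu_\ve)$. The paper secures this in two steps. First it extracts from the $\mathcal{Z}^0$-test that $\xi_0\in\partial\Omega$ and \eqref{k-mu}, which for $N\geq5$ does imply $d_\ve^{N-1}=o(\mu_\ve)$. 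Then it uses \eqref{H-esti} to control the $H$-dependent cross terms. You write down the $i=0$ relation but never feed it into the $i\geq1$ estimates.

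For $N=4$ even this is not enough, since $\kappa_\ve^3\sim\mu_\ve^3|\log\mu_\ve|^{\frac32}\ll\mu_\ve^2$. There the paper must use the pointwise bound $\|v_\ve\|_{L^\infty(\Omega)}\lesssim\mu_\ve^{-1}\kappa_\ve^2+\mu_\ve\sim\mu_\ve|\log\mu_\ve|$ from Lemma~\ref{L-infinity} to estimate $\int_\Omega|v_\ve|^2U_{\mu_\ve,\xi_\ve}|\mathcal{Z}^1_{\mu_\ve,\xi_\ve}|dx$ (see \eqref{equa3.73}).

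Finally, Lemma~\ref{Lem2.1} is not what controls the nonlinear remainders; that is Taylor expansion plus Sobolev. It is also stated only for $\rho\in E_\ve^\perp$.
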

\begin{proof}
Suppose the contrary that $\|v_\ve\|_{H_0^1(\Omega)} \lesssim \mu_\ve^{\frac{N-2}{2}}$ as $\ve\to0$. 
We first test the equation~\eqref{v-equ} by $Z_{\mu_\ve,\xi_\ve}^0$, which implies that
\begin{eqnarray}\label{test-0}
0&=&\int_{\Omega}\left(f\left(\mathcal{W}_{\mu_\ve,\xi_\ve}+v_\ve\right) -f\left(\mathcal{W}_{\mu_\ve,\xi_\ve}\right)\right)Z_{\mu_\ve,\xi_\ve}^0dx+\lambda\int_{\Omega}v_\ve Z_{\mu_\ve,\xi_\ve}^0dx\notag\\
&&+\lambda \int_{\Omega}\mathcal{W}_{\mu_\ve,\xi_\ve}Z_{\mu_\ve,\xi_\ve}^0dx+\int_{\Omega}\left(f\left(\mathcal{W}_{\mu_\ve,\xi_\ve}\right)-f\left(U_{\mu_\ve,\xi_\ve}\right)\right)Z_{\mu_\ve,\xi_\ve}^0dx\notag\\
&:=&I+II+III+IV.
\end{eqnarray}
Similar to \eqref{taylor-1}, we have
\begin{eqnarray*}
\left|f(\mathcal{W}_{\mu_\ve,\xi_\ve}+v_{\ve})-f(\mathcal{W}_{\mu_\ve,\xi_\ve})\right|\lesssim |v_\ve|^p,
\end{eqnarray*}
for $x\in \mathcal{A}_1$ and
\begin{eqnarray*}
\left|f(\mathcal{W}_{\mu_\ve,\xi_\ve}+v_{\ve})-f(\mathcal{W}_{\mu_\ve,\xi_\ve})-f'(\mathcal{W}_{\mu_\ve,\xi_\ve})v_\ve\right|&\lesssim& f''(\mathcal{W}_{\mu_\ve,\xi_\ve})|v_\ve|^2\\
&\lesssim&f''(\mathcal{W}_{\mu_\ve,\xi_\ve})|v_\ve|^2{\bf 1}_{p\geq2}+|v_\ve|^p
\end{eqnarray*}
for $x\in \Omega\backslash\mathcal{A}_1$,
where the set $\mathcal{A}_1$ is given by \eqref{A1}.  Thus, by \eqref{exp-ker}, \eqref{comp}, the orthogonal condition~\eqref{orthogonality}, H\"older inequality and $\|v_\ve\|_{H_0^1(\Omega)} \lesssim \mu_\ve^{\frac{N-2}{2}}$,
\begin{eqnarray}\label{equa3.7}
I&=&\int_{\Omega\backslash \mathcal{A}_1}\left(f(\mathcal{W}_{\mu_\ve,\xi_\ve}+v_{\ve}) -f(\mathcal{W}_{\mu_\ve,\xi_\ve})\right)Z_{\mu_\ve,\xi_\ve}^0dx+\int_{\mathcal{A}_1}\left(f (\mathcal{W}_{\mu_\ve,\xi_\ve}+v_{\ve}) -f(\mathcal{W}_{\mu_\ve,\xi_\ve})\right)Z_{\mu_\ve,\xi_\ve}^0dx\notag\\
&=&\int_{\Omega\backslash \mathcal{A}_1} f'(\mathcal{W}_{\mu_\ve,\xi_\ve})Z_{\mu_\ve,\xi_\ve}^0v_{\ve}dx+\mathcal{O}\left(\int_{\Omega}f''(\mathcal{W}_{\mu_\ve,\xi_\ve})|v_\ve|^2\left|Z_{\mu_\ve,\xi_\ve}^0\right|dx+\mu_\ve^{N}\right)\notag\\
&=&\int_{\Omega} f'(\mathcal{W}_{\mu_\ve,\xi_\ve})Z_{\mu_\ve,\xi_\ve}^0v_{\ve}dx+\mathcal{O}\left(\int_{\Omega}f'(U_{\mu_\ve,\xi_\ve})|v_\ve|^2dx+\mu_\ve^{N}\right)\notag\\
&=& \int_{\Omega}\left(f'(\mathcal{W}_{\mu_\ve,\xi_\ve})-f'(U_{\mu_\ve,\xi_\ve})\right)Z_{\mu_\ve,\xi_\ve}^0v_{\ve}dx
+\int_{\Omega}v_{\ve}f'(U_{\mu_\ve,\xi_\ve})\left(Z_{\mu_\ve,\xi_\ve}^0-\Phi_{\mu_\ve,\xi_\ve}^0\right)dx\notag\\
&&+\mathcal{O}\left(\mu_\ve^{N-2}\right).
\end{eqnarray}
By Lemma~\ref{App-esti}, \eqref{comp} and \eqref{decay-H},
\begin{eqnarray}
 \left|\int_{\Omega \backslash B_{d(\xi_\ve,\partial \Omega)}(\xi_\ve)}\left(f'(\mathcal{W}_{\mu_\ve,\xi_\ve})-f'(U_{\mu_\ve,\xi_\ve})\right)Z_{\mu_\ve,\xi_\ve}^0v_{\ve}dx\right| &\lesssim&\mu_\ve^{\frac{N-2}{2}}\left(\int_{\Omega \backslash B_{d(\xi_\ve,\partial \Omega)}(\xi_\ve)}U_{\mu_\ve,\xi_\ve}^{p+1}dx\right)^{\frac{p}{p+1}}\notag\\
&\lesssim&  \kappa_\ve^{\frac{N+2}{2}} \mu_\ve^{\frac{N-2}{2}}.\label{eqa4.0}
\end{eqnarray}
By Lemma~\ref{App-esti}, \eqref{exp-bub} and \eqref{comp},
\begin{eqnarray*}
&&\left|\int_{B_{d(\xi_\ve,\partial \Omega)}(\xi_\ve)}\left(f'(\mathcal{W}_{\mu_\ve,\xi_\ve})-f'(U_{\mu_\ve,\xi_\ve})\right)Z_{\mu_\ve,\xi_\ve}^0v_{\ve}dx\right|\notag\\
&\lesssim&\mu_\ve^{\frac{N-2}{2}}\int_{B_{d(\xi_\ve,\partial \Omega)}(\xi_\ve)}\left(H(x,\xi_\ve)+\frac{\mu_\ve^2}{d(\xi_\ve, \partial\Omega)^{N}}\right)U_{\mu_\ve,\xi_\ve}^{p-1}\left|v_{\ve}\right|dx\notag\\
&\lesssim&\kappa_\ve^{N-2}\left(\int_{B_{d(\xi_\ve,\partial \Omega)}(\xi_\ve)}U_{\mu_\ve,\xi_\ve}^{\frac{p^2-1}{p}}dx\right)^{\frac{p}{p+1}}\notag\\
&\lesssim&\left\{\aligned
&\kappa_\ve^{N-2} \mu_\ve^{\frac{N-2}{2}},\quad N=4,5,\\
&\kappa_\ve^4\left|\log \kappa_\ve\right|^{\frac{2}{3}} \mu_\ve^{2},\quad N=6\,,\\
&\kappa_\ve^{\frac{N+2}{2}}\mu_\ve^{\frac{N-2}{2}},\quad N\geq7,
\endaligned\right.
\end{eqnarray*}
which, together with \eqref{eqa4.0}, implies that
\begin{eqnarray}\label{I1-0}
\left|\int_{\Omega}\left(f'(\mathcal{W}_{\mu_\ve,\xi_\ve})-f'(U_{\mu_\ve,\xi_\ve})\right)Z_{\mu_\ve,\xi_\ve}^0v_{\ve}dx\right|
&\lesssim&\left\{\aligned
&\kappa_\ve^{N-2} \mu_\ve^{\frac{N-2}{2}},\quad N=4,5,\\
&\kappa_\ve^4\left|\log \kappa_\ve\right|^{\frac{2}{3}} \mu_\ve^{2},\quad N=6\,,\\
&\kappa_\ve^{\frac{N+2}{2}}\mu_\ve^{\frac{N-2}{2}},\quad N\geq7.
\endaligned\right.
\end{eqnarray}
Similarly, by \eqref{exp-ker}, we also have 
\begin{eqnarray}\label{I2-0}
\left|\int_{\Omega}v_{\ve}f'(U_{\mu_\ve,\xi_\ve})\left(Z_{\mu_\ve,\xi_\ve}^0-\Phi_{\mu_\ve,\xi_\ve}^0\right)dx\right|
&\lesssim&\left\{\aligned
&\kappa_\ve^{N-2} \mu_\ve^{\frac{N-2}{2}},\quad N=4,5,\\
&\kappa_\ve^4\left|\log \kappa_\ve\right|^{\frac{2}{3}} \mu_\ve^{2},\quad N=6\,,\\
&\kappa_\ve^{\frac{N+2}{2}}\mu_\ve^{\frac{N-2}{2}},\quad N\geq7.
\endaligned\right.
\end{eqnarray}
Inserting \eqref{I1-0} and \eqref{I2-0} into \eqref{equa3.7}, we have
\begin{eqnarray}\label{I}
|I|=\mathcal{O}\left(\kappa_\ve^N\right)+\mathcal{O}\left(\mu_\ve^{N-2}\right).
\end{eqnarray}
Since $\|v_\ve\|_{H_0^1(\Omega)} \lesssim \mu_\ve^{\frac{N-2}{2}}$, by Lemma~\ref{App-esti}, the H\"older inequality and \eqref{exp-ker},
\begin{eqnarray}\label{II}
|II|\lesssim\mu_\ve^{\frac{N-2}{2}}\left(\int_{\Omega}\left(Z_{\mu_\ve,\xi_\ve}^0\right)^{\frac{p+1}{p}}dx\right)^{\frac{p}{p+1}}=\left\{\aligned
&\mathcal{O}\left(\mu_\ve^{N-2}\right),\quad N=4,5,\\
&\mathcal{O}\left(\mu_\ve^{4}\left|\log\mu_\ve\right|^{\frac{2}{3}}\right),\quad N=6,\\
&\mathcal{O}\left(\mu_\ve^{\frac{N+2}{2}}\right),\quad N\geq7.
\endaligned\right.
\end{eqnarray}
Since $H(x,\xi_\ve)\lesssim \frac{1}{d(\xi_\ve,\partial \Omega)^{N-2}}$ by \eqref{decay-H}, by \eqref{exp-ker}, \eqref{exp-bub} and Lemma~\ref{App-esti},
\begin{eqnarray}\label{III-5}
III&=&\lambda \int_{\Omega}\mathcal{W}_{\mu_\ve,\xi_\ve}Z_{\mu_\ve,\xi_\ve}^0dx\notag\\
&=&\lambda \int_{B_{d(\xi_\ve,\partial \Omega)}(\xi_\ve)}U_{\mu_\ve,\xi_\ve}\Phi_{\mu_\ve,\xi_\ve}^0dx+\mathcal{O}\left(\mu_\ve^{-\frac{N-2}{2}}\kappa_\ve^{N-2}\int_{B_{d(\xi_\ve,\partial \Omega)}(\xi_\ve)}U_{\mu_\ve,\xi_\ve}dx\right)\notag\\
&&+\mathcal{O}\left(\int_{\Omega\backslash B_{d(\xi_\ve,\partial \Omega)}(\xi_\ve)}U_{\mu_\ve,\xi_\ve}^2dx\right)\notag\\
&=&\lambda D_{N,1}\mu_\ve^2+\mathcal{O}\left(\mu_\ve^2\kappa_\ve^{N-4}\right)
\end{eqnarray}
for $N\geq5$, where $D_{N,1}=\int_{\mathbb{R}^N}U_{1,0}\Phi_{1,0}^0dx$.  For $N=4$, by \eqref{exp-ker}, \eqref{exp-bub} and Lemma~\ref{App-esti},
\begin{eqnarray}\label{III-41}
III&=&\lambda \int_{\Omega}\mathcal{W}_{\mu_\ve,\xi_\ve}Z_{\mu_\ve,\xi_\ve}^0dx\notag\\
&=&\lambda \int_{\Omega}U_{\mu_\ve,\xi_\ve}\Phi_{\mu_\ve,\xi_\ve}^0dx+\mathcal{O}\left(\mu_\ve^{-1}\kappa_\ve^{2}\int_{\Omega}U_{\mu_\ve,\xi_\ve}dx\right)\notag\\
&\geq&\lambda \int_{B_{d(\xi_\ve,\partial \Omega)}(\xi_\ve)}U_{\mu_\ve,\xi_\ve}\Phi_{\mu_\ve,\xi_\ve}^0dx+\mathcal{O}\left(\mu_\ve^2\right)\notag\\
&=&\lambda D_{4,2}\mu_\ve^2\left|\log\kappa_\ve\right|+\mathcal{O}\left(\mu_\ve^2\right)
\end{eqnarray}
and
\begin{eqnarray}\label{III-42}
III&=&\lambda \int_{\Omega}\mathcal{W}_{\mu_\ve,\xi_\ve}Z_{\mu_\ve,\xi_\ve}^0dx\notag\\
&=&\lambda \int_{\Omega}U_{\mu_\ve,\xi_\ve}\Phi_{\mu_\ve,\xi_\ve}^0dx+\mathcal{O}\left(\mu_\ve^{-1}\kappa_\ve^{2}\int_{\Omega}U_{\mu_\ve,\xi_\ve}dx\right)\notag\\
&\leq&\lambda \int_{B_R(\eta)}U_{\mu_\ve,\xi_\ve}\Phi_{\mu_\ve,\xi_\ve}^0dx+\mathcal{O}\left(\mu_\ve^2\right)\notag\\
&=&\lambda D_{4,2}\mu_\ve^2\left|\log\mu_\ve\right|+\mathcal{O}\left(\mu_\ve^2\right),
\end{eqnarray}
where $B_R(\eta)$ is the ball centered at $\eta\in\Omega$ with the radius $R$ such that $\Omega\subset B_R(\eta)$ and $D_{4,2}$ is an absolute constant.  Again, similar to \eqref{taylor-2}, by \eqref{exp-ker}, \eqref{exp-bub}, \eqref{decay-H} and Lemma~\ref{App-esti},
\begin{eqnarray}\label{iv-1}
IV&=&\int_{B_{d(\xi_\ve,\partial \Omega)}(\xi_\ve)}\left(f\left(\mathcal{W}_{\mu_\ve,\xi_\ve}\right)-f\left(U_{\mu_\ve,\xi_\ve}\right)\right)Z_{\mu_\ve,\xi_\ve}^0dx+\mathcal{O}\left(\int_{\Omega\backslash B_{d(\xi_\ve,\partial \Omega)}(\xi_\ve)}U_{\mu_\ve,\xi_\ve}^{p+1}dx\right)\notag\\
&=&-\alpha_N\mu_\ve^{\frac{N-2}{2}}\int_{B_{d(\xi_\ve,\partial \Omega)}(\xi_\ve)}f'\left(U_{\mu_\ve,\xi_\ve}\right)\Phi_{\mu_\ve,\xi_\ve}^0H(x,\xi_\ve)dx+\mathcal{O}\left(\frac{\kappa_\ve^{2N-4}}{\mu_\ve^{N-2}}\int_{B_{d(\xi_\ve,\partial \Omega)}(\xi_\ve)}U_{\mu_\ve,\xi_\ve}^{p-1}dx\right)\notag\\
&&+\mathcal{O}\left(\frac{\kappa_\ve^{N}}{\mu_\ve^{\frac{N-2}{2}}}\int_{B_{d(\xi_\ve,\partial \Omega)}(\xi_\ve)}U_{\mu_\ve,\xi_\ve}^{p}dx\right)+\mathcal{O}\left(\kappa_\ve^N\right)\notag\\
&=&-\alpha_N\mu_\ve^{\frac{N-2}{2}}\int_{B_{d(\xi_\ve,\partial \Omega)}(\xi_\ve)}f'\left(U_{\mu_\ve,\xi_\ve}\right)\Phi_{\mu_\ve,\xi_\ve}^0H(x,\xi_\ve)dx+\left\{\aligned
&\mathcal{O}\left(\kappa_\ve^N\right),\quad N\geq5,\\
&\mathcal{O}\left(\kappa_\ve^4\left|\log\kappa_\ve\right|\right),\quad N=4,
\endaligned\right.
\end{eqnarray}
which, together with
\begin{eqnarray*}\label{iv-2}
&&\int_{B_{d(\xi_\ve,\partial \Omega)}(\xi_\ve)}f'\left(U_{\mu_\ve,\xi_\ve}\right)\Phi_{\mu_\ve,\xi_\ve}^0H(x,\xi_\ve)dx\notag\\
&=&\varphi(\xi_\ve)\int_{B_{d(\xi_\ve,\partial \Omega)}(\xi_\ve)}f'\left(U_{\mu_\ve,\xi_\ve}\right)\Phi_{\mu_\ve,\xi_\ve}^0dx+\mathcal{O}\left(\frac{\kappa_\ve^N}{\mu_\ve^{N}}\int_{B_{d(\xi_\ve,\partial \Omega)}(\xi_\ve)}U_{\mu_\ve,\xi_\ve}^p|x-\xi_\ve|^2dx\right)\\
&=&D_{N,3}\mu_{\ve}^{\frac{N-2}{2}}\varphi(\xi_\ve)+\mathcal{O}\left(\mu_\ve^{\frac{N-2}{2}}\kappa_\ve^2+\frac{\kappa_\ve^N}{\mu_\ve^{\frac{N-2}{2}}}\left|\log\kappa_\ve\right|\right),
\end{eqnarray*}
implies that
\begin{eqnarray}\label{IV}
IV=-\alpha_N\mu_\ve^{N-2}D_{N,3}\varphi(\xi_\ve)+\mathcal{O}\left(\kappa_\ve^N\left|\log\kappa_\ve\right|\right),
\end{eqnarray}
where $D_{N,3}=p\int_{\mathbb{R}^N}U_{1,0}^{p-1}\Phi^0_{1,0}dx$.
Inserting \eqref{I}, \eqref{II}, \eqref{III-5}, \eqref{III-41}, \eqref{III-42} and \eqref{IV} into \eqref{test-0}, we have
\begin{eqnarray}\label{test-05}
0=\lambda D_{N,1}\mu_\ve^2-\alpha_N\mu_\ve^{N-2}D_{N,3}\varphi(\xi_\ve)+\mathcal{O}\left(\mu_\ve^2\kappa_\ve^{N-4}\right)+\mathcal{O}\left(\kappa_\ve^N\left|\log\kappa_\ve\right|\right)+o\left(\mu_\ve^{2}\right)
\end{eqnarray}
for $N\geq5$,
\begin{eqnarray}\label{test-041}
0\geq\lambda D_{4,2}\mu_\ve^2\left|\log\kappa_\ve\right|-\alpha_N\mu_\ve^{2}D_{4,3}\varphi(\xi_\ve)+\mathcal{O}\left(\kappa_\ve^4\left|\log\kappa_\ve\right|\right)+\mathcal{O}\left(\mu_\ve^2\right)
\end{eqnarray}
and
\begin{eqnarray}\label{test-042}
0\leq\lambda D_{4,2}\mu_\ve^2\left|\log\mu_\ve\right|-\alpha_N\mu_\ve^{2}D_{4,3}\varphi(\xi_\ve)+\mathcal{O}\left(\kappa_\ve^4\left|\log\kappa_\ve\right|\right)+\mathcal{O}\left(\mu_\ve^2\right)
\end{eqnarray}
for $N=4$.  Clearly, $\xi_\ve\to\xi_0\in\overline{\Omega}$ as $\ve\to0$ up to a subsequence.  Since 
\begin{eqnarray*}
\kappa_\ve=\frac{\mu_\ve}{d(\xi_\ve,\partial \Omega)}\sim\mu_\ve
\end{eqnarray*}
if $\xi_0\in\Omega$, by \eqref{exp-rob}, \eqref{test-05} and \eqref{test-041}, we must have $\xi_0\in\partial\Omega$, which, together with \eqref{test-041} and \eqref{test-042}, implies that $\left|\log\mu_\ve\right|\sim\left|\log\kappa_\ve\right|$ as $\ve\to0$ for $N=4$.  Thus, by \eqref{test-041} and \eqref{test-042} once more,
 \begin{eqnarray}\label{test-04}
0=\lambda D_{4,2}\mu_\ve^2\left|\log\mu_\ve\right|-\alpha_N\mu_\ve^{2}D_{4,3}\varphi(\xi_\ve)+\mathcal{O}\left(\kappa_\ve^4\left|\log\kappa_\ve\right|\right)+\mathcal{O}\left(\mu_\ve^2\right)
\end{eqnarray}
for $N=4$.  By \eqref{exp-rob}, \eqref{test-05} and \eqref{test-04}, we have
\begin{eqnarray}\label{k-mu}
\kappa_\ve^{N-2}\sim\left\{\aligned
&\mu_\ve^2,\quad N\geq5,\\
&\mu_\ve^2\left|\log\mu_\ve\right|,\quad N=4,
\endaligned\right.
\end{eqnarray}
which, together with \eqref{regular} and \eqref{decay-H}, implies that
\begin{eqnarray} \label{H-esti}
H(x,\xi_\ve)\lesssim \left\{\aligned 
&\mu_\ve^{4-N},\quad N\geq 5,\\
&|\log\mu_\ve|,\quad N=4
\endaligned\right.
\quad\text{and}\quad
\mu_\ve\left|\frac{\partial H(x,\xi_\ve) }{\partial \xi_{i,\ve}}\right|\lesssim \left\{\aligned 
&\mu_\ve^{4-N+\frac{2}{N-2}},\quad N\geq 5,\\
&\mu_\ve|\log\mu_\ve|^{\frac{3}{2}},\quad N=4
\endaligned\right.
\end{eqnarray}
in $\Omega$, where $\xi_\ve=(\xi_{1,\ve}, \xi_{2,\ve}, \cdots, \xi_{N,\ve})$.  We next test the equation~\eqref{v-equ} by $Z_{\mu_\ve,\xi_\ve}^1$, which implies that
\begin{eqnarray}\label{test-1}
0&=&\int_{\Omega}\left(f\left(\mathcal{W}_{\mu_\ve,\xi_\ve}+v_\ve\right) -f\left(\mathcal{W}_{\mu_\ve,\xi_\ve}\right)\right)Z_{\mu_\ve,\xi_\ve}^1dx+\lambda\int_{\Omega}v_\ve Z_{\mu_\ve,\xi_\ve}^1dx\notag\\
&&+\lambda \int_{\Omega}\mathcal{W}_{\mu_\ve,\xi_\ve}Z_{\mu_\ve,\xi_\ve}^1dx+\int_{\Omega}\left(f\left(\mathcal{W}_{\mu_\ve,\xi_\ve}\right)-f\left(U_{\mu_\ve,\xi_\ve}\right)\right)Z_{\mu_\ve,\xi_\ve}^1dx\notag\\
&:=&I_1+II_1+III_1+IV_1.
\end{eqnarray}
Similar to the estimate of $I$ in \eqref{equa3.7},
\begin{eqnarray}\label{equa3.71}
I_1&=&\int_{\Omega}\left(f'(\mathcal{W}_{\mu_\ve,\xi_\ve})-f'(U_{\mu_\ve,\xi_\ve})\right)Z_{\mu_\ve,\xi_\ve}^1v_{\ve}dx
+\int_{\Omega}v_{\ve}f'(U_{\mu_\ve,\xi_\ve})\left(Z_{\mu_\ve,\xi_\ve}^1-\Phi_{\mu_\ve,\xi_\ve}^1\right)dx\notag\\
&&+\mathcal{O}\left(\mu_\ve^{N-2}\right),
\end{eqnarray}
which, together with \eqref{exp-ker}, \eqref{comp}, \eqref{H-esti} and Lemma~\ref{App-esti}, implies that
\begin{eqnarray}\label{equa3.72}
|I_1|&\lesssim&\mu_\ve^{\frac{N-2}{2}}\int_{\Omega}\left(\max\left\{\mu_\ve\left|\frac{\partial H(x,\xi_\ve) }{\partial \xi_{i,\ve}}\right|, H(x,\xi_\ve)\right\}+\frac{\mu_\ve^2}{d(\xi_\ve,\Omega)^N}\right)U_{\mu_\ve,\xi_\ve}^{p-1}|v_{\ve}|dx+\mathcal{O}\left(\mu_\ve^{N-2}\right)\notag\\
&\lesssim&\mu_\ve^{2}\left(\int_{\Omega}U_{\mu_\ve,\xi_\ve}^{\frac{p^2-1}{p}}dx\right)^{\frac{p}{p+1}}+\mathcal{O}\left(\mu_\ve^{N-2}\right)\notag\\
&\lesssim&\left\{\aligned
&\mu_\ve^{3},\quad N=5,\\
&\mu_\ve^4|\log\mu_\ve|^{\frac{2}{3}},\quad N=6,\\
&\mu_\ve^4,\quad N\geq7.
\endaligned\right.
\end{eqnarray}
For $N=4$, we shall use Lemma~\ref{L-infinity} to replace the assumption $\|v_\ve\|_{H_0^1(\Omega)} \lesssim \mu_\ve^{\frac{N-2}{2}}$ in the estimate~\eqref{equa3.71}.  Thus, by \eqref{exp-ker}, \eqref{comp}, \eqref{k-mu}, \eqref{H-esti} and Lemma~\ref{App-esti},
\begin{eqnarray}\label{equa3.73}
I_1&=&\int_{\Omega}\left(f'(\mathcal{W}_{\mu_\ve,\xi_\ve})-f'(U_{\mu_\ve,\xi_\ve})\right)Z_{\mu_\ve,\xi_\ve}^1v_{\ve}dx
+\int_{\Omega}v_{\ve}f'(U_{\mu_\ve,\xi_\ve})\left(Z_{\mu_\ve,\xi_\ve}^1-\Phi_{\mu_\ve,\xi_\ve}^1\right)dx\notag\\
&&+\mathcal{O}\left(\int_{\Omega}|v_{\ve}|^2U_{\mu_\ve,\xi_\ve}\left|Z_{\mu_\ve,\xi_\ve}^1\right|dx+\mu_\ve^{4}\right)\notag\\
&=&\int_{\Omega}\left(f'(\mathcal{W}_{\mu_\ve,\xi_\ve})-f'(U_{\mu_\ve,\xi_\ve})\right)Z_{\mu_\ve,\xi_\ve}^1v_{\ve}dx
+\int_{\Omega}v_{\ve}f'(U_{\mu_\ve,\xi_\ve})\left(Z_{\mu_\ve,\xi_\ve}^1-\Phi_{\mu_\ve,\xi_\ve}^1\right)dx\notag\\
&&+\mathcal{O}\left(\mu_\ve^{2}|\log\mu_\ve|^2\left(\int_{B_{d(\xi_\ve,\partial\Omega)}(\xi_\ve)}U_{\mu_\ve,\xi_\ve}\left|\Phi_{\mu_\ve,\xi_\ve}^1\right|dx+\int_{\Omega\backslash B_{d(\xi_\ve,\partial\Omega)}(\xi_\ve)}\kappa_\ve U_{\mu_\ve,\xi_\ve}^2dx\right)+\mu_\ve^{4}\right)\notag\\
&\lesssim&\mu_\ve\int_{\Omega}\left(\max\left\{\mu_\ve\left|\frac{\partial H(x,\xi_\ve) }{\partial \xi_{1,\ve}}\right|, H(x,\xi_\ve)\right\}+\frac{\mu_\ve^2}{d(\xi_\ve,\partial\Omega)^4}\right)U_{\mu_\ve,\xi_\ve}^{2}|v_{\ve}|dx\notag\\
&&+\mathcal{O}\left(\mu_\ve^{4}|\log\mu_\ve|^2(1+\kappa_\ve^2|\log d(\xi_\ve,\partial\Omega)|)\right)\notag\\
&=&\mathcal{O}\left(\mu_\ve^{4}|\log\mu_\ve|^3\right),
\end{eqnarray}
where we have used the estimate 
\begin{eqnarray}\label{point-z1}
\left|Z_{\mu_\ve,\xi_\ve}^i\right|+\mu_\ve^{\frac{N}{2}}\left|\frac{\partial H(x,\xi_\ve) }{\partial \xi_{i,\ve}}\right|\lesssim\left\{\aligned
&\left|\Phi_{\mu_\ve,\xi_\ve}^i\right|,\quad x\in B_{d(\xi_\ve,\partial\Omega)}(\xi_\ve),\\
&\kappa_\ve U_{\mu_\ve,\xi_\ve},\quad x\in\Omega\backslash B_{d(\xi_\ve,\partial\Omega)}(\xi_\ve),
\endaligned\right.
\end{eqnarray}
which comes from the maximum principle, \eqref{Struwe} and \eqref{ker-gre}.
Combining \eqref{equa3.72} and \eqref{equa3.73}, we have
\begin{eqnarray}\label{I1}
|I_1|
\lesssim\left\{\aligned
&\mu_\ve^{3},\quad N=4,5,\\
&\mu_\ve^4|\log\mu_\ve|^{\frac{2}{3}},\quad N=6,\\
&\mu_\ve^4,\quad N\geq7.
\endaligned\right.
\end{eqnarray}
Similarly, by using Lemma~\ref{L-infinity} for $N=4$ and the assumption $\|v_\ve\|_{H_0^1(\Omega)} \lesssim \mu_\ve^{\frac{N-2}{2}}$ for $N\geq5$, we have, by \eqref{k-mu}, \eqref{H-esti}, \eqref{point-z1} and Lemma~\ref{App-esti}, that
\begin{eqnarray}\label{II1}
&&|II_1|\notag\\
&\lesssim&\left\{\aligned
&\mu_\ve^{-1}\kappa_\ve^2\int_{B_{d(\xi_\ve,\partial\Omega)}(\xi_\ve)} \left|\Phi_{\mu_\ve,\xi_\ve}^1\right|dx+\mu_\ve\kappa_\ve\left(\int_{\Omega\backslash B_{d(\xi_\ve,\partial\Omega)}(\xi_\ve)} U_{\mu_\ve,\xi_\ve}^{\frac43}dx\right)^{\frac34},\quad N=4,\\
&\mu_\ve^{\frac{N-2}{2}}\left(\left(\int_{B_{d(\xi_\ve,\partial\Omega)}(\xi_\ve)} \left|\Phi_{\mu_\ve,\xi_\ve}^1\right|^{\frac{p+1}{p}}dx\right)^{\frac{p}{p+1}}+\kappa_\ve\left(\int_{\Omega\backslash B_{d(\xi_\ve,\partial\Omega)}(\xi_\ve)} U_{\mu_\ve,\xi_\ve}^{\frac{p+1}{p}}dx\right)^{\frac{p}{p+1}}\right),\quad N\geq5
\endaligned\right.\notag\\
&\lesssim&\left\{\aligned
&\mu_\ve^{2}\kappa_\ve,\quad N=4,\\
&\mu_\ve^{\frac72},\quad N=5,\\
&\mu_\ve^{4}|\log\mu_\ve|^{\frac{2}{3}},\quad N=6,\\
&\mu_\ve^{\frac{N+2}{2}},\quad N\geq7.
\endaligned\right.
\end{eqnarray}
Similar to \eqref{III-5}, by \eqref{comp}, \eqref{k-mu}, \eqref{H-esti}, \eqref{point-z1}, the symmetry of $\Phi_{\mu_\ve,\xi_\ve}^1$ and Lemma~\ref{App-esti},
\begin{eqnarray}\label{III1}
III_1&=&\lambda \int_{\Omega}\mathcal{W}_{\mu_\ve,\xi_\ve}Z_{\mu_\ve,\xi_\ve}^1dx\notag\\
&=&\mathcal{O}\left(\int_{B_{d(\xi_\ve,\partial \Omega)}(\xi_\ve)}\mu_\ve^{\frac{N}{2}}\left|\frac{\partial H(x,\xi_\ve) }{\partial \xi_{i,\ve}}\right|U_{\mu_\ve,\xi_\ve}dx\right)+\mathcal{O}\left(\int_{B_{d(\xi_\ve,\partial \Omega)}(\xi_\ve)}\mu_\ve^{\frac{N-2}{2}}H(x,\xi_\ve)\left|\Phi_{\mu_\ve,\xi_\ve}^1\right|dx\right)\notag\\
&&+\mathcal{O}\left(\int_{\Omega\backslash B_{d(\xi_\ve,\partial \Omega)}(\xi_\ve)}\kappa_\ve U_{\mu_\ve,\xi_\ve}^2dx\right)\notag\\
&=&\mathcal{O}\left(\mu_\ve^{2}\kappa_\ve^{N-3}\right)+\left\{\aligned
&\mathcal{O}\left(\kappa_\ve\mu_\ve^{2}|\log d(\xi_\ve,\partial \Omega)|\right),\quad N=4,\\
&\mathcal{O}\left(\mu_\ve^{N-2}\kappa_\ve\right),\quad N\geq5.
\endaligned\right.
\end{eqnarray}
Similar to \eqref{iv-1}, by \eqref{exp-ker}, \eqref{comp}, \eqref{exp-bub}, \eqref{point-z1} and Lemma~\ref{App-esti},
\begin{eqnarray*}\label{IV1-01}
IV_1&=&\int_{B_{d(\xi_\ve,\partial \Omega)}(\xi_\ve)}\left(f\left(\mathcal{W}_{\mu_\ve,\xi_\ve}\right)-f\left(U_{\mu_\ve,\xi_\ve}\right)\right)Z_{\mu_\ve,\xi_\ve}^1dx+\mathcal{O}\left(\int_{\Omega\backslash B_{d(\xi_\ve,\partial \Omega)}(\xi_\ve)}U_{\mu_\ve,\xi_\ve}^{p+1}dx\right)\notag\\
&=&-\alpha_N\mu_\ve^{\frac{N-2}{2}}\int_{B_{d(\xi_\ve,\partial \Omega)}(\xi_\ve)}f'\left(U_{\mu_\ve,\xi_\ve}\right)H(x,\xi_\ve)\Phi_{\mu_\ve,\xi_\ve}^1dx+\mathcal{O}\left(\frac{\kappa_\ve^{2N-3}}{\mu_\ve^{N-2}}\int_{B_{d(\xi_\ve,\partial \Omega)}(\xi_\ve)}U_{\mu_\ve,\xi_\ve}^{p-1}dx\right)\notag\\
&&+\mathcal{O}\left(\frac{\kappa_\ve^{N}}{\mu_\ve^{\frac{N-2}{2}}}\int_{B_{d(\xi_\ve,\partial \Omega)}(\xi_\ve)}U_{\mu_\ve,\xi_\ve}^{p-1}\left|\Phi_{\mu_\ve,\xi_\ve}^1\right|dx\right)+\mathcal{O}\left(\frac{\kappa_\ve^{2N-4}}{\mu_\ve^{N-2}}\int_{B_{d(\xi_\ve,\partial \Omega)}(\xi_\ve)}U_{\mu_\ve,\xi_\ve}^{p-2}\left|\Phi_{\mu_\ve,\xi_\ve}^1\right|dx\right){\bf 1}_{p\geq2}\notag\\
&&+\mathcal{O}\left(\frac{\kappa_\ve^{N+2}}{\mu_\ve^{\frac{N+2}{2}}}\int_{B_{d(\xi_\ve,\partial \Omega)}(\xi_\ve)}\left|\Phi_{\mu_\ve,\xi_\ve}^1\right|dx\right)+\mathcal{O}\left(\kappa_\ve^N\right)\notag\\
&=&-\alpha_N\mu_\ve^{\frac{N-2}{2}}\int_{B_{d(\xi_\ve,\partial \Omega)}(\xi_\ve)}f'\left(U_{\mu_\ve,\xi_\ve}\right)H(x,\xi_\ve)\Phi_{\mu_\ve,\xi_\ve}^1dx+\mathcal{O}\left(\kappa_\ve^N\right),
\end{eqnarray*}
which, together with
\begin{eqnarray*}\label{IV1-02}
&&\int_{B_{d(\xi_\ve,\partial \Omega)}(\xi_\ve)}f'\left(U_{\mu_\ve,\xi_\ve}\right)H(x,\xi_\ve)\Phi_{\mu_\ve,\xi_\ve}^1dx\\
&=&D_{N,4}\mu_\ve^{\frac{N}{2}}\frac{\partial \varphi(\xi_\ve)}{\partial \xi_{1,\ve}}+\mathcal{O}\left(\int_{B_{d(\xi_\ve,\partial \Omega)}(\xi_\ve)}|x-\xi_\ve|^2U_{\mu_\ve,\xi_\ve}^{p-1}\left|\Phi_{\mu_\ve,\xi_\ve}^1\right|dx\right)\notag\\
&&+\mathcal{O}\left(\frac{\kappa_\ve^{N+1}}{\mu_\ve^{\frac{N-2}{2}}}\right)\notag\\
&=&D_{N,4}\mu_\ve^{\frac{N}{2}}\frac{\partial \varphi(\xi_\ve)}{\partial \xi_{1,\ve}}+\mathcal{O}\left(\frac{\kappa_\ve^{N+1}}{\mu_\ve^{\frac{N-2}{2}}}\right)
\end{eqnarray*}
and \eqref{k-mu}, implies that
\begin{eqnarray}\label{IV1}
IV_1=-\alpha_ND_{N,4}\mu_\ve^{N-1}\frac{\partial \varphi(\xi_\ve)}{\partial \xi_{1,\ve}}
+\mathcal{O}\left(\kappa_\ve^{N}\right),
\end{eqnarray}
where $D_{N,4}=p\int_{\mathbb{R}^N}U_{1,0}^{p-1}\Phi_{1,0}^1xdx$.
Inserting \eqref{I1}, \eqref{II1}, \eqref{III1} and \eqref{IV1} into \eqref{test-1}, we have
\begin{eqnarray*}
0=-\alpha_ND_{N,4}\mu_\ve^{N-1}\frac{\partial \varphi(\xi_\ve)}{\partial \xi_{1,\ve}}+
o\left(\kappa_\ve^{N-1}\right)+\left\{\aligned
&o\left(\mu_\ve^{2}|\log\mu_\ve|\right),\quad N=4,\\
&o\left(\mu_\ve^{2}\right),
\endaligned\right.
\end{eqnarray*}
which, together with \eqref{exp-rob} and \eqref{k-mu}, implies a contradiction.  Thus, $\xi_0\in\partial\Omega$ is also impossible and the conclusion follows immediately.
 \end{proof}

With the basic estimate of $\|v_\ve\|_{H^1_0(\Omega)}$ in hands, we can locate the limit value $\overline{\lambda}$.
\begin{lemma}\label{locate-limit}
Let $N\geq4$ and $v_\ve$ is the solution of \eqref{v-equ}, then there exists $k\geq1$ such that $\overline{\lambda}=\lambda_k$.  Moreover, let $\bar{v}_{\ve}=\frac{v_{\ve}}{\|v_{\ve}\|_{H_0^1(\Omega)}}$ then $\bar{v}_{\ve}\to\bar{v}_{0}$ weakly in $H^1_0(\Omega)$ as $\ve\to0$ up to a subsequence such that $\bar{v}_{0}\in \Xi_k$.
\end{lemma}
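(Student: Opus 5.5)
We normalize $\bar v_\ve=v_\ve/\|v_\ve\|_{H_0^1(\Omega)}$ and pass to the limit in the equation \eqref{v-equ} divided by $\|v_\ve\|_{H_0^1(\Omega)}$. Since $\|\bar v_\ve\|_{H_0^1(\Omega)}=1$, up to a subsequence $\bar v_\ve\rightharpoonup\bar v_0$ weakly in $H_0^1(\Omega)$ and strongly in $L^2(\Omega)$. We fix a test function $\phi\in C_c^\infty(\Omega)$ and write
\begin{eqnarray*}
\int_\Omega\nabla\bar v_\ve\nabla\phi\,dx=\lambda\int_\Omega\bar v_\ve\phi\,dx+\frac{1}{\|v_\ve\|_{H_0^1(\Omega)}}\int_\Omega\Big(f(\mathcal{W}_{\mu_\ve,\xi_\ve}+v_\ve)-f(U_{\mu_\ve,\xi_\ve})+\lambda\mathcal{W}_{\mu_\ve,\xi_\ve}\Big)\phi\,dx .
\end{eqnarray*}
The goal is to show that the last bracket tends to $0$. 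Then $-\Delta\bar v_0=\overline{\lambda}\bar v_0$ in $\Omega$ with $\bar v_0\in H_0^1(\Omega)$, so $\bar v_0$ is either $0$ or an eigenfunction.

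For the bubble terms, we use \eqref{exp-bub}, \eqref{comp} and H\"older's inequality. This gives $\|\mathcal{W}_{\mu_\ve,\xi_\ve}\|_{L^{(p+1)/p}(\Omega)}\lesssim\|U_{\mu_\ve,\xi_\ve}\|_{L^{(p+1)/p}}$, which is $\lesssim\mu_\ve^{\frac{N-2}{2}}$ for $N\ge5$ (up to a log for $N=6$) and $\lesssim\mu_\ve|\log\mu_\ve|^{3/4}$ for $N=4$. Moreover $|f(\mathcal{W})-f(U)|\lesssim U^{p-1}|U-\mathcal W|$, and its $L^{(p+1)/p}$-norm is controlled by powers of $\mu_\ve$ and $\kappa_\ve$. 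By Lemma~\ref{basic-esti}, $\|v_\ve\|_{H_0^1(\Omega)}\gg\mu_\ve^{\frac{N-2}{2}}$, so all these bubble contributions are $o(\|v_\ve\|_{H_0^1(\Omega)})$. For $N=4$ the $|\log\mu_\ve|$ loss must be absorbed; here we would test against $\phi$ only and split $\Omega$ into $B_{\sqrt{\mu_\ve}}(\xi_\ve)$ and its complement. Since $\phi$ is bounded, the $L^1$ norm of $U$ on the ball is $O(\mu_\ve^{2})$, and away from the ball $U\lesssim\mu_\ve/|x-\xi_\ve|^2$ has $L^1$ norm $O(\mu_\ve)$, which is $o(\|v_\ve\|)$.

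For the nonlinear term in $v_\ve$, we write $f(\mathcal W+v_\ve)-f(\mathcal W)=f'(\mathcal W)v_\ve+R_\ve$. The linear piece gives $\int f'(\mathcal W)\bar v_\ve\phi$, and this tends to $0$ because $f'(\mathcal W)\le U_{\mu_\ve,\xi_\ve}^{p-1}\to0$ in $L^{N/2}$ away from a shrinking ball. Near $\xi_\ve$ we use $\|U^{p-1}\|_{L^{N/2}(B_r)}\to$ const but $\|\bar v_\ve\phi\|_{L^{2^*}(B_r(\xi_\ve))}\to0$ for $r\to0$, by strong $L^q$ convergence for $q<2^*$ plus the Lebesgue-point argument. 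Alternatively, we can use orthogonality \eqref{orthogonality}: the rescaled $V_\ve$ tends weakly to a kernel element orthogonal to all kernels, hence to $0$. The remainder satisfies $|R_\ve|\lesssim|v_\ve|^p+f''(\mathcal W)|v_\ve|^2$ by \eqref{taylor-1}. Divided by $\|v_\ve\|$, it is bounded by $\|v_\ve\|^{p-1}|\bar v_\ve|^p+\|v_\ve\|\,U^{p-2}|\bar v_\ve|^2$, which vanishes in $L^{(p+1)/p}$ by Sobolev and $\|v_\ve\|\to0$. For $N\ge6$ where $p<2$ we instead use $|R_\ve|\lesssim|v_\ve|^p$ directly.

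The main obstacle is to exclude $\bar v_0=0$, i.e. to prove \emph{strong} convergence or at least nontriviality. We plan to test \eqref{v-equ} with $v_\ve$ itself:
\begin{eqnarray*}
1=\lambda\|\bar v_\ve\|_{L^2}^2+\int f'(\mathcal W)\bar v_\ve^2+o(1).
\end{eqnarray*}
If $\bar v_0=0$, the first term vanishes. We then need $\int f'(\mathcal W)\bar v_\ve^2\le c<1$ using the orthogonality \eqref{orthogonality}, via the coercivity (Lemma~\ref{Lem2.1}-type spectral gap) of $\|\nabla w\|^2-p\int U^{p-1}w^2$ on the orthogonal complement of the kernels. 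This yields a contradiction because the bubble coercivity constant is strictly positive. Hence $\bar v_0\ne0$, so $\overline\lambda=\lambda_k$ for some $k\ge1$ and $\bar v_0\in\Xi_k$. We expect this coercivity step, together with the $N=4$ logarithmic bookkeeping, to be the delicate part.
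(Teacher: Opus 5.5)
Your passage to the limit is the paper's argument: test \eqref{v-equ} with a fixed $\varphi\in C_c^\infty(\Omega)$, divide by $\|v_\ve\|_{H_0^1(\Omega)}$, use Lemma~\ref{basic-esti} and pass to the weak limit. Some of your supporting bounds are off but easy to repair. With $U=U_{\mu_\ve,\xi_\ve}$, bound the bubble terms by $\|\varphi\|_{L^\infty}\int_\Omega U^p$ and $\|\varphi\|_{L^\infty}\int_\Omega U$, both $O(\mu_\ve^{\frac{N-2}{2}})$, and the linear term via $\|U^{p-1}\|_{L^{\frac{p+1}{p}}(\Omega)}\to0$. Your own bounds do not close as written: $\|U\|_{L^{\frac{p+1}{p}}}\sim\mu_\ve^2\gg\mu_\ve^{\frac{N-2}{2}}$ for $N\geq7$, the $\kappa_\ve$-terms are not controlled by Lemma~\ref{basic-esti}, and $\|\bar v_\ve\|_{L^{2^*}(B_r(\xi_\ve))}$ need not be small because $\bar v_\ve$ may concentrate. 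The paper then reads off $\overline{\lambda}=\lambda_k$ from $-\Delta\bar v_0=\overline{\lambda}\bar v_0$ without checking $\bar v_0\neq0$. You are right that this step needs justification.

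Your argument for $\bar v_0\neq0$, however, has a genuine gap, for two reasons.

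First, the form $\|\nabla w\|_{L^2}^2-p\int U^{p-1}w^2$ is not coercive on $\{w:\langle w,\mathcal{Z}^i_{\mu_\ve,\xi_\ve}\rangle=0,\ 0\le i\le N\}$. The bubble has Morse index one. $U$ is, up to small errors, orthogonal to every $\mathcal{Z}^i$, since $\int_{\mathbb{R}^N}U^p\Phi^i_{\mu,\xi}dx=0$. Yet for $\hat U=U/\|\nabla U\|_{L^2(\mathbb{R}^N)}$ one has $p\int_{\mathbb{R}^N}U^{p-1}\hat U^2dx=p>1$. Lemma~\ref{Lem2.1} is an invertibility estimate, not a positivity one. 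Also, $v_\ve$ is not orthogonal to $\mathcal{W}_{\mu_\ve,\xi_\ve}$, because \eqref{orthogonal-pro} minimizes only over $(\mu,\xi)$ and not over an amplitude. So $\int f'(\mathcal{W}_{\mu_\ve,\xi_\ve})\bar v_\ve^2\le c<1$ does not follow; you would first have to show that $\bar v_\ve$ carries no mass at scale $\mu_\ve$.

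Second, the $o(1)$ in your tested identity requires $\|\mathcal{R}_1\|_{L^{\frac{p+1}{p}}(\Omega)}=o(\|v_\ve\|_{H_0^1(\Omega)})$. By \eqref{R1-esti-0} this norm is of order $\kappa_\ve^{N-2}+\mu_\ve^{\frac{N-2}{2}}$ for $N=4,5$ and $\kappa_\ve^{\frac{N+2}{2}}+\mu_\ve^2$ for $N\geq7$, while Lemma~\ref{basic-esti} only gives $\|v_\ve\|\gg\mu_\ve^{\frac{N-2}{2}}$. When you test with $v_\ve$ you no longer have $\|\varphi\|_{L^\infty}$ to absorb these terms. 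The same missing bound also blocks the rescaling/nondegeneracy argument (cf.\ the proof of Lemma~\ref{L-infinity}) that would exclude mass at scale $\mu_\ve$.

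A route that works avoids $\bar v_0\neq0$ altogether. Assume $\overline{\lambda}\notin\sigma(-\Delta)$. Then Lemma~\ref{Lem2.1} with $E_\ve=\mathrm{span}\{\mathcal{Z}^i_{\mu_\ve,\xi_\ve}\}$ applies to $\rho=v_\ve$, which lies in $E_\ve^\perp$ by \eqref{orthogonality}, and gives $\|v_\ve\|_{H_0^1(\Omega)}\lesssim\|\mathcal{R}_1\|_{L^{\frac{p+1}{p}}(\Omega)}$.

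For $N=4,5$, combining this with Lemma~\ref{basic-esti} forces $\|v_\ve\|\lesssim\kappa_\ve^{N-2}$ and $\kappa_\ve^{N-2}\gg\mu_\ve^{\frac{N-2}{2}}$, so $\xi_\ve\to\partial\Omega$. Redo the $\mathcal{Z}^0$-test from the proof of Lemma~\ref{basic-esti} with this bound: all $v_\ve$-terms become $o(\kappa_\ve^{N-2})$. The test then yields $\kappa_\ve^{N-2}\sim\mu_\ve^{N-2}\varphi(\xi_\ve)\lesssim\mu_\ve^2$, resp.\ $\lesssim\mu_\ve^2|\log\mu_\ve|$ for $N=4$, which contradicts $\kappa_\ve^{N-2}\gg\mu_\ve^{\frac{N-2}{2}}$.

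For $N\geq6$, run the $\mathcal{Z}^0$- and $\mathcal{Z}^1$-tests of that proof with the same bound; they give the contradiction in the same way. Hence $\overline{\lambda}=\lambda_k$, and $\bar v_0\in\Xi_k$ follows from your limit equation.
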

\begin{proof}
By testing \eqref{v-equ} by $\varphi \in C_0^\infty (\Omega)$, we have
\begin{eqnarray}\label{test-2}
\int_{\Omega} \nabla v_\ve \nabla \varphi dx -\int_{\Omega} \left(f\left(\mathcal{W}_{\mu_\ve,\xi_\ve}+v_{\ve}\right)-f\left(\mathcal{W}_{\mu_\ve,\xi_\ve}\right)\right)\varphi dx+\lambda\int_{\Omega}(\mathcal{W}_{\mu_\ve,\xi_\ve}+v_{\ve})\varphi dx=0.
\end{eqnarray}
Similar to \eqref{taylor-1} and by the H\"older inequality,
\begin{eqnarray}\label{test-21}
&&\left|\int_{\Omega} \left(f\left(\mathcal{W}_{\mu_\ve,\xi_\ve}+v_{\ve}\right)-f\left(\mathcal{W}_{\mu_\ve,\xi_\ve}\right)\right)\varphi dx\right|\notag\\
&\lesssim&
\|v_\ve\|_{H^1_0(\Omega)}^2{\bf 1}_{p\geq2}+\|v_\ve\|_{H^1_0(\Omega)}^p+\left\{\aligned
&\|v_\ve\|_{H^1_0(\Omega)}\mu_\ve^{\frac{N-2}{2}},\quad N=4,5,\\
&\|v_\ve\|_{H^1_0(\Omega)}\mu_\ve^2\left|\log\mu_\ve\right|^{\frac{2}{3}},\quad N=6,\\
&\|v_\ve\|_{H^1_0(\Omega)}\mu_\ve^2,\quad N\geq7.
\endaligned\right.
\end{eqnarray}
By Lemma~\ref{App-esti} and \eqref{comp}, 
\begin{eqnarray}\label{test-22}
\int_{\Omega}\mathcal{W}_{\mu_\ve,\xi_\ve}\varphi dx=\mathcal{O}\left(\mu_\ve^{\frac{N-2}{2}}\right).
\end{eqnarray}
Let $\bar{v}_{\ve}=\frac{v_{\ve}}{\|v_{\ve}\|_{H_0^1(\Omega)}}$ then $\{\bar{v}_{\ve}\}$ is bounded in $H_0^1(\Omega)$ and
\begin{eqnarray*}
\left\{\aligned
\bar{v}_{\ve}\rightharpoonup\bar{v}_{0}\quad\text{weakly in }H^1_0(\Omega),\\
\bar{v}_{\ve}\to\bar{v}_{0}\quad\text{strongly in }L^2(\Omega),
\endaligned\right.
\end{eqnarray*}
as $\ve\to0$ up to a subsequence.  Now, by inserting \eqref{test-21} and \eqref{test-22} into \eqref{test-2}, it is easy to see that $\bar{v}_{\ve}\to\bar{v}_{0}$ weakly in $H^1_0(\Omega)$ as $\ve\to0$ up to a subsequence and 
\begin{eqnarray*}
\int_{\Omega} \nabla \bar{v}_{0}\nabla \varphi dx= \overline{\lambda} \int_{\Omega} \bar{v}_{0} \varphi dx.
\end{eqnarray*}
Since we have assumed $\overline{\lambda}>0$ and $\varphi$ is arbitrary, we must have that $\overline{\lambda}=\lambda_k$.
\end{proof}

\section{The first refinement and the classification in the case $\lambda\to\overline{\lambda}^+$}
We recall that we have used the notation $\{e_{k,l}\}_{1\leq l\leq m_k}$ to denote the orthogonal system of the eigenspace according to $\lambda_k$ and $m_k$ is the multiplicity of $\lambda_k$.  Then it is well known that $\{e_{k,l}\}_{1\leq l\leq h_k; k\geq1}$ is an orthogonal basis in $H_0^1(\Omega)$.  Without loss of generality, we assume that $\|e_{k,l}\|_{H_0^1(\Omega)}^2=1$ for every $k\geq1$ and $1\leq j\leq m_k$.  Thus, we can rewrite
\begin{eqnarray}\label{decomp-v}
v_\ve=\sum_{k=1}^{\infty}\sum_{l=1}^{m_k}\tau_{k,l,\ve}e_{k,l}\text{ in }H^1_0(\Omega)\quad\text{and}\quad\|v_\ve\|_{H^1_0(\Omega)}^2=\sum_{k=1}^{\infty}\sum_{l=1}^{m_k}\tau_{k,l,\ve}^2,
\end{eqnarray}
where $\tau_{k,l,\ve}=\langle v_\ve, e_{k,l}\rangle$.  
By Lemma~\ref{locate-limit}, we can assume that $\overline{\lambda}=\lambda_k$ for some $k\geq1$.  Then we can also rewrite
\begin{eqnarray}\label{orthogonal-decomp}
v_\ve=v_{\ve}^*+\sum_{l=1}^{m_k}\tau_{k,l,\ve}'e_{k,l}+\sum_{j=0}^{N}\alpha_{j,\ve}Z_{\mu_\ve,\xi_\ve}^j\quad\text{ in }H^1_0(\Omega)
\end{eqnarray}
such that
\begin{eqnarray}\label{rem-orth}
\langle v_{\ve}^*, e_{k,l}\rangle=\langle v_{\ve}^*, Z_{\mu_\ve,\xi_\ve}^j\rangle=0
\end{eqnarray}
for all $1\leq l\leq m_k$ and $0\leq j\leq N$, which is equivalent to solve the following system
\begin{eqnarray}\label{system}
\left\{\aligned
&\tau_{k,l,\ve}=\tau_{k,l,\ve}'+\sum_{i=0}^{N}\alpha_{i,\ve}\langle Z_{\mu_\ve,\xi_\ve}^i, e_{k,l}\rangle,\quad 1\leq l\leq m_k,\\
&0=\sum_{l=1}^{m_k}\tau_{k,l,\ve}'\langle e_{k,l}, Z_{\mu_\ve,\xi_\ve}^j\rangle+\sum_{i=0}^{N}\alpha_{i,\ve}\langle Z_{\mu_\ve,\xi_\ve}^i, Z_{\mu_\ve,\xi_\ve}^j\rangle,\quad 0\leq j\leq N,
\endaligned\right.
\end{eqnarray}
according to the orthogonal condition of $v_\ve$ given by \eqref{orthogonality}, the orthogonality of the eigenfunctions $\{e_{k,l}\}$ and \eqref{decomp-v}.
\begin{lemma}\label{compute-ez}
Let $N\geq4$, $\{e_{k,l}\}_{1\leq l\leq m_k}$ is the orthogonal system of the eigenspace according to $\lambda_k$ and $Z_{\mu_\ve,\xi_\ve}^j$ are given by \eqref{exp-ker}, where $m_k$ is the multiplicity of $\lambda_k$, then
\begin{eqnarray*}
\langle e_{k,l}, Z_{\mu_\ve,\xi_\ve}^j\rangle=\left\{\aligned
&D_{N,3}\mu_\ve^{\frac{N-2}{2}}e_{k,l}(\xi_\ve)+\mathcal{O}\left(\mu_\ve^{\frac{N-2}{2}}\kappa_\ve^2\left|\log\kappa_\ve\right|\right),\quad j=0,\\
&D_{N,4}\mu_\ve^{\frac{N}{2}}\frac{\partial e_{k,l}(\xi_\ve)}{\partial \xi_{j,\ve}}+\mathcal{O}\left(\mu_\ve^{\frac{N-2}{2}}\kappa_\ve^3+\mu_\ve^{\frac{N+2}{2}}\right),\quad 1\leq j\leq N,
\endaligned\right.
\end{eqnarray*}
where $D_{N,3}=p\int_{\mathbb{R}^N}U_{1,0}^{p-1}\Phi_{1,0}^0dx$ and $D_{N,4}=p\int_{\mathbb{R}^N}U_{1,0}^{p-1}\Phi_{1,0}^1xdx$.
\end{lemma}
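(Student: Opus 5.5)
The first step is to move the derivative from $Z$ onto the eigenfunction. Since $\mathcal{Z}^j_{\mu_\ve,\xi_\ve}$ solves \eqref{projkernel} and $e_{k,l}\in H^1_0(\Omega)$, integration by parts gives
\begin{eqnarray*}
\langle e_{k,l}, Z_{\mu_\ve,\xi_\ve}^j\rangle=p\int_{\Omega}U_{\mu_\ve,\xi_\ve}^{p-1}\Phi_{\mu_\ve,\xi_\ve}^j e_{k,l}\,dx.
\end{eqnarray*}
This removes the projection error in \eqref{exp-ker} altogether. What remains is an integral of an explicit, rapidly decaying kernel against the smooth function $e_{k,l}$. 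Here $e_{k,l}\in C^\infty(\Omega)\cap C^{1,\gamma}(\overline{\Omega})$ by elliptic regularity and the smoothness of $\partial\Omega$, and it vanishes on $\partial\Omega$.

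Next I split $\Omega=B_{d_\ve}(\xi_\ve)\cup(\Omega\backslash B_{d_\ve}(\xi_\ve))$ with $d_\ve=d(\xi_\ve,\partial\Omega)$.

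\emph{Outer region.} Here $|e_{k,l}|\lesssim1$, $U^{p-1}|\Phi^j|\lesssim U^p$ for $j=0$, and $U^{p-1}|\Phi^j|\lesssim U^p\,\mu_\ve|x-\xi_\ve|/(\mu_\ve^2+|x-\xi_\ve|^2)$ for $j\geq1$. Lemma~\ref{App-esti} then gives contributions of order $\mu_\ve^{\frac{N-2}{2}}\kappa_\ve^{2}$ and $\mu_\ve^{\frac{N-2}{2}}\kappa_\ve^{3}$ respectively, since $\int_{|y|>d_\ve/\mu_\ve}U_{1,0}^p\sim\kappa_\ve^{2}$.

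\emph{Inner ball.} Taylor expand $e_{k,l}$ around $\xi_\ve$:
\begin{eqnarray*}
e_{k,l}(x)=e_{k,l}(\xi_\ve)+\nabla e_{k,l}(\xi_\ve)\cdot(x-\xi_\ve)+\mathcal{O}\left(|x-\xi_\ve|^2\right),
\end{eqnarray*}
using bounded second derivatives. For this I would either assume $\xi_\ve$ stays in a region where $\|D^2e_{k,l}\|_\infty$ is bounded, which holds for smooth $\partial\Omega$ up to the boundary, or use $C^2(\overline{\Omega})$ regularity directly.

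For $j=0$, $\Phi^0$ is radial, so the linear term integrates to zero. The constant term gives $e_{k,l}(\xi_\ve)\,p\int_{B_{d_\ve}}U^{p-1}\Phi^0$. After scaling $x=\xi_\ve+\mu_\ve y$, this equals $\mu_\ve^{\frac{N-2}{2}}e_{k,l}(\xi_\ve)\big(D_{N,3}+\mathcal{O}(\kappa_\ve^2)\big)$. The quadratic term is bounded by $\mu_\ve^{\frac{N-2}{2}}\mu_\ve^2\int_{|y|<d_\ve/\mu_\ve}|y|^2U_{1,0}^p\,dy$. Because $U_{1,0}^p|y|^2\sim|y|^{-N}$, this integral produces exactly $\mu_\ve^{\frac{N-2}{2}}\mu_\ve^2|\log\kappa_\ve|\lesssim\mu_\ve^{\frac{N-2}{2}}\kappa_\ve^2|\log\kappa_\ve|$, since $\mu_\ve\le\kappa_\ve\,\mathrm{diam}\,\Omega$.

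For $1\le j\le N$, $\Phi^j$ is odd in $y_j$, so the constant term vanishes. The linear term gives $\mu_\ve^{\frac{N}{2}}\partial_je_{k,l}(\xi_\ve)\,p\int U_{1,0}^{p-1}\Phi^j_{1,0}y_j$, which is $D_{N,4}\mu_\ve^{\frac N2}\partial_j e_{k,l}(\xi_\ve)$ up to a tail $\mathcal{O}(\mu_\ve^{\frac N2}\kappa_\ve^{2})\lesssim\mu_\ve^{\frac{N-2}{2}}\kappa_\ve^3$. The quadratic term has even symmetry against an odd kernel, so it also vanishes. I therefore go to third order, with remainder $\mathcal{O}(|x-\xi_\ve|^3)$, which yields $\mu_\ve^{\frac{N-2}{2}}\mu_\ve^{3}\int_{|y|<d_\ve/\mu_\ve}|y|^{3}U_{1,0}^{p-1}|\Phi^j_{1,0}|$. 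The integrand is $\sim|y|^{-N}$, giving $\mu_\ve^{\frac{N+4}{2}}|\log\kappa_\ve|$, which is absorbed into $\mu_\ve^{\frac{N+2}{2}}$. Alternatively, if only $C^2$ is available, the second-order remainder together with symmetry gives the $\mu_\ve^{\frac{N+2}{2}}$ term directly: $\mu_\ve^{\frac{N+2}{2}}\int|y|^2U^{p-1}|\Phi^j|<\infty$ since the integrand is $\sim|y|^{-N-1}$.

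The main obstacle is justifying the uniform Taylor expansion of $e_{k,l}$ near the boundary, that is, uniform $C^2$/$C^3$ bounds on $\overline{\Omega}$, together with careful bookkeeping of the logarithmic tails in $N=4$. Both are handled by boundary elliptic regularity for the eigenfunctions and by the explicit decay of $U_{1,0}$.
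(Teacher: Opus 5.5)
Your proposal is correct and is essentially the paper's argument: integrate by parts using \eqref{projkernel} to get $p\int_{\Omega}U_{\mu_\ve,\xi_\ve}^{p-1}\Phi_{\mu_\ve,\xi_\ve}^j e_{k,l}\,dx$, Taylor expand $e_{k,l}$ at $\xi_\ve$ on $B_{d(\xi_\ve,\partial\Omega)}(\xi_\ve)$, use parity to kill the lower-order terms, and bound the outer region and the remainders with Lemma~\ref{App-esti}. The only difference is that the third-order expansion for $1\leq j\leq N$ is not needed, since the paper uses exactly your $C^2$ alternative and bounds the second-order remainder by $\mu_\ve^{\frac{N+2}{2}}\int|y|^2U_{1,0}^{p-1}|\Phi^j_{1,0}|\,dy<\infty$.
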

\begin{proof}
By \eqref{projkernel},
\begin{eqnarray*}
\langle e_{k,l}, Z_{\mu_\ve,\xi_\ve}^j\rangle=p\int_{\Omega}U_{\mu_\ve,\xi_\ve}^{p-1}\Psi_{\mu_\ve,\xi_\ve}^j e_{k,l}dx
\end{eqnarray*}
for all $0\leq j\leq N$, $k\geq1$ and $1\leq l\leq m_k$.  Thus, by the regularity of $e_{k,l}$, the Taylor expansion, \eqref{comp} and Lemma~\ref{App-esti},
\begin{eqnarray}\label{esti-ez-0}
\langle e_{k,l}, Z_{\mu_\ve,\xi_\ve}^0\rangle&=&p\int_{\Omega}U_{\mu_\ve,\xi_\ve}^{p-1}\Psi_{\mu_\ve,\xi_\ve}^0\left(e_{k,l}(\xi_\ve)+\nabla e_{k,l}(\xi_\ve)(x-\xi_\ve)+\mathcal{O}(|x-\xi_\ve|^2)\right)dx\notag\\
&=&D_{N,3}\mu_\ve^{\frac{N-2}{2}}e_{k,l}(\xi_\ve)+\mathcal{O}\left(\int_{\Omega\backslash B_{d(\xi_\ve,\partial \Omega)}(\xi_\ve)}U_{\mu_\ve,\xi_\ve}^{p}dx+\int_{B_{d(\xi_\ve,\partial \Omega)}(\xi_\ve)}U_{\mu_\ve,\xi_\ve}^{p}|x-\xi_\ve|^2dx\right)\notag\\
&=&D_{N,3}\mu_\ve^{\frac{N-2}{2}}e_{k,l}(\xi_\ve)+\mathcal{O}\left(\mu_\ve^{\frac{N-2}{2}}\kappa_\ve^2\left|\log\kappa_\ve\right|\right)
\end{eqnarray}
and
\begin{eqnarray*}
\langle e_{k,l}, Z_{\mu_\ve,\xi_\ve}^j\rangle&=&p\int_{\Omega}U_{\mu_\ve,\xi_\ve}^{p-1}\Psi_{\mu_\ve,\xi_\ve}^j\left(e_{k,l}(\xi_\ve)+\nabla e_{k,l}(\xi_\ve)(x-\xi_\ve)+\mathcal{O}(|x-\xi_\ve|^2)\right)dx\\
&=&D_{N,4}\mu_\ve^{\frac{N}{2}}\frac{\partial e_{k,l}(\xi_\ve)}{\partial \xi_{j,\ve}}+\mathcal{O}\left(\int_{\Omega\backslash B_{d(\xi_\ve,\partial \Omega)}(\xi_\ve)}U_{\mu_\ve,\xi_\ve}^{p-1}\left|\Psi_{\mu_\ve,\xi_\ve}^j\right|dx\right)\\
&&+\mathcal{O}\left(\int_{B_{d(\xi_\ve,\partial \Omega)}(\xi_\ve)}U_{\mu_\ve,\xi_\ve}^{p-1}\left|\Psi_{\mu_\ve,\xi_\ve}^j\right||x-\xi_\ve|^2dx\right)\\
&=&D_{N,4}\mu_\ve^{\frac{N}{2}}\frac{\partial e_{k,l}(\xi_\ve)}{\partial \xi_{j,\ve}}+\mathcal{O}\left(\mu_\ve^{\frac{N-2}{2}}\kappa_\ve^3+\mu_\ve^{\frac{N+2}{2}}\right)
\end{eqnarray*}
for $1\leq j\leq N$.
\end{proof}

Let
\begin{eqnarray}\label{beta}
\beta_\ve=\max_{k\geq1; 1\leq l\leq m_k}|\tau_{k,l,\ve}|.
\end{eqnarray}
Since by \eqref{projkernel} and Lemma~\ref{App-esti},
\begin{eqnarray}\label{zi-zj}
\langle Z_{\mu_\ve,\xi_\ve}^i, Z_{\mu_\ve,\xi_\ve}^j\rangle=p\int_{\Omega}U_{\mu_\ve,\xi_\ve}^{p-1}\Psi_{\mu_\ve,\xi_\ve}^j Z_{\mu_\ve,\xi_\ve}^idx=\left\{\aligned
&D_{N,5}+\mathcal{O}\left(\kappa_\ve^{N}\right),\quad i=j=0,\\
&D_{N,6}+\mathcal{O}\left(\kappa_\ve^{N+2}\right),\quad i=j\not=0,\\
&\mathcal{O}\left(\kappa_\ve^{N+1}\right),\quad i\not=j,
\endaligned\right.
\end{eqnarray}
$0\leq i,j\leq N$, where $D_{N,5}=p\int_{\mathbb{R}^N}U_{1,0}^{p-1}\left|\Phi_{1,0}^0\right|^2dx$ and $D_{N,6}=p\int_{\mathbb{R}^N}U_{1,0}^{p-1}\left|\Phi_{1,0}^1\right|^2dx$.  we can solve \eqref{system} by
\begin{eqnarray*}
\tau_{k,l,\ve}'=\tau_{k,l,\ve}+\sum_{i=0}^{N}\mathcal{O}\left(\mu_\ve^{\frac{N-2}{2}}|\alpha_{i,\ve}|\right)\quad\text{and}\quad|\alpha_{i,\ve}|=\sum_{l=1}^{m_{k}}\mathcal{O}\left(\mu_\ve^{\frac{N-2}{2}}|\tau_{k,l,\ve}'|\right),
\end{eqnarray*}
which, together with \eqref{beta}, implies that
\begin{eqnarray}\label{solution-sys}
\tau_{k,l,\ve}'=\tau_{k,l,\ve}+\mathcal{O}\left(\mu_\ve^{N-2}\beta_\ve\right)\quad\text{and}\quad|\alpha_{i,\ve}|=\mathcal{O}\left(\mu_\ve^{\frac{N-2}{2}}\beta_\ve\right)
\end{eqnarray}
for all $k\geq1$, $1\leq l\leq m_k$ and $0\leq i\leq N$.

\vskip0.12in

By \eqref{orthogonal-decomp}, we can rewrite the equation of $v_\ve$ given by \eqref{v-equ} into the equation of $v_{\ve}^*$ as follows:
\begin{eqnarray}\label{v*-equ}
\left\{\aligned
&-\Delta v_{\ve}^*-\lambda v_{\ve}^*-f'(\mathcal{W}_{\mu_\ve,\xi_\ve}^*)v_{\ve}^*=\mathcal{R}_0(v_{\ve}^*)+\sum_{j=1}^2\mathcal{R}_j,\quad\text{in }\Omega,\\
&\langle v_\ve^*, \mathcal{Z}_{\mu_\ve, \xi_\ve}^i\rangle=\langle v_\ve^*, e_{k,l}\rangle=0,\quad 0\leq i\leq N,\quad1\leq l\leq m_k,\\
&v_\ve^*=0,\quad\text{on }\partial\Omega,
\endaligned\right.
\end{eqnarray}
where we denote
\begin{eqnarray}\label{new*}
\mathcal{E}_{\ve}^*=\sum_{l=1}^{m_k}\tau_{k,l,\ve}'e_{k,l},\quad\mathcal{Z}_\ve=\sum_{j=0}^{N}\alpha_{j,\ve}Z_{\mu_\ve,\xi_\ve}^j\quad\text{and}\quad\mathcal{W}_{\mu_\ve,\xi_\ve}^*=\mathcal{W}_{\mu_\ve,\xi_\ve}+\mathcal{Z}_\ve,
\end{eqnarray}
\begin{eqnarray}\label{Rv}
\mathcal{R}_0(v_{\ve}^*)&=&f(\mathcal{W}_{\mu_\ve,\xi_\ve}^*+\mathcal{E}_{\ve}^*+v_{\ve}^*)-f(\mathcal{W}_{\mu_\ve,\xi_\ve}^*+\mathcal{E}_{\ve}^*)-f'(\mathcal{W}_{\mu_\ve,\xi_\ve}^*+\mathcal{E}_{\ve}^*)v_{\ve}^*\notag\\
&&+\left(f'(\mathcal{W}_{\mu_\ve,\xi_\ve}^*+\mathcal{E}_{\ve}^*)-f'(\mathcal{W}_{\mu_\ve,\xi_\ve}^*)\right)v_{\ve}^*
\end{eqnarray}
and
\begin{eqnarray}\label{R}
\left\{\aligned
&\mathcal{R}_1=f(\mathcal{W}_{\mu_\ve,\xi_\ve})-f(U_{\mu_\ve,\xi_\ve})+\lambda \mathcal{W}_{\mu_\ve,\xi_\ve},\\
&\mathcal{R}_2=f(\mathcal{W}_{\mu_\ve,\xi_\ve}^*+\mathcal{E}_{\ve}^*)-f(\mathcal{W}_{\mu_\ve,\xi_\ve})+\ve\mathcal{E}_{\ve}^*+\lambda\mathcal{Z}_\ve-\sum_{j=0}^{N}\alpha_{j,\ve}f'\left(U_{\mu_\ve,\xi_\ve}\right)\Phi_{\mu_\ve,\xi_\ve}^j.
\endaligned\right.
\end{eqnarray}
By \eqref{kernel}, \eqref{point-z1}, \eqref{solution-sys} and the comparsion,
\begin{eqnarray}\label{esti-W*}
\mathcal{W}_{\mu_\ve,\xi_\ve}^*\sim\left(1+\mathcal{O}\left(\mu_\ve^{\frac{N-2}{2}}\beta_\ve\right)\right)U_{\mu_\ve,\xi_\ve}.
\end{eqnarray}
\begin{lemma}\label{esti-v*}
Let $N\geq4$ and $v_\ve^*$ is the solution of \eqref{v*-equ}.  Then
\begin{eqnarray*}
\|v_\ve^*\|_{H^1_0(\Omega)}\lesssim\ve\beta_\ve+\beta_\ve^{p}+\left\{\aligned
&\kappa_\ve^{N-2}+\mu_\ve^{\frac{N-2}{2}},\quad N=4,5,\\
&\kappa_\ve^4|\log\kappa_\ve|^{\frac{2}{3}}+\mu_\ve^2\left|\log\mu_\ve\right|^{\frac{2}{3}},\quad N=6,\\
&\kappa_\ve^{\frac{N+2}{2}}+\mu_\ve^2,\quad N\geq7
\endaligned\right.
\end{eqnarray*}
and
\begin{eqnarray*}
\|v_\ve^*\|_{L^\infty (\Omega)} \lesssim  \beta_\ve+\mu_\ve^{-\frac{N-2}{2}}\kappa_\ve^{N-2}+\mu_\ve^{\frac{6-N}{2}}.
\end{eqnarray*}
\end{lemma}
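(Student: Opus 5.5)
The $H^1_0$ bound will come from Lemma~\ref{Lem2.1} applied to \eqref{v*-equ}. The $L^\infty$ bound will follow from the decomposition \eqref{orthogonal-decomp}, Lemma~\ref{L-infinity} and \eqref{solution-sys}.

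\textbf{The $H^1_0$ estimate.} Since $v_\ve^*\in E_\ve^\perp$ by \eqref{rem-orth}, \eqref{v*-equ} can be written as
\[
\Pi^\perp\bigl[v_\ve^*-i^*\bigl(\lambda v_\ve^*+f'(\mathcal{W}_{\mu_\ve,\xi_\ve})v_\ve^*+g\bigr)\bigr]=0,
\]
with
\[
g=\mathcal{R}_0(v_\ve^*)+\mathcal{R}_1+\mathcal{R}_2+\bigl(f'(\mathcal{W}^*_{\mu_\ve,\xi_\ve})-f'(\mathcal{W}_{\mu_\ve,\xi_\ve})\bigr)v_\ve^*.
\]
The last term has $L^{\frac{p+1}{p}}$ norm $o(\|v_\ve^*\|)$ by \eqref{esti-W*}. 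Lemma~\ref{Lem2.1} then gives $\|v_\ve^*\|\lesssim\|g\|_{L^{\frac{p+1}{p}}}$, and it remains to bound each piece.

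\begin{enumerate}
\item[(1)] The term $\mathcal{R}_0(v_\ve^*)$ is superlinear in $v_\ve^*$, plus the cross term $(f'(\mathcal{W}^*+\mathcal{E}^*)-f'(\mathcal{W}^*))v_\ve^*$. Since $\|\mathcal{E}_\ve^*\|\lesssim\beta_\ve\to0$ and $\|v_\ve^*\|\to0$, Hölder and Sobolev show that it is $o(\|v_\ve^*\|)$ and can be absorbed.
\item[(2)] For $\mathcal{R}_1$, I would use \eqref{taylor-2}, \eqref{exp-bub}, \eqref{decay-H} and Lemma~\ref{App-esti}, exactly as in the estimates of $II$ and $IV$. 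The term $f(\mathcal{W})-f(U)$ contributes $\kappa_\ve^{N-2}$ for $N=4,5$, $\kappa_\ve^4|\log\kappa_\ve|^{2/3}$ for $N=6$ and $\kappa_\ve^{\frac{N+2}{2}}$ for $N\ge7$. The term $\lambda\mathcal{W}$ has $L^{\frac{2N}{N+2}}$ norm $\mu_\ve^{\frac{N-2}{2}}$, $\mu_\ve^2|\log\mu_\ve|^{2/3}$ or $\mu_\ve^2$ respectively. These are exactly the dimension-dependent terms in the statement.
\item[(3)] For $\mathcal{R}_2$, the key point is that $-\Delta\mathcal{Z}_\ve=\sum_j\alpha_{j,\ve}f'(U)\Phi^j$. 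This is why the last sum appears in $\mathcal{R}_2$, and it makes $v_\ve^*$ solve exactly \eqref{v*-equ}. The term $\ve\mathcal{E}_\ve^*$ uses $-\Delta\mathcal{E}^*=\lambda_k\mathcal{E}^*$ and gives $\ve\beta_\ve$. The term $\lambda\mathcal{Z}_\ve$ is $O(\mu_\ve^{\frac{N-2}{2}}\beta_\ve)$ times $\|Z\|_{L^{\frac{p+1}{p}}}$, so it is negligible.
\item[(4)] The main piece of $\mathcal{R}_2$ is $f(\mathcal{W}^*+\mathcal{E}^*)-f(\mathcal{W})$. I would split it as $[f(\mathcal{W}^*+\mathcal{E}^*)-f(\mathcal{W}^*)]+[f(\mathcal{W}^*)-f(\mathcal{W})]$.
\begin{enumerate}
\item[(a)] The second bracket is bounded by $f'(U)|\mathcal{Z}_\ve|\lesssim\mu_\ve^{\frac{N-2}{2}}\beta_\ve f'(U)U$, which is small.
\item[(b)] For the first bracket, split $\Omega$ into the region where $|\mathcal{E}^*|\gtrsim\mathcal{W}^*$ and its complement. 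On the first region the bound is $|\mathcal{E}^*|^p$, which gives $\beta_\ve^p$. On the complement the bound is $f'(U)|\mathcal{E}^*|\lesssim\beta_\ve U^{p-1}$. Its $L^{\frac{2N}{N+2}}$ norm is $\beta_\ve\mu_\ve^{\frac{N-2}{2}}$ for $N\le5$, $\beta_\ve\mu_\ve^2|\log\mu_\ve|^{2/3}$ for $N=6$ and $\beta_\ve\mu_\ve^2$ for $N\ge7$. These are again dominated by the listed $\mu$-terms.
\end{enumerate}
\end{enumerate}

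\textbf{Main obstacle.} I expect the hardest step to be this last term (4), for two reasons. First, a naive Taylor expansion of $f(\mathcal{W}^*+\mathcal{E}^*)-f(\mathcal{W}^*)$ does not isolate the correct order, so the region splitting is needed. Second, for $p<2$ (that is, $N\ge7$) one must check that $f''$-type terms like $f''(U)|\mathcal{E}^*|^2$ stay integrable. Careful case bookkeeping in the style of \eqref{taylor-1} handles both points.

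\textbf{The $L^\infty$ estimate.} Write
\[
v_\ve^*=v_\ve-\mathcal{E}_\ve^*-\mathcal{Z}_\ve .
\]
Lemma~\ref{L-infinity} bounds $\|v_\ve\|_\infty$ by $\mu_\ve^{-\frac{N-2}{2}}\kappa_\ve^{N-2}+\mu_\ve^{\frac{6-N}{2}}$. The eigenfunction part satisfies $\|\mathcal{E}_\ve^*\|_\infty\lesssim\beta_\ve$, since the $e_{k,l}$ are smooth and there are finitely many of them. For the kernel part, $\|Z^j\|_\infty\lesssim\mu_\ve^{-\frac{N-2}{2}}$, so $\|\mathcal{Z}_\ve\|_\infty\lesssim|\alpha_{j,\ve}|\mu_\ve^{-\frac{N-2}{2}}\lesssim\beta_\ve$ by \eqref{solution-sys}. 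Summing the three bounds gives the stated estimate.
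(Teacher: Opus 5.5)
Your proposal is correct and follows the paper's route: Lemma~\ref{Lem2.1} is applied to \eqref{v*-equ}, $\mathcal{R}_0(v_\ve^*)$, $\mathcal{R}_1$ and $\mathcal{R}_2$ are bounded in $L^{\frac{p+1}{p}}$ by region splitting and Lemma~\ref{App-esti}, and the $L^\infty$ bound comes from \eqref{orthogonal-decomp}, Lemma~\ref{L-infinity} and \eqref{solution-sys}. Moving $\bigl(f'(\mathcal{W}^*_{\mu_\ve,\xi_\ve})-f'(\mathcal{W}_{\mu_\ve,\xi_\ve})\bigr)v_\ve^*$ into $g$ to match the operator in Lemma~\ref{Lem2.1} (a point the paper passes over), and splitting $\mathcal{R}_2$ around $f(\mathcal{W}^*_{\mu_\ve,\xi_\ve})$ rather than around $f(\mathcal{E}_\ve^*)$ as in \eqref{R2-exp}, are only bookkeeping differences; you should also record the easy $\mathcal{O}\bigl(\mu_\ve^{\frac{N-2}{2}}\beta_\ve\bigr)$ bound for the subtracted sum $\sum_{j}\alpha_{j,\ve}f'(U_{\mu_\ve,\xi_\ve})\Phi^j_{\mu_\ve,\xi_\ve}$ in $\mathcal{R}_2$.
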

\begin{proof}
By Lemma~\ref{Lem2.1}, 
\begin{eqnarray}\label{eqa001}
\|v_\ve^*\|_{H^1_0(\Omega)}\lesssim\left\|\mathcal{R}_0(v_{\ve}^*)\right\|_{L^{\frac{p+1}{p}}(\Omega)}+\sum_{j=1}^2\left\|\mathcal{R}_j\right\|_{L^{\frac{p+1}{p}}(\Omega)}.
\end{eqnarray}
Similar to \eqref{taylor-1}, by \eqref{Rv} and \eqref{esti-W*},
\begin{eqnarray}\label{R0-1}
\left|\mathcal{R}_0(v_{\ve}^*)\right|
&\lesssim&\left(\left|v_{\ve}^*\right|+\beta_\ve \right)U_{\mu_\ve,\xi_\ve}^{p-2}\left|v_{\ve}^*\right|{\bf 1}_{(\Omega\backslash\mathcal{A}_2)\cap (\Omega\backslash\mathcal{A}_2')}+\beta_\ve^{p-1}\left|v_{\ve}^*\right|{\bf 1}_{(\Omega\backslash\mathcal{A}_2)\cap \mathcal{A}_2'}\notag\\
&&+\left(U_{\mu_\ve,\xi_\ve}^{p-2}\left|v_{\ve}^*\right|^2+\left|v_{\ve}^*\right|^{p}\right){\bf 1}_{\mathcal{A}_2\cap (\Omega\backslash\mathcal{A}_2')}+\beta_\ve^{p-1}\left|v_{\ve}^*\right|{\bf 1}_{\mathcal{A}_2\cap \mathcal{A}_2'}\notag\\
&\lesssim&\beta_\ve U_{\mu_\ve,\xi_\ve}^{p-2}\left|v_{\ve}^*\right|{\bf 1}_{(\Omega\backslash\mathcal{A}_2)\cap (\Omega\backslash\mathcal{A}_2')}+U_{\mu_\ve,\xi_\ve}^{p-2}\left|v_{\ve}^*\right|^2{\bf 1}_{\Omega\backslash\mathcal{A}_2'}+\beta_\ve^{p-1}\left|v_{\ve}^*\right|{\bf 1}_{\mathcal{A}_2'}+\left|v_{\ve}^*\right|^{p}{\bf 1}_{\mathcal{A}_2}
\end{eqnarray}
where
\begin{eqnarray}\label{A2}
\left\{\aligned
&\mathcal{A}_2=\left\{x \in \Omega\mid \left|\mathcal{W}_{\mu_\ve,\xi_\ve}^*(x)+\mathcal{E}_\ve^*(x)\right|\lesssim\left|v_{\ve}^*(x)\right|\right\},\\
&\mathcal{A}_2'=\left\{x \in \Omega\mid \left|\mathcal{W}_{\mu_\ve,\xi_\ve}^*(x)\right|\lesssim\left|\mathcal{E}_\ve^*(x)\right|\right\}.
\endaligned\right.
\end{eqnarray}
Thus,
\begin{eqnarray*}
\left\|\mathcal{R}_0(v_{\ve}^*)\right\|_{L^{\frac{p+1}{p}}(\Omega)}=o(\|v_\ve^*\|_{H^1_0(\Omega)}).
\end{eqnarray*}
Similarly, by \eqref{exp-bub} and \eqref{R},
\begin{eqnarray}\label{R1-exp}
&&\left|\mathcal{R}_1-f'(U_{\mu_\ve,\xi_\ve})\alpha_N\mu_\ve^{\frac{N-2}{2}}H(x,\xi_\ve)-\lambda\mathcal{W}_{\mu_\ve,\xi_\ve}\right|\notag\\
&\lesssim&\left\{\aligned
&\frac{\kappa_\ve^{2N-4}}{\mu_\ve^{N-2}}U_{\mu_\ve,\xi_\ve}^{p-2}{\bf 1}_{p\geq2}+\frac{\kappa_\ve^{N+2}}{\mu_\ve^{\frac{N+2}{2}}},\quad x\in B_{d(\xi_\ve,\partial \Omega)}(\xi_\ve),\\
&U_{\mu_\ve,\xi_\ve}^{p},\quad x\in \Omega\backslash B_{d(\xi_\ve,\partial \Omega)}(\xi_\ve)
\endaligned\right.
\end{eqnarray}
and by \eqref{kernel}, \eqref{point-z1}, \eqref{solution-sys}, \eqref{new*}, \eqref{R} and \eqref{esti-W*},
\begin{eqnarray}\label{R2-exp}
\left|\mathcal{R}_2-f(\mathcal{E}_{\ve}^*)\right|\lesssim \beta_\ve U_{\mu_\ve,\xi_\ve}^{p-1}+\beta_\ve^{p-1} U_{\mu_\ve,\xi_\ve}+\mu_\ve^{\frac{N-2}{2}}\beta_\ve U_{\mu_\ve,\xi_\ve}+\ve\beta_\ve.
\end{eqnarray}
It follows from Lemma~\ref{App-esti} that
\begin{eqnarray}\label{R1-esti-0}
\left\|\mathcal{R}_1\right\|_{L^{\frac{p+1}{p}}(\Omega)}\lesssim\left\{\aligned
&\kappa_\ve^{N-2}+\mu_\ve^{\frac{N-2}{2}},\quad N=4,5,\\
&\kappa_\ve^4|\log\kappa_\ve|^{\frac{2}{3}}+\mu_\ve^2\left|\log\mu_\ve\right|^{\frac{2}{3}},\quad N=6,\\
&\kappa_\ve^{\frac{N+2}{2}}+\mu_\ve^2,\quad N\geq7
\endaligned\right.
\end{eqnarray}
and
\begin{eqnarray*}
\left\|\mathcal{R}_2\right\|_{L^{\frac{p+1}{p}}(\Omega)}\lesssim\ve\beta_\ve+\beta_\ve^{p}+\left\{\aligned
&\beta_\ve\mu_\ve^{\frac{N-2}{2}}+\mu_\ve^{N-2}\beta_\ve,\quad N=4,5,\\
&\beta_\ve\mu_\ve^2|\log\mu_\ve|^{\frac{2}{3}},\quad N=6,\\
&\beta_\ve^{p-1}\mu_\ve^2,\quad N\geq7.
\endaligned\right.
\end{eqnarray*}
Inserting the above estimates into \eqref{eqa001}, we have the conclusion of $\|v_\ve^*\|_{H^1_0(\Omega)}$.  For the estimate of $\|v_\ve^*\|_{L^\infty(\Omega)}$, we notice that by \eqref{kernel}, \eqref{point-z1}, \eqref{beta} and \eqref{solution-sys},
\begin{eqnarray*}
\left|\sum_{l=1}^{m_k}\tau_{k,l,\ve}'e_{k,l}+\sum_{j=0}^{N}\alpha_{j,\ve}Z_{\mu_\ve,\xi_\ve}^j\right|\lesssim\beta_\ve
\end{eqnarray*}
in $\Omega$.  Thus, the estimate of $\|v_\ve^*\|_{L^\infty(\Omega)}$ follows from  \eqref{orthogonal-decomp} and Lemma~\ref{L-infinity}.
\end{proof}

By \eqref{decomp-v}, \eqref{orthogonal-decomp}, \eqref{rem-orth} and Lemma~\ref{compute-ez},
\begin{eqnarray*}
\sum_{s=1}^{\infty}\sum_{l=1}^{m_s}\tau_{s,l,\ve}^2=\|v_\ve^*\|_{H^1_0(\Omega)}^2+\sum_{l=1}^{m_k}\left|\tau_{k,l,\ve}'\right|^2+\mathcal{O}\left(\sum_{i=0}^{N}|\alpha_{i,\ve}|^2+\sum_{i=0}^{N}\sum_{l=1}^{m_k}\mu_\ve^{\frac{N-2}{2}}|\alpha_{i,\ve}\tau_{k,l,\ve}'|\right),
\end{eqnarray*}
which, together with \eqref{beta}, \eqref{solution-sys} and Lemmas~\ref{basic-esti}, \ref{locate-limit} and \ref{esti-v*}, implies that
\begin{eqnarray*}
\sum_{s=1; s\not=k}^{\infty}\sum_{l=1}^{m_s}\tau_{s,l,\ve}^2=o(\beta_\ve^2)+\left\{\aligned
&\mathcal{O}\left(\kappa_\ve^{N-2}\right),\quad N=4,5,\\
&\mathcal{O}\left(\kappa_\ve^4|\log\kappa_\ve|^{\frac{2}{3}}+\mu_\ve^2\left|\log\mu_\ve\right|^{\frac{2}{3}}\right),\quad N=6,\\
&\mathcal{O}\left(\kappa_\ve^{\frac{N+2}{2}}+\mu_\ve^2\right),\quad N\geq7.
\endaligned\right.
\end{eqnarray*}
It follows that 
\begin{eqnarray}\label{beta-new}
\beta_\ve^*:=\max_{1\leq l\leq m_k}|\tau_{k,l,\ve}|=\beta_\ve+\left\{\aligned
&\mathcal{O}\left(\kappa_\ve^{N-2}\right),\quad N=4,5,\\
&\mathcal{O}\left(\kappa_\ve^4|\log\kappa_\ve|^{\frac{2}{3}}+\mu_\ve^2\left|\log\mu_\ve\right|^{\frac{2}{3}}\right),\quad N=6,\\
&\mathcal{O}\left(\kappa_\ve^{\frac{N+2}{2}}+\mu_\ve^2\right),\quad N\geq7.
\endaligned\right.
\end{eqnarray}
Now, with Lemma~\ref{esti-v*} in hands, we can expand the orthogonal condition in \eqref{v*-equ} up to the leading order terms.
\begin{proposition}\label{exp-orth}
Let $N\geq4$, then
\begin{enumerate}
\item[$(1)$] we have
\begin{eqnarray*}
0&\geq&\left\{\aligned
&\lambda D_{N,1}\mu_\ve^2-\alpha_N\mu_\ve^{N-2}D_{N,3}\varphi(\xi_\ve)+\left(1+\frac{\ve}{\lambda_k}\right)D_{N,3}\mu_\ve^{\frac{N-2}{2}}\mathcal{E}_{\ve}^*(\xi_\ve)+o\left(\mu_\ve^2+\kappa_\ve^{N-2}\right),\quad N\geq5,\\
&\lambda D_{4,2}\mu_\ve^2\left|\log\kappa_\ve\right|-\alpha_4\mu_\ve^{2}D_{4,3}\varphi(\xi_\ve)+\left(1+\frac{\ve}{\lambda_k}\right)D_{4,3}\mu_\ve\mathcal{E}_{\ve}^*(\xi_\ve)+o\left(\kappa_\ve^{2}\right)+\mathcal{O}\left(\mu_\ve^2\right), \quad N=4,
\endaligned\right.\\
&&+\mathcal{O}\left((\ve\beta_\ve)^2+\beta_\ve^{2p}\right)
\end{eqnarray*}
and
\begin{eqnarray*}
0&\leq&\left\{\aligned
&\lambda D_{N,1}\mu_\ve^2-\alpha_N\mu_\ve^{N-2}D_{N,3}\varphi(\xi_\ve)+\left(1+\frac{\ve}{\lambda_k}\right)D_{N,3}\mu_\ve^{\frac{N-2}{2}}\mathcal{E}_{\ve}^*(\xi_\ve)+o\left(\mu_\ve^2+\kappa_\ve^{N-2}\right),\quad N\geq5,\\
&\lambda D_{4,2}\mu_\ve^2\left|\log\mu_\ve\right|-\alpha_4\mu_\ve^{2}D_{4,3}\varphi(\xi_\ve)+\left(1+\frac{\ve}{\lambda_k}\right)D_{4,3}\mu_\ve\mathcal{E}_{\ve}^*(\xi_\ve)+o\left(\kappa_\ve^{2}\right)+\mathcal{O}\left(\mu_\ve^2\right), \quad N=4,
\endaligned\right.\\
&&+\mathcal{O}\left((\ve\beta_\ve)^2+\beta_\ve^{2p}\right).
\end{eqnarray*}
\item[$(2)$] For all $1\leq j\leq N$, we have
\begin{eqnarray*}\label{zj}
0&=&\left(1+\frac{\ve}{\lambda_k}\right)D_{N,4}\mu_\ve^{\frac{N}{2}}\frac{\partial \mathcal{E}_{\ve}^*(\xi_\ve)}{\partial \xi_{i,\ve}}-\alpha_ND_{N,4}\mu_\ve^{N-1}\frac{\partial \varphi(\xi_\ve)}{\partial \xi_{i,\ve}}+\mathcal{O}\left(\beta_\ve\mu_\ve^{\frac{N-2}{2}}\kappa_\ve^3\left|\log\kappa_\ve\right|+\mu_\ve^{2}\kappa_\ve^{N-3}\right)+\mathcal{O}\left(\beta_\ve^{2p}\right)\\
&&+o\left(\kappa_\ve^{N-1}+\beta_\ve\mu_\ve^{\frac{N}{2}}\right)
+\left\{\aligned
&\mathcal{O}\left(\left(\ve\beta_\ve\right)^3+\kappa_\ve^5|\log d(\xi_\ve,\partial \Omega)|\right),\quad N=4,\\
&o\left(\left(\ve\beta_\ve\right)^2+\mu_\ve^3\right)+\mathcal{O}\left(\kappa_\ve^7\mu_\ve^{-1}\right),\quad N=5,\\
&\mathcal{O}\left(\left(\ve\beta_\ve\right)^2\right)+o\left(\mu_\ve^3\right),\quad N\geq6.
\endaligned\right.
\end{eqnarray*}
\item[$(3)$] For all $1\leq l\leq m_k$, we have
\begin{eqnarray*}\label{el}
0&=&\left(1+\frac{\ve}{\lambda_k}\right)D_{N,6}\mu_\ve^{\frac{N-2}{2}}e_{k,l}(\xi_\ve)+\int_{\Omega}f\left(\mathcal{E}_\ve^*\right)e_{k,l}dx+\ve\int_{\Omega}\mathcal{E}_\ve^*e_{k,l}dx+\mathcal{O}\left(\mu_\ve^{\frac{N-2}{2}}\kappa_\ve^2|\log\mu_\ve|\right)\notag\\
&&+o\left(\ve\beta_\ve+\beta_\ve^p+\kappa_\ve^{N-2}\right)
+\left\{\aligned
&o\left(\mu_\ve^{2}|\log\mu_\ve|\right),\quad N=4,\\
&o\left(\mu_\ve^{2}\right),\quad N=5,6,\\
&o\left(\mu_\ve^{2}\right)+\mathcal{O}\left(\kappa_\ve^{\frac{(N+2)p}{2}}\right),\quad N\geq7.
\endaligned\right.
\end{eqnarray*}
\end{enumerate}
\end{proposition}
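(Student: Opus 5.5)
The plan is to test the equation \eqref{v*-equ} of $v_\ve^*$ against the kernels $\mathcal{Z}^0_{\mu_\ve,\xi_\ve}$, $\mathcal{Z}^j_{\mu_\ve,\xi_\ve}$ ($1\leq j\leq N$) and $e_{k,l}$. By the orthogonality conditions in \eqref{v*-equ}, the left-hand side paired with each kernel reduces to small linear terms. Indeed, $\langle v_\ve^*,\mathcal{Z}^i\rangle=0$, so
\[
\int(-\Delta v_\ve^*)\mathcal{Z}^i=0, \qquad \lambda\int v_\ve^*\mathcal{Z}^i+\int f'(\mathcal{W}^*)v_\ve^*\mathcal{Z}^i=\int\big(f'(\mathcal{W}^*)-f'(U)\big)v_\ve^*\mathcal{Z}^i+\int f'(U)(\mathcal{Z}^i-\Phi^i)v_\ve^*+\lambda\int v_\ve^*\mathcal{Z}^i .
\]
Similarly, for $e_{k,l}$ we have $\int(-\Delta-\lambda_k)v_\ve^*\,e_{k,l}=0$, which leaves $\ve\int v_\ve^*e_{k,l}+\int f'(\mathcal{W}^*)v_\ve^*e_{k,l}$. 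All these leftover linear terms, together with $\int\mathcal{R}_0(v_\ve^*)\cdot(\text{kernel})$, are bounded by H\"older's inequality and the $H^1_0$-bound of Lemma~\ref{esti-v*}. For $N\geq5$ this gives quadratic errors of the form $\mathcal{O}((\ve\beta_\ve)^2+\beta_\ve^{2p})+o(\mu_\ve^2+\kappa_\ve^{N-2})$. For $N=4$ the $H^1_0$-bound is too weak, and I will instead use the $L^\infty$-bound of Lemma~\ref{esti-v*}, exactly as in \eqref{equa3.73}, estimating pointwise on $B_{d(\xi_\ve,\partial\Omega)}(\xi_\ve)$ and on its complement via \eqref{point-z1}.

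Once these errors are controlled, the leading terms come from $\int(\mathcal{R}_1+\mathcal{R}_2)\cdot(\text{kernel})$. The $\mathcal{R}_1$ contributions are exactly $III+IV$ and $III_1+IV_1$, already computed in Lemma~\ref{basic-esti}. For $N\geq5$ these are \eqref{III-5} and \eqref{IV}, and for $N=4$ they are the two-sided bounds \eqref{III-41} and \eqref{III-42}. This is why part $(1)$ is stated as a pair of inequalities with $|\log\kappa_\ve|$ and $|\log\mu_\ve|$. For the moving part, \eqref{III1} and \eqref{IV1} give the term $-\alpha_ND_{N,4}\mu_\ve^{N-1}\partial_{\xi_i}\varphi$.

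For $\mathcal{R}_2$, I will use \eqref{R2-exp} and expand
\[
f(\mathcal{W}^*+\mathcal{E}_\ve^*)-f(\mathcal{W}^*)=f'(U)\mathcal{E}_\ve^*+\text{(higher order)}.
\]
The term $\int f'(U)\mathcal{E}_\ve^*\mathcal{Z}^i$ is computed by Lemma~\ref{compute-ez}, since $\int f'(U)\Phi^i e_{k,l}=\langle e_{k,l},\mathcal{Z}^i\rangle$. This produces $D_{N,3}\mu_\ve^{\frac{N-2}{2}}\mathcal{E}_\ve^*(\xi_\ve)$ and $D_{N,4}\mu_\ve^{\frac N2}\partial_i\mathcal{E}_\ve^*(\xi_\ve)$. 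The term $\ve\int\mathcal{E}_\ve^*\mathcal{Z}^i$ is handled by writing $\mathcal{E}_\ve^*=\lambda_k^{-1}(-\Delta)\mathcal{E}_\ve^*$ and integrating by parts. This gives $\frac{\ve}{\lambda_k}\langle\mathcal{E}_\ve^*,\mathcal{Z}^i\rangle$ and explains the factor $(1+\frac{\ve}{\lambda_k})$. The terms $\lambda\mathcal{Z}_\ve$ and $\alpha_{j,\ve}f'(U)\Phi^j$ are of size $\mu_\ve^{\frac{N-2}{2}}\beta_\ve$ by \eqref{solution-sys} and are absorbed using \eqref{zi-zj}. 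The remaining nonlinear pieces, such as $\beta_\ve^{p-1}U$ or $\beta_\ve^2 U^{p-2}$ near the bubble, are estimated with Lemma~\ref{App-esti}.

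For part $(3)$, pairing with $e_{k,l}$ turns $\int f'(U)\mathcal{E}_\ve^*e_{k,l}$ into a contribution of order $\beta_\ve\mu_\ve^{N-2}$. In the region where $|\mathcal{E}_\ve^*|\gtrsim\mathcal{W}^*$, one gets $\int f(\mathcal{E}_\ve^*)e_{k,l}$ at leading order, while $\ve\int\mathcal{E}_\ve^*e_{k,l}$ comes directly. The term $\int f(\mathcal{W})e_{k,l}=\int(-\Delta\mathcal{W})e_{k,l}=\langle\mathcal{W},e_{k,l}\rangle$ is expanded like Lemma~\ref{compute-ez}, and the factor $(1+\frac{\ve}{\lambda_k})$ again comes from $\lambda\int\mathcal{W}e_{k,l}=\frac{\lambda}{\lambda_k}\langle\mathcal{W},e_{k,l}\rangle$.

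The main obstacle is the sharp bookkeeping of the mixed interaction $f(\mathcal{W}^*+\mathcal{E}_\ve^*)-f(\mathcal{W}^*)-f(\mathcal{E}_\ve^*)$ across the regions $\mathcal{A}_2'$ and its complement, especially in the $\mathcal{Z}^j$ equations ($1\leq j\leq N$). There the leading term has the small size $\mu_\ve^{\frac N2}\beta_\ve$, so symmetry cancellations of the odd function $\Phi^j$ must be used to remove the $\mathcal{O}(\beta_\ve\mu_\ve^{\frac{N-2}{2}})$ parts. In dimension $N=4$, the log-divergent integrals additionally force the $L^\infty$ route, which explains the $(\ve\beta_\ve)^3$ and $\kappa_\ve^5|\log d|$ errors. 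I would first perform the Taylor expansion of $e_{k,l}$ around $\xi_\ve$ to second order, then split the integrals into $B_{d(\xi_\ve,\partial\Omega)}(\xi_\ve)$ and its complement, and only afterwards collect the errors.
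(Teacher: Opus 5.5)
Your overall route is the paper's. You test \eqref{v*-equ} against $Z^0_{\mu_\ve,\xi_\ve}$, $Z^i_{\mu_\ve,\xi_\ve}$ and $e_{k,l}$, reuse the $\mathcal{R}_1$ computations of Lemma~\ref{basic-esti}, and get the main terms from Lemma~\ref{compute-ez} together with $\ve\int\mathcal{E}^*_\ve Z^i=\frac{\ve}{\lambda_k}\langle\mathcal{E}^*_\ve,Z^i\rangle$. The $v_\ve^*$ terms are bounded by Lemma~\ref{esti-v*}. However, your treatment of the $\mathcal{Z}_\ve$ terms in parts $(1)$ and $(2)$ fails as written, for two reasons.

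\textbf{The dropped piece.} $\mathcal{R}_2$ contains $f(\mathcal{W}^*_{\mu_\ve,\xi_\ve}+\mathcal{E}^*_\ve)-f(\mathcal{W}_{\mu_\ve,\xi_\ve})$, not $f(\mathcal{W}^*_{\mu_\ve,\xi_\ve}+\mathcal{E}^*_\ve)-f(\mathcal{W}^*_{\mu_\ve,\xi_\ve})$. Expanding only the latter drops $f(\mathcal{W}_{\mu_\ve,\xi_\ve}+\mathcal{Z}_\ve)-f(\mathcal{W}_{\mu_\ve,\xi_\ve})\approx f'(U_{\mu_\ve,\xi_\ve})\mathcal{Z}_\ve$.

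\textbf{The term you call absorbed is main order.} The term $\sum_j\alpha_{j,\ve}f'(U_{\mu_\ve,\xi_\ve})\Phi^j_{\mu_\ve,\xi_\ve}$ cannot be absorbed into the errors. Paired with $Z^i$ it equals $\sum_j\alpha_{j,\ve}\langle Z^j,Z^i\rangle$, which by \eqref{system} is $-\sum_l\tau'_{k,l,\ve}\langle e_{k,l},Z^i\rangle$. Up to lower order this is $-D_{N,3}\mu_\ve^{\frac{N-2}{2}}\mathcal{E}^*_\ve(\xi_\ve)$ for $i=0$ and $-D_{N,4}\mu_\ve^{\frac N2}\partial_i\mathcal{E}^*_\ve(\xi_\ve)$ for $i\geq1$. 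That is exactly the size of the main terms, not of the admissible errors. Using only \eqref{zi-zj} and \eqref{solution-sys} you get $\alpha_{i,\ve}D_{N,6}=\mathcal{O}(\mu_\ve^{\frac{N-2}{2}}\beta_\ve)$, which in $(2)$ is even larger than the main term $\mu_\ve^{\frac N2}\beta_\ve$.

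\textbf{How to repair it.} The two omissions cancel each other, which is why your final formulas come out right, but the proof has to show this. There are two equivalent ways.
\begin{enumerate}
\item[(a)] Do as the paper does. Expand around $\mathcal{W}_{\mu_\ve,\xi_\ve}$ with perturbation $\mathcal{E}^*_\ve+\mathcal{Z}_\ve$, as in \eqref{fw}. Then show $\int f'(U)(\mathcal{E}^*_\ve+\mathcal{Z}_\ve)Z^i$ is lower order, because $\int f'(U)\mathcal{Z}_\ve Z^i\approx-\int f'(U)\mathcal{E}^*_\ve Z^i$. The main term then comes from $-\sum_j\alpha_{j,\ve}\int f'(U)\Phi^jZ^i=\sum_l\tau'_{k,l,\ve}\langle e_{k,l},Z^i\rangle$ via \eqref{system}.
\item[(b)] Keep your grouping, with $\int f'(U)\mathcal{E}^*_\ve Z^i$ as the main term. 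Then prove that $\int\big(f'(\mathcal{W})\mathcal{Z}_\ve-\sum_j\alpha_{j,\ve}f'(U)\Phi^j\big)Z^i$ is lower order. This needs control of $\sum_j\alpha_{j,\ve}\int f'(U)(Z^j-\Phi^j)Z^i$, of $\int(f'(\mathcal{W})-f'(U))\mathcal{Z}_\ve Z^i$, and of the terms quadratic in $\mathcal{Z}_\ve$.
\end{enumerate}
In part $(3)$ the same terms really are negligible, since $\langle Z^j,e_{k,l}\rangle=\mathcal{O}(\mu_\ve^{\frac{N-2}{2}})$ gives a contribution $\mathcal{O}(\mu_\ve^{N-2}\beta_\ve)$. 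So the problem is confined to $(1)$ and $(2)$.

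\textbf{Two minor slips in part $(3)$.}
\begin{itemize}
\item $\int f'(U)\mathcal{E}^*_\ve e_{k,l}\,dx$ is of order $\beta_\ve\mu_\ve^2$ (times $|\log\mu_\ve|$ when $N=4$), not $\beta_\ve\mu_\ve^{N-2}$. It is still absorbed into $o(\mu_\ve^2)$.
\item $\int f(\mathcal{W})e_{k,l}\,dx\neq\langle\mathcal{W},e_{k,l}\rangle$, since $-\Delta\mathcal{W}=U^p$. In fact $f(\mathcal{W})$ cancels between $\mathcal{R}_1$ and $\mathcal{R}_2$. The main term comes only from $\lambda\int\mathcal{W}e_{k,l}\,dx=\frac{\lambda}{\lambda_k}\int U^pe_{k,l}\,dx$, as you also state.
\end{itemize}
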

\begin{proof}
$(1)$\quad We test \eqref{v*-equ} with $Z_{\mu_\ve,\xi_\ve}^0$, then by the orthogonality of $v_\ve^*$, we have
\begin{eqnarray}\label{equa0002}
0&=&\sum_{j=1}^2\int_{\Omega}\mathcal{R}_jZ_{\mu_\ve,\xi_\ve}^0dx+\int_{\Omega}\mathcal{R}_0(v_\ve^*)Z_{\mu_\ve,\xi_\ve}^0dx+\lambda\int_{\Omega}v_\ve^*Z_{\mu_\ve,\xi_\ve}^0dx+\int_{\Omega}f'(\mathcal{W}_{\mu_\ve,\xi_\ve}^*)v_{\ve}^*Z_{\mu_\ve,\xi_\ve}^0dx\notag\\
&:=&I+II+III+IV.
\end{eqnarray}
By \eqref{R}, the estimate of $\int_{\Omega}\mathcal{R}_1Z_{\mu_\ve,\xi_\ve}^0dx$ is the same as that of $III$ and $IV$ in the proof of Lemma~\ref{basic-esti}, which implies that
\begin{eqnarray}\label{R10+}
\int_{\Omega}\mathcal{R}_1Z_{\mu_\ve,\xi_\ve}^0dx=\lambda D_{N,1}\mu_\ve^2-\alpha_N\mu_\ve^{N-2}D_{N,3}\varphi(\xi_\ve)+\mathcal{O}\left(\kappa_\ve^N\left|\log\kappa_\ve\right|\right)+\mathcal{O}\left(\mu_\ve^2\kappa_\ve^{N-4}\right)
\end{eqnarray}
for $N\geq5$,
\begin{eqnarray}\label{R10-1}
\int_{\Omega}\mathcal{R}_1Z_{\mu_\ve,\xi_\ve}^0dx\geq\lambda D_{4,2}\mu_\ve^2\left|\log\kappa_\ve\right|-\alpha_4\mu_\ve^{2}D_{4,3}\varphi(\xi_\ve)+\mathcal{O}\left(\kappa_\ve^4\left|\log\kappa_\ve\right|\right)+\mathcal{O}\left(\mu_\ve^2\right)
\end{eqnarray}
and
\begin{eqnarray}\label{R10-2}
\int_{\Omega}\mathcal{R}_1Z_{\mu_\ve,\xi_\ve}^0dx\leq\lambda D_{4,2}\mu_\ve^2\left|\log\mu_\ve\right|-\alpha_4\mu_\ve^{2}D_{4,3}\varphi(\xi_\ve)+\mathcal{O}\left(\kappa_\ve^4\left|\log\kappa_\ve\right|\right)+\mathcal{O}\left(\mu_\ve^2\right)
\end{eqnarray}
for $N=4$.  Let
\begin{eqnarray*}\label{A3}
\mathcal{A}_3=\left\{x \in \Omega\mid \left|\mathcal{W}_{\mu_\ve,\xi_\ve}(x)\right|\lesssim\left|\mathcal{E}_\ve^*(x)+\mathcal{Z}_{\ve}(x)\right|\right\}.
\end{eqnarray*}
Then similar to \eqref{taylor-1}, by \eqref{solution-sys} and \eqref{esti-W*},
\begin{eqnarray}\label{fw}
f(\mathcal{W}_{\mu_\ve,\xi_\ve}^*+\mathcal{E}_{\ve}^*)-f(\mathcal{W}_{\mu_\ve,\xi_\ve})=\left\{\aligned
&f'\left(\mathcal{W}_{\mu_\ve,\xi_\ve}\right)\left(\mathcal{E}_{\ve}^*+\mathcal{Z}_{\ve}\right)+\mathcal{O}\left(\beta_{\ve}^2U_{\mu_\ve,\xi_\ve}^{p-2}\right),\quad x\in\Omega\backslash\mathcal{A}_3,\\
&\mathcal{O}\left(\beta_{\ve}^p\right),\quad x\in\mathcal{A}_3.
\endaligned\right.
\end{eqnarray}
It follows from \eqref{point-z1}, \eqref{R} and Lemma~\ref{App-esti} that
\begin{eqnarray*}
\int_{\Omega}\mathcal{R}_2Z_{\mu_\ve,\xi_\ve}^0dx&=&\int_{\Omega}f'\left(\mathcal{W}_{\mu_\ve,\xi_\ve}\right)\left(\mathcal{E}_{\ve}^*+\mathcal{Z}_{\ve}\right)Z_{\mu_\ve,\xi_\ve}^0dx-\sum_{j=0}^{N}\alpha_{j,\ve}\int_{\Omega}f'\left(U_{\mu_\ve,\xi_\ve}\right)\Phi_{\mu_\ve,\xi_\ve}^jZ_{\mu_\ve,\xi_\ve}^0dx\\
&&+\int_{\Omega}\left(\ve\mathcal{E}_{\ve}^*+\lambda\mathcal{Z}_\ve\right)Z_{\mu_\ve,\xi_\ve}^0dx+\mathcal{O}\left(\widetilde{\tau}_{k,\ve}^p\mu_\ve^{\frac{N-2}{2}}\right)+\left\{\aligned
&\mathcal{O}\left(\beta_{\ve}^2\mu_\ve^2|\log\mu_\ve|\right),\quad N=4,\\
&\mathcal{O}\left(\beta_{\ve}^2\mu_\ve^2\right),\quad N\geq5.
\endaligned\right.
\end{eqnarray*}
Since by \eqref{exp-bub}, \eqref{comp} and similar expanding as \eqref{taylor-1},
\begin{eqnarray}\label{fw1}
f'\left(\mathcal{W}_{\mu_\ve,\xi_\ve}\right)=\left\{\aligned
&f'\left(U_{\mu_\ve,\xi_\ve}\right)+\mathcal{O}\left(\frac{\kappa_\ve^{N-2}}{\mu_\ve^{\frac{N-2}{2}}}f''\left(U_{\mu_\ve,\xi_\ve}\right){\bf 1}_{p\geq2}+\frac{\kappa_\ve^{4}}{\mu_\ve^{2}}\right),\quad x\in B_{d(\xi_\ve,\partial \Omega)}(\xi_\ve),\\
&\mathcal{O}\left(U_{\mu_\ve,\xi_\ve}^{p-1}\right),\quad x\in \Omega\backslash B_{d(\xi_\ve,\partial \Omega)}(\xi_\ve),
\endaligned\right.
\end{eqnarray}
by \eqref{projkernel}, \eqref{exp-ker} , \eqref{system}, \eqref{solution-sys} and Lemma~\ref{App-esti},
\begin{eqnarray*}
\int_{\Omega}f'\left(\mathcal{W}_{\mu_\ve,\xi_\ve}\right)\left(\mathcal{E}_{\ve}^*+\mathcal{Z}_{\ve}\right)Z_{\mu_\ve,\xi_\ve}^0dx=\int_{\Omega}f'\left(U_{\mu_\ve,\xi_\ve}\right)\left(\mathcal{E}_{\ve}^*+\mathcal{Z}_{\ve}\right)Z_{\mu_\ve,\xi_\ve}^0dx+o\left(\kappa_\ve^{N-2}\right)+\mathcal{O}\left(\beta_{\ve}\mu_\ve^{\frac{N-2}{2}}\kappa_\ve^2\right),
\end{eqnarray*}
which, together with similar computations in the proof of Lemma~\ref{compute-ez}, \eqref{exp-ker}, \eqref{system}, \eqref{zi-zj} and Lemma~\ref{App-esti}, implies that
\begin{eqnarray*}
\int_{\Omega}pU_{\mu_\ve,\xi_\ve}^{p-1}\mathcal{E}_{\ve}^*Z_{\mu_\ve,\xi_\ve}^0dx&=&\int_{\Omega}pU_{\mu_\ve,\xi_\ve}^{p-1}\Phi_{\mu_\ve,\xi_\ve}^0\mathcal{E}_{\ve}^*dx+\mathcal{O}\left(\frac{\beta_{\ve}\kappa_{\ve}^{N-2}}{\mu_\ve^{\frac{N-2}{2}}}\int_{B_{d(\xi_\ve,\partial \Omega)}(\xi_\ve)}U_{\mu_\ve,\xi_\ve}^{p-1}dx\right)\\
&&+\mathcal{O}\left(\beta_{\ve}\int_{\Omega\backslash B_{d(\xi_\ve,\partial \Omega)}(\xi_\ve)}U_{\mu_\ve,\xi_\ve}^{p}dx\right)\\
&=&D_{N,3}\mu_\ve^{\frac{N-2}{2}}\mathcal{E}_{\ve}^*(\xi_\ve)+\mathcal{O}\left(\beta_{\ve}\mu_\ve^{\frac{N-2}{2}}\kappa_\ve^2\left|\log\kappa_\ve\right|\right)
\end{eqnarray*}
and
\begin{eqnarray*}
\int_{\Omega}pU_{\mu_\ve,\xi_\ve}^{p-1}\mathcal{Z}_{\ve}Z_{\mu_\ve,\xi_\ve}^0dx&=&-D_{N,3}\mu_\ve^{\frac{N-2}{2}}\mathcal{E}_{\ve}^*(\xi_\ve)+\mathcal{O}\left(\beta_\ve\mu_\ve^{\frac{N-2}{2}}\kappa_\ve^{N-2}\right).
\end{eqnarray*}
Thus,
\begin{eqnarray*}
\int_{\Omega}f'\left(\mathcal{W}_{\mu_\ve,\xi_\ve}\right)\left(\mathcal{E}_{\ve}^*+\mathcal{Z}_{\ve}\right)Z_{\mu_\ve,\xi_\ve}^0dx=o\left(\kappa_\ve^{N-2}\right)+\mathcal{O}\left(\beta_{\ve}\mu_\ve^{\frac{N-2}{2}}\kappa_\ve^2\left|\log\kappa_\ve\right|\right).
\end{eqnarray*}
Similarly, by \eqref{projkernel}, \eqref{system}, \eqref{solution-sys} and Lemma~\ref{compute-ez},
\begin{eqnarray*}
\sum_{j=0}^{N}\alpha_{j,\ve}\int_{\Omega}f'\left(U_{\mu_\ve,\xi_\ve}\right)\Phi_{\mu_\ve,\xi_\ve}^jZ_{\mu_\ve,\xi_\ve}^0dx&=&-\sum_{l=1}^{m_k}\tau_{k,l,\ve}'\langle e_{k,l}, Z_{\mu_\ve,\xi_\ve}^0\rangle\\
&=&-D_{N,3}\mu_\ve^{\frac{N-2}{2}}\mathcal{E}_{\ve}^*(\xi_\ve)+\mathcal{O}\left(\beta_{\ve}\mu_\ve^{\frac{N-2}{2}}\kappa_\ve^2\left|\log\kappa_\ve\right|\right)
\end{eqnarray*}
and
\begin{eqnarray*}
\ve\int_{\Omega}\mathcal{E}_{\ve}^*Z_{\mu_\ve,\xi_\ve}^0dx=\sum_{l=1}^{m_k}\frac{\ve\tau_{k,l,\ve}'}{\lambda_k}\langle e_{k,l}, Z_{\mu_\ve,\xi_\ve}^0\rangle=\frac{\ve D_{N,3}}{\lambda_k}\mu_\ve^{\frac{N-2}{2}}\mathcal{E}_{\ve}^*(\xi_\ve)+\mathcal{O}\left(\beta_{\ve}\mu_\ve^{\frac{N-2}{2}}\kappa_\ve^2\left|\log\kappa_\ve\right|\right).
\end{eqnarray*}
By \eqref{comp}, \eqref{point-z1}, \eqref{new*} and Lemma~\ref{App-esti},
\begin{eqnarray}\label{equa0003}
\int_{\Omega}\mathcal{Z}_\ve Z_{\mu_\ve,\xi_\ve}^0dx&=&\mathcal{O}\left(\beta_\ve\mu_\ve^{\frac{N-2}{2}}\int_{\Omega}\left|\Phi_{\mu_\ve,\xi_\ve}^{1}\right| U_{\mu_\ve,\xi_\ve}+U_{\mu_\ve,\xi_\ve}^2dx\right)\notag\\
&=&\left\{\aligned
&\mathcal{O}\left(\beta_\ve\mu_\ve^{3}|\log\mu_\ve|\right),\quad N=4,\\
&\mathcal{O}\left(\beta_\ve\mu_\ve^{\frac{N+2}{2}}\right),\quad N\geq5.
\endaligned\right.
\end{eqnarray}
Summarizing the above estimates, we have
\begin{eqnarray}\label{R2-0}
\int_{\Omega}\mathcal{R}_2Z_{\mu_\ve,\xi_\ve}^0dx&=&\left(1+\frac{\ve}{\lambda_k}\right)D_{N,3}\mu_\ve^{\frac{N-2}{2}}\mathcal{E}_{\ve}^*(\xi_\ve)+\mathcal{O}\left(\widetilde{\tau}_{k,\ve}\mu_\ve^{\frac{N-2}{2}}\kappa_\ve^2\left|\log\kappa_\ve\right|\right)+o\left(\kappa_\ve^{N-2}\right)\notag\\
&&+\mathcal{O}\left(\beta_\ve^p\mu_\ve^{\frac{N-2}{2}}\right)+\left\{\aligned
&\mathcal{O}\left(\beta_{\ve}^2\mu_\ve^2|\log\mu_\ve|\right),\quad N=4,\\
&o\left(\mu_\ve^2\right),\quad N\geq5.
\endaligned\right.
\end{eqnarray}
By \eqref{comp}, \eqref{new*}, \eqref{esti-W*}, \eqref{R0-1}, \eqref{A2} and Lemma~\ref{esti-v*},
\begin{eqnarray}\label{R0-0}
\int_{\Omega}\mathcal{R}_0(v_\ve^*)Z_{\mu_\ve,\xi_\ve}^0dx&=&\mathcal{O}\left(\|v_\ve^*\|_{H^1_0(\Omega)}^2+\beta_\ve^{p}\|v_\ve^*\|_{H^1_0(\Omega)}+\beta_\ve\int_{\Omega}U_{\mu_\ve,\xi_\ve}^{p-1}\left|v_\ve^*\right|dx\right)\notag\\
&=&\mathcal{O}\left((\ve\beta_\ve)^2+\beta_\ve^{2p}\right)+o\left(\kappa_\ve^{N-2}\right)+\left\{\aligned
&\mathcal{O}\left(\mu_\ve^{2}\right),\quad N=4,\\
&o\left(\mu_\ve^{2}\right),\quad N\geq5.
\endaligned\right.
\end{eqnarray}
By \eqref{comp}, Lemmas~\ref{esti-v*} and \ref{App-esti},
\begin{eqnarray}\label{III-10}
\lambda\int_{\Omega}v_\ve^*Z_{\mu_\ve,\xi_\ve}^0dx&=&
\mathcal{O}\left(\|v_\ve^*\|_{H_0^1 (\Omega)}\left(\int_{\Omega}U_{\mu_\ve,\xi_\ve}^{\frac{p+1}{p}}dx\right)^{\frac{p}{p+1}}\right)\notag\\
&=&\left\{\aligned
&\mathcal{O}\left((\ve\beta_\ve)^2+\beta_\ve^{2p}+\mu_\ve^2\right)+o\left(\kappa_\ve^{2}\right),\quad N=4,\\
&\mathcal{O}\left((\ve\beta_\ve)^2+\beta_\ve^{2p}\right)+o\left(\mu_\ve^2+\kappa_\ve^{3}\right),\quad N=5,\\
&o\left(\mu_\ve^2+\kappa_\ve^{4}\right),\quad N=6,\\
&o\left(\mu_\ve^2\right),\quad N\geq7.
\endaligned\right.
\end{eqnarray}
By the orthogonal condition of $v_{\ve}^*$ given in \eqref{v*-equ}, \eqref{decay-H},  similar estimates in \eqref{I2-0} and \eqref{fw1} and Lemma~\ref{esti-v*},
\begin{eqnarray}\label{IV-10}
\int_{\Omega}f'(\mathcal{W}_{\mu_\ve,\xi_\ve}^*)v_{\ve}^*Z_{\mu_\ve,\xi_\ve}^0dx&=&\int_{\Omega}\left(f'(\mathcal{W}_{\mu_\ve,\xi_\ve}^*)-f'(U_{\mu_\ve,\xi_\ve})\right)v_{\ve}^*\Phi_{\mu_\ve,\xi_\ve}^0dx\notag\\
&&+\mathcal{O}\left(\|v_\ve^*\|_{H^1_0(\Omega)}\frac{\kappa_\ve^{N-2}}{\mu_\ve^{\frac{N-2}{2}}}\left(\int_{B_{d(\xi_\ve,\partial \Omega)}(\xi_\ve)}U_{\mu_\ve,\xi_\ve}^{\frac{p^2-1}{p}}dx\right)^{\frac{p}{p+1}}\right)\notag\\
&&+\mathcal{O}\left(\|v_\ve^*\|_{H^1_0(\Omega)}\left(\int_{\Omega\backslash B_{d(\xi_\ve,\partial \Omega)}(\xi_\ve)}U_{\mu_\ve,\xi_\ve}^{p+1}dx\right)^{\frac{p}{p+1}}\right)\notag\\
&=&\mathcal{O}\left(\int_{\Omega}\left(\mu_\ve^{\frac{N-2}{2}}\beta_\ve +\left(\mu_\ve^{\frac{N-2}{2}}\beta_\ve\right)^{p-1}\right) U_{\mu_\ve,\xi_\ve}^{p}|v_{\ve}^*|dx\right)\notag\\
&&+\mathcal{O}\left(\|v_\ve^*\|_{H^1_0(\Omega)}\frac{\kappa_\ve^{N-2}}{\mu_\ve^{\frac{N-2}{2}}}\left(\int_{B_{d(\xi_\ve,\partial \Omega)}(\xi_\ve)}U_{\mu_\ve,\xi_\ve}^{\frac{p^2-1}{p}}dx\right)^{\frac{p}{p+1}}\right)\notag\\
&&+\mathcal{O}\left(\|v_\ve^*\|_{H^1_0(\Omega)}\left(\int_{\Omega\backslash B_{d(\xi_\ve,\partial \Omega)}(\xi_\ve)}U_{\mu_\ve,\xi_\ve}^{p+1}dx\right)^{\frac{p}{p+1}}\right)\notag\\
&=&\left\{\aligned
&\mathcal{O}\left(\left(\beta_\ve\mu_\ve^{\frac{N-2}{2}}+\kappa_\ve^{N-2}\right)\|v_\ve^*\|_{H^1_0(\Omega)}\right),\quad N=4,5,\\
&\mathcal{O}\left(\left(\beta_\ve\mu_\ve^{2}+\kappa_\ve^4|\log\kappa_\ve|^{\frac{2}{3}}\right)\|v_\ve^*\|_{H^1_0(\Omega)}\right),\quad N=6,\\
&\mathcal{O}\left(\left(\beta_\ve^{p-1}\mu_\ve^{2}+\kappa_\ve^{\frac{N+2}{2}}\right)\|v_\ve^*\|_{H^1_0(\Omega)}\right),\quad N\geq7,
\endaligned\right.\notag\\
&=&\mathcal{O}\left((\ve\beta_\ve)^2+\beta_\ve^{2p}\right)+o\left(\mu_\ve^2+\kappa_\ve^{N-2}\right).
\end{eqnarray}
Inserting \eqref{R10+}, \eqref{R10-1}, \eqref{R10-2}, \eqref{R2-0}, \eqref{R0-0}, \eqref{III-10} and \eqref{IV-10} into \eqref{equa0002}, we have the conclusions.

\vskip0.12in

$(2)$\quad We test \eqref{v*-equ} with $Z_{\mu_\ve,\xi_\ve}^i$, then by the orthogonality of $v_\ve^*$, we have
\begin{eqnarray}\label{equa0004}
0&=&\sum_{j=1}^2\int_{\Omega}\mathcal{R}_jZ_{\mu_\ve,\xi_\ve}^idx+\int_{\Omega}\mathcal{R}_0(v_\ve^*)Z_{\mu_\ve,\xi_\ve}^idx+\lambda\int_{\Omega}v_\ve^*Z_{\mu_\ve,\xi_\ve}^idx+\int_{\Omega}f'(\mathcal{W}_{\mu_\ve,\xi_\ve}^*)v_{\ve}^*Z_{\mu_\ve,\xi_\ve}^idx\notag\\
&:=&I_1+II_1+III_1+IV_1.
\end{eqnarray}
Again, by \eqref{R}, the estimate of $\int_{\Omega}\mathcal{R}_1Z_{\mu_\ve,\xi_\ve}^idx$ is the same as that of $III_1$ and $IV_1$ in the proof of Lemma~\ref{basic-esti}, which implies that
\begin{eqnarray}\label{R1-i}
\int_{\Omega}\mathcal{R}_1Z_{\mu_\ve,\xi_\ve}^idx&=&-\alpha_ND_{N,4}\mu_\ve^{N-1}\frac{\partial \varphi(\xi_\ve)}{\partial \xi_{i,\ve}}+\mathcal{O}\left(\mu_\ve^{2}\kappa_\ve^{N-3}\right)\notag\\
&&+o\left(\kappa_\ve^{N-1}\right)+\left\{\aligned
&\mathcal{O}\left(\kappa_\ve\mu_\ve^{2}|\log d(\xi_\ve,\partial \Omega)|\right),\quad N=4,\\
&\mathcal{O}\left(\mu_\ve^{N-2}\kappa_\ve\right),\quad N\geq5.
\endaligned\right.
\end{eqnarray}
By \eqref{point-z1}, \eqref{R}, \eqref{fw} and Lemma~\ref{App-esti},
\begin{eqnarray*}
\int_{\Omega}\mathcal{R}_2Z_{\mu_\ve,\xi_\ve}^idx&=&\int_{\Omega}f'\left(\mathcal{W}_{\mu_\ve,\xi_\ve}\right)\left(\mathcal{E}_{\ve}^*+\mathcal{Z}_{\ve}\right)Z_{\mu_\ve,\xi_\ve}^idx-\sum_{j=0}^{N}\alpha_{j,\ve}\int_{\Omega}f'\left(U_{\mu_\ve,\xi_\ve}\right)\Phi_{\mu_\ve,\xi_\ve}^jZ_{\mu_\ve,\xi_\ve}^idx\\
&&+\int_{\Omega}\left(\ve\mathcal{E}_{\ve}^*+\lambda\mathcal{Z}_\ve\right)Z_{\mu_\ve,\xi_\ve}^idx+\mathcal{O}\left(\beta_{\ve}^p\mu_\ve^{\frac{N+2}{2}}\kappa_\ve^{-1}\right)\\
&&+\left\{\aligned
&\mathcal{O}\left(\beta_{\ve}^2\mu_\ve^2(1+\kappa_\ve|\log d(\xi_\ve,\partial \Omega)|)\right),\quad N=4,\\
&\mathcal{O}\left(\beta_{\ve}^2\mu_\ve^2(\mu_\ve|\log\kappa_\ve|+\kappa_\ve)\right),\quad N=5,\\
&\mathcal{O}\left(\beta_{\ve}^2\mu_\ve^2\kappa_\ve\right),\quad N=6.
\endaligned\right.
\end{eqnarray*}
By \eqref{point-z1}, \eqref{fw1} and Lemma~\ref{App-esti},
\begin{eqnarray*}
\int_{\Omega}f'\left(\mathcal{W}_{\mu_\ve,\xi_\ve}\right)\left(\mathcal{E}_{\ve}^*+\mathcal{Z}_{\ve}\right)Z_{\mu_\ve,\xi_\ve}^idx=\int_{\Omega}f'\left(U_{\mu_\ve,\xi_\ve}\right)\left(\mathcal{E}_{\ve}^*+\mathcal{Z}_{\ve}\right)Z_{\mu_\ve,\xi_\ve}^idx+o\left(\kappa_\ve^{N-1}\right)+\mathcal{O}\left(\beta_{\ve}\mu_\ve^{\frac{N-2}{2}}\kappa_\ve^3\right).
\end{eqnarray*}
Again, by \eqref{exp-ker} and similar computations in the proof of Lemma~\ref{compute-ez},
\begin{eqnarray*}
\int_{\Omega}pU_{\mu_\ve,\xi_\ve}^{p-1}\mathcal{E}_{\ve}^*Z_{\mu_\ve,\xi_\ve}^idx&=&\int_{\Omega}pU_{\mu_\ve,\xi_\ve}^{p-1}\Phi_{\mu_\ve,\xi_\ve}^i\mathcal{E}_{\ve}^*dx+\mathcal{O}\left(\frac{\beta_{\ve}\kappa_{\ve}^{N-1}}{\mu_\ve^{\frac{N-2}{2}}}\int_{B_{d(\xi_\ve,\partial \Omega)}(\xi_\ve)}U_{\mu_\ve,\xi_\ve}^{p-1}dx\right)\\
&&+\mathcal{O}\left(\beta_{\ve}\kappa_\ve\int_{\Omega\backslash B_{d(\xi_\ve,\partial \Omega)}(\xi_\ve)}U_{\mu_\ve,\xi_\ve}^{p}dx\right)\\
&=&D_{N,4}\mu_\ve^{\frac{N}{2}}\frac{\partial \mathcal{E}_{\ve}^*(\xi_\ve)}{\partial \xi_{i,\ve}}+\mathcal{O}\left(\beta_{\ve}\mu_\ve^{\frac{N-2}{2}}\kappa_\ve^3\left|\log\kappa_\ve\right|+\beta_{\ve}\mu_\ve^{\frac{N+2}{2}}\right)
\end{eqnarray*}
and
\begin{eqnarray*}
\int_{\Omega}pU_{\mu_\ve,\xi_\ve}^{p-1}\left(\sum_{j=0}^{N}\alpha_{j,\ve}Z_{\mu_\ve,\xi_\ve}^j\right)Z_{\mu_\ve,\xi_\ve}^idx&=&-D_{N,4}\mu_\ve^{\frac{N}{2}}\frac{\partial \mathcal{E}_{\ve}^*(\xi_\ve)}{\partial \xi_{i,\ve}}+\mathcal{O}\left(\beta_{\ve}\mu_\ve^{\frac{N-2}{2}}\kappa_\ve^3\left|\log\kappa_\ve\right|+\beta_{\ve}\mu_\ve^{\frac{N+2}{2}}\right).
\end{eqnarray*}
Thus,
\begin{eqnarray*}
\int_{\Omega}f'\left(U_{\mu_\ve,\xi_\ve}\right)\left(\mathcal{E}_{\ve}^*+\mathcal{Z}_{\ve}\right)Z_{\mu_\ve,\xi_\ve}^idx=\mathcal{O}\left(\beta_{\ve}\mu_\ve^{\frac{N-2}{2}}\kappa_\ve^3\left|\log\kappa_\ve\right|+\beta_{\ve}\mu_\ve^{\frac{N+2}{2}}\right).
\end{eqnarray*}
Similarly, by \eqref{system} and Lemma~\ref{compute-ez},
\begin{eqnarray*}
\sum_{j=0}^{N}\alpha_{j,\ve}\int_{\Omega}f'\left(U_{\mu_\ve,\xi_\ve}\right)\Phi_{\mu_\ve,\xi_\ve}^jZ_{\mu_\ve,\xi_\ve}^idx&=&-\sum_{l=1}^{m_k}\tau_{k,l,\ve}'\langle e_{k,l}, Z_{\mu_\ve,\xi_\ve}^i\rangle\\
&=&-D_{N,4}\mu_\ve^{\frac{N}{2}}\frac{\partial \mathcal{E}_{\ve}^*(\xi_\ve)}{\partial \xi_{i,\ve}}+\mathcal{O}\left(\beta_{\ve}\mu_\ve^{\frac{N-2}{2}}\kappa_\ve^3\left|\log\kappa_\ve\right|+\beta_{\ve}\mu_\ve^{\frac{N+2}{2}}\right)
\end{eqnarray*}
and
\begin{eqnarray*}
\ve\int_{\Omega}\mathcal{E}_{\ve}^*Z_{\mu_\ve,\xi_\ve}^idx=\sum_{l=1}^{m_k}\frac{\ve\tau_{k,l,\ve}'}{\lambda_k}\langle e_{k,l}, Z_{\mu_\ve,\xi_\ve}^i\rangle=\frac{\ve D_{N,4}}{\lambda_k}\mu_\ve^{\frac{N}{2}}\frac{\partial \mathcal{E}_{\ve}^*(\xi_\ve)}{\partial \xi_{i,\ve}}+\mathcal{O}\left(\beta_{\ve}\mu_\ve^{\frac{N-2}{2}}\kappa_\ve^3\left|\log\kappa_\ve\right|+\beta_{\ve}\mu_\ve^{\frac{N+2}{2}}\right).
\end{eqnarray*}
Similar to the computations in \eqref{equa0003},
\begin{eqnarray*}
\int_{\Omega}\mathcal{Z}_\ve Z_{\mu_\ve,\xi_\ve}^idx&=&\mathcal{O}\left(\beta_\ve\mu_\ve^{\frac{N-2}{2}}\left(\int_{B_{d(\xi_\ve,\partial \Omega)}(\xi_\ve)}\left|\Phi_{\mu_\ve,\xi_\ve}^{1}\right| U_{\mu_\ve,\xi_\ve}dx+\kappa_\ve\int_{\Omega\backslash B_{d(\xi_\ve,\partial \Omega)}(\xi_\ve)}U_{\mu_\ve,\xi_\ve}^2dx\right)\right)\\
&=&\left\{\aligned
&\mathcal{O}\left(\beta_\ve\mu_\ve^{3}\left(1+\kappa_\ve|\log d(\xi_\ve,\partial \Omega)|\right)\right),\quad N=4,\\
&\mathcal{O}\left(\beta_\ve^2\mu_\ve^{\frac{N+2}{2}}(1+\kappa_\ve^{N-3})\right),\quad N\geq5.
\endaligned\right.
\end{eqnarray*}
Summarizing the above estimates, we have
\begin{eqnarray}\label{R2-i}
\int_{\Omega}\mathcal{R}_2Z_{\mu_\ve,\xi_\ve}^idx&=&\left(1+\frac{\ve}{\lambda_k}\right)D_{N,4}\mu_\ve^{\frac{N}{2}}\frac{\partial \mathcal{E}_{\ve}^*(\xi_\ve)}{\partial \xi_{i,\ve}}+\mathcal{O}\left(\beta_{\ve}\mu_\ve^{\frac{N-2}{2}}\kappa_\ve^3\left|\log\kappa_\ve\right|+\beta_{\ve}\mu_\ve^{\frac{N+2}{2}}\right)\notag\\
&&+\mathcal{O}\left(\beta_{\ve}^p\mu_\ve^{\frac{N+2}{2}}\kappa_\ve^{-1}\right)+o\left(\kappa_\ve^{N-1}+\beta_\ve\mu_\ve^{\frac N2}\right)\notag\\
&&+\left\{\aligned
&\mathcal{O}\left(\beta_{\ve}^2\mu_\ve^3|\log\kappa_\ve|\right),\quad N=5,\\
&\mathcal{O}\left(\beta_{\ve}^2\mu_\ve^2\kappa_\ve\right),\quad N=6.
\endaligned\right.
\end{eqnarray}
By \eqref{comp}, \eqref{point-z1}, Lemmas~\ref{esti-v*} and \ref{App-esti} and similar to \eqref{R0-0},
\begin{eqnarray}\label{R0-i-5}
\int_{\Omega}\mathcal{R}_0(v_\ve^*)Z_{\mu_\ve,\xi_\ve}^idx&=&\mathcal{O}\left(\|v_\ve^*\|_{H^1_0(\Omega)}^2+\beta_\ve^{p}\|v_\ve^*\|_{H^1_0(\Omega)}+\beta_\ve\int_{\Omega}U_{\mu_\ve,\xi_\ve}^{p-2}\left|Z_{\mu_\ve,\xi_\ve}^iv_\ve^*\right|dx\right)\notag\\
&=&\mathcal{O}\left((\ve\beta_\ve)^2+\beta_\ve^{2p}\right)+o\left(\kappa_\ve^{N-1}+\beta_\ve\mu_\ve^{\frac N2}\right)\notag\\
&&+\left\{\aligned
&\mathcal{O}\left(\mu_\ve^{4}|\log\mu_\ve|^{\frac{4}{3}}\right),\quad N=6,\\
&\mathcal{O}\left(\mu_\ve^{4}\right),\quad N\geq7
\endaligned\right.
\end{eqnarray}
for $N\geq6$.  For $N=4,5$, by \eqref{comp}, \eqref{point-z1}, \eqref{new*}, \eqref{esti-W*}, \eqref{R0-1}, \eqref{A2} and Lemma~\ref{esti-v*},
\begin{eqnarray}\label{R0-i-4}
\int_{\Omega}\mathcal{R}_0(v_\ve^*)Z_{\mu_\ve,\xi_\ve}^idx&=&\mathcal{O}\left(\|v_\ve^*\|_{L^\infty(\Omega)}^2\left(\int_{B_{d(\xi_\ve,\partial \Omega)}(\xi_\ve)}U_{\mu_\ve,\xi_\ve}^{p-2}\left|\Phi_{\mu_\ve,\xi_\ve}^i\right|dx+\kappa_\ve\int_{\Omega\backslash B_{d(\xi_\ve,\partial \Omega)}(\xi_\ve)}U_{\mu_\ve,\xi_\ve}^{p-1}dx\right)\right)\notag\\
&&+\mathcal{O}\left(\int_{B_{d(\xi_\ve,\partial \Omega)}(\xi_\ve)}\left(\beta_\ve^{p-1}+|v_\ve^*|^{p-1}\right)|v_\ve^*|\left|\Phi_{\mu_\ve,\xi_\ve}^i\right|dx\right)\notag\\
&&+\mathcal{O}\left(\kappa_\ve\int_{\Omega\backslash B_{d(\xi_\ve,\partial \Omega)}(\xi_\ve)}\left(\beta_\ve^{p-1}+|v_\ve^*|^{p-1}\right)|v_\ve^*|U_{\mu_\ve,\xi_\ve}dx\right)\notag\\
&=&o\left(\mu_\ve^{\frac{N}{2}}\beta_\ve+\kappa_\ve^{N-1}\right)+\left\{\aligned
&\mathcal{O}\left(\kappa_\ve^5|\log d(\xi_\ve,\partial \Omega)|\right),\quad N=4,\\
&\mathcal{O}\left(\kappa_\ve^7\mu_\ve^{-1}\right), \quad N=5.
\endaligned\right.
\end{eqnarray}
By \eqref{point-z1} and similar to \eqref{III-10}, 
\begin{eqnarray}\label{III-1i}
&&\lambda\int_{\Omega}v_\ve^*Z_{\mu_\ve,\xi_\ve}^idx\notag\\
&=&
\mathcal{O}\left(\|v_\ve^*\|_{H_0^1 (\Omega)}\left(\left(\int_{B_{d(\xi_\ve,\partial \Omega)}(\xi_\ve)}\left|\Phi_{\mu_\ve,\xi_\ve}^i\right|^{\frac{p+1}{p}}dx\right)^{\frac{p}{p+1}}+\kappa_\ve\left(\int_{\Omega\backslash B_{d(\xi_\ve,\partial \Omega)}(\xi_\ve)}U_{\mu_\ve,\xi_\ve}^{\frac{p+1}{p}}dx\right)^{\frac{p}{p+1}}\right)\right)\notag\\
&=&\mathcal{O}\left((\ve\beta_\ve)^2{\bf 1}_{N\geq6}+\beta_\ve^{2p}\right)+o\left(\kappa_\ve^{N-1}\right)+\left\{\aligned
&o\left(\mu_\ve^{2}\beta_\ve\right),\quad N=4,\\
&o\left((\ve\beta_\ve)^2+\mu_\ve^3\right),\quad N=5,\\
&o\left(\mu_\ve^3\right),\quad N\geq7.
\endaligned\right.
\end{eqnarray}
By \eqref{point-z1}, Lemma~\ref{esti-v*} and similar to \eqref{IV-10}, 
\begin{eqnarray}\label{IV-1i}
&&\int_{\Omega}f'(\mathcal{W}_{\mu_\ve,\xi_\ve}^*)v_{\ve}^*Z_{\mu_\ve,\xi_\ve}^idx\notag\\
&=&\int_{\Omega}\left(f'(\mathcal{W}_{\mu_\ve,\xi_\ve}^*)-f'(U_{\mu_\ve,\xi_\ve})\right)v_{\ve}^*\Phi_{\mu_\ve,\xi_\ve}^idx+\mathcal{O}\left(\kappa_\ve^{\frac{N+4}{2}}\|v_\ve^*\|_{H^1_0(\Omega)}\right)\notag\\
&&+\left\{\aligned
&\mathcal{O}\left(\|v_\ve^*\|_{L^\infty(\Omega)}\frac{\kappa_\ve^{N-1}}{\mu_\ve^{\frac{N-2}{2}}}\int_{B_{d(\xi_\ve,\partial \Omega)}(\xi_\ve)}U_{\mu_\ve,\xi_\ve}^{p-1}dx\right),\quad N=4,5,\\
&\mathcal{O}\left(\|v_\ve^*\|_{H^1_0(\Omega)}\frac{\kappa_\ve^{N-1}}{\mu_\ve^{\frac{N-2}{2}}}\left(\int_{B_{d(\xi_\ve,\partial \Omega)}(\xi_\ve)}U_{\mu_\ve,\xi_\ve}^{\frac{p^2-1}{p}}dx\right)^{\frac{p}{p+1}}\right),\quad N\geq6,
\endaligned\right.\notag\\
&=&\mathcal{O}\left(\int_{\Omega}\left(\beta_\ve \mu_\ve^{\frac{N-2}{2}}+\left(\beta_\ve\mu_\ve^{\frac{N-2}{2}}\right)^{p-1}\right)U_{\mu_\ve,\xi_\ve}^{p-1}|\Phi_{\mu_\ve,\xi_\ve}^iv_{\ve}^*|dx\right)\notag\\
&&+\mathcal{O}\left(\kappa_\ve^{\frac{N+4}{2}}|\log\kappa_\ve|^{\frac{2}{3}}\|v_\ve^*\|_{H^1_0(\Omega)}+\mu_\ve^{\frac{N-2}{2}}\kappa_\ve^3\beta_\ve+\mu_\ve^2\kappa_\ve^3\right)+o\left(\kappa_\ve^{N-1}+\beta_\ve\mu_\ve^{\frac{N}{2}}\right)\notag\\
&&+\left\{\aligned
&\mathcal{O}\left(\|v_\ve^*\|_{L^\infty(\Omega)}\int_{B_{d(\xi_\ve,\partial \Omega)}(\xi_\ve)}\frac{\kappa_\ve^{N-2}}{\mu_\ve^{\frac{N-2}{2}}}\left|\Phi_{\mu_\ve,\xi_\ve}^i\right|U_{\mu_\ve,\xi_\ve}^{p-2}dx\right),\quad N=4,5,\\
&\mathcal{O}\left(\|v_\ve^*\|_{L^\infty(\Omega)}\int_{B_{d(\xi_\ve,\partial \Omega)}(\xi_\ve)}\frac{\kappa_\ve^{4}}{\mu_\ve^{2}}|\Phi_{\mu_\ve,\xi_\ve}^i|dx\right),\quad N\geq6,
\endaligned\right.\notag\\
&=&\mathcal{O}\left((\ve\beta_\ve)^2{\bf 1}_{N\geq6}+\beta_\ve^{2p}\right)+\left\{\aligned
&\mathcal{O}\left(\mu_\ve^2\kappa_\ve^3\right),\quad N=5,6,\\
&\mathcal{O}\left(\beta_\ve^{p-1}\mu_\ve^{4}+\beta_\ve\mu_\ve^{\frac{N-2}{2}}\kappa_\ve^3\right),\quad N\geq7.
\endaligned\right.
\end{eqnarray}
Inserting \eqref{R1-i}, \eqref{R2-i}, \eqref{R0-i-5}, \eqref{R0-i-4}, \eqref{III-1i} and \eqref{IV-1i} into \eqref{equa0004}, we obtain the conclusion.
\vskip0.12in

$(3)$\quad We test \eqref{v*-equ} with $e_{k,l}$, then by the orthogonality of $v_\ve^*$, we have
\begin{eqnarray}\label{equa0005}
0&=&\sum_{j=1}^2\int_{\Omega}\mathcal{R}_je_{k,l}dx+\int_{\Omega}\mathcal{R}_0(v_\ve^*)e_{k,l}dx+\int_{\Omega}f'(\mathcal{W}_{\mu_\ve,\xi_\ve}^*)v_{\ve}^*e_{k,l}dx\notag\\
&:=&I_2+II_2+III_2.
\end{eqnarray}
By \eqref{H-esti}, \eqref{R1-exp} and Lemma~\ref{App-esti},
\begin{eqnarray*}
\int_{\Omega}\mathcal{R}_1e_{k,l}dx&=&\lambda\int_{\Omega}\mathcal{W}_{\mu_\ve,\xi_\ve}e_{k,l}dx-\alpha_N\mu_\ve^{\frac{N-2}{2}}\int_{\Omega}f'\left(U_{\mu_\ve,\xi_\ve}\right)H(x,\xi_\ve)e_{k,l}dx\\
&&+\mathcal{O}\left(\int_{B_{d(\xi_\ve,\partial \Omega)}(\xi_\ve)}\mu_\ve^{N-2}U_{\mu_\ve,\xi_\ve}^{p-2}\left|H(x,\xi_\ve)\right|^2{\bf 1}_{p\geq2}+\mu_\ve^{\frac{N+2}{2}}\left|H(x,\xi_\ve)\right|^{p}dx\right)\\
&&+\mathcal{O}\left(\int_{\Omega\backslash B_{d(\xi_\ve,\partial \Omega)}(\xi_\ve)}U_{\mu_\ve,\xi_\ve}^{p}dx\right)\\
&=&\lambda\int_{\Omega}\mathcal{W}_{\mu_\ve,\xi_\ve}e_{k,l}dx-\alpha_N\mu_\ve^{\frac{N-2}{2}}\int_{\Omega}f'\left(U_{\mu_\ve,\xi_\ve}\right)H(x,\xi_\ve)e_{k,l}dx+\mathcal{O}\left(\mu_\ve^{\frac{N-2}{2}}\kappa_\ve^2\right).
\end{eqnarray*}
By \eqref{Projection} and similar estimate in \eqref{esti-ez-0},
\begin{eqnarray}
\lambda\int_{\Omega}\mathcal{W}_{\mu_\ve,\xi_\ve}e_{k,l}dx&=&\frac{\lambda}{\lambda_k}\int_{\Omega}U_{\mu_\ve,\xi_\ve}^{p}e_{k,l}dx\notag\\
&=&\frac{\lambda}{\lambda_k}\int_{\Omega}U_{\mu_\ve,\xi_\ve}^{p}\left(e_{k,l}(\xi_\ve)+\nabla e_{k,l}(\xi_\ve)\cdot(x-\xi_\ve)+\mathcal{O}\left(|x-\xi_\ve|^2\right) \right)dx\notag\\
&=&\left(1+\frac{\ve}{\lambda_k}\right)D_{N,6}\mu_\ve^{\frac{N-2}{2}}e_{k,l}(\xi_\ve)+\mathcal{O}\left(\int_{\Omega\backslash B_{d(\xi_\ve,\partial \Omega)}(\xi_\ve)}U_{\mu_\ve,\xi_\ve}^{p}dx\right)\notag\\
&&+\mathcal{O}\left(\int_{B_{d(\xi_\ve,\partial \Omega)}(\xi_\ve)}U_{\mu_\ve,\xi_\ve}^{p}|x-\xi_\ve|^2dx\right)\notag\\
&=&\left(1+\frac{\ve}{\lambda_k}\right)D_{N,6}\mu_\ve^{\frac{N-2}{2}}e_{k,l}(\xi_\ve)+\mathcal{O}\left(\mu_\ve^{\frac{N-2}{2}}\kappa_\ve^2\left|\log \kappa_\ve\right|\right),\label{compute-0}
\end{eqnarray}
where $D_{N,6}=\int_{\mathbb{R}^N}U_{1,0}^{p}dx$.
By \eqref{H-esti} and Lemma~\ref{App-esti},
\begin{eqnarray*}
\int_{\Omega}f'\left(U_{\mu_\ve,\xi_\ve}\right)H(x,\xi_\ve)e_{k,l}dx=\mathcal{O}\left(\kappa_\ve^2\mu_\ve^{\frac{N-2}{2}}|\log\mu_\ve|\right).
\end{eqnarray*}
Thus,
\begin{eqnarray}\label{R1-e}
\int_{\Omega}\mathcal{R}_1e_{k,l}dx
=\left(1+\frac{\ve}{\lambda_k}\right)D_{N,6}\mu_\ve^{\frac{N-2}{2}}e_{k,l}(\xi_\ve)+\mathcal{O}\left(\mu_\ve^{\frac{N-2}{2}}\kappa_\ve^2|\log\mu_\ve|\right).
\end{eqnarray}
By \eqref{R}, \eqref{R2-exp} and Lemmas~\ref{basic-esti} and \ref{App-esti},
\begin{eqnarray}\label{R2-e}
\int_{\Omega}\mathcal{R}_2e_{k,l}dx
&=&\int_{\Omega}f\left(\mathcal{E}_\ve^*\right)e_{k,l}dx+\ve\int_{\Omega}\mathcal{E}_\ve^*e_{k,l}dx+\mathcal{O}\left(\int_{\Omega}\left(\beta_\ve U_{\mu_\ve,\xi_\ve}^{p-1}+\left(\beta_\ve^{p-1}+\mu_\ve^{\frac{N-2}{2}}\beta_\ve\right)U_{\mu_\ve,\xi_\ve}dx\right)\right)\notag\\
&=&\int_{\Omega}f\left(\mathcal{E}_\ve^*\right)e_{k,l}dx+\ve\int_{\Omega}\mathcal{E}_\ve^*e_{k,l}dx+o\left(\beta_\ve^p\right)+\left\{\aligned
&o\left(\mu_\ve^2|\log\mu_\ve|\right),\quad N=4,\\
&o\left(\mu_\ve^{2}\right),\quad N\geq 5.
\endaligned\right.
\end{eqnarray}
By \eqref{R0-1} and Lemmas~\ref{basic-esti} and \ref{App-esti},
\begin{eqnarray}\label{R0-e}
\int_{\Omega}\mathcal{R}_0(v_\ve^*)e_{k,l}dx&=&\mathcal{O}\left(\|v_\ve^*\|_{H^1_0(\Omega)}^{2\wedge p}+\beta_\ve^{p-1}\|v_\ve^*\|_{H^1_0(\Omega)}+\beta_\ve\|v_\ve^*\|_{H^1_0(\Omega)}^{p-1}\right)\notag\\
&=&o(\ve\beta_\ve+\beta_\ve^{p}+\kappa_\ve^{N-2})+\left\{\aligned
&\mathcal{O}\left(\mu_\ve^{2}\right),\quad N=4,\\
&o\left(\mu_\ve^{2}\right),\quad N=5,6,\\
&o\left(\mu_\ve^{2}\right)+\mathcal{O}\left(\kappa_\ve^{\frac{(N+2)p}{2}}\right),\quad N\geq7.
\endaligned\right.
\end{eqnarray}
Similar to \eqref{IV-10}, by \eqref{esti-W*} and Lemmas~\ref{basic-esti} and \ref{App-esti},
\begin{eqnarray}\label{III2-e}
\int_{\Omega}f'(\mathcal{W}_{\mu_\ve,\xi_\ve}^*)v_{\ve}^*e_{k,l}dx&=&\mathcal{O}\left(\int_{\Omega}U_{\mu_\ve,\xi_\ve}^{p-1}\left|v_{\ve}^*\right|dx\right)\notag\\
&=&\left\{\aligned
&\mathcal{O}\left(\mu_\ve^2|\log\mu_\ve|\|v_\ve^*\|_{L^\infty(\Omega)}\right),\quad N=4,\\
&\mathcal{O}\left(\mu_\ve^3\|v_\ve^*\|_{L^\infty(\Omega)}\right),\quad N=5,\\
&\mathcal{O}\left(\mu_\ve^2|\log\mu_\ve|^{\frac{2}{3}}\|v_\ve^*\|_{H^1_0(\Omega)}\right),\quad N=6,\\
&\mathcal{O}\left(\mu_\ve^2\|v_\ve^*\|_{H^1_0(\Omega)}\right),\quad N\geq7,
\endaligned\right.\notag\\
&=&o(\ve\beta_\ve+\beta_\ve^{p}+\kappa_\ve^{N-2})+\left\{\aligned
&\mathcal{O}\left(\beta_\ve\mu_\ve^{2}|\log \mu_\ve|\right),\quad N=4,\\
&o\left(\mu_\ve^{2}\right),\quad N\geq5.
\endaligned\right.
\end{eqnarray}
Inserting \eqref{R1-e}, \eqref{R2-e}, \eqref{R0-e} and \eqref{III2-e} into \eqref{equa0005}, we have the conclusion.
\end{proof}

\vskip0.12in

We are in the position to prove the first classification result.

\vskip0.06in

\noindent\textbf{Proof of Theorem~\ref{zero weak limit+}:} \quad
By multiplying the equation in $(3)$ of Proposition~\ref{exp-orth} with $\tau_{k,l,\ve}'$ and summarizing them from $l=1$ to $l=m_k$, we have
\begin{eqnarray}\label{el}
0&=&\left(1+\frac{\ve}{\lambda_k}\right)D_{N,6}\mu_\ve^{\frac{N-2}{2}}\mathcal{E}_\ve^*(\xi_\ve)+\|\mathcal{E}_\ve^*\|_{L^{p+1}(\Omega)}^{p+1}+\ve\|\mathcal{E}_\ve^*\|_{L^2(\Omega)}^2+\mathcal{O}\left(\beta_\ve\mu_\ve^{\frac{N-2}{2}}\kappa_\ve^2|\log\mu_\ve|\right)\notag\\
&&+o\left(\ve\beta_\ve^2+\beta_\ve^{p+1}+\beta_\ve\kappa_\ve^{N-2}\right)
+\left\{\aligned
&o\left(\beta_\ve\mu_\ve^{2}|\log\mu_\ve|\right),\quad N=4,\\
&o\left(\beta_\ve\mu_\ve^{2}\right),\quad N=5,6,\\
&o\left(\beta_\ve\mu_\ve^{2}\right)+\mathcal{O}\left(\beta_\ve\kappa_\ve^{\frac{(N+2)p}{2}}\right),\quad N\geq7,
\endaligned\right.
\end{eqnarray}
which, together with $(1)$ of Proposition~\ref{exp-orth}, implies that
\begin{eqnarray}\label{basic-relat+}
\alpha_N\mu_\ve^{N-2}D_{N,3}\varphi(\xi_\ve)+\|\mathcal{E}_\ve^*\|_{L^{p+1}(\Omega)}^{p+1}+\ve\|\mathcal{E}_\ve^*\|_{L^2(\Omega)}^2
\leq\left\{\aligned
&(\lambda D_{N,1}+o(1))\mu_\ve^2,\quad N\geq5,\\
&(\lambda D_{4,2}+o(1))\mu_\ve^2\left|\log\mu_\ve\right|,\quad N=4
\endaligned\right.
\end{eqnarray}
and
\begin{eqnarray}\label{basic-relat-}
\alpha_N\mu_\ve^{N-2}D_{N,3}\varphi(\xi_\ve)+\|\mathcal{E}_\ve^*\|_{L^{p+1}(\Omega)}^{p+1}+\ve\|\mathcal{E}_\ve^*\|_{L^2(\Omega)}^2
\geq\left\{\aligned
&(\lambda D_{N,1}+o(1))\mu_\ve^2,\quad N\geq5,\\
&(\lambda D_{4,2}+o(1))\mu_\ve^2\left|\log\kappa_\ve\right|,\quad N=4.
\endaligned\right.
\end{eqnarray}
Let $t_{k,l,\ve}=\frac{\tau_{k,l,\ve}'}{\max_{1\leq l\leq m_k}\left|\tau_{k,l,\ve}'\right|}$, then we may assume that $t_{k,l,\ve}\to t_{k,l,0}$ as $\ve\to0^+$ such that
\begin{eqnarray}\label{limit-e}
\|\mathcal{E}_0^*\|_{L^{p+1}(\Omega)}^{p+1}\not=0\quad\text{and}\quad \|\mathcal{E}_0^*\|_{L^{2}(\Omega)}^{2}\not=0,
\end{eqnarray}
where $\mathcal{E}_0^*=\sum_{l=1}^{m_k}t_{k,l,0}e_{k,l}$.
It follows from \eqref{beta-new}, \eqref{basic-relat+} and \eqref{limit-e} that
\begin{eqnarray}\label{beta-mu}
\mu_\ve^{N-2}\varphi(\xi_\ve)+\beta_\ve^{p+1}+\ve\beta_\ve^2\lesssim\left\{\aligned
&\mu_\ve^2,\quad N\geq5,\\
&\mu_\ve^2\left|\log\mu_\ve\right|,\quad N=4
\endaligned\right.
\end{eqnarray}
and for $N=4$,
\begin{eqnarray}\label{logd}
|\log d(\xi_\ve,\partial \Omega)|\lesssim|\log(\log\mu_\ve)|.
\end{eqnarray}

\vskip0.12in

{\bf The case $N\geq6$}.

\vskip0.06in

By $(1)$ of Proposition~\ref{exp-orth}, we have that $\xi_\ve\to\xi_0\in\partial\Omega$ as $\ve\to0^+$, which together with \eqref{exp-rob}, implies that
\begin{eqnarray}\label{N6}
\kappa_\ve^{N-2}\sim\mu_\ve^2\quad\text{for }N\geq6.
\end{eqnarray}
By \eqref{beta-new}, \eqref{el}, \eqref{beta-mu} and $(3)$ of Proposition~\ref{exp-orth}, we also have 
\begin{eqnarray}\label{beta-mu-2}
\ve\beta_\ve+\beta_\ve^{p}\lesssim\mu_\ve^{\frac{N-2}{2}}.
\end{eqnarray}
Thus, by \eqref{exp-rob}, \eqref{beta-mu-2}, \eqref{N6} and $(2)$ of Proposition~\ref{exp-orth}, 
\begin{eqnarray*}
\kappa_\ve^{N-1}\lesssim\mu_\ve^{\frac{N}{2}}\beta_\ve=o(\kappa_\ve^{N-1})
\end{eqnarray*}
for $N\geq6$ as $\ve\to0^+$, which is impossible.  

\vskip0.12in

{\bf The case $N=5$}.

\vskip0.06in

If $\xi_\ve\to\xi_0\in\partial\Omega$ as $\ve\to0^+$, then arguments used for $N\geq6$ also yields a contradiction in this case, since by \eqref{beta-mu-2}, we have $\mu_\ve^{\frac{3}{2}}\beta_\ve=o\left(\mu_\ve^2\right)$ which still implies that $\kappa_\ve^3\sim\mu_\ve^2$ in this case by $(1)$ of Proposition~\ref{exp-orth}.  Thus, for $N=5$, we must have $\xi_\ve\to\xi_0\in\Omega$ as $\ve\to0^+$, which together with $(1)$ of Proposition~\ref{exp-orth} and \eqref{basic-relat+}, implies that
\begin{eqnarray}\label{N5}
\mu_\ve^{\frac{3}{2}}\mathcal{E}_{\ve}^*(\xi_\ve)\sim-\mu_\ve^2.
\end{eqnarray}
If $\mathcal{E}_0^*(\xi_0)\not=0$, then by \eqref{N5}, we have $\beta_\ve\sim\mu_\ve^{\frac{1}{2}}$.  It follows that $\beta_\ve^{\frac{10}{3}}\sim\mu_\ve^{\frac{5}{3}}$ as $\ve\to0^+$, which contradicts \eqref{basic-relat+}.  Thus, for $N=5$, we must have that $\mathcal{E}_0^*(\xi_0)=0$ and $\beta_\ve\mu_\ve^{-\frac{1}{2}}\to+\infty$ as $\ve\to0^+$, which, together with $(2)$ of Proposition~\ref{exp-orth} and \eqref{basic-relat+} again, implies that $\nabla \mathcal{E}_0^*(\xi_0)=0$.  Thus, for $N=5$, we must have that $\xi_0$ is a singular point of $\mathcal{E}_0^*$.

\vskip0.12in

{\bf The case $N=4$}.

\vskip0.06in

We first consider the case that $\xi_\ve\to\xi_0\in\Omega$ as $\ve\to0^+$.  Then by \eqref{beta-new}, \eqref{basic-relat+}, \eqref{basic-relat-} and \eqref{limit-e}, we have
\begin{eqnarray}\label{beta-mu-4-0}
\beta_\ve^{p+1}+\ve\beta_\ve^2\sim
\mu_\ve^2\left|\log\mu_\ve\right|,
\end{eqnarray}
which, together with $(1)$ of Proposition~\ref{exp-orth}, implies that 
\begin{eqnarray}\label{equa0006}
0=(\lambda D_{4,2}+o(1))\mu_\ve^2\left|\log\mu_\ve\right|+\left(1+\frac{\ve}{\lambda_k}\right)D_{4,3}\mu_\ve\mathcal{E}_{\ve}^*(\xi_\ve).
\end{eqnarray}
Thus, $\beta_\ve\gtrsim\mu_\ve|\log\mu_\ve|$.  It follows from $(2)$ of Proposition~\ref{exp-orth} and \eqref{beta-mu-4-0} that $\nabla\mathcal{E}_0^*(\xi_0)=0$.  Moreover, if $\mathcal{E}_0^*(\xi_0)\not=0$, that is, $\xi_0$ is a regular point of $\mathcal{E}_0^*$, then by \eqref{beta-mu-4-0} and \eqref{equa0006}, 
\begin{eqnarray*}
\beta_\ve\sim\frac{1}{\ve}e^{-\frac{1}{\ve}}\quad\text{and}\quad\mu_\ve\sim e^{-\frac{1}{\ve}}.
\end{eqnarray*}
It follows from \eqref{beta-new} that 
\begin{eqnarray*}
t_{k,l,0}=\lim_{\ve\to0^+}t_{k,l,\ve}=\lim_{\ve\to0^+}\frac{\tau_{k,l,\ve}'}{\max_{1\leq l\leq m_k}\left|\tau_{k,l,\ve}'\right|}=\lim_{\ve\to0^+}\frac{\tau_{k,l,\ve}}{\beta_\ve}.
\end{eqnarray*}
We next consider the case that $\xi_\ve\to\xi_0\in\partial\Omega$ as $\ve\to0^+$.  Without loss of generality, we assume that $\xi_0$ is a regular point of $\mathcal{E}_0^*$, that is, $\nabla\mathcal{E}_0^*(\xi_0)\not=0$.  Thus, by \eqref{beta-new}, \eqref{basic-relat+}, \eqref{logd}, \eqref{beta-mu-2} and $(2)$ of Proposition~\ref{exp-orth}, 
\begin{eqnarray}\label{equa0007}
\beta_\ve^6\lesssim\left(\mu_\ve^2|\log\mu_\ve|\right)^{\frac{5}{4}}\beta_\ve=o\left(\mu_\ve^2\beta_\ve\right)\quad\text{and}\quad\kappa_\ve^3\sim\mu_\ve^2\beta_\ve,
\end{eqnarray}
which implies that $\log\kappa_\ve\sim\log\mu_\ve$.  Suppose the contrary that $\kappa_\ve^2<<\mu_\ve^2|\log\mu_\ve|$.  Then by $(1)$ and $(2)$ of Proposition~\ref{exp-orth}, \eqref{equa0008} and \eqref{equa0009},
\begin{eqnarray}\label{equa0010}
\mu_\ve\beta_\ve d(\xi_\ve, \partial\Omega)\sim\mu_\ve^2|\log\mu_\ve|\quad\text{and}\quad\ve\beta_\ve^2\sim\mu_\ve^2|\log\mu_\ve|,
\end{eqnarray}
which, together with \eqref{equa0007}, implies that $d(\xi_\ve, \partial\Omega)\sim\frac{\ve\beta_\ve}{\mu_\ve}$ and $\ve\sim|\log\mu_\ve|^{-2}$.  However, by $d(\xi_\ve, \partial\Omega)\sim\frac{\ve\beta_\ve}{\mu_\ve}$, $\kappa_\ve^2<<\mu_\ve^2|\log\mu_\ve|$ and \eqref{equa0010}, we also have $\ve>>|\log\mu_\ve|^{-2}$.  It is a contradiction.  Thus, we must have $\kappa_\ve^2\sim\mu_\ve^2|\log\mu_\ve|$, which, together with \eqref{equa0007} and $(3)$ of Proposition~\ref{exp-orth}, implies that $d(\xi_\ve, \partial\Omega)\sim|\log\mu_\ve|^{-\frac12}$, $\beta_\ve\sim\mu_\ve|\log\mu_\ve|^{\frac{3}{2}}$ and
\begin{eqnarray*}
\mu_\ve\beta_\ve d(\xi_\ve, \partial\Omega)\sim\mu_\ve^2|\log\mu_\ve|\sim\ve\beta_\ve^2.
\end{eqnarray*}
It follows that
\begin{eqnarray*}
\mu_\ve\sim e^{-\frac{1}{\sqrt{\ve}}},\quad \beta_\ve\sim \frac{1}{\ve^{\frac{3}{4}}}e^{-\frac{1}{\sqrt{\ve}}}\quad\text{and}\quad d(\xi_\ve, \partial\Omega)\sim\ve^{\frac{1}{4}}.
\end{eqnarray*}
It completes the proof and we remark that all the computations for regular $\xi_0$ can be precise up to the leading order terms.
\hfill$\Box$

\section{The second refinement and the classification in the case $\lambda\to\overline{\lambda}^-$}
We recall that by Lemma~\ref{locate-limit}, we have determined that $\overline{\lambda}=\lambda_k$ for some $k\geq1$.  In the case $\lambda\to\lambda_k^{-}$, by $(3)$ of Proposition~\ref{exp-orth}, cancelations may occur in the leading order terms if and only if
\begin{eqnarray}\label{cancelation}
\|\mathcal{E}_0^*\|_{L^{p+1}(\Omega)}^{p+1}+\|\mathcal{E}_0^*\|_{L^{2}(\Omega)}^{2}\lim_{\ve\to0^-}\frac{\ve}{\left(\beta_\ve^*\right)^{p-1}}=0,
\end{eqnarray}
where $\mathcal{E}_0^*(x)=\sum_{i=1}^{m_{k}}t_{k,i,0}e_{k,i}(x)$
with $t_{k,i,0}=\lim_{\ve\to0}\frac{\tau_{k,i,\ve}}{\beta_{\ve}^*}$ and $\beta_\ve^*=\max_{1\leq i\leq m_{k}}|\tau_{k,i,\ve}|\to0$.
Thus, under the refined expansion in Proposition~\ref{exp-orth}, we can only establish a partial classification result.

\vskip0.12in

\noindent\textbf{Proof of Theorem~\ref{zero weak limit-1}:} \quad
Since 
\begin{eqnarray*}
\|\mathcal{E}_0^*\|_{L^{p+1}(\Omega)}^{p+1}+\|\mathcal{E}_0^*\|_{L^{2}(\Omega)}^{2}\lim_{\ve\to0^-}\frac{\ve}{\left(\beta_\ve^*\right)^{p-1}}\not=0,
\end{eqnarray*}
by $(1)$ and $(3)$ of Proposition~\ref{exp-orth}, we must have 
\begin{eqnarray}\label{beta-mu-}
\max\{\ve\beta_\ve,\beta_\ve^p\}\lesssim \mu_\ve^{\frac{N-2}{2}}.
\end{eqnarray}
Thus, we can go though the same argument for Theorem~\ref{zero weak limit+} to conclude that the blow-up is impossible for $N\geq6$ and for $N=5$, we must have that 
\begin{eqnarray*}
\|\mathcal{E}_0^*\|_{L^{p+1}(\Omega)}^{p+1}+\|\mathcal{E}_0^*\|_{L^{2}(\Omega)}^{2}\lim_{\ve\to0^-}\frac{\ve}{\left(\beta_\ve^*\right)^{p-1}}>0
\end{eqnarray*}
and $\xi_\ve\to\xi_0\in\Omega$ is a singular point of $\mathcal{E}_0^*$ given in \eqref{limit-e}.  For $N=4$, if $\xi_\ve\to\xi_0\in\Omega$ then by $(1)$  of Proposition~\ref{exp-orth} and \eqref{beta-mu-}, we must have $\beta_\ve\gtrsim\mu_\ve|\log\mu_\ve|$, which, together with $(2)$ of Proposition~\ref{exp-orth} and \eqref{beta-mu-} once more, implies that $\nabla\mathcal{E}_0^*(\xi_0)=0$.  Thus, either $\xi_0$ is a singular point of $\mathcal{E}_0^*$ or $\mathcal{E}_0^*(\xi_0)\not=0$.  If $\mathcal{E}_0^*(\xi_0)\not=0$ then by $(1)$ of Proposition~\ref{exp-orth} and \eqref{beta-mu-} once more,
\begin{eqnarray*}
\mu_\ve|\log\mu_\ve|\sim\beta_\ve,\quad \mathcal{E}_\ve^*(\xi_\ve)<0
\end{eqnarray*}
and
\begin{eqnarray*}
\|\mathcal{E}_0^*\|_{L^{p+1}(\Omega)}^{p+1}+\|\mathcal{E}_0^*\|_{L^{2}(\Omega)}^{2}\lim_{\ve\to0^-}\frac{\ve}{\left(\beta_\ve^*\right)^{p-1}}>0,
\end{eqnarray*}
which, together with $(3)$ of Proposition~\ref{exp-orth} and $\ve<0$, implies that 
\begin{eqnarray*}
0=\left(1+o(1)\right)D_{N,6}\mu_\ve^{\frac{N-2}{2}}\mathcal{E}_\ve^*(\xi_\ve)+(1+o(1))\ve\|\mathcal{E}_\ve^*\|_{L^2(\Omega)}^2<0
\end{eqnarray*} 
for $|\ve|$ sufficiently small.  It is a contradiction.  If $\xi_\ve\to\xi_0\in\partial\Omega$ then without loss of generality, we may assume that $\nabla\mathcal{E}_0^*(\xi_0)\not=0$.  If 
\begin{eqnarray*}
\|\mathcal{E}_0^*\|_{L^{p+1}(\Omega)}^{p+1}+\|\mathcal{E}_0^*\|_{L^{2}(\Omega)}^{2}\lim_{\ve\to0^-}\frac{\ve}{\left(\beta_\ve^*\right)^{p-1}}>0,
\end{eqnarray*}
then we still have $\mathcal{E}_\ve^*(\xi_\ve)<0$ by $(1)$ and $(3)$ of Proposition~\ref{exp-orth}.  Thus, by \eqref{beta-mu-}, we can go though the same argument for Theorem~\ref{zero weak limit+} to conclude that $\kappa_\ve^3\sim\mu_\ve^2\beta_\ve$.  If 
\begin{eqnarray*}
\|\mathcal{E}_0^*\|_{L^{p+1}(\Omega)}^{p+1}+\|\mathcal{E}_0^*\|_{L^{2}(\Omega)}^{2}\lim_{\ve\to0^-}\frac{\ve}{\left(\beta_\ve^*\right)^{p-1}}<0
\end{eqnarray*}
and $\left(\beta_\ve^*\right)^4<<\ve\left(\beta_\ve^*\right)^2$, then by $(1)$ and $(3)$ of Proposition~\ref{exp-orth}, $\mathcal{E}_\ve^*(\xi_\ve)>0$ and $\kappa_\ve^2\gtrsim\mu_\ve^2|\log \kappa_\ve|+\ve\beta_\ve^2>>\beta_\ve^4$, which implies that $|\log d(\xi_\ve,\partial \Omega)|\lesssim |\log \mu_\ve|\sim|\log\kappa_\ve|$.  It follows from $(2)$ of Proposition~\ref{exp-orth} and \eqref{beta-mu-} that $\kappa_\ve^3\sim\mu_\ve^2\beta_\ve$.  If 
\begin{eqnarray*}
\|\mathcal{E}_0^*\|_{L^{p+1}(\Omega)}^{p+1}+\|\mathcal{E}_0^*\|_{L^{2}(\Omega)}^{2}\lim_{\ve\to0^-}\frac{\ve}{\left(\beta_\ve^*\right)^{p-1}}<0,
\end{eqnarray*}
$\left(\beta_\ve^*\right)^4\sim\ve\left(\beta_\ve^*\right)^2$ and $\ve\left(\beta_\ve^*\right)^2\lesssim\mu_\ve^2|\log\mu_\ve^2|$, then by \eqref{equa0007}, \eqref{beta-mu-} and $(2)$ of Proposition~\ref{exp-orth} again, we can also conclude that $\kappa_\ve^3\sim\mu_\ve^2\beta_\ve$.  If 
\begin{eqnarray*}
\|\mathcal{E}_0^*\|_{L^{p+1}(\Omega)}^{p+1}+\|\mathcal{E}_0^*\|_{L^{2}(\Omega)}^{2}\lim_{\ve\to0^-}\frac{\ve}{\left(\beta_\ve^*\right)^{p-1}}<0,
\end{eqnarray*}
$\left(\beta_\ve^*\right)^4\sim\ve\left(\beta_\ve^*\right)^2$ and $\ve\left(\beta_\ve^*\right)^2>>\mu_\ve^2|\log\mu_\ve^2|$, then by $(1)$ and $(3)$ of Proposition~\ref{exp-orth}, $\kappa_\ve^2\sim|\ve|\beta_\ve^2$, which, together with $(3)$ of Proposition~\ref{exp-orth} once more, implies that
\begin{eqnarray*}
\kappa_\ve^2\sim|\ve|\beta_\ve^2\sim\beta_\ve^4\sim\mu_\ve\beta_\ve d(\xi_\ve, \partial\Omega).
\end{eqnarray*}
Thus, $\mu_\ve^2\beta_\ve\sim|\ve|^{\frac{11}{4}}$ and $\beta_\ve^6\sim|\ve|^3$.  It 
follows from $(2)$ of Proposition~\ref{exp-orth} and \eqref{beta-mu-} that we still have $\kappa_\ve^3\sim\mu_\ve^2\beta_\ve$.  In a word, we always have 
\begin{eqnarray}\label{equa0011}
\left(1+\frac{\ve}{\lambda_k}\right)\mu_\ve^{2}\frac{\partial \mathcal{E}_{\ve}^*(\xi_\ve)}{\partial \xi_{i,\ve}}=(\alpha_4+o(1))\mu_\ve^{3}\frac{\partial \varphi(\xi_\ve)}{\partial \xi_{i,\ve}}
\end{eqnarray}
if $\xi_\ve\to\xi_0\in\partial\Omega$ such that $\nabla\mathcal{E}_0^*(\xi_0)\not=0$.  Thus, as in the proof of Theorem~\ref{zero weak limit+}, we have $\log\kappa_\ve\sim\log\mu_\ve$ as $\ve\to0^-$.  If
\begin{eqnarray}\label{equa0041}
\|\mathcal{E}_0^*\|_{L^{p+1}(\Omega)}^{p+1}+\|\mathcal{E}_0^*\|_{L^{2}(\Omega)}^{2}\lim_{\ve\to0^-}\frac{\ve}{\left(\beta_\ve^*\right)^{p-1}}>0
\end{eqnarray}
and $\mathcal{E}_\ve^*(\xi_\ve)<0$, then by \eqref{basic-relat+}, \eqref{basic-relat-} and $\kappa_\ve^3\sim\mu_\ve^2\beta_\ve$, we have 
\begin{eqnarray}\label{equa0008}
\kappa_\ve^2+\beta_\ve^4+\ve\beta_\ve^2\sim\mu_\ve^2|\log\mu_\ve|,
\end{eqnarray}
$\kappa_\ve\lesssim\mu_\ve|\log\mu_\ve|^{\frac{1}{2}}$ and $\beta_\ve\lesssim\mu_\ve|\log\mu_\ve|^{\frac{3}{2}}$, which implies that \begin{eqnarray}\label{equa0009}
\kappa_\ve^2+\ve\beta_\ve^2\sim\mu_\ve^2|\log\mu_\ve|.
\end{eqnarray}
We claim that $\kappa_\ve^2\sim\mu_\ve^2|\log\mu_\ve|$.  Suppose the contrary that $\kappa_\ve^2<<\mu_\ve^2|\log\mu_\ve|$.  Then by $(1)$ and $(2)$ of Proposition~\ref{exp-orth}, \eqref{equa0008} and \eqref{equa0009},
\begin{eqnarray}\label{equa0010}
\mu_\ve\beta_\ve d(\xi_\ve, \partial\Omega)\sim\mu_\ve^2|\log\mu_\ve|\quad\text{and}\quad\ve\beta_\ve^2\sim\mu_\ve^2|\log\mu_\ve|,
\end{eqnarray}
which, together with $\kappa_\ve^3\sim\mu_\ve^2\beta_\ve$, implies that $d(\xi_\ve, \partial\Omega)\sim\frac{\ve\beta_\ve}{\mu_\ve}$ and $\ve\sim|\log\mu_\ve|^{-2}$.  It follows from \eqref{equa0010} that $\kappa_\ve^2\sim\mu_\ve^2|\log\mu_\ve|$.  
It is a contradiction.  Thus, we must have $\kappa_\ve^2\sim\mu_\ve^2|\log\mu_\ve|$, which, together with $\kappa_\ve^3\sim\mu_\ve^2\beta_\ve$ and $(3)$ of Proposition~\ref{exp-orth}, implies that $d(\xi_\ve, \partial\Omega)\sim|\log\mu_\ve|^{-\frac12}$, $\beta_\ve\sim\mu_\ve|\log\mu_\ve|^{\frac{3}{2}}$ and
\begin{eqnarray*}
\mu_\ve\beta_\ve d(\xi_\ve, \partial\Omega)\sim\mu_\ve^2|\log\mu_\ve|\sim\ve\beta_\ve^2.
\end{eqnarray*}
It follows that
\begin{eqnarray}\label{equa0040}
\mu_\ve\sim e^{-\frac{1}{\sqrt{\ve}}},\quad \beta_\ve\sim \frac{1}{\ve^{\frac{3}{4}}}e^{-\frac{1}{\sqrt{\ve}}}\quad\text{and}\quad d(\xi_\ve, \partial\Omega)\sim\ve^{\frac{1}{4}}.
\end{eqnarray}
However, by \eqref{beta-new} and \eqref{equa0040}, we have $\left(\beta_\ve^*\right)^4=o\left(|\ve|\left(\beta_\ve^*\right)^2\right)$, which contradicts \eqref{equa0041}.  Thus, we must have 
\begin{eqnarray*}\label{equa0030}
\|\mathcal{E}_0^*\|_{L^{p+1}(\Omega)}^{p+1}+\|\mathcal{E}_0^*\|_{L^{2}(\Omega)}^{2}\lim_{\ve\to0^-}\frac{\ve}{\left(\beta_\ve^*\right)^{p-1}}<0\quad\text{and}\quad \mathcal{E}_\ve^*(\xi_\ve)>0.
\end{eqnarray*}
Again, by \eqref{exp-rob} and \eqref{equa0011},
\begin{eqnarray*}
\left(1+\frac{\ve}{\lambda_k}\right)\mu_\ve^{2}\left(\nabla\mathcal{E}_{\ve}^*(\xi_\ve)\cdot(\xi_\ve-\xi_\ve' )\right)=(\alpha_4+o(1))\mu_\ve^{3}\left(\nabla\varphi(\xi_\ve)\cdot(\xi_\ve-\xi_\ve' )\right)<0,
\end{eqnarray*}
where $\xi_\ve' \in \partial \Omega$ is the unique point satisfying $d(\xi_\ve,\partial \Omega)=\left|\xi_\ve-\xi_\ve'\right|$.  Thus, by \eqref{exp-rob}, the Taylor expansion and $(1)$ and $(2)$ of Proposition~\ref{exp-orth}, we have
\begin{eqnarray*}
-\mu_\ve^2\left|\log\kappa_\ve\right|\gtrsim\left(1+\frac{\ve}{\lambda_k}\right)\mu_\ve\beta_\ve \mathcal{E}_\ve^*(\xi_\ve)-\alpha_4\mu_\ve^{2}\varphi(\xi_\ve)\sim\kappa_\ve^2,
\end{eqnarray*}
which is impossible.  Thus, if $\xi_\ve\to\xi_0\in\partial\Omega$ then $\xi_0$ must be a singular point of $\mathcal{E}_0^*$.
\hfill$\Box$

\vskip0.12in

It remains to consider the case that \eqref{cancelation} holds true for which cancelations occur in the leading order terms in $(3)$ of Proposition~\ref{exp-orth}.  In this case, the refined expansion obtained in Proposition~\ref{exp-orth} is not good enough for the classification and we need to refine it once more.  Recall that by \eqref{cancelation}, we set
\begin{eqnarray}\label{equa0021}
\beta_\ve^*=(1+o(1))\overline{\beta}_\ve\quad\text{and}\quad\tau_{k,l,\ve}'=(t_{k,l,0}+o(1))\overline{\beta}_\ve,
\end{eqnarray}
where $\overline{\beta}_\ve=\left(\frac{\|\mathcal{E}_0^*\|_{L^{2}(\Omega)}^{2}}{\|\mathcal{E}_0^*\|_{L^{p+1}(\Omega)}^{p+1}}|\ve|\right)^{\frac{1}{p-1}}$.
\begin{proposition}\label{new-decomp}
Suppose the matrix
\begin{eqnarray*}
\left(\int_{\Omega}\left(f'\left(\widetilde{\mathcal{E}}_{\ve,0}^*\right)-1\right)e_{k,l}e_{k,t}dx\right)_{m_k\times m_k}
\end{eqnarray*}
is regular, where $\widetilde{\mathcal{E}}_{\ve,0}^*=\sum_{l=1}^{m_k}\left(\frac{\|\mathcal{E}_0^*\|_{L^{2}(\Omega)}^{2}}{\|\mathcal{E}_0^*\|_{L^{p+1}(\Omega)}^{p+1}}\right)^{\frac{1}{p-1}}t_{k,l,0} e_{k,l}$, then for $\ve<0$ such that $|\ve|>0$ sufficiently small, there exists a unique $\overline{\rho}_\ve\in C^{2,\alpha}(\overline{\Omega})$ satisfying the following equation
\begin{eqnarray*}
\left\{\aligned
&-\Delta \overline{\rho}_\ve-\lambda \overline{\rho}_\ve=f\left(\overline{\mathcal{E}}_{\ve}^*+\overline{\rho}_\ve\right)+\ve \overline{\mathcal{E}}_{\ve}^*,\quad\text{in }\Omega,\\
&\langle\overline{\rho}_\ve, e_{k,l}\rangle=0,\quad 1\leq l\leq m_k,\\
&\overline{\rho}_\ve=0,\quad\text{on }\partial\Omega,
\endaligned\right.
\end{eqnarray*}
such that $\|\overline{\rho}_\ve\|_{C^2(\overline{\Omega})}\lesssim\beta_\ve^{p}$,
where $\overline{\mathcal{E}}_{\ve}^*=\sum_{l=1}^{m_k}\overline{\beta}_\ve t_{k,l} e_{k,l}$ with $\left|t_{k,l}-t_{k,l,0}\right|=\mathcal{O}\left(\beta_\ve^{2p}\right)$ for all $1\leq l\leq m_k$.
\end{proposition}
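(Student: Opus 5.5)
We treat this as a Lyapunov--Schmidt type construction in the eigenspace $\Xi_k$, with the coefficients $t_{k,l}$ playing the role of Lagrange multipliers. First we fix $\pmb{t}=(t_{k,1},\dots,t_{k,m_k})$ near $\pmb{t}_0=(t_{k,l,0})$ and solve the projected problem
\begin{eqnarray*}
\overline{\rho}-\Pi_k^\perp i^*\left(\lambda\overline{\rho}+f\left(\overline{\mathcal{E}}_{\ve}^*+\overline{\rho}\right)+\ve\overline{\mathcal{E}}_{\ve}^*\right)=0,
\end{eqnarray*}
with $\overline{\rho}\in\Xi_k^\perp$. Here $\Pi_k^\perp$ is the $H^1_0$-orthogonal projection onto $\Xi_k^\perp$. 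Since $\lambda=\lambda_k+\ve$ with $|\ve|$ small, the operator $\overline{\rho}\mapsto\overline{\rho}-\Pi_k^\perp i^*(\lambda\overline{\rho})$ is invertible on $\Xi_k^\perp$, with inverse bounded uniformly in $\ve$. Indeed, on $\Xi_k^\perp$ the spectrum of $\lambda/\lambda_j$ stays a fixed distance from $1$, because $\lambda_j\neq\lambda_k$ there.

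We set this up in $X=C^{2,\alpha}(\overline{\Omega})\cap\Xi_k^\perp$, or equivalently work in $L^\infty$ and bootstrap with Schauder estimates. The inverse $(-\Delta-\lambda)^{-1}$ restricted to $\Xi_k^\perp$ maps $C^{0,\alpha}$ to $C^{2,\alpha}$ boundedly. The source $\ve\overline{\mathcal{E}}_\ve^*$ lies in $\Xi_k$. Projecting $\Pi_k^\perp$ kills it, so the projected equation for $\overline\rho$ only feels $f(\overline{\mathcal{E}}_{\ve}^*+\overline{\rho})$.

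Next we use $|f(\overline{\mathcal{E}}_\ve^*)|\lesssim\overline\beta_\ve^{p}$ in $C^{0,\alpha}$. When $N=4$ we have $p=3$ and $f$ is smooth. For $N=5$, $f(u)=|u|^{4/3}u$ is $C^{1,\alpha}$, which is enough. A contraction mapping argument in the ball $\{\|\overline\rho\|_{C^{2,\alpha}}\le C\overline\beta_\ve^{p}\}$ then gives a unique solution $\overline\rho_\ve(\pmb t)$. This works because the Lipschitz constant of $\overline\rho\mapsto f(\overline{\mathcal{E}}_\ve^*+\overline\rho)$ there is $O(\overline\beta_\ve^{p-1})=o(1)$. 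The solution depends $C^1$ on $\pmb t$, and $\overline\beta_\ve\sim\beta_\ve$ by \eqref{equa0021}.

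It remains to make the $\Xi_k$-components vanish, so that $\overline\rho$ solves the full equation. Testing with $e_{k,l}$ and using $\langle\overline\rho,e_{k,l}\rangle=0$ and $\int_\Omega\overline\rho e_{k,l}=\lambda_k^{-1}\langle\overline\rho,e_{k,l}\rangle=0$, the reduced equations are
\begin{eqnarray*}
G_l(\pmb t):=\int_\Omega f\left(\overline{\mathcal{E}}_{\ve}^*+\overline{\rho}_\ve(\pmb t)\right)e_{k,l}dx+\ve\int_\Omega\overline{\mathcal{E}}_{\ve}^*e_{k,l}dx=0,\quad 1\le l\le m_k.
\end{eqnarray*}
We divide by $\overline\beta_\ve^{p}$ and use $\ve=-\overline\beta_\ve^{p-1}\|\mathcal{E}_0^*\|_{L^{p+1}}^{p+1}/\|\mathcal{E}_0^*\|_{L^2}^2$. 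With $c=(\|\mathcal{E}_0^*\|_{L^{2}}^{2}/\|\mathcal{E}_0^*\|_{L^{p+1}}^{p+1})^{1/(p-1)}$, the rescaled map becomes $\overline\beta_\ve^{-p}G_l(\pmb t)=F_l(\pmb t)+O(\overline\beta_\ve^{p-1})$, where
\begin{eqnarray*}
F_l(\pmb t)=\int_\Omega f\Big(\sum_s t_se_{k,s}\Big)e_{k,l}dx-c^{-(p-1)}\int_\Omega\sum_st_se_{k,s}e_{k,l}dx.
\end{eqnarray*}
The error term is $O(\overline\beta_\ve^{p-1})$ because $\|\overline\rho_\ve\|\lesssim\overline\beta_\ve^{p}$.

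By the cancelation condition \eqref{cancelation}, $\pmb t_0$ should be an (approximate) zero of $F$. After the rescaling $\pmb t\mapsto c\pmb t$, $DF(\pmb t_0)$ is a positive multiple of the regular matrix in the hypothesis. The implicit function theorem, or a quantitative Newton/degree argument, then produces $\pmb t$ with $G(\pmb t)=0$ and $|\pmb t-\pmb t_0|$ controlled by the size of $F(\pmb t_0)$ plus the perturbation.

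The main obstacle is this last step: reaching the claimed precision $|t_{k,l}-t_{k,l,0}|=O(\beta_\ve^{2p})$. That requires $F(\pmb t_0)$ to vanish exactly, or to an order compatible with $\beta_\ve^{2p}$. We would first check that $\pmb t_0$ is an exact critical point of the function $\mathcal H$ restricted to $\Xi_k$. This should follow from passing to the limit in $(3)$ of Proposition~\ref{exp-orth} componentwise, not just after summing against $\tau'$, with the bubble terms being lower order in this regime. We would then expand the perturbation term carefully, using that $\overline\rho_\ve$'s contribution to $G_l$ is of order $\overline\beta_\ve^{2p-1}$. If the sharp rate fails, we would redefine the normalization of $\overline\beta_\ve$ so that the residual is absorbed.
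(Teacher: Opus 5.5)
Your route is the paper's. You solve for $\overline\rho_\ve$ on $\Xi_k^\perp$ by a contraction/Fredholm argument with Lagrange multipliers in $\Xi_k$. You then kill the multipliers by a finite-dimensional fixed point in $\pmb t$, using that the hypothesis matrix equals $c^{p-1}DF(\pmb t_0)$. That first part is fine; for $N\geq6$, use $f\in C^{1,p-1}$ and $\alpha\leq p-1$.

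The gap is the last step, which the paper's proof also only asserts (``standard \dots\ by the fix-point argument''). It has two parts.

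First, you need $F(\pmb t_0)=0$ exactly. Since $\overline\beta_\ve^{-p}G=F+O(\overline\beta_\ve^{p-1})$ uniformly, otherwise no zero of $G$ converges to $\pmb t_0$ at all. The cancelation built into $\overline\beta_\ve$ only gives the scalar identity $\sum_l t_{k,l,0}F_l(\pmb t_0)=0$. That suffices when $m_k=1$, where $F(t)=\|e_{k,1}\|_{L^{p+1}(\Omega)}^{p+1}\,t\,(|t|^{p-1}-|t_{k,1,0}|^{p-1})$. For $m_k\geq2$, passing to the limit componentwise in $(3)$ of Proposition~\ref{exp-orth} needs $\mu_\ve^{\frac{N-2}{2}}e_{k,l}(\xi_\ve)=o(\beta_\ve^{p})$ for every $l$. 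The summed identity only controls $\mu_\ve^{\frac{N-2}{2}}\mathcal{E}_\ve^*(\xi_\ve)$. That quantity is small precisely when $\mathcal{E}_0^*(\xi_0)=0$, while the individual $e_{k,l}(\xi_0)$ need not vanish. For instance, when $N=4$, the regime $\mu_\ve\sim\beta_\ve^{3}$ with $\xi_0$ a singular point of $\mathcal{E}_0^*$ is not excluded by $(1)$--$(3)$. In that regime $F(\pmb t_0)$ is proportional to $(e_{k,l}(\xi_0))_l$, which need not vanish.

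Second, even granting $F(\pmb t_0)=0$, your own bookkeeping rules out the rate $O(\beta_\ve^{2p})$. The $\overline\rho_\ve$-contribution to $G_l$ is of order $\overline\beta_\ve^{2p-1}$, i.e.\ $\overline\beta_\ve^{p-1}$ after dividing by $\overline\beta_\ve^{p}$. Inverting the $O(1)$ matrix $DF(\pmb t_0)$ therefore moves the zero by order $\overline\beta_\ve^{p-1}\gg\beta_\ve^{2p}$, and this is sharp. Take $k=1$, $e=e_{1,1}$, $t_0=t_{1,1,0}=\pm1$, and let $g$ be the $L^2$-projection of $f(e)$ onto $\{e\}^{\perp}$. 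Then
\[
\overline\beta_\ve^{-p}G(t_0)=p\,t_0\,Q\,\overline\beta_\ve^{p-1}+o\big(\overline\beta_\ve^{p-1}\big),\qquad Q=\int_\Omega g\,(-\Delta-\lambda_1)^{-1}g\,dx>0 .
\]
Here $Q>0$ because $-\Delta-\lambda_1$ is positive on $\{e\}^{\perp}$ and $g\neq0$, since $|e|^{p-1}$ is not constant. Hence
\[
t_\ve-t_0=-\frac{p\,t_0\,Q}{(p-1)\|e\|_{L^{p+1}(\Omega)}^{p+1}}\,\overline\beta_\ve^{p-1}\,(1+o(1)).
\]
Since $\overline\beta_\ve\sim\beta_\ve$, the rate in the statement cannot hold.

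Renormalizing $\overline\beta_\ve$ does not rescue it. That changes the $\overline\beta_\ve$ fixed in \eqref{equa0021}, and for $m_k\geq2$ a scalar only absorbs the component of the shift along $\pmb t_0$.

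What your argument does prove, given $F(\pmb t_0)=0$, is $|t_{k,l}-t_{k,l,0}|\lesssim\beta_\ve^{p-1}$ together with $\|\overline\rho_\ve\|_{C^2(\overline\Omega)}\lesssim\beta_\ve^{p}$. The later uses, e.g.\ the bound on $\widehat\tau^{**}_{k,l,\ve}$, only need $t_{k,l}\to t_{k,l,0}$ and exact vanishing of the multipliers. That is the statement to aim for.
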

\begin{proof}
Let us first consider the following equation
\begin{eqnarray}\label{equa0016}
\left\{\aligned
&-\Delta \overline{\rho}_\ve-\lambda \overline{\rho}_\ve=f\left(\overline{\mathcal{E}}_{\ve}^*+\overline{\rho}_\ve\right)+\ve \overline{\mathcal{E}}_{\ve}^*+\sum_{l=1}^{m_k}\overline{c}_{k,l,\ve}e_{k,l},\quad\text{in }\Omega,\\
&\langle\overline{\rho}_\ve, e_{k,l}\rangle=0,\quad 1\leq l\leq m_k,\\
&\overline{\rho}_\ve=0,\quad\text{on }\partial\Omega,
\endaligned\right.
\end{eqnarray}
where $\overline{c}_{k,l,\ve}\in\mathbb{R}$ are a part of unknowns serving as the Lagrange multipliers.  Since $\lambda=\lambda_k+\ve$ with $\ve\to0^-$ and $\langle\overline{\rho}_\ve, e_{k,l}\rangle=0$ for all $1\leq l\leq m_k$, the existence and uniqueness of $\overline{\rho}_\ve\in C^2(\Omega)$ and $\overline{c}_{k,l,\ve}\in\mathbb{R}$ satisfying \eqref{equa0016} is a direct conclusion of the application of the blow-up and fix-point arguments and the Fredholm alternative, where
\begin{eqnarray*}
\overline{c}_{k,l,\ve}=-\int_{\Omega}\left(f\left(\overline{\mathcal{E}}_{\ve}^*+\overline{\rho}_\ve\right)+\ve \overline{\mathcal{E}}_{\ve}^*\right)e_{k,l}dx
\end{eqnarray*}
for all $1\leq l\leq m_k$.  Moreover, by the standard elliptic regularity theory and the choice of $\overline{\beta}_\ve$, $\|\overline{\rho}_\ve\|_{C^2(\overline{\Omega})}\lesssim\beta_\ve^{p}$.  Since the matrix
\begin{eqnarray*}
\left(\int_{\Omega}\left(f'\left(\widetilde{\mathcal{E}}_{\ve,0}^*\right)-1\right)e_{k,l}e_{k,t}dx\right)_{m_k\times m_k}
\end{eqnarray*}
is regular, it is standard to show, by the fix-point argument, that the system
\begin{eqnarray*}
0=\int_{\Omega}\left(f\left(\overline{\mathcal{E}}_{\ve}^*+\overline{\rho}_\ve\right)+\ve \overline{\mathcal{E}}_{\ve}^*\right)e_{k,l}dx,\quad 1\leq l\leq m_k,
\end{eqnarray*}
has a solution $\pmb{t}_{k}=(t_{k,1}, t_{k,2}, \cdots, t_{k,m_k})$ satisfying $\left|\pmb{t}_{k}-\pmb{t}_{k,0}\right|=\mathcal{O}\left(\beta_\ve^{2p}\right)$, where 
\begin{eqnarray*}
\pmb{t}_{k,0}=(t_{k,1,0}, t_{k,2,0}, \cdots, t_{k,m_k,0}).
\end{eqnarray*}
It completes the proof.
\end{proof}

Recall that by \eqref{v} and \eqref{orthogonal-decomp}, we have
\begin{eqnarray*}
u_\ve=\mathcal{W}_{\mu_\ve,\xi_\ve}+\sum_{l=1}^{m_k}\tau_{k,l,\ve}'e_{k,l}+\sum_{j=0}^{N}\alpha_{j,\ve}Z_{\mu_\ve,\xi_\ve}^j+v_\ve^*,
\end{eqnarray*}
where $v_\ve^*$ satisfies \eqref{v*-equ}.  We write
\begin{eqnarray}\label{new-v**}
v_\ve^{**}=v_\ve^*-\overline{\rho}_\ve-\sum_{l=1}^{m_k}\overline{\tau}_{k,l,\ve}'e_{k,l}-\sum_{j=0}^{N}\overline{\alpha}_{j,\ve}Z_{\mu_\ve,\xi_\ve}^j,
\end{eqnarray}
such that by Proposition~\ref{new-decomp}, $v_\ve^{**}$ satisfies
\begin{eqnarray}\label{v**-equ}
\left\{\aligned
&-\Delta v_{\ve}^{**}-\lambda v_{\ve}^{**}-f'(\mathcal{W}_{\mu_\ve,\xi_\ve}^{**})v_{\ve}^{**}=\overline{\mathcal{R}}_0(v_{\ve}^{**})+\mathcal{R}_1+\overline{\mathcal{R}}_2,\quad\text{in }\Omega,\\
&\langle v_\ve^{**}, \mathcal{Z}_{\mu_\ve, \xi_\ve}^i\rangle=\langle v_\ve^{**}, e_{k,l}\rangle=0,\quad 0\leq i\leq N,\quad1\leq l\leq m_k,\\
&v_\ve^{**}=0,\quad\text{on }\partial\Omega,
\endaligned\right.
\end{eqnarray}
where $\mathcal{R}_1$ is given in \eqref{R},
\begin{eqnarray}\label{Rv**}
\overline{\mathcal{R}}_0(v_{\ve}^{**})&=&f\left(\mathcal{W}_{\mu_\ve,\xi_\ve}^{**}+\mathcal{E}_{\ve}^{**}+v_{\ve}^{**}\right)-f\left(\mathcal{W}_{\mu_\ve,\xi_\ve}^{**}+\mathcal{E}_{\ve}^{**}\right)-f'\left(\mathcal{W}_{\mu_\ve,\xi_\ve}^{**}+\mathcal{E}_{\ve}^{**}\right)v_{\ve}^{**}\notag\\
&&+\left(f'\left(\mathcal{W}_{\mu_\ve,\xi_\ve}^{**}+\mathcal{E}_{\ve}^{**}\right)-f'\left(\mathcal{W}_{\mu_\ve,\xi_\ve}^{**}\right)\right)v_{\ve}^{**}
\end{eqnarray}
and
\begin{eqnarray}\label{R2*}
\overline{\mathcal{R}}_2&=&f\left(\mathcal{W}_{\mu_\ve,\xi_\ve}^{**}+\mathcal{E}_{\ve}^{**}\right)-f\left(\mathcal{W}_{\mu_\ve,\xi_\ve}\right)-f\left(\mathcal{E}_{\ve}^{**}+\widehat{\mathcal{E}}_{\ve}^{**}\right)\notag\\
&&-\ve\widehat{\mathcal{E}}_{\ve}^{**}+\lambda\mathcal{Z}_\ve^*-\sum_{j=0}^{N}\overline{\alpha}^{*}_{j,\ve}f'\left(U_{\mu_\ve,\xi_\ve}\right)\Phi_{\mu_\ve,\xi_\ve}^j
\end{eqnarray}
with $\mathcal{E}_{\ve}^{**}=\sum_{l=1}^{m_k}\overline{\tau}^{*}_{k,l,\ve}e_{k,l}+\overline{\rho}_\ve$, $\widehat{\mathcal{E}}_{\ve}^{**}=\sum_{l=1}^{m_k}\widehat{\tau}^{**}_{k,l,\ve}e_{k,l}$,
\begin{eqnarray}\label{new**}
\mathcal{Z}^*_\ve=\sum_{j=0}^{N}\overline{\alpha}^{*}_{j,\ve}Z_{\mu_\ve,\xi_\ve}^j,\quad\mathcal{W}_{\mu_\ve,\xi_\ve}^{**}=\mathcal{W}_{\mu_\ve,\xi_\ve}+\mathcal{Z}^*_\ve,\quad\overline{\mathcal{E}}_{\ve}^{**}=\sum_{l=1}^{m_k}\overline{c}_{k,l,\ve}e_{k,l},
\end{eqnarray}
and 
\begin{eqnarray}\label{pare}
\left\{\aligned
&\overline{\tau}^{*}_{k,l,\ve}=\tau_{k,l,\ve}'+\overline{\tau}_{k,l,\ve}',\\
&\widehat{\tau}^{**}_{k,l,\ve}=\overline{\beta}_\ve t_{k,l}-\overline{\tau}^*_{k,l,\ve},\\
&\overline{\alpha}^{*}_{j,\ve}=\alpha_{j,\ve}+\overline{\alpha}_{j,\ve}.
\endaligned\right.
\end{eqnarray}
As \eqref{system}, by the orthogonal conditions of $v_\ve^{*}$ and $v_\ve^{**}$ in \eqref{v*-equ} and \eqref{v**-equ}, respectively, \eqref{equa0016} and \eqref{new-v**}, we have
\begin{eqnarray}\label{system*}
\left\{\aligned
&0=\overline{\tau}_{k,l,\ve}'+\sum_{i=0}^{N}\overline{\alpha}_{i,\ve}\langle Z_{\mu_\ve,\xi_\ve}^i, e_{k,l}\rangle,\quad 1\leq l\leq m_k,\\
&0=\langle \overline{\rho}_\ve, Z_{\mu_\ve,\xi_\ve}^j\rangle+\sum_{l=1}^{m_k}\overline{\tau}_{k,l,\ve}'\langle e_{k,l}, Z_{\mu_\ve,\xi_\ve}^j\rangle+\sum_{i=0}^{N}\overline{\alpha}_{i,\ve}\langle Z_{\mu_\ve,\xi_\ve}^i, Z_{\mu_\ve,\xi_\ve}^j\rangle,\quad 0\leq j\leq N,
\endaligned\right.
\end{eqnarray}
which, together with Lemma~\ref{compute-ez}, \eqref{zi-zj} and Proposition~\ref{new-decomp}, implies that 
\begin{eqnarray}\label{new-esti*}
\sum_{i=0}^{N}\left|\overline{\alpha}_{i,\ve}\right|=\left\{\aligned
&\mathcal{O}\left(\mu_\ve^{2}\beta_\ve^{p}\right),\quad N=4,\\
&\mathcal{O}\left(\mu_\ve^{3}|\log\mu_\ve|\beta_\ve^{p}\right),\quad N=5,\\
&\mathcal{O}\left(\mu_\ve^{3}\beta_\ve^{p}\right),\quad N\geq6.
\endaligned\right.
\quad\text{and}\quad
\sum_{l=1}^{m_k}\left|\overline{\tau}_{k,l,\ve}'\right|=\left\{\aligned
&\mathcal{O}\left(\mu_\ve^{3}\beta_\ve^{p}\right),\quad N=4,\\
&\mathcal{O}\left(\mu_\ve^{\frac{9}{2}}|\log\mu_\ve|\beta_\ve^{p}\right),\quad N=5,\\
&\mathcal{O}\left(\mu_\ve^{\frac{N+4}{2}}\beta_\ve^{p}\right),\quad N\geq6.
\endaligned\right.
\end{eqnarray}
It follows from \eqref{solution-sys}, \eqref{pare} and \eqref{new-esti*} that
\begin{eqnarray}\label{new-esti**}
\sum_{l=1}^{m_k}\left|\overline{\tau}^{*}_{k,l,\ve}\right|=\mathcal{O}\left(\beta_\ve\right)\quad\text{and}\quad\sum_{i=0}^{N}\left|\overline{\alpha}^{*}_{i,\ve}\right|=\left\{\aligned
&\mathcal{O}\left(\mu_\ve^{\frac{N-2}{2}}\beta_\ve\right),\quad 4\leq N\leq8,\\
&\mathcal{O}\left(\mu_\ve^{3}\beta_\ve^{p}+\mu_\ve^{\frac{N-2}{2}}\beta_\ve\right),\quad N\geq9.
\endaligned\right.
\end{eqnarray}
Moreover, by \eqref{solution-sys}, \eqref{beta-new}, \eqref{equa0021} and \eqref{new-esti*},
\begin{eqnarray*}\label{tau**}
\sum_{l=1}^{m_k}\left|\widehat{\tau}^{**}_{k,l,\ve}\right|=o(\beta_\ve)+\left\{\aligned
&\mathcal{O}\left(\kappa_\ve^{N-2}\right),\quad N=4,5,\\
&\mathcal{O}\left(\kappa_\ve^4|\log\kappa_\ve|^{\frac{2}{3}}+\mu_\ve^2\left|\log\mu_\ve\right|^{\frac{2}{3}}\right),\quad N=6,\\
&\mathcal{O}\left(\kappa_\ve^{\frac{N+2}{2}}+\mu_\ve^2\right),\quad N\geq7.
\endaligned\right.
\end{eqnarray*}
\begin{lemma}\label{esti-v**}
Let $N\geq4$ and $v_\ve^{**}$ is the solution of \eqref{v**-equ}.  Then
\begin{eqnarray*}
\|v_\ve^{**}\|_{H^1_0(\Omega)}\lesssim\sum_{l=1}^{m_k}\ve\left|\widehat{\tau}^{**}_{k,l,\ve}\right|+\left\{\aligned
&\kappa_\ve^{N-2}+\mu_\ve^{\frac{N-2}{2}},\quad N=4,5,\\
&\kappa_\ve^4|\log\kappa_\ve|^{\frac{2}{3}}+\mu_\ve^2\left|\log\mu_\ve\right|^{\frac{2}{3}},\quad N=6,\\
&\kappa_\ve^{\frac{N+2}{2}}+\mu_\ve^2,\quad N\geq7
\endaligned\right.
\end{eqnarray*}
and
\begin{eqnarray*}
\|v_\ve^{**}\|_{L^\infty (\Omega)} \lesssim  \beta_\ve+\mu_\ve^{-\frac{N-2}{2}}\kappa_\ve^{N-2}+\mu_\ve^{\frac{6-N}{2}}.
\end{eqnarray*}
\end{lemma}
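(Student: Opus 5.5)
The plan follows the proof of Lemma~\ref{esti-v*} almost verbatim, with \eqref{v**-equ} replacing \eqref{v*-equ}. Since $v_\ve^{**}\in E_\ve^\perp$ by the orthogonality conditions in \eqref{v**-equ}, Lemma~\ref{Lem2.1} applies. The only change is that $\mathcal{W}_{\mu_\ve,\xi_\ve}$ in the linear operator is replaced by $\mathcal{W}^{**}_{\mu_\ve,\xi_\ve}$. By \eqref{new-esti**}, $\mathcal{W}^{**}_{\mu_\ve,\xi_\ve}\sim(1+o(1))U_{\mu_\ve,\xi_\ve}$ in the same sense as \eqref{esti-W*}, so the blow-up argument behind Lemma~\ref{Lem2.1} goes through unchanged. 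This yields
\begin{eqnarray*}
\|v_\ve^{**}\|_{H^1_0(\Omega)}\lesssim\left\|\overline{\mathcal{R}}_0(v_{\ve}^{**})\right\|_{L^{\frac{p+1}{p}}(\Omega)}+\left\|\mathcal{R}_1\right\|_{L^{\frac{p+1}{p}}(\Omega)}+\left\|\overline{\mathcal{R}}_2\right\|_{L^{\frac{p+1}{p}}(\Omega)}.
\end{eqnarray*}
The term $\mathcal{R}_1$ is already controlled by \eqref{R1-esti-0}, which supplies exactly the dimension-dependent terms in the claimed bound. For $\overline{\mathcal{R}}_0$ we repeat \eqref{R0-1} with $\mathcal{E}^{**}_\ve$ in place of $\mathcal{E}^*_\ve$. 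By \eqref{new-esti**} and $\|\overline{\rho}_\ve\|_{C^2}\lesssim\beta_\ve^p$ we have $|\mathcal{E}_\ve^{**}|\lesssim\beta_\ve$, and the same splitting over the sets $\mathcal{A}_2,\mathcal{A}_2'$ gives $\|\overline{\mathcal{R}}_0(v_\ve^{**})\|_{L^{\frac{p+1}{p}}}=o(\|v_\ve^{**}\|_{H_0^1})$, which is absorbed into the left-hand side.

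The main work is estimating $\overline{\mathcal{R}}_2$, and the point of the construction is that $\overline{\mathcal{R}}_2$ no longer contains the terms $\beta_\ve^p$ and $\ve\beta_\ve$ that appear in Lemma~\ref{esti-v*}. I would split $\Omega$ as in \eqref{fw}, using the set where $\mathcal{W}_{\mu_\ve,\xi_\ve}\lesssim|\mathcal{E}^{**}_\ve+\mathcal{Z}^*_\ve|$ and its complement, and write
\[
f(\mathcal{W}^{**}+\mathcal{E}^{**})-f(\mathcal{W})=f(\mathcal{E}^{**})+f'(U)\mathcal{Z}^*+\mathcal{O}\left(\beta_\ve U^{p-1}+\beta_\ve^{p-1}U+\mu_\ve^{\frac{N-2}{2}}\beta_\ve U\right),
\]
exactly as in \eqref{R2-exp}. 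The term $f'(U)\mathcal{Z}^*$ then cancels the sum $\sum_j\overline{\alpha}^*_{j,\ve}f'(U)\Phi^j$ up to $f'(U)(\mathcal{Z}^*-\sum_j\overline{\alpha}^*_{j,\ve}\Phi^j)$, which is small by \eqref{exp-ker} and \eqref{new-esti**}.

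The remaining piece is $f(\mathcal{E}^{**})-f(\mathcal{E}^{**}+\widehat{\mathcal{E}}^{**})-\ve\widehat{\mathcal{E}}^{**}$. By the mean value theorem it is bounded by $(\beta_\ve^{p-1}+|\ve|)|\widehat{\mathcal{E}}^{**}|$. Since $\beta_\ve^{p-1}\sim|\ve|$ by \eqref{equa0021}, this is $\lesssim\sum_l|\ve||\widehat\tau^{**}_{k,l,\ve}|$, which is precisely the first term in the claim. The key identity making this work is Proposition~\ref{new-decomp}: $\overline{\rho}_\ve$ solves the nonlinear equation with source $f(\overline{\mathcal{E}}^*_\ve+\overline{\rho}_\ve)+\ve\overline{\mathcal{E}}^*_\ve$, with $\overline{\mathcal{E}}^*_\ve+\overline{\rho}_\ve=\mathcal{E}^{**}_\ve+\widehat{\mathcal{E}}^{**}_\ve$. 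Consequently the pure eigenfunction nonlinearity, together with $\ve$ times the eigen-part, is removed from the equation, and only the difference between the true coefficients and the constructed ones survives.

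The hardest step is making sure that no stray $\beta_\ve^p$ or $\ve\beta_\ve$ terms survive this bookkeeping. This requires tracking the $\lambda\mathcal{Z}^*_\ve$ contribution and the error terms $\beta_\ve U^{p-1}+\beta_\ve^{p-1}U$. Applying Lemma~\ref{App-esti} to the latter gives $\beta_\ve\mu_\ve^{\frac{N-2}{2}}$-type bounds, which are absorbed in the $\mu_\ve$-terms of the claim, with the usual logarithmic corrections when $N=6$. If residual $\beta_\ve^{2p}$-type terms remain, I would use $|t_{k,l}-t_{k,l,0}|=\mathcal{O}(\beta_\ve^{2p})$ to absorb them. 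Finally, the $L^\infty$ bound follows as in Lemma~\ref{esti-v*}. By \eqref{new-v**}, $v^{**}_\ve$ equals $v^*_\ve$ minus terms bounded pointwise by $\beta_\ve$: the term $\overline{\rho}_\ve$ is of order $\beta_\ve^p$, and the sums involving $\overline{\tau}'$ and $\overline{\alpha}$ are bounded using \eqref{point-z1} and \eqref{new-esti*}. Lemma~\ref{esti-v*} then gives $\|v_\ve^{**}\|_{L^\infty}\lesssim\beta_\ve+\mu_\ve^{-\frac{N-2}{2}}\kappa_\ve^{N-2}+\mu_\ve^{\frac{6-N}{2}}$.
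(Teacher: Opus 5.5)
Your proposal is correct and follows essentially the same route as the paper. Both arguments apply Lemma~\ref{Lem2.1} to \eqref{v**-equ}, show $\|\overline{\mathcal{R}}_0(v_\ve^{**})\|_{L^{\frac{p+1}{p}}(\Omega)}=o(\|v_\ve^{**}\|_{H_0^1(\Omega)})$, bound $\mathcal{R}_1$ by \eqref{R1-esti-0}, bound $\overline{\mathcal{R}}_2$ pointwise with the eigenfunction nonlinearity cancelled by $\overline{\rho}_\ve$ so that only $\mathcal{O}(\beta_\ve^{p-1}|\widehat{\mathcal{E}}^{**}_\ve|+|\ve||\widehat{\mathcal{E}}^{**}_\ve|)$ plus bubble-localized errors survive, and read the $L^\infty$ bound off Lemma~\ref{esti-v*} and \eqref{new-v**}; you additionally make explicit the step $\beta_\ve^{p-1}\sim|\ve|$ from \eqref{equa0021}, which the paper uses only implicitly when turning \eqref{equa0019} into the $\ve|\widehat{\tau}^{**}_{k,l,\ve}|$ term of the statement.
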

\begin{proof}
The proof is similar to that of Lemma~\ref{esti-v*}, so we only sketch it and point out the differences.  By Proposition~\ref{new-decomp}, \eqref{new-esti**} and \eqref{new**}, we have
\begin{eqnarray}\label{equa0017}
\left\|\mathcal{E}_{\ve}^{**}\right\|_{L^{\infty}(\Omega)}\lesssim\beta_\ve\quad\text{and}\quad 0<\mathcal{W}_{\mu_\ve,\xi_\ve}^{**}\sim U_{\mu_\ve,\xi_\ve}.
\end{eqnarray}
Thus, similar to \eqref{R0-1}, by \eqref{Rv**} and \eqref{equa0017}, we still have
\begin{eqnarray}\label{R0-1**}
\left|\overline{\mathcal{R}}_0(v_{\ve}^{**})\right|
\lesssim\beta_\ve U_{\mu_\ve,\xi_\ve}^{p-2}\left|v_{\ve}^{**}\right|{\bf 1}_{(\Omega\backslash\mathcal{A}_4)\cap (\Omega\backslash\mathcal{A}_4')}+U_{\mu_\ve,\xi_\ve}^{p-2}\left|v_{\ve}^{**}\right|^2{\bf 1}_{\Omega\backslash\mathcal{A}_4'}+\beta_\ve^{p-1}\left|v_{\ve}^{**}\right|{\bf 1}_{\mathcal{A}_4'}+\left|v_{\ve}^{**}\right|^{p}{\bf 1}_{\mathcal{A}_4}
\end{eqnarray}
where
\begin{eqnarray*}\label{A4}
\left\{\aligned
&\mathcal{A}_4=\left\{x \in \Omega\mid \left|\mathcal{W}_{\mu_\ve,\xi_\ve}^{**}(x)+\mathcal{E}_\ve^{**}(x)\right|\lesssim\left|v_{\ve}^{**}(x)\right|\right\},\\
&\mathcal{A}_4'=\left\{x \in \Omega\mid \left|\mathcal{W}_{\mu_\ve,\xi_\ve}^{**}(x)\right|\lesssim\left|\mathcal{E}_\ve^{**}(x)\right|\right\}.
\endaligned\right.
\end{eqnarray*}
It follows from \eqref{R0-1**} that
\begin{eqnarray}\label{R0**}
\left\|\mathcal{R}_0(v_{\ve}^{**})\right\|_{L^{\frac{p+1}{p}}(\Omega)}=o\left(\left\|v_\ve^{**}\right\|_{H^1_0(\Omega)}\right).
\end{eqnarray}
By \eqref{solution-sys}, Proposition~\ref{new-decomp}, \eqref{new-esti*}, 
\begin{eqnarray}\label{equa0018}
\left\|\ve\widehat{\mathcal{E}}_{\ve}^{**}\right\|_{L^{\infty}(\Omega)}=\mathcal{O}\left(\sum_{l=1}^{m_k}\ve\left|\widehat{\tau}^{**}_{k,l,\ve}\right|\right)
\end{eqnarray}
and
\begin{eqnarray}\label{equa0019}
\left|f\left(\mathcal{E}_{\ve}^{**}+\widehat{\mathcal{E}}_{\ve}^{**}\right)-f\left(\mathcal{E}_{\ve}^{**}\right)\right|=\mathcal{O}\left(\sum_{l=1}^{m_k}\beta_\ve^{p-1}\left|\widehat{\tau}^{**}_{k,l,\ve}\right|\right).
\end{eqnarray}
Thus, similar to \eqref{R2-exp}, by \eqref{R2*}, \eqref{new**}, \eqref{new-esti**}, \eqref{equa0018} and \eqref{equa0019}, 
\begin{eqnarray}\label{R2**-exp}
\left|\overline{\mathcal{R}}_2\right|=\mathcal{O}\left(\beta_\ve U_{\mu_\ve,\xi_\ve}^{p-1}+\beta_\ve^{p-1} U_{\mu_\ve,\xi_\ve}+\left(\left(\mu_\ve^{\frac{N-2}{2}}\beta_\ve\right)\vee\left(\mu_\ve^{3}\beta_\ve^p\right)\right) U_{\mu_\ve,\xi_\ve}+\ve\overline{\tau}^{**}_{k,l,\ve}\right),
\end{eqnarray}
which implies that
\begin{eqnarray}\label{R2**}
\left\|\overline{\mathcal{R}}_2\right\|_{L^{\frac{p+1}{p}}(\Omega)}\lesssim\sum_{l=1}^{m_k}\ve\overline{\tau}^{**}_{k,l,\ve}+\left\{\aligned
&\beta_\ve\mu_\ve^{\frac{N-2}{2}}+\beta_\ve^{p-1}\mu_\ve^{\frac{N-2}{2}}+\mu_\ve^{N-2}\beta_\ve,\quad N=4,5,\\
&\beta_\ve\mu_\ve^2|\log\mu_\ve|^{\frac{2}{3}}+\beta_\ve\mu_\ve^2\left|\log\mu_\ve\right|^{\frac{2}{3}},\quad N=6,\\
&\beta_\ve\mu_\ve^2+\beta_\ve^{p-1}\mu_\ve^2,\quad N\geq7.
\endaligned\right.
\end{eqnarray}
Here, $a\vee b=\min\{a,b\}$.  Combining \eqref{R1-esti-0}, \eqref{R0**}, \eqref{R2**} and Lemma~\ref{Lem2.1}, we have
 \begin{eqnarray*}
\|v_\ve^{**}\|_{H^1_0(\Omega)}\lesssim\sum_{l=1}^{m_k}\ve\overline{\tau}^{**}_{k,l,\ve}+\left\{\aligned
&\kappa_\ve^{N-2}+\mu_\ve^{\frac{N-2}{2}},\quad N=4,5,\\
&\kappa_\ve^4|\log\kappa_\ve|^{\frac{2}{3}}+\mu_\ve^2\left|\log\mu_\ve\right|^{\frac{2}{3}},\quad N=6,\\
&\kappa_\ve^{\frac{N+2}{2}}+\mu_\ve^2,\quad N\geq7.
\endaligned\right.
\end{eqnarray*}
The $L^{\infty}$ estimate of $v_\ve^{**}$ follows from Lemma~\ref{esti-v*}, \eqref{new-v**} and \eqref{new-esti*}.
\end{proof}

With Lemma~\ref{esti-v**} in hands, we could refine Proposition~\ref{exp-orth} once more to exclude the cancelation in the leading order terms.
\begin{proposition}\label{refine-exp-orth}
Let $N\geq4$, then 
\begin{enumerate}
\item[$(1)$] we have
\begin{eqnarray*}\label{z0**-}
0&\geq&\left\{\aligned
&\lambda D_{N,1}\mu_\ve^2-\alpha_N\mu_\ve^{N-2}D_{N,3}\varphi(\xi_\ve)+D_{N,3}\mu_\ve^{\frac{N-2}{2}}\mathcal{E}_{\ve}^{**}(\xi_\ve)+o\left(\mu_\ve^2+\kappa_\ve^{N-2}\right),\quad N\geq5,\\
&\lambda D_{4,2}\mu_\ve^2\left|\log\kappa_\ve\right|-\alpha_4\mu_\ve^{2}D_{4,3}\varphi(\xi_\ve)+D_{N,3}\mu_\ve^{\frac{N-2}{2}}\mathcal{E}_{\ve}^{**}(\xi_\ve)+o\left(\kappa_\ve^{2}\right)+\mathcal{O}\left(\mu_\ve^2\right), \quad N=4,
\endaligned\right.\\
&&+\mathcal{O}\left(\left(\ve\beta_{1,\ve}\right)^2\right)
\end{eqnarray*}
and
\begin{eqnarray*}\label{z0**+}
0&\leq&\left\{\aligned
&\lambda D_{N,1}\mu_\ve^2-\alpha_N\mu_\ve^{N-2}D_{N,3}\varphi(\xi_\ve)+D_{N,3}\mu_\ve^{\frac{N-2}{2}}\mathcal{E}^{**}(\xi_\ve)+o\left(\mu_\ve^2+\kappa_\ve^{N-2}\right),\quad N\geq5,\\
&\lambda D_{4,2}\mu_\ve^2\left|\log\mu_\ve\right|-\alpha_4\mu_\ve^{2}D_{4,3}\varphi(\xi_\ve)+D_{N,3}\mu_\ve^{\frac{N-2}{2}}\mathcal{E}_{\ve}^{**}(\xi_\ve)+o\left(\kappa_\ve^{2}\right)+\mathcal{O}\left(\mu_\ve^2\right), \quad N=4,
\endaligned\right.\\
&&+\mathcal{O}\left(\left(\ve\beta_{1,\ve}\right)^2\right),
\end{eqnarray*}
where $\mathcal{E}_{\ve}^{**}=\sum_{l=1}^{m_k}\overline{\tau}^{*}_{k,l,\ve}e_{k,l}$.
\item[$(2)$] For all $1\leq j\leq N$, we have
\begin{eqnarray*}\label{zj**}
0&=&D_{N,4}\mu_\ve^{\frac{N}{2}}\frac{\partial \mathcal{E}_{\ve}^{**}(\xi_\ve)}{\partial \xi_{i,\ve}}-\alpha_ND_{N,4}\mu_\ve^{N-1}\frac{\partial \varphi(\xi_\ve)}{\partial \xi_{i,\ve}}+\mathcal{O}\left(\beta_\ve\mu_\ve^{\frac{N-2}{2}}\kappa_\ve^3\left|\log\kappa_\ve\right|+\mu_\ve^{2}\kappa_\ve^{N-3}\right)\\
&&+o\left(\kappa_\ve^{N-1}+\beta_\ve\mu_\ve^{\frac{N}{2}}\right)
+\left\{\aligned
&\mathcal{O}\left(\left(\ve\beta_{1,\ve}\right)^3+\kappa_\ve^5|\log d(\xi_\ve,\partial \Omega)|\right),\quad N=4,\\
&o\left(\left(\ve\beta_{1,\ve}\right)^2+\mu_\ve^3\right)+\mathcal{O}\left(\kappa_\ve^7\mu_\ve^{-1}\right),\quad N=5,\\
&\mathcal{O}\left(\left(\ve\beta_{1,\ve}\right)^2\right)+o\left(\mu_\ve^3\right),\quad N\geq6.
\endaligned\right.
\end{eqnarray*}
\item[$(3)$] For all $1\leq l\leq m_k$, we have
\begin{eqnarray*}\label{el**}
0&=&\left(1+\frac{\ve}{\lambda_k}\right)D_{N,6}\mu_\ve^{\frac{N-2}{2}}e_{k,l}(\xi_\ve)-\left(\int_{\Omega}f'\left(\mathcal{E}_{\ve}^{**}\right)\widehat{\mathcal{E}}_{\ve}^{**}e_{k,l}dx+\int_{\Omega}\ve\widehat{\mathcal{E}}_{\ve}^{**}e_{k,l}dx\right)\notag\\
&&+\mathcal{O}\left(\mu_\ve^{\frac{N-2}{2}}\kappa_\ve^2|\log\mu_\ve|\right)+o\left(\ve\beta_{1,\ve}+\kappa_\ve^{N-2}\right)+\mathcal{O}\left(\beta_\ve^{p-1}\mu_\ve^{\frac{N-2}{2}}\right)\\
&&+\left\{\aligned
&o\left(\mu_\ve^{2}|\log\mu_\ve|\right),\quad N=4,\\
&o\left(\mu_\ve^{2}\right),\quad N=5,6,\\
&o\left(\mu_\ve^{2}\right)+\mathcal{O}\left(\kappa_\ve^{\frac{(N+2)p}{2}}\right),\quad N\geq7.
\endaligned\right.
\end{eqnarray*}
\end{enumerate}
Here, for the sake of simplicity, we denote
\begin{eqnarray*}
\beta_{1,\ve}=\max_{1\leq l\leq m_k}\left|\widehat{\tau}^{**}_{k,l,\ve}\right|.
\end{eqnarray*}
\end{proposition}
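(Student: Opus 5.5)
The plan is to repeat the proof of Proposition~\ref{exp-orth} almost verbatim, now for the equation \eqref{v**-equ} satisfied by $v_\ve^{**}$. We test \eqref{v**-equ} successively with $Z_{\mu_\ve,\xi_\ve}^0$, $Z_{\mu_\ve,\xi_\ve}^i$ ($1\leq i\leq N$) and $e_{k,l}$. The orthogonality conditions in \eqref{v**-equ} make $\langle v_\ve^{**},Z^i\rangle$ and $\langle v_\ve^{**},e_{k,l}\rangle$ vanish. Each identity then splits into the four pieces $\int\mathcal{R}_1(\cdot)$, $\int\overline{\mathcal{R}}_2(\cdot)$, $\int\overline{\mathcal{R}}_0(v_\ve^{**})(\cdot)$ and the linear terms $\lambda\int v_\ve^{**}(\cdot)+\int f'(\mathcal{W}^{**}_{\mu_\ve,\xi_\ve})v_\ve^{**}(\cdot)$.

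The pieces $\int\mathcal{R}_1Z^0$, $\int\mathcal{R}_1Z^i$ and $\int\mathcal{R}_1e_{k,l}$ are literally those already computed in \eqref{R10+}--\eqref{R10-2}, \eqref{R1-i} and \eqref{R1-e}, so they produce the $\varphi(\xi_\ve)$, $\nabla\varphi$ and $D_{N,6}\mu_\ve^{\frac{N-2}{2}}e_{k,l}(\xi_\ve)$ terms unchanged. The remainder pieces are bounded exactly as in \eqref{R0-0}--\eqref{IV-10}, \eqref{R0-i-5}--\eqref{IV-1i}, \eqref{R0-e}, \eqref{III2-e}. For this we use Lemma~\ref{esti-v**} in place of Lemma~\ref{esti-v*}, with \eqref{equa0017} replacing \eqref{esti-W*}. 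The effect is that every occurrence of $\ve\beta_\ve+\beta_\ve^p$ coming from $\|v^*_\ve\|$ is replaced by $\ve\beta_{1,\ve}$. This explains the error terms $\mathcal{O}((\ve\beta_{1,\ve})^2)$ and $(\ve\beta_{1,\ve})^3$ in the statement: the $\beta_\ve^{2p}$ contributions have disappeared because $\overline{\rho}_\ve$ absorbed the $f(\mathcal{E})$ part.

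The main work is in $\overline{\mathcal{R}}_2$. We write $\mathcal{W}^{**}+\mathcal{E}^{**}$ and expand $f(\mathcal{W}^{**}_{\mu_\ve,\xi_\ve}+\mathcal{E}^{**}_\ve)-f(\mathcal{W}_{\mu_\ve,\xi_\ve})$ as in \eqref{fw}, using a set $\mathcal{A}_3$ built with $\mathcal{E}^{**}+\mathcal{Z}^*$. This gives $f'(\mathcal{W})(\mathcal{E}^{**}+\mathcal{Z}^*_\ve)$ near the bubble and $f(\mathcal{E}^{**})$ away from it.

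\emph{Tests with $Z^j$.} The combination $\int f'(U)(\mathcal{E}^{**}+\mathcal{Z}^*)Z^j-\sum_j\overline{\alpha}^*_j\int f'(U)\Phi^jZ$ cancels by \eqref{system}, \eqref{system*}, Lemma~\ref{compute-ez} and \eqref{zi-zj}, exactly as before. The extra term $\langle\overline{\rho}_\ve,Z^j\rangle$ in \eqref{system*} is $\mathcal{O}(\mu_\ve^{\frac{N-2}{2}}\beta_\ve^p)$, smaller than the displayed errors. What survives is $D_{N,3}\mu_\ve^{\frac{N-2}{2}}\mathcal{E}^{**}_\ve(\xi_\ve)$, respectively $D_{N,4}\mu_\ve^{N/2}\partial_i\mathcal{E}^{**}_\ve(\xi_\ve)$. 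The coefficient is now $1$ rather than $1+\ve/\lambda_k$, because the $\ve\mathcal{E}$-term has been moved into the equation for $\overline{\rho}_\ve$. Only the $\ve\widehat{\mathcal{E}}^{**}_\ve$ remainder is left, and it is of size $\ve\beta_{1,\ve}\mu_\ve^{\frac{N-2}{2}}$.

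\emph{Test with $e_{k,l}$.} Here we use the $\overline{\rho}_\ve$-equation of Proposition~\ref{new-decomp} together with the vanishing of the multipliers, i.e. $\int(f(\overline{\mathcal{E}}^*_\ve+\overline{\rho}_\ve)+\ve\overline{\mathcal{E}}^*_\ve)e_{k,l}=0$. With this, $\int[f(\mathcal{E}^{**}+\widehat{\mathcal{E}}^{**})+\ve\widehat{\mathcal{E}}^{**}]e_{k,l}$ reduces, after a first order Taylor expansion in $\widehat{\mathcal{E}}^{**}$, to $-\int(f'(\mathcal{E}^{**}_\ve)\widehat{\mathcal{E}}^{**}_\ve+\ve\widehat{\mathcal{E}}^{**}_\ve)e_{k,l}$ up to $o(\ve\beta_{1,\ve})$. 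The mixed terms $f'(\mathcal{W})\mathcal{E}$ and $\mathcal{W}^{p-1}$ integrated against $e_{k,l}$ give the $\mathcal{O}(\beta_\ve^{p-1}\mu_\ve^{\frac{N-2}{2}})$ error, via \eqref{R2**-exp} and Lemma~\ref{App-esti}.

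The main obstacle is this last bookkeeping. The sign conventions in \eqref{pare} must be checked so that the quantity $\overline{\beta}_\ve t_{k,l}=\overline{\tau}^*+\widehat{\tau}^{**}$ appearing inside $f$ is exactly the argument solving the bifurcation system of Proposition~\ref{new-decomp}, with the $\mathcal{O}(\beta_\ve^{2p})$ discrepancy in $t_{k,l}$ being negligible. We also have to confirm that the Taylor remainder $f''\cdot|\widehat{\mathcal{E}}^{**}|^2$ is $o(\ve\beta_{1,\ve})$. This uses $|\widehat{\tau}^{**}|=o(\beta_\ve)$ and $\beta_\ve^{p-1}\sim|\ve|$ from \eqref{equa0021}.
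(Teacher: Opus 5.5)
Your route is the paper's: retest \eqref{v**-equ} against $Z^0_{\mu_\ve,\xi_\ve}$, $Z^i_{\mu_\ve,\xi_\ve}$ and $e_{k,l}$, reuse the $\mathcal{R}_1$ integrals verbatim, use Lemma~\ref{esti-v**} in place of Lemma~\ref{esti-v*} (which is why $\beta_\ve^{2p}$ disappears), and treat $\overline{\mathcal{R}}_2$ through \eqref{system} and \eqref{system*}. Your explanation of why $1+\frac{\ve}{\lambda_k}$ becomes $1$ in (1)--(2) is also correct. The step that fails as written is treating $\langle\overline{\rho}_\ve,Z^j_{\mu_\ve,\xi_\ve}\rangle$ as a remainder. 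Adding \eqref{system} and \eqref{system*}, and using \eqref{projkernel}, gives
$-\sum_i\overline{\alpha}^*_{i,\ve}\int_\Omega f'(U_{\mu_\ve,\xi_\ve})\Phi^i_{\mu_\ve,\xi_\ve}Z^j_{\mu_\ve,\xi_\ve}dx=\langle\sum_l\overline{\tau}^*_{k,l,\ve}e_{k,l}+\overline{\rho}_\ve,Z^j_{\mu_\ve,\xi_\ve}\rangle$.
Meanwhile $\int_\Omega f'(U_{\mu_\ve,\xi_\ve})(\mathcal{E}^{**}_\ve+\mathcal{Z}^*_\ve)Z^j_{\mu_\ve,\xi_\ve}dx$ nearly cancels. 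So $\overline{\rho}_\ve$ sits inside the main term, not in the error.

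For $j\geq1$ your bound $\mathcal{O}(\mu_\ve^{\frac{N-2}{2}}\beta_\ve^p)$ is too crude. It is $o(\beta_\ve\mu_\ve^{\frac N2})$ only if $\beta_\ve^{p-1}=o(\mu_\ve)$, whereas $(\beta^*_\ve)^{p-1}\sim|\ve|$ by \eqref{equa0021}. So for $N=4,5$ it is not covered by the errors in (2). You need the odd-symmetry expansion of Lemma~\ref{compute-ez} together with $\|\overline{\rho}_\ve\|_{C^2(\overline{\Omega})}\lesssim\beta_\ve^p$, which gives $D_{N,4}\mu_\ve^{\frac N2}\partial_j\overline{\rho}_\ve(\xi_\ve)+\cdots=\mathcal{O}(\mu_\ve^{\frac N2}\beta_\ve^p)$. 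The same applies to your size $\ve\beta_{1,\ve}\mu_\ve^{\frac{N-2}{2}}$ for the $\ve\widehat{\mathcal{E}}^{**}_\ve$-term, which for $j\geq1$ must be $\mathcal{O}(\ve\beta_{1,\ve}\mu_\ve^{\frac N2})$.

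For $j=0$ the term is $D_{N,3}\mu_\ve^{\frac{N-2}{2}}\overline{\rho}_\ve(\xi_\ve)$, of size $\mu_\ve^{\frac{N-2}{2}}\beta_\ve^p$. This is $o(\mu_\ve^2)$ for $N\geq6$, but for $N=4,5$ it is not in general dominated by the listed errors (for $N=4$ you would need $\beta_\ve^3\lesssim\mu_\ve$). The repair is to keep it in the main term, i.e.\ prove (1) with $\mathcal{E}^{**}_\ve=\sum_l\overline{\tau}^*_{k,l,\ve}e_{k,l}+\overline{\rho}_\ve$ as defined after \eqref{R2*}. In (3) the two readings of $\mathcal{E}^{**}_\ve$ differ only negligibly.

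Your account of the $e_{k,l}$-test is garbled as written. By the vanishing multipliers of Proposition~\ref{new-decomp}, $\int_\Omega(f(\mathcal{E}^{**}_\ve+\widehat{\mathcal{E}}^{**}_\ve)+\ve\widehat{\mathcal{E}}^{**}_\ve)e_{k,l}dx=-\ve\int_\Omega(\sum_{l'}\overline{\tau}^*_{k,l',\ve}e_{k,l'})e_{k,l}dx$, which is not the expression you assert. The multiplier identity is used when deriving \eqref{v**-equ}, so that $\overline{\mathcal{R}}_2$ carries no $\overline{c}_{k,l,\ve}$-term. What you then Taylor expand is $\int_\Omega(f(\mathcal{E}^{**}_\ve)-f(\mathcal{E}^{**}_\ve+\widehat{\mathcal{E}}^{**}_\ve)-\ve\widehat{\mathcal{E}}^{**}_\ve)e_{k,l}dx$. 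Here $f(\mathcal{E}^{**}_\ve)$ comes from $f(\mathcal{W}^{**}_{\mu_\ve,\xi_\ve}+\mathcal{E}^{**}_\ve)-f(\mathcal{W}_{\mu_\ve,\xi_\ve})$ where $\mathcal{E}^{**}_\ve$ dominates. The error $\mathcal{O}(\beta_\ve^{p-1}\mu_\ve^{\frac{N-2}{2}})$ comes from $f'(\mathcal{E}^{**}_\ve)\mathcal{W}^{**}_{\mu_\ve,\xi_\ve}$ in that same region.

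Also, $\widehat{\tau}^{**}_{k,l,\ve}$ is $o(\beta_\ve)$ only up to the $\kappa_\ve,\mu_\ve$-terms in the estimate preceding Lemma~\ref{esti-v**}. The Taylor remainder is still absorbed, but into $o(\ve\beta_{1,\ve}+\kappa_\ve^{N-2})$ and the dimension-dependent terms of (3), not into $o(\ve\beta_{1,\ve})$ alone.
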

\begin{proof}
The estimates are similar to that in the proof of Proposition~\ref{exp-orth}, so we only sketch it and point out the differences.  

\vskip0.12in

For the conclusion~$(1)$, we still test \eqref{v**-equ} with $Z_{\mu_\ve,\xi_\ve}^0$, then by the orthogonality of $v_\ve^{**}$, we have
\begin{eqnarray*}\label{equa0002**}
0&=&\int_{\Omega}\mathcal{R}_1Z_{\mu_\ve,\xi_\ve}^0dx+\int_{\Omega}\overline{\mathcal{R}}_2Z_{\mu_\ve,\xi_\ve}^0dx+\int_{\Omega}\overline{\mathcal{R}}_0(v_\ve^{**})Z_{\mu_\ve,\xi_\ve}^0dx\notag\\
&&+\lambda\int_{\Omega}v_\ve^{**}Z_{\mu_\ve,\xi_\ve}^0dx+\int_{\Omega}f'\left(\mathcal{W}_{\mu_\ve,\xi_\ve}^{**}\right)v_{\ve}^{**}Z_{\mu_\ve,\xi_\ve}^0dx.
\end{eqnarray*}
Then by \eqref{system}, \eqref{system*}, \eqref{new-esti**}, \eqref{R2**-exp} and similar estimates for $\int_{\Omega}\mathcal{R}_2Z_{\mu_\ve,\xi_\ve}^0dx$ in the proof of $(1)$ of Proposition~\ref{exp-orth}, we have
\begin{eqnarray*}\label{R2**-esti}
\int_{\Omega}\overline{\mathcal{R}}_2Z_{\mu_\ve,\xi_\ve}^0dx&=&\int_{\Omega}f'\left(\mathcal{W}_{\mu_\ve,\xi_\ve}\right)\left(\mathcal{E}_{\ve}^{**}+\mathcal{Z}_{\ve}^{*}\right)Z_{\mu_\ve,\xi_\ve}^0dx-\sum_{j=0}^{N}\overline{\alpha}^*_{j,\ve}\int_{\Omega}f'\left(U_{\mu_\ve,\xi_\ve}\right)\Phi_{\mu_\ve,\xi_\ve}^jZ_{\mu_\ve,\xi_\ve}^0dx\notag\\
&&+\int_{\Omega}\left(\lambda\mathcal{Z}_\ve^*-\ve\widehat{\mathcal{E}}_{\ve}^{**}\right)Z_{\mu_\ve,\xi_\ve}^0dx+\mathcal{O}\left(\ve\beta_{1,\ve}\mu_\ve^{\frac{N-2}{2}}\right)\notag\\
&&+\left\{\aligned
&o\left(\mu_\ve^2|\log\mu_\ve|\right),\quad N=4,\\
&o\left(\mu_\ve^2\right),\quad N\geq5,
\endaligned\right.\notag\\
&=&D_{N,3}\mu_\ve^{\frac{N-2}{2}}\widehat{\mathcal{E}}_{\ve}^{**}(\xi_\ve)+\mathcal{O}\left(\left(\ve\beta_{1,\ve}\right)^2\right)+o\left(\kappa_\ve^{N-2}\right)+\left\{\aligned
&o\left(\mu_\ve^2|\log\mu_\ve|\right),\quad N=4,\\
&o\left(\mu_\ve^2\right),\quad N\geq5,
\endaligned\right.
\end{eqnarray*}
where $\widehat{\mathcal{E}}_{\ve}^{**}=\sum_{l=1}^{m_k}\widehat{\tau}^{**}_{k,l,\ve}e_{k,l}$.
Since \eqref{equa0017} holds true, the rest estimates for $(1)$ is the same as that for $(1)$ of Proposition~\ref{exp-orth}, the only difference is to use the estimate on $\|v_\ve^{**}\|_{H^1_0(\Omega)}$ in Lemma~\ref{esti-v**} to replace the usage of $\|v_\ve^{*}\|_{H^1_0(\Omega)}$ in Lemma~\ref{esti-v*}.  Thus, we omit the details of the remaining estimates.

\vskip0.12in

For the conclusion~$(2)$, we still test \eqref{v**-equ} with $Z_{\mu_\ve,\xi_\ve}^i$, then by the orthogonality of $v_\ve^{**}$, we have
\begin{eqnarray*}\label{equa0002**}
0&=&\int_{\Omega}\mathcal{R}_1Z_{\mu_\ve,\xi_\ve}^idx+\int_{\Omega}\overline{\mathcal{R}}_2Z_{\mu_\ve,\xi_\ve}^idx+\int_{\Omega}\overline{\mathcal{R}}_0(v_\ve^{**})Z_{\mu_\ve,\xi_\ve}^idx\notag\\
&&+\lambda\int_{\Omega}v_\ve^{**}Z_{\mu_\ve,\xi_\ve}^idx+\int_{\Omega}f'\left(\mathcal{W}_{\mu_\ve,\xi_\ve}^{**}\right)v_{\ve}^{**}Z_{\mu_\ve,\xi_\ve}^idx.
\end{eqnarray*}
Again, by \eqref{system}, \eqref{system*}, \eqref{new-esti**}, \eqref{R2**-exp} and similar estimates for $\int_{\Omega}\mathcal{R}_2Z_{\mu_\ve,\xi_\ve}^0dx$ in the proof of $(1)$ of Proposition~\ref{exp-orth}, we have
\begin{eqnarray*}
\int_{\Omega}\overline{\mathcal{R}}_2Z_{\mu_\ve,\xi_\ve}^idx&=&\int_{\Omega}f'\left(\mathcal{W}_{\mu_\ve,\xi_\ve}\right)\left(\mathcal{E}_{\ve}^{**}+\mathcal{Z}^*_{\ve}\right)Z_{\mu_\ve,\xi_\ve}^idx-\sum_{j=0}^{N}\overline{\alpha}^*_{j,\ve}\int_{\Omega}f'\left(U_{\mu_\ve,\xi_\ve}\right)\Phi_{\mu_\ve,\xi_\ve}^jZ_{\mu_\ve,\xi_\ve}^idx\\
&&+\int_{\Omega}\left(\lambda\mathcal{Z}^*_\ve-\ve\widehat{\mathcal{E}}_{\ve}^{**}\right)Z_{\mu_\ve,\xi_\ve}^idx+\mathcal{O}\left(\ve\beta_{1,\ve}\mu_\ve^{\frac{N+2}{2}}\kappa_\ve^{-1}\right)\\
&&+\left\{\aligned
&o\left(\kappa_\ve^3\right),\quad N=4,\\
&\mathcal{O}\left(\beta_{\ve}^2\mu_\ve^2(\mu_\ve|\log\kappa_\ve|+\kappa_\ve)\right),\quad N=5,\\
&\mathcal{O}\left(\beta_{\ve}^2\mu_\ve^2\kappa_\ve\right),\quad N=6.
\endaligned\right.\\
&=&D_{N,4}\mu_\ve^{\frac{N}{2}}\frac{\partial \widehat{\mathcal{E}}_{\ve}^{**}(\xi_\ve)}{\partial \xi_{i,\ve}}+\mathcal{O}\left(\beta_\ve\mu_\ve^{\frac{N-2}{2}}\kappa_\ve^3\left|\log\kappa_\ve\right|+\beta_\ve\mu_\ve^{\frac{N+2}{2}}\right)\\
&&+\mathcal{O}\left(\ve\beta_{1,\ve}\mu_\ve^{\frac{N+2}{2}}\kappa_\ve^{-1}\right)+o\left(\kappa_\ve^{N-1}+\beta_\ve\mu_\ve^{\frac N2}\right)+\left\{\aligned
&\mathcal{O}\left(\beta_{\ve}^2\mu_\ve^3|\log\kappa_\ve|\right),\quad N=5,\\
&\mathcal{O}\left(\beta_{\ve}^2\mu_\ve^2\kappa_\ve\right),\quad N=6.
\endaligned\right.
\end{eqnarray*}
Again, since \eqref{equa0017} holds true, the rest estimates for $(1)$ is the same as that for $(2)$ of Proposition~\ref{exp-orth}, the only difference is to use the estimate on $\|v_\ve^{**}\|_{H^1_0(\Omega)}$ in Lemma~\ref{esti-v**} to replace the usage of $\|v_\ve^{*}\|_{H^1_0(\Omega)}$ in Lemma~\ref{esti-v*}.  Thus, we omit the details of the remaining estimates.

\vskip0.12in

For the conclusion~$(3)$, we test \eqref{v**-equ} with $e_{k,l}$, then by the orthogonality of $v_\ve^{**}$, we have
\begin{eqnarray*}\label{equa0025}
0&=&\int_{\Omega}\mathcal{R}_1e_{k,l}dx+\int_{\Omega}\overline{\mathcal{R}}_2e_{k,l}dx+\int_{\Omega}\overline{\mathcal{R}}_0(v_\ve^{**})e_{k,l}dx+\int_{\Omega}f'(\mathcal{W}_{\mu_\ve,\xi_\ve}^{**})v_{\ve}^{**}e_{k,l}dx.
\end{eqnarray*}
Similar to \eqref{R2-e}, by \eqref{R2*} and \eqref{R2**-exp}, we have
\begin{eqnarray*}\label{R2**-e}
\int_{\Omega}\overline{\mathcal{R}}_2e_{k,l}dx
&=&\int_{\Omega}\left(f\left(\mathcal{E}_{\ve}^{**}\right)-f\left(\mathcal{E}_{\ve}^{**}+\widehat{\mathcal{E}}_{\ve}^{**}\right)\right)e_{k,l}dx-\int_{\Omega}\ve\widehat{\mathcal{E}}_{\ve}^{**}e_{k,l}dx\notag\\
&&+\mathcal{O}\left(\beta_\ve^{p-1}\mu_\ve^{\frac{N-2}{2}}\right)+\left\{\aligned
&o\left(\mu_\ve^2|\log\mu_\ve|\right),\quad N=4,\\
&o\left(\mu_\ve^{2}\right),\quad N\geq 5.
\endaligned\right.\notag\\
&=&-\left(\int_{\Omega}f'\left(\mathcal{E}_{\ve}^{**}\right)\widehat{\mathcal{E}}_{\ve}^{**}e_{k,l}dx+\int_{\Omega}\ve\widehat{\mathcal{E}}_{\ve}^{**}e_{k,l}dx\right)\notag\\
&&+o\left(\ve\beta_{1,\ve}\right)+\mathcal{O}\left(\beta_\ve^{p-1}\mu_\ve^{\frac{N-2}{2}}\right)+\left\{\aligned
&o\left(\mu_\ve^2|\log\mu_\ve|\right),\quad N=4,\\
&o\left(\mu_\ve^{2}\right),\quad N\geq 5.
\endaligned\right.
\end{eqnarray*}
Again, since \eqref{equa0017} holds true, the rest estimates for $(1)$ is the same as that for $(3)$ of Proposition~\ref{exp-orth}, the only difference is to use the estimate on $\|v_\ve^{**}\|_{H^1_0(\Omega)}$ in Lemma~\ref{esti-v**} to replace the usage of $\|v_\ve^{*}\|_{H^1_0(\Omega)}$ in Lemma~\ref{esti-v*}.  Thus, we omit the details of the remaining estimates.
\end{proof}

With Proposition~\ref{refine-exp-orth} in hands, we could complete the classification in the case $\lambda\to\lambda_k^-$.

\vskip0.12in

\noindent\textbf{Proof of Theorem~\ref{zero weak limit-2}:} \quad
Since $p>1$, we multiply $(3)$ of Proposition~\ref{refine-exp-orth} with $\widehat{\tau}^{**}_{k,l,\ve}$ and sum them from $l=1$ to $l=m_k$, then we have
\begin{eqnarray}\label{beta1}
\ve\beta_{1,\ve}\lesssim\mu_\ve^{\frac{N-2}{2}}+o\left(\kappa_\ve^{N-2}\right)+\left\{\aligned
&o\left(\mu_\ve^{2}|\log\mu_\ve|\right),\quad N=4,\\
&o\left(\mu_\ve^{2}\right),\quad N\geq5.
\endaligned\right.
\end{eqnarray}
Inserting \eqref{beta1} into $(1)$ of Proposition~\ref{refine-exp-orth} implies that
\begin{eqnarray}\label{z0**-}
0\geq\left\{\aligned
&\left(\lambda D_{N,1}+o(1)\right)\mu_\ve^2-\left(\alpha_ND_{N,3}+o(1)\right)\mu_\ve^{N-2}\varphi(\xi_\ve)+D_{N,3}\mu_\ve^{\frac{N-2}{2}}\mathcal{E}_{\ve}^{**}(\xi_\ve),\quad N\geq5,\\
&\left(\lambda D_{4,2}+o(1)\right)\mu_\ve^2\left|\log\kappa_\ve\right|-\left(\alpha_4D_{4,3}+o(1)\right)\mu_\ve^{2}\varphi(\xi_\ve)+D_{N,3}\mu_\ve^{\frac{N-2}{2}}\mathcal{E}_{\ve}^{**}(\xi_\ve), \quad N=4
\endaligned\right.
\end{eqnarray}
and
\begin{eqnarray}\label{z0**+}
0\leq\left\{\aligned
&\left(\lambda D_{N,1}+o(1)\right)\mu_\ve^2-\left(\alpha_ND_{N,3}+o(1)\right)\mu_\ve^{N-2}\varphi(\xi_\ve)+D_{N,3}\mu_\ve^{\frac{N-2}{2}}\mathcal{E}_{\ve}^{**}(\xi_\ve),\quad N\geq5,\\
&\left(\lambda D_{4,2}+o(1)\right)\mu_\ve^2\left|\log\mu_\ve\right|-\left(\alpha_4D_{4,3}+o(1)\right)\mu_\ve^{2}\varphi(\xi_\ve)+D_{N,3}\mu_\ve^{\frac{N-2}{2}}\mathcal{E}_{\ve}^{**}(\xi_\ve), \quad N=4.
\endaligned\right.
\end{eqnarray}
Inserting \eqref{beta1} into $(2)$ of Proposition~\ref{refine-exp-orth} also implies that
\begin{eqnarray}\label{zj**}
0&=&D_{N,4}\mu_\ve^{\frac{N}{2}}\frac{\partial \mathcal{E}_{\ve}^{**}(\xi_\ve)}{\partial \xi_{i,\ve}}-\alpha_ND_{N,4}\mu_\ve^{N-1}\frac{\partial \varphi(\xi_\ve)}{\partial \xi_{i,\ve}}+\mathcal{O}\left(\beta_\ve\mu_\ve^{\frac{N-2}{2}}\kappa_\ve^3\left|\log\kappa_\ve\right|+\mu_\ve^{2}\kappa_\ve^{N-3}\right)\notag\\
&&+o\left(\kappa_\ve^{N-1}+\beta_\ve\mu_\ve^{\frac{N}{2}}\right)
+\left\{\aligned
&\mathcal{O}\left(\kappa_\ve^5|\log d(\xi_\ve,\partial \Omega)|\right),\quad N=4,\\
&o\left(\mu_\ve^3\right)+\mathcal{O}\left(\kappa_\ve^7\mu_\ve^{-1}\right),\quad N=5,\\
&o\left(\mu_\ve^3\right),\quad N\geq6.
\endaligned\right.
\end{eqnarray}

\vskip0.12in

{\bf The case~$N\geq6$.}

\vskip0.06in

By \eqref{z0**-} and \eqref{z0**+}, for $N\geq6$, we must have $\xi_\ve\to\xi_0\in\partial\Omega$ and $\mu_\ve^2\sim\kappa_\ve^{N-2}$ as $\ve\to0^-$.  Thus, by \eqref{exp-rob} and \eqref{zj**}, we still have $\kappa_\ve^{N-1}\lesssim\mu_\ve^{\frac{N}{2}}\beta_\ve$, which is impossible for $|\ve|>0$ sufficiently small.

\vskip0.12in

{\bf The case~$N=5$.}

\vskip0.06in

If $\xi_\ve\to\xi_0\in\partial\Omega$ as $\ve\to0^-$ such that $\xi_0\in\partial\Omega$ is a regular point of $\mathcal{E}_0^*$, that is $\nabla \mathcal{E}_0^*(\xi_0)\not=0$, then by \eqref{exp-rob}, $(1)$ and $(3)$ of Proposition~\ref{refine-exp-orth}, 
\begin{eqnarray}\label{equa0031}
D_{5,3}\mu_\ve^{\frac{3}{2}}\nabla\mathcal{E}_{\ve}^{**}(\xi_\ve)\cdot(\xi_\ve-\xi_\ve')=\left(\lambda D_{5,1}+o(1)\right)\mu_\ve^2-\left(\alpha_5D_{5,3}+o(1)\right)\mu_\ve^{N-2}\varphi(\xi_\ve)
\end{eqnarray}
and
\begin{eqnarray*}
-D_{5,3}\mu_\ve^{\frac{3}{2}}\nabla\widehat{\mathcal{E}}_{\ve}^{**}(\xi_\ve)\cdot(\xi_\ve-\xi_\ve')\sim-\ve\beta_{1,\ve}^2+o\left(\kappa_\ve^3\beta_{1,\ve}+\mu_\ve^3\beta_{1,\ve}\right),
\end{eqnarray*}
where $\xi_\ve' \in \partial \Omega$ is the unique point satisfying $d(\xi_\ve,\partial \Omega)=\left|\xi_\ve-\xi_\ve'\right|$.  It follows that $\mu_\ve^2\sim\kappa_\ve^3+\ve\beta_{1,\ve}\beta_\ve$.  Thus, by \eqref{exp-rob} and \eqref{zj**}, we still have 
\begin{eqnarray}\label{equa0025}
\mu_\ve^{\frac{5}{2}}\nabla\mathcal{E}_{\ve}^{**}(\xi_\ve)\cdot(\xi_\ve-\xi_\ve')=-\left(\frac{3\alpha_5}{2^3}+o(1)\right)\kappa_\ve^{4}\left|\xi_\ve-\xi_\ve'\right|
\end{eqnarray}
as $\ve\to0^-$,   It follows from \eqref{exp-rob}, \eqref{equa0031} and \eqref{equa0025} that $-\mu_\ve^2\sim\kappa_\ve^3$ as $\ve\to0^-$, which is impossible.  Thus, if $\xi_\ve\to\xi_0\in\partial\Omega$ as $\ve\to0^-$ then $\xi_0\in\partial\Omega$ must be a singular point of $\mathcal{E}_0^*$.

If $\xi_\ve\to\xi_0\in\Omega$ as $\ve\to0^-$ then by \eqref{z0**-} and \eqref{z0**+},
\begin{eqnarray*}
\left(\lambda D_{5,1}+o(1)\right)\mu_\ve^2=D_{5,3}\mu_\ve^{\frac{3}{2}}\mathcal{E}_{\ve}^{**}(\xi_\ve),
\end{eqnarray*}
which implies that $\beta_\ve\gtrsim\mu_\ve^{\frac12}$.  It follows from \eqref{zj**} that $\nabla \mathcal{E}_0^*(\xi_0)=0$.  Thus, without loss of generality, we may assume that  $\xi_0\in\Omega$ is a regular point of $\mathcal{E}_0^*$, that is $\mathcal{E}_0^*(\xi_0)\not=0$ and $\nabla \mathcal{E}_0^*(\xi_0)=0$.  It follows from  \eqref{cancelation} that
\begin{eqnarray*}
\beta_\ve\sim|\ve|^{\frac{3}{4}}\quad\text{and}\quad\mu_\ve\sim|\ve|^{\frac{3}{2}}.
\end{eqnarray*}
Moreover, all the estimates can be precisely computed up to the leading order terms.

\vskip0.12in

{\bf The case~$N=4$.}

\vskip0.06in

If $\xi_\ve\to\xi_0\in\partial\Omega$ as $\ve\to0^-$ such that $\xi_0\in\partial\Omega$ is a regular point of $\mathcal{E}_0^*$, that is $\nabla \mathcal{E}_0^*(\xi_0)\not=0$, then by \eqref{exp-rob}, we can also get the same contradiction in the case $N=5$, that is, $-\mu_\ve^2|\log\kappa_\ve|\gtrsim\kappa_\ve^2$ as $\ve\to0^-$.  Thus, if $\xi_\ve\to\xi_0\in\partial\Omega$ as $\ve\to0^-$ then $\xi_0\in\partial\Omega$ must be a singular point of $\mathcal{E}_0^*$.

If $\xi_\ve\to\xi_0\in\Omega$ as $\ve\to0^-$ then by \eqref{z0**-} and \eqref{z0**+},
\begin{eqnarray}\label{equa0026}
\left(\lambda D_{4,1}+o(1)\right)\mu_\ve^2|\log\mu_\ve|=D_{4,3}\mu_\ve\mathcal{E}^{**}(\xi_\ve),
\end{eqnarray}
which implies that $\beta_\ve\gtrsim\mu_\ve|\log\mu_\ve|$.  It follows from \eqref{zj**} that $\nabla \mathcal{E}_0^*(\xi_0)=0$.  If we further have that $\xi_0\in\Omega$ is a regular point of $\mathcal{E}_0^*$, then by $(3)$ of Proposition~\ref{refine-exp-orth}, $\mu_\ve\beta_\ve=o(\ve\beta_\ve)$ as $\ve\to0^-$.  However, by \eqref{equa0026}, $\mu_\ve\sim\frac{\beta_\ve}{|\log\beta_\ve|}$ as $\ve\to0^-$.  Thus, by \eqref{cancelation}, $\frac{|\ve|}{|\log|\ve||}=o(|\ve|^{\frac{3}{2}})$ as $\ve\to0^-$, which is impossible for $|\ve|>0$ sufficiently small.  Thus, if $\xi_\ve\to\xi_0\in\Omega$ as $\ve\to0^-$ then $\xi_0\in\Omega$ also must be a singular point of $\mathcal{E}_0^*$.  It completes the proof.
\hfill$\Box$

\section{The existence theory of $4d$ Brezis-Nirenberg equation for $\lambda=\lambda_k$}
We recall that it has been proved by Cerami, Solimini and Struwe \cite{CSS1986}, Chen, Lin and Zou \cite{CLZ2014}, Chen, Shioji and Zou \cite{CSZ2012}, Clapp and Weth \cite{CW2005}, Tavares, You and Zou \cite{TYZ2022}, and Szulkin, Weth and Willem \cite{SWW2009} that the least energy sign-changing solutions of \eqref{BN}, denoted by $u_{\lambda, sg}$, satisfies
\begin{eqnarray}\label{energy-1}
E_\lambda(u_{\lambda,sg})=\left\{\aligned
&\inf_{\mathcal{N}_{sg}}E_\lambda(u)<\frac{2}{N}S^{\frac{N}{2}},\quad0<\lambda<\lambda_1,\\
&\inf_{\mathcal{P}_i}E_\lambda(u)<\frac{1}{N}S^{\frac{N}{2}},\quad\lambda_i<\lambda<\lambda_{i+1}\text{ and }i\geq1,
\endaligned\right.
\end{eqnarray}
where $\lambda_i$ are the eigenvalues of $-\Delta$ in $H_0^1(\Omega)$, 
\begin{eqnarray*}
E_\lambda(u)=\frac{1}{2}\|u\|^2_{H_0^1(\Omega)}-\frac{\lambda}{2}\|u\|^2_{L^2(\Omega)}-\frac{1}{p+1}\|u\|^{p+1}_{L^{p+1}(\Omega)}
\end{eqnarray*}
is the corresponding energy functional of the Brezis-Nirenberg equation~\eqref{BN}, 
\begin{eqnarray*}
\mathcal{N}_{sg}=\left\{u\in H_0^1(\Omega)\backslash\{0\}\mid E'_\lambda(u^{\pm})u^{\pm}=0\text{ and }u^\pm\not=0\right\}
\end{eqnarray*}
and
\begin{eqnarray*}
\mathcal{P}_i=\left\{u\in H_0^1(\Omega)\backslash\{0\}\mid E'_\lambda(u)u=0\quad\text{and}\quad Q_jE'_\lambda(u)=0, 1\leq j\leq i\right\}
\end{eqnarray*}
are its associated sign-changing Nehari manifold and Pankov-Nehari manifold, respectively, with $Q_j$ the projection from $H_0^1(\Omega)$ onto 
\begin{eqnarray}\label{xij}
\Xi_j=\bbr e_{j,1}\oplus\bbr e_{j,2}\oplus\cdots\oplus\bbr e_{j,m_j}
\end{eqnarray}
with
$m_j\in\bbn$ being the multiplicity of $\lambda_{j}$ and $\{e_{j,i}\}$ being its orthogonal system.

\vskip0.12in

\noindent\textbf{Proof of Theorem~\ref{existence}:} \quad
We divide the proof into three steps.

\vskip0.06in

{\bf Step~1}\quad We prove that the $4d$ Brezis-Nirenberg equation~\eqref{BN} has a least energy sign-changing solution for $\lambda=\lambda_1$.

\vskip0.06in

For this purpose, let $u_{\lambda,sg}$ be a sequence of least energy sign-changing solutions of the $4d$ Brezis-Nirenberg equation~\eqref{BN} for $\lambda<\lambda_1$ and $\lambda\to\lambda_1^-$.  Since it is known \cite{AP2025, SWW2009, TYZ2022} that 
$m_{\lambda,sg}:=E_{\lambda}(u_{\lambda,sg})$ is strictly decreasing for $\lambda\in(0, \lambda_1)$ and $\lim_{\lambda\to\lambda_1^-}m_{\lambda,sg}\leq\frac{1}{4}S^{2}$.  Thus, it is standard to show that $\{u_{\lambda,sg}\}$ is bounded in $H_0^1(\Omega)$, which, together with the Struwe decomposition \cite{Str1984} and $\lim_{\lambda\to\lambda_1^-}m_{\lambda,sg}\leq\frac{1}{4}S^{2}$, implies that \eqref{condition} holds true for $\{u_{\lambda,sg}\}$ up to a subsequence, provided $\{u_{\lambda,sg}\}$ is noncompact in $H_0^1(\Omega)$ up to a subsequence.  By Theorems~\ref{zero weak limit-1} and \ref{zero weak limit-2}, the concentration point must be a singular point of the principal eigenfunction $e_{1,1}$.  However, as the principal eigenfunction, $e_{1,1}$ is regular on $\overline{\Omega}$ by the strong maximum principle since $\partial\Omega$ is smooth.  It follows that $\{u_{\lambda,sg}\}$ is compact in $H_0^1(\Omega)$, which implies that the limit, say $u_{\lambda_1,sg}$ is a sign-changing solution of the $4d$ Brezis-Nirenberg equation~\eqref{BN} for $\lambda=\lambda_1$ satisfying $E_{\lambda_1}(u_{\lambda_1,sg})\leq\frac{1}{4}S^{2}$.  Since it is known \cite{AP2025, SWW2009, TYZ2022} that 
\begin{eqnarray*}
\inf_{\mathcal{P}_1}E_{\lambda_1}(u)\leq\frac{1}{4}S^{2},
\end{eqnarray*}
either $u_{\lambda_1,sg}$ is a least energy sign-changing solution of the $4d$ Brezis-Nirenberg equation~\eqref{BN} for $\lambda=\lambda_1$ or $\inf_{\mathcal{P}_1}E_{\lambda_1}(u)<\frac{1}{4}S^{2}$, which also implies the existence of least energy sign-changing solution by classical variational methods used in \cite{SWW2009}.

\vskip0.06in

{\bf Step~2}\quad We prove that the $4d$ Brezis-Nirenberg equation~\eqref{BN} has a least energy sign-changing solution for $\lambda=\lambda_k$ with $k\geq2$.

\vskip0.06in

For this purpose, let $u_{\lambda,sg}$ be a sequence of least energy sign-changing solutions of the $4d$ Brezis-Nirenberg equation~\eqref{BN} for $\lambda_k<\lambda<\lambda_{k+1}$ and $\lambda\to\lambda_k^+$.  By \eqref{energy-1}, we can use the same argument in Step~1 to show that \eqref{condition} holds true for $\{u_{\lambda,sg}\}$ up to a subsequence, provided $\{u_{\lambda,sg}\}$ is noncompact in $H_0^1(\Omega)$ up to a subsequence.  Thus, by Theorem~\ref{zero weak limit+}, Proposition~\ref{exp-orth}, \eqref{orthogonal-decomp}, \eqref{solution-sys} and \eqref{beta-mu-2},
\begin{eqnarray*}
u_\ve=\mathcal{W}_{\xi_\ve, \mu_\ve}^*+\mathcal{E}_{\ve}^*+v_{\ve}^*
\end{eqnarray*}
and
\begin{eqnarray}\label{equa0035}
\left\{\aligned
&\lambda D_{4,2}\mu_\ve^2\left|\log\mu_\ve\right|-\alpha_4\mu_\ve^{2}D_{4,3}\varphi(\xi_\ve)+D_{4,3}\mu_\ve\mathcal{E}_{\ve}^*(\xi_\ve)= o\left(\mu_\ve^2|\log\mu_\ve|\right),\\
&D_{4,6}\mu_\ve \mathcal{E}_{\ve}^*(\xi_\ve)+\left\|\mathcal{E}_\ve^*\right\|_{L^{4}(\Omega)}^4+\ve\left\|\mathcal{E}_\ve^*\right\|_{L^{2}(\Omega)}^2=o\left(\mu_\ve^2|\log\mu_\ve|\right),
\endaligned\right.
\end{eqnarray}
where $\mathcal{W}_{\xi_\ve, \mu_\ve}^*$ and $\mathcal{E}_{\ve}^*$ are given by \eqref{new*}, $v_\ve^*$ is given in Lemma~\ref{esti-v*},
\begin{eqnarray*}
D_{4,3}=3\int_{\mathbb{R}^4}U_{1,0}^2\Phi_{1,0}^0dx\quad\text{and}\quad D_{4,6}=\int_{\mathbb{R}^4}U_{1,0}^3dx,
\end{eqnarray*}
and
\begin{eqnarray*}\label{equa0015}
\xi_\ve\to\xi_0\in\overline{\Omega},\quad \mu_{\ve}\to0,\quad \beta_\ve^*=\max_{1\leq i\leq m_{k}}|\tau_{k,i,\ve}|\to0\quad\text{and}\quad\frac{\|\varphi_{\ve}\|_{H_0^1(\Omega)}}{\max_{1\leq i\leq m_{k}}|\tau_{k,i,\ve}|}\to0
\end{eqnarray*}
as $\ve=\lambda-\lambda_k\to0^+$.  It follows from \eqref{beta-mu-2}, \eqref{equa0035} and Lemma~\ref{App-esti} that
\begin{eqnarray}\label{equa0036}
E_{\lambda}\left(u_{\lambda,sg}\right)&=&\frac{1}{4}S^2-\frac{\lambda}{2}\left\|\mathcal{W}_{\mu_\ve,\xi_\ve}\right\|_{L^2(\Omega)}^2-\lambda\int_{\Omega}\mathcal{W}_{\mu_\ve,\xi_\ve}\mathcal{E}_{\ve}^*dx+\frac{\alpha_4\mu_\ve}{2}\int_{\Omega}U_{\mu_\ve,\xi_\ve}^3H(x,\xi_\ve)dx\notag\\
&&-\frac{\ve}{2}\left\|\mathcal{E}_{\ve}^*\right\|_{L^2(\Omega)}^2-\frac{1}{4}\left\|\mathcal{E}_{\ve}^*\right\|_{L^4(\Omega)}^4+o(\mu_\ve^2|\log\mu_\ve|)\notag\\
&=&\frac{1}{4}S^2-\frac{1}{2}\left(\lambda D_{4,2}\mu_\ve^2\left|\log\mu_\ve\right|-\alpha_4\mu_\ve^{2}D_{4,3}\varphi(\xi_\ve)+D_{4,6}\mu_\ve \mathcal{E}_{\ve}^*(\xi_\ve)\right)\notag\\
&&-\frac{1}{2}\left(\ve\left\|\mathcal{E}_{\ve}^*\right\|_{L^2(\Omega)}^2+\left\|\mathcal{E}_{\ve}^*\right\|_{L^4(\Omega)}^4+D_{4,6}\mu_\ve \mathcal{E}_{\ve}^*(\xi_\ve)\right)+\frac{1}{4}\left\|\mathcal{E}_{\ve}^*\right\|_{L^4(\Omega)}^4+o(\mu_\ve^2|\log\mu_\ve|)\notag\\
&\geq&\frac{1}{4}S^2+(D_{4,3}-D_{4,6})\mu_\ve \mathcal{E}_{\ve}^*(\xi_\ve)+o(\mu_\ve^2|\log\mu_\ve|).
\end{eqnarray}
Since by the standard inverse stereographic projection $\mathbb{R}^4\to\mathbb{S}^4$, i.e.,
\begin{eqnarray*}
\omega_i=\left\{\aligned
&\frac{2x_i}{1+|x|^2},\quad i=1,2,3,4,\\
&\frac{1-|x|^2}{1+|x|^2},\quad i=5,
\endaligned\right.
\end{eqnarray*}
we have
\begin{eqnarray*}
D_{4,3}=\frac{3\alpha_4^3}{16}\int_{\mathbb{R}^4}(|x|^2-1)\mathcal{J}(x)dx=-\frac{6\alpha_4^3}{16}\int_{\mathbb{S}^4}\frac{\omega_5}{1+\omega_5}d\omega=\frac{\alpha_4^3}{4}\left|\mathbb{S}^3\right|
\end{eqnarray*}
and
\begin{eqnarray*}
D_{4,6}=\frac{\alpha_4^3}{16}\int_{\mathbb{R}^4}(|x|^2+1)\mathcal{J}(x)dx=\frac{2\alpha_4^3}{16}\int_{\mathbb{S}^4}\frac{1}{1+\omega_5}d\omega=\frac{\alpha_4^3}{4}\left|\mathbb{S}^3\right|,
\end{eqnarray*}
where $\mathcal{J}(x)=\left(\frac{2}{1+|x|^2}\right)^4$ is the Jacobian of the standard inverse stereographic projection.  Thus, by \eqref{equa0036},
we must have 
\begin{eqnarray}\label{low}
E_{\lambda}\left(u_{\lambda,sg}\right)\geq\frac{1}{4}S^2+o(\mu_\ve^2|\log\mu_\ve|)
\end{eqnarray}
as $\ve\to0^+$.  Thus, by the classical estimates in \cite{SWW2009}, we know that
\begin{eqnarray}\label{minimax-sg}
E_\lambda(u_{\lambda,sg})=\inf_{v\in\mathbb{Y}_k\atop v\not=0}\max_{w\in\mathbb{X}_k\atop t>0}E_\lambda(tv+w)\to\frac{1}{4}S^2
\end{eqnarray}
as $\lambda\to\lambda_k^+$,
where $\mathbb{X}_k=\oplus_{j=1}^{k}\Xi_j$ and $\mathbb{Y}_k=\oplus_{j=k+1}^{\infty}\Xi_j$ with $\Xi_j$ given by \eqref{xij}.  Let
\begin{eqnarray*}
v_{\ve}&=&\mathcal{W}_{\mu_\ve,\eta_\ve}-\sum_{j=1}^{k-1}\sum_{l=1}^{m_j}\varrho_{j,l,\ve}e_{j,l}-\sum_{l=1}^{m_k}\varrho_{k,l,\ve}e_{k,l}\\
&:=&\mathcal{W}_{\mu_\ve,\eta_\ve}-\sum_{j=1}^{k}v_{j,\ve},
\end{eqnarray*}
where $\varrho_{k,l,\ve}$ are chosen such that $\langle v_{\ve}, e_{j,l}\rangle=0$ for all $j\geq k+1$ and $1\leq l\leq m_{j}$ and $\eta_\ve\in\Omega$ will be chosen later.
Moreover, similar to Lemma~\ref{compute-ez}, by  \eqref{beta-mu}, 
\begin{eqnarray*}
\varrho_{j,l,\ve}=D_{4,6}\mu_\ve e_{k,l}(\eta_\ve)+o\left(\mu_\ve^2\left|\log\mu_\ve\right|\right)\quad\text{for all }1\leq j\leq k\text{ and }1\leq l\leq m_j.
\end{eqnarray*}
It follows from \eqref{minimax-sg} that 
\begin{eqnarray*}
E_\lambda(u_{\lambda,sg})\leq E_\lambda\left(t_\ve v_\ve+\sum_{j=1}^{k}w_{j,\ve}\right),
\end{eqnarray*}
where $t_\ve\in\mathbb{R}$ and $w_{j,\ve}=\sum_{l=1}^{m_j}s_{j,l,\ve}e_{j,l}\in \Xi_j$ satisfy $t_\ve\to1$ and $\|w_{j,\ve}\|_{H^1_0(\Omega)}\to0$ as $\ve\to0^+$.  
Since 
\begin{eqnarray}\label{test-10}
\frac{\sum_{l=1}^{m_k}(s_{k,l,\ve}-\varrho_{k,l,\ve})e_{k,l}(x)}{\max_{1\leq l\leq m_k}|s_{k,l,\ve}-\varrho_{k,l,\ve}|}\to \sum_{l=1}^{m_k}s_{k,l}^0e_{k,l}(x)\in \Xi_j,
\end{eqnarray}
where $s_{k,l}^0=\lim_{\ve\to0^+}\frac{s_{k,l,\ve}-\varrho_{k,l,\ve}}{\max_{1\leq l\leq m_k}|s_{k,l,\ve}-\varrho_{k,l,\ve}|}$, we can choose $\eta_\ve\in\Omega$ such that $\sum_{l=1}^{m_k}(s_{k,l,\ve}-\varrho_{k,l,\ve})e_{k,l}(\eta_\ve)=0$ and $d(\eta_\ve,\partial\Omega)\gtrsim1$ as $\ve\to0^+$.  Thus, by the same computations in \eqref{compute-0},
\begin{eqnarray*}
E_\lambda\left(t_\ve v_\ve+w_{k,-}+w_k\right)&=&\left(\frac{1}{2}t_\ve^2-\frac{1}{4}t_\ve^4\right)S^2+\sum_{j=1}^{k}\sum_{l=1}^{m_j}\frac{\lambda-\lambda_j}{2\lambda_j}\left(t_\ve\alpha_{j,l,\ve}\right)^2-\lambda t_\ve^2D_{4,2}\mu_\ve^2|\log\mu_\ve^2|\\
&&+\sum_{j=1}^{k}\frac{\lambda_j-\lambda}{2\lambda_j}\|w_{j,\ve}\|_{L^2(\Omega)}^2-\int_{\Omega}\left(t_\ve\mathcal{W}_{\mu_\ve,\eta_\ve}\right)^3\left(\sum_{j=1}^k(w_{j,\ve}-v_{j,\ve})\right)dx\\
&&-\frac{3}{2}\int_{\Omega}\left(t_\ve\mathcal{W}_{\mu_\ve,\eta_\ve}\right)^2\left(\sum_{j=1}^k(w_{j,\ve}-v_{j,\ve})\right)^2dx-\frac14\left\|\sum_{j=1}^k(w_{j,\ve}-v_{j,\ve})\right\|_{L^4(\Omega)}^4\\
&&+\mathcal{O}(\mu_\ve^2)\\
&\leq&\frac{1}{4}S^2-\lambda t_\ve^2D_{4,2}\mu_\ve^2|\log\mu_\ve^2|+\sum_{j=1}^{k}\frac{\lambda_j-\lambda}{2}\|w_{j,\ve}\|_{L^2(\Omega)}^2\\
&&+\mathcal{O}\left(\sum_{j=1}^{k-1}\|w_{j,\ve}\|_{L^2(\Omega)}\mu_\ve+\mu_\ve^2\right)\\
&\leq&\frac{1}{4}S^2-\lambda t_\ve^2D_{4,2}\mu_\ve^2|\log\mu_\ve^2|+\mathcal{O}(\mu_\ve^2),
\end{eqnarray*}
which contradicts \eqref{low} for $\ve>0$ sufficiently small.  It follows that $\{u_{\lambda,sg}\}$ is compact in $H_0^1(\Omega)$, which implies that the limit, say $u_{\lambda_k,sg}$ is a sign-changing solution of the $4d$ Brezis-Nirenberg equation~\eqref{BN} for $\lambda=\lambda_k$ satisfying $E_{\lambda_k}(u_{\lambda_k,sg})=\frac{1}{4}S^{2}$.  Clearly, $u_{\lambda_k,sg}$ must be the least energy sign-changing solution of the $4d$ Brezis-Nirenberg equation~\eqref{BN} for $\lambda=\lambda_k$.

\vskip0.06in

{\bf Step~3}\quad We prove that $\{u_{\lambda,sg}\}$ is compact in $H_0^1(\Omega)$ as $\lambda\to\lambda_1^+$.

\vskip0.06in

Since $u_{\lambda_1,sg}$ must be the least energy sign-changing solution of the $4d$ Brezis-Nirenberg equation~\eqref{BN} for $\lambda=\lambda_1$, a standard perturbation argument implies that 
\begin{eqnarray*}
\inf_{\mathcal{P}_1}E_{\lambda}(u)<\frac{1}{4}S^{2}-\ve C,
\end{eqnarray*}
where $C>0$ is a constant.  Thus, $\{u_{\lambda,sg}\}$ is also compact as $\lambda\to\lambda_1^+$ by going through the same arguments used in the second step for $k\geq2$.  The monotonicity and continuity of $E_{\lambda}(u_{\lambda,sg})$ is also standard now according to the complete existence theory of the $4d$ Brezis-Nirenberg equation~\eqref{BN}, so we omit it here.
\hfill$\Box$

\section{Appendix: Some useful estimates}
In this section, we provide some estimates which will be frequently used in this paper.
\begin{lemma}\label{App-esti}
Let $\alpha>0$ and $\beta,\gamma\geq0$, then for any $\mu>0$ and $\xi\in\Omega$ such that $\kappa=\frac{\mu}{d(\xi,\partial \Omega)}<<1$, we have
\begin{eqnarray*}
\left(\int_{B_{d(\xi,\partial\Omega)}(\xi)}U_{\mu,\xi}^{\beta}\left|\Phi_{\mu,\xi}^i\right|^{\gamma}dx\right)^{\frac{1}{\alpha}}\lesssim\left\{\aligned
&\mu^{\frac{2N-(N-2)(\beta+\gamma)}{2\alpha}}, \quad N<(N-2)\beta+(N-1)\gamma\text{ and }1\leq i\leq N,\\
&\mu^{\frac{N+\gamma}{2\alpha}}\left|\log \kappa\right|^{\frac{1}{\alpha}}, \quad N=(N-2)\beta+(N-1)\gamma\text{ and }1\leq i\leq N,\\
&\frac{\mu^{\frac{2N-(N-2)(\beta+\gamma)}{2\alpha}}}{\kappa^{\frac{N-(N-2)\beta-(N-1)\gamma}{\alpha}}}, \quad N>(N-2)\beta+(N-1)\gamma\text{ and }1\leq i\leq N,\\
&\mu^{\frac{2N-(N-2)(\beta+\gamma)}{2\alpha}}, \quad N<(N-2)(\beta+\gamma)\text{ and }i=0,\\
&\mu^{\frac{N}{2\alpha}}\left|\log \kappa\right|^{\frac{1}{\alpha}}, \quad N=(N-2)(\beta+\gamma)\text{ and }i=0,\\
&\frac{\mu^{\frac{2N-(N-2)(\beta+\gamma)}{2\alpha}}}{\kappa^{\frac{N-(N-2)(\beta+\gamma)}{\alpha}}}, \quad N>(N-2)(\beta+\gamma)\text{ and }i=0
\endaligned\right.
\end{eqnarray*}
and
\begin{eqnarray*}
\left(\int_{\Omega\backslash B_{d(\xi,\partial\Omega)}(\xi)}U_{\mu,\xi}^{\beta}\left|\Phi_{\mu,\xi}^i\right|^{\gamma}dx\right)^{\frac{1}{\alpha}}\lesssim\left\{\aligned
&\frac{\mu^{\frac{2N-(N-2)(\beta+\gamma)}{2\alpha}}}{\kappa^{\frac{N-(N-2)\beta-(N-1)\gamma}{\alpha}}}, \quad N<(N-2)\beta+(N-1)\gamma\text{ and }1\leq i\leq N,\\
&\mu^{\frac{N+\gamma}{2\alpha}}\left|\log d(\xi,\partial\Omega)\right|^{\frac{1}{\alpha}}, \quad N=(N-2)\beta+(N-1)\gamma\text{ and }1\leq i\leq N,\\
&\mu^{\frac{(N-2)\beta+N\gamma}{2\alpha}}, \quad N>(N-2)\beta+(N-1)\gamma\text{ and }1\leq i\leq N,\\
&\frac{\mu^{\frac{2N-(N-2)(\beta+\gamma)}{2\alpha}}}{\kappa^{\frac{N-(N-2)(\beta+\gamma)}{\alpha}}}, \quad N<(N-2)(\beta+\gamma)\text{ and }i=0,\\
&\mu^{\frac{N}{2\alpha}}\left|\log d(\xi,\partial\Omega)\right|^{\frac{1}{\alpha}}, \quad N=(N-2)(\beta+\gamma)\text{ and }i=0,\\
&\mu^{\frac{(N-2)(\beta+\gamma)}{2\alpha}}, \quad N>(N-2)(\beta+\gamma)\text{ and }i=0,
\endaligned\right.
\end{eqnarray*}
where $U_{\mu,\xi}$ and $\Phi_{\mu,\xi}^i$ are given by \eqref{Talenti} and \eqref{kernel}, respectively.
\end{lemma}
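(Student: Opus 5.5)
The estimate is a scaling computation. I will change variables $x=\xi+\mu y$ and then split the resulting integral according to the decay of the integrand at infinity.

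\emph{Rescaling.} From \eqref{Talenti} and \eqref{kernel} we have $U_{\mu,\xi}(\xi+\mu y)=\mu^{-\frac{N-2}{2}}U_{1,0}(y)$ and $\Phi^i_{\mu,\xi}(\xi+\mu y)=\mu^{-\frac{N-2}{2}}\Phi^i_{1,0}(y)$. Hence
\begin{eqnarray*}
\int_{A}U_{\mu,\xi}^{\beta}\left|\Phi_{\mu,\xi}^i\right|^{\gamma}dx=\mu^{\frac{2N-(N-2)(\beta+\gamma)}{2}}\int_{(A-\xi)/\mu}U_{1,0}^{\beta}\left|\Phi_{1,0}^i\right|^{\gamma}dy .
\end{eqnarray*}
I then use the pointwise bounds
\begin{eqnarray*}
U_{1,0}(y)\sim(1+|y|)^{-(N-2)},\qquad |\Phi^0_{1,0}(y)|\lesssim(1+|y|)^{-(N-2)},\qquad |\Phi^i_{1,0}(y)|\lesssim|y|(1+|y|)^{-N}\ \ (1\leq i\leq N).
\end{eqnarray*}
Near $y=0$ the integrand is bounded, so only the behaviour at infinity matters. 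At infinity it decays like $|y|^{-q}$, with $q=(N-2)(\beta+\gamma)$ for $i=0$ and $q=(N-2)\beta+(N-1)\gamma$ for $i\geq1$.

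\emph{Inner region.} For $A=B_{d(\xi,\partial\Omega)}(\xi)$ the rescaled domain is $B_{1/\kappa}(0)$. The radial integral $\int_1^{1/\kappa}r^{N-1-q}dr$ is $O(1)$ if $q>N$, $|\log\kappa|$ if $q=N$, and $\kappa^{q-N}$ if $q<N$. Taking the $\frac1\alpha$ power gives the six cases of the first estimate.

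\emph{Outer region.} For $A=\Omega\setminus B_{d}(\xi)$, where $d=d(\xi,\partial\Omega)$, the rescaled domain lies in $\{1/\kappa\leq|y|\leq R/\mu\}$ with $R=\operatorname{diam}\Omega$. The radial integral $\int_{1/\kappa}^{R/\mu}r^{N-1-q}dr$ is bounded by $\kappa^{q-N}$ if $q>N$, by $\log(R/d)\lesssim|\log d|$ if $q=N$, and by $\mu^{q-N}$ if $q<N$. In the last case, multiplying by the prefactor $\mu^{\frac{2N-(N-2)(\beta+\gamma)}{2}}$ gives the exponent $\frac{(N-2)\beta+N\gamma}{2}$ for $i\geq1$, since there $q-N+N-\frac{(N-2)(\beta+\gamma)}{2}=\frac{(N-2)\beta+N\gamma}{2}$, and the exponent $\frac{(N-2)(\beta+\gamma)}{2}$ for $i=0$.

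\emph{Checking the borderline exponent.} When $q=N$ with $i\geq1$, the exponent $\frac{2N-(N-2)(\beta+\gamma)}{2}$ should reduce to $\frac{N+\gamma}{2}$. Indeed, $N=(N-2)\beta+(N-1)\gamma$ gives $(N-2)(\beta+\gamma)=N-\gamma$, so the exponent equals $\frac{N+\gamma}{2}$. For $i=0$ the corresponding exponent is $N/2$.

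The only delicate step is this bookkeeping of the prefactor in the borderline and outer cases. In particular, when $q<N$ in the outer region, the bound must come from the upper cutoff $R/\mu$ and not from $1/\kappa$. The rest is routine.
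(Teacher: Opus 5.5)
Your rescaling $x=\xi+\mu y$ followed by the radial power count is exactly the ``direct calculation'' the paper omits, and your exponent bookkeeping reproduces all twelve cases correctly. There is one small caveat, in the borderline outer case. There the integral is really $\log(R/d)$ with $d=d(\xi,\partial\Omega)$. This is $\lesssim 1+|\log d|$ but not $\lesssim|\log d|$ when $d$ is close to $1$, which can happen if $\Omega$ has inradius at least $1$. So your step $\log(R/d)\lesssim|\log d|$, and that line of the statement itself, should be read with $1+|\log d|$. This defect is harmless for the paper, since in its applications either $d\to0$ or the corresponding term is of lower order.
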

\begin{proof}
The conclusion follows from direct calculations.  Thus, we omit it here.
\end{proof}

\end{document}